\newtheorem{theorem}{Theorem}
\newtheorem{proposition}[theorem]{Proposition}
\newtheorem{lemma}[theorem]{Lemma}
\newtheorem{remark}[theorem]{Remark}
\newtheorem*{theorem*}{Theorem}
\theoremstyle{definition}
\newtheorem{definition}[theorem]{Definition}
\newtheorem{assumption}{Assumption}
\newtheorem*{indhypothesis}{Induction hypothesis}
\def\Xint#1{\mathchoice
{\XXint\displaystyle\textstyle{#1}}%
{\XXint\textstyle\scriptstyle{#1}}%
{\XXint\scriptstyle\scriptscriptstyle{#1}}%
{\XXint\scriptscriptstyle\scriptscriptstyle{#1}}%
\!\int}
\def\XXint#1#2#3{{\setbox0=\hbox{$#1{#2#3}{\int}$ }
\vcenter{\hbox{$#2#3$ }}\kern-.6\wd0}}
\def\dashint{\Xint-}
\definecolor{Yellow}{rgb}{0.95,0.9,0.0} 
\definecolor{Red}{rgb}{0.8,0.1,0.1}
\definecolor{Green}{rgb}{0.1,0.65,0.2}
\definecolor{Blue}{rgb}{0.1,0.1,0.8}
\definecolor{Purple}{rgb}{0.7,0.1,0.7}
\definecolor{Grey}{rgb}{0.6,0.6,0.6}
\newcommand{\supp}{\operatorname{supp}}
\newcommand{\Rd}[1][d]{{\mathbb{R}^{#1}}}
\newcommand{\N}{\mathbb{N}}
\newcommand{\dx}{\,\mathrm{d}x}
\newcommand{\ds}{\,\mathrm{d}s}
\newcommand{\Prob}{\mathbf{P}}
\begin{document}

\title[Higher-order linearized correctors in stochastic homogenization]
{Corrector estimates for higher-order linearizations in stochastic homogenization
of nonlinear uniformly elliptic equations}

\author{Sebastian Hensel}
\address{Institute of Science and Technology Austria (IST Austria), Am~Campus~1, 
3400 Klosterneuburg, Austria}
\email{sebastian.hensel@ist.ac.at}

\begin{abstract}
Corrector estimates constitute a key ingredient in the derivation of
optimal convergence rates via two-scale expansion techniques 
in homogenization theory of random uniformly elliptic equations.
The present work follows up---in terms of corrector estimates---on the
recent work of Fischer and Neukamm (arXiv:1908.02273) which provides a quantitative stochastic
homogenization theory of nonlinear uniformly elliptic equations under a spectral
gap assumption. We establish optimal-order estimates (with respect to the scaling in the ratio
between the microscopic and the macroscopic scale) for higher-order linearized correctors.
A rather straightforward consequence of the corrector estimates is the higher-order regularity of the
associated homogenized monotone operator. 
\end{abstract} 


\maketitle

\section{Introduction}
Consider the setting of a monotone, uniformly elliptic and bounded PDE
\begin{align}
\label{Random}
-\nabla\cdot A\Big(\frac{x}{\varepsilon},\nabla u_\varepsilon\Big) = \nabla\cdot f
\quad\text{in } \mathbb{R}^d,\quad f\in C^\infty_{\mathrm{cpt}}(\mathbb{R}^d;\mathbb{R}^d),
\end{align}
with $\varepsilon \ll 1$ denoting a microscale. We in addition assume that
the monotone nonlinearity $A$ is random (see Subsection~\ref{subsec:assumptions}
for a precise account on the assumptions of this work). The theory of nonlinear stochastic homogenization 
is then concerned with the behavior of the solutions to equation~\eqref{Random} in the
limit $\varepsilon\to 0$. 

If the monotone nonlinearity is sampled according to a stationary
and ergodic probability distribution (which we will always assume), the classical qualitative prediction
(see, e.g., \cite{Maso1986} and \cite{DalMaso1986}) consists of the convergence of 
$u_\varepsilon$ to the solution $u_{\mathrm{hom}}$ of an effective
nonlinear PDE 
\begin{align}
\label{Effective}
-\nabla\cdot A_{\mathrm{hom}}(\nabla u_{\mathrm{hom}}) = \nabla\cdot f
\quad\text{in } \mathbb{R}^d,
\end{align}
with $A_{\mathrm{hom}}$ being a monotone, uniformly elliptic and bounded operator.
In the rigorous transition from the random model~\eqref{Random} to the deterministic
effective model~\eqref{Effective}, next to the purely qualitative questions
of convergence or the derivation of a homogenization formula for the effective
operator $A_{\mathrm{hom}}$, also quantitative aspects like the 
validity of convergence rates are obviously of interest. 

For all of these questions in homogenization theory, the probably most fundamental concept 
is the notion of the homogenization corrector $\phi^\varepsilon_\xi$, which for a constant macroscopic field gradient
$\xi\in\mathbb{R}^d$ is given by the almost surely sublinearly growing solution of 
\begin{align}
\label{eq:rescaledCorrector}
-\nabla\cdot A\Big(\frac{x}{\varepsilon},\xi{+}\nabla\phi^\varepsilon_\xi\Big) = 0
\quad\text{in } \mathbb{R}^d.
\end{align}
For instance, by means of the homogenization correctors
the homogenization formula for the effective operator reads as
\begin{align}
A_{\mathrm{hom}}(\xi) = \Big\langle A\Big(\frac{x}{\varepsilon},\xi{+}\nabla\phi^\varepsilon_\xi\Big)\Big\rangle,
\end{align}
which is well-defined as a consequence of stationarity of the underlying probability distribution.
For quantitatively inclined questions like those concerned with
the derivation of convergence rates, it is useful to
introduce in addition a notion of flux correctors. For a given constant macroscopic field gradient
$\xi\in\mathbb{R}^d$, the associated flux corrector $\sigma_\xi$ is a random
field with almost surely sublinear growth at infinity,
taking values in the skew-symmetric matrices $\mathbb{R}^{d\times d}_{\mathrm{skew}}$,
and solving
\begin{align}
\label{eq:rescaledFluxCorrector}
\nabla\cdot\sigma_\xi^\varepsilon = A\Big(\frac{x}{\varepsilon},\xi{+}\nabla\phi^\varepsilon_\xi\Big)
- A_{\mathrm{hom}}(\xi)
\quad\text{in } \mathbb{R}^d.
\end{align} 
The merit of the corrector pair $(\phi^\varepsilon_\xi,\sigma^\varepsilon_\xi)$
is that it allows to represent, at least on a formal level, the error for the two-scale
expansion $w_\varepsilon := u_{\mathrm{hom}}(x) + \phi^\varepsilon_\xi(x)|_{\xi=\nabla u_{\mathrm{hom}}(x)}$
in divergence form by means of first-order linearized correctors
\begin{equation}
\label{eq:twoScaleExpansion}
\begin{aligned}
&-\nabla\cdot A\Big(\frac{x}{\varepsilon},\nabla w_\varepsilon\Big)
\\&
= \nabla\cdot f
- \nabla\cdot\Big(\big((a_\xi^\varepsilon\otimes\partial_\xi\phi^\varepsilon_\xi)|_{\xi=\nabla u_{\mathrm{hom}}(x)}
{-}\partial_\xi\sigma^\varepsilon_\xi|_{\xi=\nabla u_{\mathrm{hom}}(x)}\big)
\colon\nabla^2 u_{\mathrm{hom}}\Big),
\end{aligned}
\end{equation}
where we also introduced the linearized coefficient field 
$a_\xi^\varepsilon:=\partial_\xi A(\frac{x}{\varepsilon},\xi{+}\nabla\phi^\varepsilon_\xi)$.
It is clear from the previous display that estimates on the corrector
pair~$(\phi^\varepsilon_\xi,\sigma^\varepsilon_\xi)$ (and its first-order linearization) 
constitute a key ingredient in quantifying the convergence $u_\varepsilon \to u_{\mathrm{hom}}$.
In the present nonlinear setting, we refer to the recent work of Fischer and Neukamm~\cite{Fischer2019} where this
program was carried out in the regime of a spectral gap assumption, resulting in homogenization error estimates being optimal
in terms of scaling with respect to~$\varepsilon$.

We establish in the present work optimal-order estimates (with respect to the scaling in~$\varepsilon$) 
for higher-order linearized homogenization and flux correctors. Given a linearization order~$L\in\mathbb{N}_0$
and a family of vectors~$w_1,\ldots,w_L\in\mathbb{R}^d$, the $L$th order linearized homogenization
corrector is formally given by the directional derivative~$\phi^\varepsilon_{\xi,w_1\odot\cdots\odot w_L}=
(\partial_\xi\phi^\varepsilon_\xi)[w_1\odot\cdots\odot w_L]$. Its defining PDE may be obtained by
differentiating the nonlinear corrector problem~\eqref{eq:rescaledCorrector} in the
macroscopic variable~$\xi\in\mathbb{R}^d$. In particular, note that $\phi^\varepsilon_{\xi,w_1\odot\cdots\odot w_L}
=\varepsilon\phi_{\xi,w_1\odot\cdots\odot w_L}(\frac{\cdot}{\varepsilon})$ 
where~$\phi_{\xi,w_1\odot\cdots\odot w_L}$ formally represents the $L$th order 
directional derivative (in direction of~$w_1\odot\cdots\odot w_L$)
of the almost surely sublinearly growing solution of
\begin{align}
\label{eq:rescaledCorrectorAux}
-\nabla\cdot A(x,\xi{+}\nabla\phi_\xi) = 0
\quad\text{in } \mathbb{R}^d.
\end{align}
We then derive on the level of~$\phi_{\xi,w_1\odot\cdots\odot w_L}$, 
amongst other things (cf.\ Theorem~\ref{theo:correctorBounds}
for a more precise statement), corrector estimates of the form
\begin{align}
\label{eq:showcaseEstimate}
\bigg\langle\bigg|\,\dashint_{B_1(x_0)}
\big|\phi_{\xi,w_1\odot\cdots\odot w_L}
\big|^2\bigg|^{q}\bigg\rangle^\frac{1}{q}
\lesssim_{L,q,|\xi|} |w_1|^2\cdots|w_L|^2\mu_*^2\big(1{+}|x_0|\big)
\end{align}
with the scaling function $\mu_*\colon\mathbb{R}_{>0}\to \mathbb{R}_{>0}$ 
defined by~\eqref{eq:scalingCorrectorBounds}. This in turn implies
\begin{align}
\label{eq:showcaseEstimateRescaled}
\bigg\langle\bigg|\,\dashint_{B_\varepsilon(x_0)}
\big|\phi^\varepsilon_{\xi,w_1\odot\cdots\odot w_L}
\big|^2\bigg|^{q}\bigg\rangle^\frac{1}{q}
\lesssim_{L,q,|\xi|} \varepsilon^2\mu^2_{*}\Big(\frac{1}{\varepsilon}\Big) 
|w_1|^2\cdots|w_L|^2 \mu_*^2\big(1{+}|x_0|\big)
\end{align}
as is immediate from the scaling relation
$\phi^\varepsilon_{\xi,w_1\odot\cdots\odot w_L}
=\varepsilon\phi_{\xi,w_1\odot\cdots\odot w_L}(\frac{\cdot}{\varepsilon})$,
a change of variables as well as~\eqref{eq:scalingCorrectorBounds}.
In the case~$L=1$, this recovers the optimal-order corrector
estimates of~\cite{Fischer2019}. As properties of~$\phi_{\xi,w_1\odot\cdots\odot w_L}$
may always be translated into properties of~$\phi^\varepsilon_{\xi,w_1\odot\cdots\odot w_L}$
based on their scaling relation, from Section~\ref{subsec:assumptions} onwards we set $\varepsilon=1$
and study higher-order linearizations of~\eqref{eq:rescaledCorrectorAux}.

For a proof of corrector estimates of the form~\eqref{eq:showcaseEstimate}
in terms of higher-order linearized correctors,
we devise a suitable inductive scheme to propagate corrector estimates
from one linearization order to the next. The actual implementation of this
inductive scheme, cf.\ Subsections~\ref{subsec:indStep}--\ref{subsec:limitPassage} 
below, is in large parts directly inspired by the methods of
Gloria, Neukamm and Otto~\cite{Gloria2020}--\cite{Gloria2019}, Fischer and Neukamm~\cite{Fischer2019}
as well as Josien and Otto~\cite{Josien2020}. Similar to the latter two works, 
we also employ a small-scale regularity assumption (see Assumption~\ref{assumption:smallScaleReg} below).
%

\subsection{Applications for corrector estimates of higher-order linearizations}
The motivation for the present work derives from the expectation that estimates 
for higher-order linearized correctors constitute one of the important 
ingredients for open questions of interest in nonlinear stochastic homogenization, e.g.,
\textit{i)} an optimal quantification of the commutability of
homogenization and linearization (cf.\ \cite{Armstrong2020} and~\cite{Armstrong2020a} 
for suboptimal algebraic rates in the regime of finite range of dependence),
or \textit{ii)} the development of a nonlinear analogue of the theory of fluctuations 
as worked out for the linear case in~\cite{Duerinckx2020}, \cite{Duerinckx2020a} and \cite{Duerinckx2019a}.

The former for instance concerns the study of the homogenization of the first-order
linearized problem
\begin{align}
\label{RandomLinearized}
-\nabla\cdot \partial_\xi A\Big(\frac{x}{\varepsilon},\nabla u_\varepsilon\Big)
\nabla U_\varepsilon^{(1)} = \nabla\cdot f^{(1)}
\quad\text{in } \mathbb{R}^d,\quad f^{(1)}\in C^\infty_{\mathrm{cpt}}(\mathbb{R}^d;\mathbb{R}^d)
\end{align}
towards the linearized effective equation
\begin{align}
\label{EffectiveLinearized}
-\nabla\cdot \partial_\xi A_{\mathrm{hom}}(\nabla u_{\mathrm{hom}}) 
\nabla U_{\mathrm{hom}}^{(1)} = \nabla\cdot f^{(1)}
\quad\text{in } \mathbb{R}^d;
\end{align}
of course under appropriate regularity assumption for the nonlinearity. 
It is natural to define a two-scale
expansion of~$U_\varepsilon^{(1)}$ in terms of first-order
linearized homogenization correctors $W_\varepsilon^{(1)}:=U_{\mathrm{hom}}^{(1)} 
+ (\partial_\xi\phi^\varepsilon_{\xi})[\nabla U^{(1)}_{\mathrm{hom}}]\big|_{\xi=\nabla u_{\mathrm{hom}}}$, 
so that the difference~$\nabla U_\varepsilon^{(1)} 
{-}\nabla W_\varepsilon^{(1)}$ formally satisfies a uniformly elliptic equation with fluctuating
coefficient~$\partial_\xi A(\frac{x}{\varepsilon},\nabla u_\varepsilon)$
and a right hand side, which---amongst other terms---in particular features second-order linearized
homogenization (and flux) correctors. The estimates obtained in the
present work therefore represent a key ingredient if one aims for a 
derivation of optimal-order convergence rates of the homogenization of~\eqref{RandomLinearized}
towards~\eqref{EffectiveLinearized}.

The second topic mentioned above concerns the study of the random fluctuations
of several macroscopic observables of interest in homogenization theory, e.g.,
\begin{align}
\label{eq:macroscopicObservables}
\int_{\mathbb{R}^d} F \cdot \nabla u_\varepsilon,
\quad
\int_{\mathbb{R}^d} F \cdot \nabla \phi^\varepsilon_\xi,
\quad 
F\in C^\infty_{\mathrm{cpt}}(\mathbb{R}^d;\mathbb{R}^d).
\end{align}
In the works~\cite{Duerinckx2020} and~\cite{Duerinckx2020a}, Duerinckx, Gloria and~Otto
identified in the framework of linear stochastic homogenization~$A(\cdot,\xi)=a(\cdot)\xi$
an object, the so-called standard homogenization corrector
\begin{align}
\label{eq:standardHomCommutator}
\Xi_\xi := (a{-}a_{\mathrm{hom}})(\xi{+}\nabla\phi_\xi),
\end{align}
which relates the fluctuations of the
corrector gradients with the fluctuations of the field~$\nabla u_\varepsilon$.
That fluctuations are related in terms of a single object is by no means obvious
as substituting naively a two-scale expansion for~$\nabla u_\varepsilon$ in~$\int_{\mathbb{R}^d} F\cdot\nabla u_\varepsilon$
does not characterize the fluctuations of~$\int_{\mathbb{R}^d} F\cdot\nabla u_\varepsilon$ to leading order 
as observed in~\cite{Gu2016}. 

In a forthcoming work~\cite{Hensel2021}, we perform an intermediate
step towards understanding the fluctuations of random variables of the form~\eqref{eq:macroscopicObservables}
in nonlinear settings. To this end, we introduce a 
nonlinear counterpart of the standard homogenization commutator~\eqref{eq:standardHomCommutator}
and derive a scaling limit result in a Gaussian setting (cf.\ \cite{Duerinckx2019a}). 
As in the linear regime, this nonlinear counterpart of~\eqref{eq:standardHomCommutator} 
also dictates the fluctuations of linear functionals of the corrector gradients~$\nabla\phi_\xi$ 
(and their (higher-order) linearized descendants in terms of (higher-order) linearized homogenization commutators).
The results of the work~\cite{Hensel2021} are based, amongst other things, on estimates for higher-order
linearized homogenization and flux correctors of the dual linearized operator
$-\nabla\cdot a^{*}_\xi\nabla$ (cf.\ Section~\ref{subsec:dualCorrectors} below),
where $a^{*}_\xi$ denotes the transpose of the
linearized coefficient field $a_\xi:=\partial_\xi A(\omega,\xi{+}\nabla\phi_\xi)$.

\subsection{Stochastic homogenization of linear uniformly elliptic equations and systems}
Before we give a precise account of the underlying assumptions for the present work
in Subsection~\ref{subsec:assumptions}, let us first briefly review the by-now substantial literature on the subject. 
The classical results in qualitative stochastic homogenization are due to 
Papanicolaou and Varadhan~\cite{Papanicolaou1981} and Kozlov~\cite{Kozlov1980}, who studied heat conduction in a 
randomly heterogeneous medium under the assumption of stationarity and ergodicity 
(for the discrete setting, see~\cite{Kozlov1987} and~\cite{Kuennemann1983}). 
The first result in quantitative stochastic homogenization is due to
Yurinskii~\cite{Yurinskii1987}, who derived a suboptimal quantitative result
for linear elliptic PDEs under a uniform mixing condition. Naddaf and Spencer~\cite{Naddaf1998} 
expressed mixing for the first time in the form of a spectral gap inequality,
and as a result obtained optimal results for the fluctuations of the energy density of the corrector.
Their work is however limited to small ellipticity contrast, see also 
Conlon and Naddaf~\cite{Conlon2000} or Conlon and Fahim~\cite{Conlon2014}. 

Extensions to the non-perturbative regime in the discrete setting were established 
through a series of articles by Gloria and Otto~\cite{Gloria2011}, \cite{Gloria2012} and \cite{Gloria2017},
see also Gloria, Neukamm and Otto~\cite{Gloria2014}. These works contain optimal estimates for the approximation error
of the homogenized coefficients, the approximation error for the solutions, the corrector as well 
as the fluctuation of the energy density of the corrector under the assumption of i.i.d.\ conductivities.
In the continuum setting and under spectral gap type assumptions, we refer to the 
works~\cite{Gloria2020} and~\cite{Gloria2019} of Gloria, Neukamm and Otto for optimal-order 
estimates in linear stochastic homogenization. Armstrong, Mourrat and Kuusi~\cite{Armstrong2016}
establish these results in the finite range of dependence regime
including also optimal stochastic integrability, see to this end also
Gloria and Otto~\cite{Gloria2015}.

\subsection{Stochastic homogenization in nonlinear settings}
In the context of qualitative nonlinear stochastic homogenization, 
the first results are due to Dal~Maso and Modica~\cite{Maso1986} and~\cite{DalMaso1986}
in the setting of convex integral functionals.
Lions and Souganidis~\cite{Lions2003} studied the homogenization of Hamilton--Jacobi 
equations under the qualitative assumptions of stationarity and ergodicity.
Caffarelli, Souganidis and Wang~\cite{Caffarelli2004} obtained
stochastic homogenization in the context of nonlinear, uniformly elliptic equations in divergence form
(see also Armstrong and Smart~\cite{Armstrong2013}). A homogenization result in the same framework but 
without assuming uniform ellipticity is due to Armstrong and Smart~\cite{Armstrong2014}. For an example
of stochastic homogenization for nonlinear nonlocal equations, we refer to Schwab~\cite{Schwab2012}.

A first quantitative result in the context of nonlinear stochastic homogenization
was established by Caffarelli and Souganidis~\cite{Caffarelli2010}, who succeeded in the derivation of
a logarithmic-type convergence rate under strong mixing conditions. Substantial progress
in the nonlinear setting was later provided by the works of Armstrong and Smart~\cite{Armstrong2016a} 
on uniformly convex integral functionals, and Armstrong and Mourrat~\cite{Armstrong2015} 
on elliptic equations in divergence form with monotone coefficient fields.
In the two recent works~\cite{Armstrong2020} resp.\ \cite{Armstrong2020a},
Armstrong, Ferguson and Kuusi succeeded in proving that the processes of homogenization and
(first-order resp.\ higher-order) linearization commute. Moreover, as it is 
also the case in the previously mentioned works of Armstrong et al., they 
derive quantitative estimates in terms of a suboptimal algebraic rate of convergence with respect to the
ratio in the microscopic and macroscopic scale, assuming finite range of dependence
for the underlying probability space. The established estimates, however, are optimal
in terms of stochastic integrability. Under a spectral gap assumption,
Fischer and Neukamm~\cite{Fischer2019} recently provided quantitative
homogenization estimates for monotone uniformly elliptic coefficient fields, 
which on one side are the first being optimal in the ratio between the microscopic and macroscopic scale,
but which on the other side are non-optimal in terms of stochastic integrability. 

\subsection{Assumptions and setting}
\label{subsec:assumptions}
In this section, we give a precise account of the underlying assumptions for the present work.
They represent the natural higher-order analogues of the assumptions from~\cite{Fischer2019}. 
We start with the deterministic requirements on the family of monotone operators
(cf.\ \cite[Section 2.1]{Fischer2019}). 

\begin{assumption}[Family of monotone operators]
\label{assumption:operators}
Let $d\in\N$ be the spatial dimension, and let $0<\lambda\leq\Lambda<\infty$
be two constants (playing the role of ellipticity constants in the sequel). 
Let $n\in\N$ and $L\in\N_0$ be given. We then assume that we are equipped with
a family of operators indexed by elements of $\Rd[n]$
\begin{align*}
A\colon\Rd[n]\times\Rd \to \Rd
\end{align*} 
which is subject to the following three conditions:
\begin{itemize}[leftmargin=1.1cm]
\item[(A1)] The map $A$ gives rise to a family of monotone operators in the second
					  variable with lower bound $\lambda$. More precisely, for all $\tilde\omega\in \Rd[n]$
						we require
						\begin{align*}
						\big(A(\tilde\omega,\xi_1){-}A(\tilde\omega,\xi_2)\big)\cdot (\xi_1{-}\xi_2) 
						\geq \lambda|\xi_1{-}\xi_2|^2
						\end{align*}
						for all $\xi_1,\xi_2\in\Rd$. Furthermore, $A(\tilde\omega,0)=0$ for
						all $\tilde\omega\in \Rd[n]$.
\item[(A2)$_L$] Each operator $A(\tilde\omega,\cdot)$, $\tilde\omega\in\Rd[n]$, is $L{+}1$ times differentiable
						in the second variable. In quantitative terms, we assume that for all $\tilde\omega\in\Rd[n]$
						\begin{align*}
						\sup_{k\in\{1,\ldots,L{+}1\}}\sup_{\xi\in\Rd} \big|\partial_\xi^{k} A(\tilde\omega,\xi)\big|
						&\leq \Lambda,
						\\
						\sup_{k\in\{1,\ldots,L{+}1\}} \sup_{\xi\in\Rd} 
						\sup_{\tilde\omega_1,\tilde\omega_2\in\Rd[n],\,\tilde\omega_1\neq\tilde\omega_2}
						\frac{|\partial_\xi^{k} A(\tilde\omega_1,\xi)
						- \partial_\xi^{k} A(\tilde\omega_2,\xi)|}
						{|\tilde\omega_1 - \tilde\omega_2|} &\leq \Lambda.
						\end{align*}
						In particular, we have $|A(\tilde\omega,\xi)|\leq \Lambda|\xi|$
						for all $\tilde\omega\in\Rd[n]$ and all $\xi\in\Rd$.
\item[(A3)$_L$] For each $\xi\in\Rd$ and $k\in\{0,\ldots,L\}$, the map
						$\tilde\omega\mapsto \partial_\xi^k A(\tilde\omega,\xi)$
						is differentiable with uniformly Lipschitz continuous derivative. 
						In quantitative terms, the following bounds are required to hold true
						for all $\tilde\omega\in\Rd[n]$
						\begin{align*}
						|\partial_\omega A(\tilde\omega,\xi)|\leq \Lambda|\xi|,\quad
						\sup_{k\in\{1,\ldots,L\}}\sup_{\xi\in\Rd}
						|\partial_\omega \partial_\xi^k A(\tilde\omega,\xi)| &\leq \Lambda,\quad
						\\
						\sup_{\xi\in\Rd}
						\sup_{\tilde\omega_1,\tilde\omega_2\in\Rd[n],\,\tilde\omega_1\neq\tilde\omega_2}
						\frac{|\partial_\omega A(\tilde\omega_1,\xi)
						- \partial_\omega A(\tilde\omega_2,\xi)|}
						{|\tilde\omega_1 - \tilde\omega_2|} &\leq \Lambda|\xi|,
						\\
						\sup_{k\in\{1,\ldots,L\}}\sup_{\xi\in\Rd}
						\sup_{\tilde\omega_1,\tilde\omega_2\in\Rd[n],\,\tilde\omega_1\neq\tilde\omega_2}
						\frac{|\partial_\omega\partial_\xi^k A(\tilde\omega_1,\xi)
						- \partial_\omega\partial_\xi^k A(\tilde\omega_2,\xi)|}
						{|\tilde\omega_1 - \tilde\omega_2|} &\leq \Lambda.
						\end{align*}
\end{itemize}
For some results, we in addition require the following condition to be true.
\begin{itemize}[leftmargin=1.1cm]
\item[(A4)$_L$] For each $\tilde\omega\in\Rd[n]$, the maps
						$\xi\mapsto \partial_\xi^{L+1} A(\tilde\omega,\xi)$ and 
						$\xi\mapsto\partial_\omega\partial^L_\xi A(\tilde\omega,\xi)$
						are uniformly Lipschitz continuous. More precisely,
						for all $\tilde\omega\in\Rd[n]$ we are equipped with bounds
						\begin{align*}
						\sup_{\xi_1,\xi_2\in\Rd,\,\xi_1\neq\xi_2} 
						\frac{|\partial_\xi^{L{+}1} A(\tilde\omega,\xi_1) - \partial_\xi^{L{+}1} A(\tilde\omega,\xi_2)|}
						{|\xi_1 - \xi_2|} &\leq \Lambda.
						\\
						\sup_{\xi_1,\xi_2\in\Rd,\,\xi_1\neq\xi_2} 
						\frac{|\partial_\omega\partial_\xi^{L} A(\tilde\omega,\xi_1) 
						- \partial_\omega\partial_\xi^{L} A(\tilde\omega,\xi_2)|}
						{|\xi_1 - \xi_2|} &\leq \Lambda.
						\end{align*}
\end{itemize}
\end{assumption}

Having the deterministic requirements on the family of monotone operators in place,
we next turn to the probabilistic assumptions.

\begin{assumption}[Stationarity and quantified ergodicity for probability distribution of parameter fields]
\label{assumption:ensembleParameterFields}
Denote by $B^n_1$ the open unit ball in $\Rd[n]$. We call a measurable function 
$\omega\colon \Rd\to B_1^n$ a \emph{parameter field}, and denote by $\Omega$
the \emph{space of parameter fields} with the $L^1_{\mathrm{loc}}(\Rd;B_1^n)$ topology.
We then assume that we are equipped with a probability
measure $\Prob$ on the space of parameter fields $\Omega$ subject to the
following two conditions:
\begin{itemize}[leftmargin=1.1cm]
\item[(P1)] The probability measure $\Prob$ on $\Omega$ is $\Rd$-stationary. In other words,
						the probability distributions of $\omega(\cdot)$ and $\omega(\cdot+z)$
						coincide for all $z\in\Rd$.
\item[(P2)] The probability measure $\Prob$ on $\Omega$ satisfies a \emph{spectral gap inequality}. 
						More precisely, denoting with $\langle\cdot\rangle$ the expectation with respect to $\Prob$, 
						there exists a constant~$\rho>0$ such that for all
						random variables $X$ we have the estimate
						\begin{align}
						\label{eq:spectralGap}
						\big\langle\big|X{-}\langle X\rangle\big|^2\big\rangle
						\leq\frac{1}{\rho^2}\bigg\langle\int\bigg(\,\dashint_{B_1(x)}
						\big|\partial^{\mathrm{fct}}X\big|\,\bigg)^2\,\bigg\rangle
						\end{align}
						with the abbreviation
						\begin{align*}
						\dashint_{B_1(x)}\big|\partial^{\mathrm{fct}}X\big|
						:=\sup_{\delta\omega}\limsup_{t\to 0}
						\frac{|X(\omega{+}t\delta\omega)-X(\omega)|}{t},
						\end{align*}
						where the supremum in the previous display is taken with respect to 
						smooth parameter fields $\delta\omega\colon\Rd\to B^n_1$ 
						such that $\supp\delta\omega\subset B_1(x)$.
\end{itemize}
\end{assumption}

Before we move on with the statement of the last main assumption of this work,
we register the following standard consequence of the spectral gap inequality \eqref{eq:spectralGap}.
\begin{lemma}
\label{lemma:spectralGapHigherMoments}
Let the conditions and notation of Assumption~\ref{assumption:ensembleParameterFields} be in place,
and let $q\in [1,\infty)$. We then have for all random variables $X$ the estimate
\begin{align}
\label{eq:spectralGapHigherMoments}
\big\langle\big|X{-}\langle X\rangle\big|^{2q}\big\rangle^\frac{1}{q}
\leq C^2q^2\bigg\langle\bigg|\int\bigg(\,\dashint_{B_1(x)}
\big|\partial^{\mathrm{fct}}X\big|\,\bigg)^2\,\bigg|^q\bigg\rangle^\frac{1}{q}.
\end{align}
\end{lemma}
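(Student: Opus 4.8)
The plan is to derive the higher-moment bound \eqref{eq:spectralGapHigherMoments} from the $L^2$-based spectral gap inequality \eqref{eq:spectralGap} by the standard bootstrap via the $p$-version of the inequality. First I would record the elementary observation that the ``functional derivative'' operation $X\mapsto \int(\dashint_{B_1(x)}|\fct X|)^2\dx$ obeys a Leibniz-type bound: for smooth functions $F$ of finitely many random variables, or more robustly by a chain-rule estimate on the Malliavin-type derivative, one has $\dashint_{B_1(x)}|\fct(|X|^{q})| \lesssim q\,|X|^{q-1}\dashint_{B_1(x)}|\fct X|$ pointwise in $x$ (this uses that $\fct$ is a derivation and that $|X|^{q}$ has derivative of size $q|X|^{q-1}$). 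Applying \eqref{eq:spectralGap} to the random variable $Y := |X - \langle X\rangle|^{q}$ — after first reducing to the mean-zero case by replacing $X$ with $X - \langle X\rangle$, which does not change $\fct X$ — and then using Cauchy--Schwarz in $x$ together with the Leibniz bound gives
\begin{align*}
\big\langle\big|\,|X{-}\langle X\rangle|^{q} - \langle |X{-}\langle X\rangle|^{q}\rangle\big|^2\big\rangle
\lesssim \frac{q^2}{\rho^2}\bigg\langle |X{-}\langle X\rangle|^{2q-2}\int\bigg(\dashint_{B_1(x)}\big|\fct X\big|\bigg)^2\dx\bigg\rangle.
\end{align*}
The left-hand side controls $\langle |X - \langle X\rangle|^{2q}\rangle$ up to the square of $\langle |X-\langle X\rangle|^{q}\rangle$, which by Jensen is bounded by $\langle |X-\langle X\rangle|^{2q}\rangle$ with a constant strictly less than one after the usual rearrangement, so the term $\langle |X-\langle X\rangle|^{q}\rangle^2$ can be absorbed; hence
\begin{align*}
\big\langle |X{-}\langle X\rangle|^{2q}\big\rangle
\lesssim \frac{q^2}{\rho^2}\bigg\langle |X{-}\langle X\rangle|^{2q-2}\int\bigg(\dashint_{B_1(x)}\big|\fct X\big|\bigg)^2\dx\bigg\rangle.
\end{align*}

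Next I would apply Hölder's inequality on the right-hand side with exponents $\frac{q}{q-1}$ and $q$, splitting $|X-\langle X\rangle|^{2q-2}$ off against the integral term, to obtain
\begin{align*}
\big\langle |X{-}\langle X\rangle|^{2q}\big\rangle
\lesssim \frac{q^2}{\rho^2}\,\big\langle |X{-}\langle X\rangle|^{2q}\big\rangle^{\frac{q-1}{q}}
\bigg\langle\bigg|\int\bigg(\dashint_{B_1(x)}\big|\fct X\big|\bigg)^2\dx\bigg|^{q}\bigg\rangle^{\frac{1}{q}}.
\end{align*}
Dividing through by $\langle |X-\langle X\rangle|^{2q}\rangle^{(q-1)/q}$ — which is legitimate after a routine approximation/truncation argument that first ensures this quantity is finite, the general case following by monotone convergence — yields $\langle |X-\langle X\rangle|^{2q}\rangle^{1/q} \lesssim \frac{q^2}{\rho^2}\langle|\int(\dashint_{B_1(x)}|\fct X|)^2\dx|^{q}\rangle^{1/q}$, which is exactly \eqref{eq:spectralGapHigherMoments} after absorbing $\rho^{-2}$ and the implicit dimensional constants into $C^2$.

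The step I expect to require the most care is not any single inequality but the \emph{Leibniz/chain rule for $\fct$ applied to the non-smooth function $t\mapsto |t|^{q}$}, since for $q<2$ this function is only Hölder near the origin and the $\limsup$-based definition of $\dashint_{B_1(x)}|\fct X|$ must be handled directly rather than through a genuine derivative. The clean way around this is to replace $|t|^{q}$ by the smooth, convex, globally Lipschitz-on-bounded-sets approximation $t\mapsto (t^2+\delta^2)^{q/2} - \delta^{q}$, carry out the argument verbatim for this surrogate (where the chain-rule bound $\dashint_{B_1(x)}|\fct((Y^2+\delta^2)^{q/2})| \le q(Y^2+\delta^2)^{(q-1)/2}\dashint_{B_1(x)}|\fct Y|$ is immediate from the definition and subadditivity of the $\limsup$), and then send $\delta\to0$ using dominated convergence on both sides; all constants are uniform in $\delta$. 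The finiteness/truncation technicality needed to divide by $\langle|X-\langle X\rangle|^{2q}\rangle^{(q-1)/q}$ is handled in the same breath by first proving the estimate for $X\wedge M \vee(-M)$ and letting $M\to\infty$.
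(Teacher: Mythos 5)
The paper itself gives no proof of this lemma (it is invoked as a standard consequence of \eqref{eq:spectralGap}), so your proposal can only be judged on its own merits. The overall architecture is the right one — apply the base spectral gap to a $q$-th power of $X-\langle X\rangle$, use the chain rule for $\fct$, then H\"older with exponents $(\frac{q}{q-1},q)$ and divide — and your handling of the non-smoothness of $t\mapsto|t|^q$ and of the finiteness/truncation issue is careful and correct.

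However, there is a genuine gap in the absorption step. With $Y:=|X-\langle X\rangle|^q$, the spectral gap controls $\langle Y^2\rangle-\langle Y\rangle^2$, and you claim that $\langle Y\rangle^2=\langle|X-\langle X\rangle|^q\rangle^2$ is bounded by $\langle|X-\langle X\rangle|^{2q}\rangle$ ``with a constant strictly less than one.'' Jensen gives this bound with constant \emph{exactly} one, so nothing can be absorbed; indeed, if $|X-\langle X\rangle|$ is (nearly) deterministic while $X-\langle X\rangle$ is not — say $X=\pm1$ with equal probability as a nontrivial function of the field — then $Y$ is constant, $\langle Y^2\rangle-\langle Y\rangle^2=0$, and the inequality you apply carries no information about $\langle|X-\langle X\rangle|^{2q}\rangle$. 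The term $\langle Y\rangle^2$ must instead be estimated by a second, independent use of the base spectral gap: either (a) replace $|W|^q$ by the sign-preserving power $W|W|^{q-1}$ with $W=X-\langle X\rangle$, so that $\langle W|W|^{q-1}\rangle$ is a covariance, which Cauchy--Schwarz bounds by $\langle W^2\rangle^{1/2}\langle|W|^{2q-2}\rangle^{1/2}$ and then \eqref{eq:spectralGap} with $q=1$ plus Jensen and H\"older reduce to the desired right-hand side; or (b) interpolate $\|W\|_{q}\leq\|W\|_1^{1/(2q-1)}\|W\|_{2q}^{(2q-2)/(2q-1)}$, apply Young's inequality to split off $\varepsilon\|W\|_{2q}^{2q}$ (which \emph{can} be absorbed) plus $C_\varepsilon\|W\|_1^{2q}$, and control $\|W\|_1\leq\|W\|_2$ by the $q=1$ spectral gap. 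With either fix the rest of your argument (chain rule, H\"older, division after truncation) goes through and yields the stated $C^2q^2$ dependence.
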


We finally state a small-scale regularity assumption which is essential
to obtain optimal-order estimates (i.e., with respect to the ratio of the microscopic and 
macroscopic scale) for linearized homogenization and flux correctors, as well as their
higher-order analogues.

\begin{assumption}[Annealed small-scale regularity condition]
\label{assumption:smallScaleReg}
Let the conditions and notation of 
Assumption~\ref{assumption:ensembleParameterFields} be in place.
We then in addition require that the following small-scale regularity condition is satisfied:
\begin{itemize}[leftmargin=0.9cm]
\item[(R)]  There exist an exponent $\eta\in (0,1)$ and a constant $C>0$ 
						such that for all $q \in [1,\infty)$ it holds
						\begin{align*}
						\bigg\langle\bigg|\sup_{x,y\in B_1,\,x\neq y} 
						\frac{|\omega(x)-\omega(y)|}{|x-y|^\eta}
						\bigg|^{2q}\bigg\rangle^\frac{1}{q}	\leq C^2q^{2C}.
						\end{align*}
\end{itemize}
\end{assumption}

Note that our small-scale regularity condition is slightly weaker than
the corresponding assumption in~\cite{Fischer2019}. For this reason
we provide a proof in Appendix~\ref{app:toolsEllipticRegularity} 
concerning the small-scale H\"older regularity of the (massive) corrector solving the nonlinear corrector
problem~\eqref{eq:PDEMonotoneHomCorrectorLocalized},
see Lemma~\ref{lem:annealedRegCorrectorNonlinear},
which in turn implies small-scale H\"older regularity of
the linearized coefficient field, see Lemma~\ref{lem:annealedHoelderRegLinCoefficient}.

\subsection{Example}
We give an example for a random parameter field
subject to Assumption~\ref{assumption:ensembleParameterFields}
and Assumption~\ref{assumption:smallScaleReg}. To this end,
consider a stationary and centered Gaussian random field 
$\tilde\omega\colon\mathbb{R}^d\to\mathbb{R}^n$, and assume
there exists some $\alpha>0$ such that the Fourier transform $\hat c$ of the 
covariance $c(x):=\langle\tilde\omega(x)\otimes\tilde\omega(0)\rangle$ satisfies
\begin{align*}
\hat{c}(z) \leq C (1{+}|z|)^{-(d+\alpha)}, \quad z\in\mathbb{R}^d.
\end{align*}
We also fix a $1$-Lipschitz map $\beta\colon\mathbb{R}^n\to\mathbb{R}^n$
with $\|\beta\|_{L^\infty} \leq 1$. The random field
$\omega:=\beta(\tilde\omega)$ is then subject to
the requirements of Assumption~\ref{assumption:ensembleParameterFields}
and Assumption~\ref{assumption:smallScaleReg}.
For a proof, see, e.g., \cite[Lemma 3.1, Appendix A.3.1]{Josien2020}.

\subsection{Notation}
We denote by $\N$ the set of positive integers, and define $\N_0:=\N\cup\{0\}$.
For given $d\in\N$, the space of real-valued $d{\times}d$ matrices is denoted
by~$\Rd[d{\times}d]$. The transpose of a matrix~$A\in\Rd[d{\times}d]$ is
given by~$A^*$. We write $\mathbb{R}^{d{\times}d}_{\mathrm{skew}}$ for the space of 
skew-symmetric matrices $A^*=-A$. For a given $L\in\N$, we define
$\mathrm{Par}\{1,\ldots,L\}$ to be the set of all partitions of $\{1,\ldots,L\}$. For any $x_0\in\Rd$ and $R>0$, 
we denote by $B_R(x_0)\subset\Rd$ the $d$-dimensional open ball of radius $R$ centered at $x_0$.
In case of $x_0=0$, we simply write $B_R$. In the rare occasion that the dimension of 
the ambient space is not represented by $d\in\N$ but, say, $n\in\N$, we emphasize the dimension of the
ambient space by writing $B^n_R(x_0)$ for the $n$-dimensional open ball of 
radius $R>0$ centered at $x_0\in\Rd[n]$. 

The tensor product of vectors $v_1,\ldots,v_L\in\Rd$, $L\geq 2$,
is denoted by $v_1\otimes\cdots\otimes v_L$. For the symmetric tensor product, we write
$v_1\odot\cdots\odot v_L$. The $L$-fold tensor product of a vector $v\in\Rd$ is
abbreviated as $v^{\otimes L}$; or $v^{\odot L}$ for the corresponding symmetric version.
For a differentiable map $A\colon\Rd[n]\times\Rd\to \Rd,\,(\omega,\xi) \mapsto A(\omega,\xi)$,
we make use of the usual notation $\partial_\omega A,\partial_\xi A$ for the respective partial derivatives. 
Higher-order (possibly mixed) partial derivatives of a map
$A\colon\Rd[n]\times\Rd\to \Rd$
are denoted by $\partial_\omega^l A,\partial_\xi^k A,
\partial_\omega^l\partial_\xi^k A$ for any $k,l\in\N$. 

Integrals $\int_{\Rd} f \dx$ with respect to the $d$-dimensional
Lebesgue measure are abbreviated in the course of the paper as $\int f$.
Given a Lebesgue-measurable subset $A\subset\Rd$ with finite and non-trivial Lebesgue 
measure $|A|\in (0,\infty)$, we denote by $\dashint_{A} f := 
\frac{1}{|A|}\int \mathds{1}_A f$ the average integral of $f$ over $A$.
Here, $\mathds{1}_A$ represents the characteristic function with respect to a set $A$.
For a probability measure $\Prob$ on a measure space $(\Omega,\mathcal{A})$,
we write $\langle\cdot\rangle$ for the expectation with respect to $\Prob$.

We make use of the usual notation of Lebesgue and Sobolev spaces on $\Rd$
(with respect to the Lebesgue measure), e.g., $L^p(\Rd),\,W^{1,p}(\Rd),
\,H^1(\Rd):=W^{1,2}(\Rd)$ and so on. For a probability measure $\Prob$ on a measure space $(\Omega,\mathcal{A})$,
we instead use the notation $L^p_{\langle\cdot\rangle}$.
If we want to emphasize the target space, say,
a finite-dimensional real vector space $V$, we do so by writing $L^p(\Rd;V)$.
For the Lebesgue resp.\ Sobolev spaces on $\Rd$ with only locally finite norm,
we write $L^p_{\mathrm{loc}}(\Rd),\,H^1_{\mathrm{loc}}(\Rd)$ and so on. Furthermore, in the
case of uniformly locally finite norm, i.e., 
\begin{align*}
\sup_{x_0\in\Rd} \|f\|_{L^p(B_1(x_0))} < \infty
\quad\text{resp.}\quad
\sup_{x_0\in\Rd} \|f\|_{H^1(B_1(x_0))} < \infty
\end{align*}
we reserve the notation $L^p_{\mathrm{uloc}}(\Rd)$ resp.\ $H^1_{\mathrm{uloc}}(\Rd)$.
The space of all compactly supported and smooth functions on $\Rd$ is denoted
by $C^\infty_{\mathrm{cpt}}(\Rd)$. Finally, for an exponent $q\in [1,\infty]$, 
we write $q_*\in [1,\infty]$ for its dual H\"older exponent: $\frac{1}{q} + \frac{1}{q_*}=1$.

\subsection{Structure of the paper} In the upcoming Section~\ref{sec:mainResults},
we formulate the main results of the present work and provide definitions
for the underlying key objects. Section~\ref{sec:outlineStrategy} is devoted
to a discussion of the strategy for the proof of the main results. In the course
of it, we also collect several auxiliary results representing the main steps in the proof.
Section~\ref{sec:proofs} contains the proofs of all the main and auxiliary results
as stated in the previous two sections. The paper finishes with three appendices.
In Appendix~\ref{app:toolsEllipticRegularity} we list (and partly prove) several
results from elliptic regularity theory. Most of them are classical results
from deterministic theory. In addition, we also rely on some annealed regularity
theory; however, only in a perturbative regime \`a la Meyers. Appendix~\ref{app:existenceCorrectors}
deals with existence of higher-order linearized correctors for a suitable class of
parameter fields. Finally, as the proof of the main results proceeds by an induction 
over the linearization order, we formulate and prove in Appendix~\ref{app:baseCaseInd}
the corresponding statements taking care of the base case of the induction.

\section{Main results}\label{sec:mainResults}
This section collects the statements of the main results of this work
which are twofold: \textit{i)} corrector estimates for higher-order linearizations
of the nonlinear problem, and~\textit{ii)} higher-order regularity of the
homogenized monotone operator.

\subsection{Corrector bounds for higher-order linearized correctors}
The first main result constitutes the analogue (and slight extension) of~\cite[Corollary~15]{Fischer2019}
for the higher-order linearized correctors of Definition~\ref{def:correctorsHigherOrderLinearization}.

\begin{theorem}[Corrector estimates for higher-order linearizations]
\label{theo:correctorBounds}
Let $L\in\N$ and $M>0$ be fixed.
Let the requirements and notation of (A1), (A2)$_L$ and (A3)$_L$ of
Assumption~\ref{assumption:operators}, (P1) and (P2) of
Assumption~\ref{assumption:ensembleParameterFields}, and (R) of 
Assumption~\ref{assumption:smallScaleReg} be in place.
Fix a set of vectors $w_1,\ldots,w_L\in\Rd$ and define
$B:=w_1\odot\cdots\odot w_{L}$. Let
\begin{align*}
\phi_{\xi,B}\in H^1_{\mathrm{loc}}(\Rd)
\quad\text{and}\quad
\sigma_{\xi,B}\in H^1_{\mathrm{loc}}(\Rd;\mathbb{R}^{d\times d}_{\mathrm{skew}})
\end{align*}
be the linearized homogenization and flux corrector from Definition~\ref{def:correctorsHigherOrderLinearization}.

There exists a constant $C=C(d,\lambda,\Lambda,\nu,\rho,\eta,M,L)$ such that for all $|\xi|\leq M$, 
all $q\in [1,\infty)$, all $x_0\in\Rd$, and all compactly supported and 
square-integrable $g_\phi,g_\sigma$ it holds
\begin{align}
\label{eq:linearFunctionalCorrectorGradientBound}
\bigg\langle\bigg|\bigg(\int g_\phi\cdot\nabla\phi_{\xi,B}, 
\int g_\sigma^{kl}\cdot\nabla\sigma_{\xi,B,kl}\bigg)\bigg|^{2q}\bigg\rangle^\frac{1}{q}
&\leq C^2q^{2C}|B|^2\int \big|\big(g_\phi,g_\sigma\big)\big|^2,
\\\label{eq:correctorGradientBound}
\big\langle\big\|\big(\nabla\phi_{\xi,B},\nabla\sigma_{\xi,B}
\big)\big\|^{2q}_{L^2(B_1)}\big\rangle^\frac{1}{q}
&\leq C^2q^{2C}|B|^2,
\\\label{eq:correctorGrowthBound}
\bigg\langle\bigg|\,\dashint_{B_1(x_0)}
\big|\big(\phi_{\xi,B},\sigma_{\xi,B}\big)
\big|^2\bigg|^{q}\bigg\rangle^\frac{1}{q}
&\leq C^2q^{2C}|B|^2\mu_*^2\big(1{+}|x_0|\big),
\end{align}
with the scaling function $\mu_*\colon\mathbb{R}_{>0}\to \mathbb{R}_{>0}$ defined by
\begin{align}
\label{eq:scalingCorrectorBounds}
\mu_*(\ell) := \begin{cases}
							 \ell^\frac{1}{2}, & d=1, \\
							 \log^\frac{1}{2}(1{+}\ell), & d=2, \\
							 1, & d\geq 3.
							 \end{cases}
\end{align}

Let $\xi_0\in\Rd$ and $K\in\N_0$ be fixed, and assume in addition to the previous requirements
that~(A2)$_{L{+}K}$ and~(A3)$_{L{+}K}$ from Assumption~\ref{assumption:operators} hold true.
We may then define $\Prob$-almost surely $K$th-order Taylor expansions
for the linearized homogenization and flux correctors with base point~$\xi_0$ by means of
\begin{align}
\label{eq:TaylorLinearizedHomCorrector}
\Phi^K_{\xi_0,B}(\xi) &:= \phi_{\xi,B} - 
\sum_{k=0}^K \frac{1}{k!}\partial_\xi\phi_{\xi_0,B}\big[(\xi-\xi_0)^{\odot k}\big]
\in H^1_{\mathrm{loc}}(\Rd),
\\
\label{eq:TaylorLinearizedFluxCorrector}
\Sigma^K_{\xi_0,B}(\xi) &:= \sigma_{\xi,B}
- \sum_{k=0}^K \frac{1}{k!}\partial_\xi\sigma_{\xi_0,B}\big[(\xi-\xi_0)^{\odot k}\big]
\in H^1_{\mathrm{loc}}(\Rd;\mathbb{R}^{d\times d}_{\mathrm{skew}}).
\end{align}
Under the stronger assumptions of~(A2)$_{L{+}K{+}1}$ and~(A3)$_{L{+}K{+}1}$ from Assumption~\ref{assumption:operators},
there exists a constant $C=C(d,\lambda,\Lambda,\nu,\rho,\eta,M,L,K)$ such that for all $|(\xi_0,\xi)|\leq M$, 
all $q\in [1,\infty)$, and all $x_0\in\Rd$ it holds
\begin{align}
\label{eq:correctorGradientBoundDifferences}
\big\langle\big\|\big(\nabla\Phi^K_{\xi_0,B}(\xi),\nabla\Sigma^K_{\xi_0,B}(\xi)
\big)\big\|^{2q}_{L^2(B_1)}\big\rangle^\frac{1}{q}
&\leq C^2q^{2C}|B|^2|\xi{-}\xi_0|^{2(K{+}1)},
\\\label{eq:correctorGrowthBoundDifferences}
\bigg\langle\bigg|\,\dashint_{B_1(x_0)}\big(
\Phi^K_{\xi_0,B}(\xi),\Sigma^K_{\xi_0,B}(\xi)
\big)\bigg|^{2q}\bigg\rangle^\frac{1}{q}
&\leq C^2q^{2C}|B|^2|\xi{-}\xi_0|^{2(K{+}1)}\mu_*^2\big(1{+}|x_0|\big).
\end{align}
\end{theorem}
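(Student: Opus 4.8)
I would argue by induction over the linearization order $L\in\N$. The base case $L=1$ is (a slight strengthening of) the corrector estimates of Fischer and Neukamm~\cite{Fischer2019}; since our condition~(R) is marginally weaker than the corresponding assumption there, I would reprove it in Appendix~\ref{app:baseCaseInd}, the only additional inputs being Lemma~\ref{lem:annealedHoelderRegLinCoefficient} on the annealed small-scale H\"older regularity of $a_\xi=\partial_\xi A(\cdot,\xi{+}\nabla\phi_\xi)$ and the known estimates for the nonlinear corrector $\phi_\xi$. For the inductive step, fix $L\geq 2$, put $B=w_1\odot\cdots\odot w_L$, and assume \eqref{eq:linearFunctionalCorrectorGradientBound}--\eqref{eq:correctorGrowthBound} for every linearization order $\leq L-1$. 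Repeated $\xi$-differentiation of the nonlinear corrector equation~\eqref{eq:rescaledCorrectorAux} along $w_1,\dots,w_L$, by the chain and product rules, shows that $\phi_{\xi,B}$ solves the \emph{linear} equation $-\nabla\cdot(a_\xi\nabla\phi_{\xi,B})=\nabla\cdot g_{\xi,B}$, where $g_{\xi,B}$ is the explicit finite sum --- over the partitions $\pi\in\mathrm{Par}\{1,\dots,L\}$ with at least two blocks --- of $\partial_\xi^{|\pi|}A(\cdot,\xi{+}\nabla\phi_\xi)$ contracted against products, over the blocks $\pi_j$, of the $w$-vectors and of strictly lower-order linearized corrector gradients $\nabla\phi_{\xi,\odot_{i\in\pi_j}w_i}$; correspondingly $\sigma_{\xi,B,kl}$ solves $-\Delta\sigma_{\xi,B,kl}=\partial_k q_{\xi,B,l}-\partial_l q_{\xi,B,k}$ with $q_{\xi,B}:=a_\xi\nabla\phi_{\xi,B}+g_{\xi,B}$. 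This is the structure that propagates the estimates. Throughout I would work with the massive regularizations $\phi_{\xi,B,T},\sigma_{\xi,B,T}$ (adding $\tfrac{1}{T}(\cdot)$), which are $\Rd$-stationary with crude a priori moments, prove all bounds uniformly in $T$ in Subsection~\ref{subsec:indStep}, and pass to $T\to\infty$ in Subsection~\ref{subsec:limitPassage}; existence of the limiting objects of Definition~\ref{def:correctorsHigherOrderLinearization} is the content of Appendix~\ref{app:existenceCorrectors}.

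The engine of the inductive step is the spectral gap inequality (Lemma~\ref{lemma:spectralGapHigherMoments}), applied to $X=\int g_\phi\cdot\nabla\phi_{\xi,B,T}$ and its flux analogue. Differentiating the defining equation along an infinitesimal $\delta\omega$ supported in $B_1(x)$, one finds that $\partial^{\mathrm{fct}}\phi_{\xi,B,T}$ again solves the massive operator $\tfrac{1}{T}-\nabla\cdot a_\xi\nabla$, with a right-hand side linear in (a) the variation of the coefficient $a_\xi$ --- that is, in $\partial_\omega\partial_\xi A$, $\partial_\xi^2 A$ and $\partial^{\mathrm{fct}}\nabla\phi_\xi$, multiplied by $\nabla\phi_{\xi,B,T}$ --- and (b) the variation of $g_{\xi,B}$ --- linear in $\partial^{\mathrm{fct}}$ of strictly lower-order corrector gradients and in $\partial^{\mathrm{fct}}\nabla\phi_\xi$. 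Representing $\partial^{\mathrm{fct}}X$ via the dual massive equation $\tfrac{1}{T}h-\nabla\cdot a_\xi^{*}\nabla h=\nabla\cdot g_\phi$ and using the deterministic energy estimate for $h$ together with the stationarity of the massive correctors, one bounds $\langle|\int(\dashint_{B_1(x)}|\partial^{\mathrm{fct}}X|)^2\dx|^q\rangle^{1/q}$ by $q^{2C}\big(\int|(g_\phi,g_\sigma)|^2\big)$ times the local gradient moment of $(\phi_{\xi,B,T},\sigma_{\xi,B,T})$ at order $L$, plus lower-order contributions controlled by the induction hypothesis \eqref{eq:correctorGradientBound}--\eqref{eq:correctorGrowthBound}. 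One then closes this self-referential estimate against the deterministic Caccioppoli inequality --- which controls the local gradient moment by the local $L^2$-moment of the corrector itself plus the lower-order terms in $g_{\xi,B}$ --- and against the growth estimate \eqref{eq:correctorGrowthBound}, in turn obtained by telescoping averages over dyadic annuli reaching scale $1{+}|x_0|$, each increment estimated through a Poincar\'e inequality and \eqref{eq:linearFunctionalCorrectorGradientBound}, which is precisely what produces the scaling function $\mu_*$ of~\eqref{eq:scalingCorrectorBounds}; the additive constant $\dashint_{B_1}\phi_{\xi,B,T}$ is handled by one further application of the spectral gap inequality, uniformly in $T$. The $q=1$ base of this bootstrap is the stationary energy estimate for the defining equation (with right-hand side controlled by the induction hypothesis), and the flux corrector is treated in parallel through Calder\'on--Zygmund; passing $T\to\infty$ then yields \eqref{eq:linearFunctionalCorrectorGradientBound}--\eqref{eq:correctorGrowthBound} at order $L$.

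For the Taylor estimates \eqref{eq:correctorGradientBoundDifferences}--\eqref{eq:correctorGrowthBoundDifferences} I would use that, $\Prob$-almost surely, $\xi'\mapsto\phi_{\xi',B}$ is $(K{+}1)$ times continuously differentiable as an $H^1_{\mathrm{loc}}$-valued map with $\partial_\xi^{K+1}\phi_{\xi',B}[(\xi{-}\xi_0)^{\odot(K+1)}]=\phi_{\xi',B\odot(\xi-\xi_0)^{\odot(K+1)}}$, a linearized corrector of order $L{+}K{+}1$ --- and likewise for $\sigma$ (this differentiability, which already makes the polynomials in~\eqref{eq:TaylorLinearizedHomCorrector}--\eqref{eq:TaylorLinearizedFluxCorrector} well-defined under (A2)$_{L{+}K}$, (A3)$_{L{+}K}$, is provided by Definition~\ref{def:correctorsHigherOrderLinearization} and Appendix~\ref{app:existenceCorrectors}, the extra derivative and the order-$(L{+}K{+}1)$ estimate requiring (A2)$_{L{+}K{+}1}$, (A3)$_{L{+}K{+}1}$). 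Taylor's formula with integral remainder then gives $\Phi^K_{\xi_0,B}(\xi)=\tfrac{1}{K!}\int_0^1(1{-}t)^K\,\phi_{\xi_0+t(\xi-\xi_0),\,B\odot(\xi-\xi_0)^{\odot(K+1)}}\dt$, and similarly for $\Sigma^K_{\xi_0,B}(\xi)$. Since $|(\xi_0,\xi)|\leq M$ forces $|\xi_0{+}t(\xi{-}\xi_0)|\leq M$ for all $t\in[0,1]$ by convexity, I can apply the already-established bounds \eqref{eq:correctorGradientBound}, \eqref{eq:correctorGrowthBound} at linearization order $L{+}K{+}1$, with magnitude $|B\odot(\xi-\xi_0)^{\odot(K+1)}|=|B|\,|\xi-\xi_0|^{K+1}$, uniformly in the base point; pulling the $t$-integral out of the $L^{2q}_{\langle\cdot\rangle}$-norm by Minkowski's integral inequality then yields \eqref{eq:correctorGradientBoundDifferences} and \eqref{eq:correctorGrowthBoundDifferences} with the factor $|\xi{-}\xi_0|^{2(K+1)}$ and a constant of the asserted dependence.

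The step I expect to be the main obstacle is the sensitivity estimate in the second paragraph. After the differentiation, $\partial^{\mathrm{fct}}X$ is a product of two quantities localized at different points and scales, and extracting from its $q$th stochastic moment the sharp $\big(\int|(g_\phi,g_\sigma)|^2\big)|B|^2$ with only the algebraic loss $q^{2C}$ requires two ingredients that must mesh precisely: the annealed large-scale regularity theory for the \emph{fluctuating} operator $-\nabla\cdot a_\xi\nabla$ --- whose coefficient field is only small-scale H\"older continuous, so that condition~(R) through Lemma~\ref{lem:annealedHoelderRegLinCoefficient} is indispensable --- to turn the dual equation into pointwise-in-scale decay; and a careful bookkeeping of the partition sum defining $g_{\xi,B}$ and of $\partial^{\mathrm{fct}}a_\xi$, so that each of the finitely many lower-order corrector factors is charged exactly once to the induction hypothesis and all the ensuing $\mu_*$-growth contributions collapse into the single factor $\mu_*^2(1{+}|x_0|)$ on the right-hand side of \eqref{eq:correctorGrowthBound}. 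The subsequent limit $T\to\infty$ is by now routine but still demands care in transferring the uniform-in-$T$ bounds to the merely $\nabla$-stationary limiting correctors.
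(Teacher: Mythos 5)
Your overall architecture coincides with the paper's: massive regularization, induction over the linearization order with the Fischer--Neukamm estimates as base case, spectral-gap/sensitivity estimates for linear functionals via the dual massive operator, limit passage $T\to\infty$, and, for the Taylor bounds, the integral-remainder representation $\Phi^K_{\xi_0,B}(\xi)=\tfrac{1}{K!}\int_0^1(1-t)^K\,\phi_{\xi_0+t(\xi-\xi_0),\,B\odot(\xi-\xi_0)^{\odot(K+1)}}\,\mathrm{d}t$ combined with the order-$(L{+}K{+}1)$ corrector estimates --- this last step is essentially verbatim the paper's argument. Your derivation of \eqref{eq:correctorGrowthBound}, converting $\dashint_{B_1(x_0)}\phi_{\xi,B}-\dashint_{B_1}\phi_{\xi,B}$ into a linear functional of the gradient whose test field has $L^2$-mass of order $\mu_*^2(1{+}|x_0|)$, is also how the paper proceeds (it builds an explicit potential $g_{x_0}$ from a Neumann problem rather than telescoping dyadic annuli, but the two devices are interchangeable).

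The one genuine gap is the closing of the self-referential moment estimate, which you correctly flag as the main obstacle but then dispose of by ``closing against Caccioppoli and the growth estimate.'' That loop does not close as stated: the sensitivity estimate (the paper's Lemma~\ref{lem:annealedLinFunctionalsIndStep}) unavoidably bounds the $2q$-th moment of $\int g\cdot\nabla\phi^T_{\xi,B}$ by the \emph{higher} moment $\langle\|\nabla\phi^T_{\xi,B}\|_{L^2(B_1)}^{2q/(1-\tau)^2}\rangle^{(1-\tau)^2/q}$ of the very quantity being controlled, so a direct absorption is impossible. The paper's resolution has three ingredients you would need to supply: (i) the minimal radius $r_{*,T,\xi,B}$ of Definition~\ref{def:minRadiusHigherOrderLinearizations}, whose hole-filling gain $\delta>0$ converts the small-scale energy into $\langle r_{*,T,\xi,B}^{(d-\delta+2\underline{\gamma})q}\rangle^{1/q}$ (Lemma~\ref{lem:annealedSmallScaleEnergyEstimate}); (ii) a preliminary, deliberately suboptimal bound $\lesssim_q\sqrt{T}^{d}$ obtained from the exponentially weighted energy estimate, whose sole purpose is to guarantee that arbitrarily high moments of $r_{*,T,\xi,B}$ are finite before one buckles; and (iii) the buckling itself, in which $\underline{\gamma}$ and $\tau$ are tuned so that $(d-\delta+2\underline{\gamma})/(1-\tau)^2=d-\tfrac{\delta}{2}$, whence $\langle r_{*,T,\xi,B}^{(d-\delta/2)2q}\rangle\leq C^{2q}q^{2Cq}\big(1+\langle r_{*,T,\xi,B}^{(d-\delta/2)2q}\rangle^{(1-\tau)^2}\big)$ closes to the stretched exponential moment bound of Lemma~\ref{lem:momentBoundMinRadius}. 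Without the strict exponent gain from hole filling there is no room to absorb the loss in stochastic integrability, and the induction step as you describe it would fail.
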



\subsection{Differentiability of the homogenized operator}
A rather straightforward consequence of the estimates
for higher-order linearized correctors is the higher-order regularity of the
associated homogenized monotone operator. 

\begin{theorem}[Higher-order regularity of the homogenized operator]
\label{theo:diffHomOperator}
Let $L\in\N$ and $M>0$ be fixed.
Let the requirements and notation of (A1), (A2)$_{L}$ and (A3)$_{L}$ 
of Assumption~\ref{assumption:operators}, (P1) and (P2) of
Assumption~\ref{assumption:ensembleParameterFields}, and (R) of 
Assumption~\ref{assumption:smallScaleReg} be in place.
Fix next a set of vectors $w_1,\ldots,w_L\in\Rd$ and define
$B:=w_1\odot\cdots\odot w_{L}$. Let finally
\begin{align}
\label{def:homogenizedOperator}
\Rd\ni\xi \mapsto \bar A(\xi) := \langle q_{\xi} \rangle
\end{align}
be the homogenized operator, with the flux $q_\xi$ being defined in~\eqref{eq:PDEMonotoneFlux}.

The homogenized operator is $L$ times differentiable as a map $\xi\mapsto \bar A(\xi)$. 
There exists a constant $C=C(d,\lambda,\Lambda,\nu,\rho,\eta,M,L)$ such that for all $|\xi|\leq M$
its $L$th G\^ateaux derivative in direction $B$ admits the bound
\begin{align}
\label{eq:boundDerivHomOperator}
\big|\partial_\xi^{L}\bar A(\xi)[B]\big| \leq C|B|.
\end{align}
Finally, we have the following representation for the $L$th order G\^ateaux derivative in direction $B$
\begin{align}
\label{eq:repDerivHomOperator}
\partial_\xi^{L}\bar A(\xi)[B] = \langle q_{\xi,B} \rangle.
\end{align}
Here, $q_{\xi,B}$ denotes the linearized flux from~\eqref{eq:LinearizedFlux}.
\end{theorem}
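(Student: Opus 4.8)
\textbf{Proof plan for Theorem~\ref{theo:diffHomOperator}.}
The plan is to deduce the differentiability and the representation formula directly from the corrector estimates of Theorem~\ref{theo:correctorBounds}, treating the homogenized operator as an expectation of the flux $q_\xi = A(\omega,\xi+\nabla\phi_\xi)$ and differentiating under the expectation. First I would establish the key qualitative fact that for each fixed direction $B = w_1\odot\cdots\odot w_L$ the map $\xi\mapsto\langle q_\xi\rangle$ is $L$ times G\^ateaux differentiable. Here the natural candidate for the $L$th derivative in direction $B$ is obtained by formally applying the chain rule to $q_\xi$: the $L$th $\xi$-derivative of $A(\omega,\xi+\nabla\phi_\xi)$ in direction $B$ is a sum over partitions $\pi\in\mathrm{Par}\{1,\ldots,L\}$ of terms of the form $\partial_\xi^{|\pi|}A(\omega,\xi+\nabla\phi_\xi)$ applied to the tuple $\big(w_{\pi_1}^{\odot}+\nabla\phi_{\xi,w_{\pi_1}^{\odot}},\ldots\big)$ over the blocks of $\pi$; this is exactly the linearized flux $q_{\xi,B}$ from~\eqref{eq:LinearizedFlux}. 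So the target is to show that $\langle q_{\xi,B}\rangle$ is a legitimate $L$th G\^ateaux derivative and that it satisfies the stated bound.

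The main analytic input is a difference quotient / Taylor remainder estimate, and this is where I expect to do the real work. Concretely I would use the $K=0$ case of the Taylor-expansion bounds~\eqref{eq:correctorGradientBoundDifferences} (applied at each linearization order $\le L$) to control, in $L^2_{\langle\cdot\rangle}(L^2(B_1))$, the difference between $\nabla\phi_{\xi',B'}$ and $\nabla\phi_{\xi,B'}$ by $|\xi'-\xi|$, uniformly for $|\xi|,|\xi'|\le M$ and for all the relevant sub-tensors $B'$ of $B$. Combined with the boundedness of $\nabla\phi_{\xi,B'}$ in the same norm (estimate~\eqref{eq:correctorGradientBound}) and the Lipschitz/boundedness hypotheses on $\partial_\xi^k A$ and on $\xi\mapsto\partial_\xi^k A$ from (A2)$_L$--(A3)$_L$, a routine telescoping and Cauchy--Schwarz argument shows that the difference quotient
\[
\frac{1}{t}\Big(\partial_\xi^{L-1}q_{\xi+tw_L}[\,w_1\odot\cdots\odot w_{L-1}\,] - \partial_\xi^{L-1}q_{\xi}[\,w_1\odot\cdots\odot w_{L-1}\,]\Big)
\]
converges in $L^1_{\langle\cdot\rangle}$ (indeed in $L^2_{\langle\cdot\rangle}(L^2(B_1))$) to $q_{\xi,B}$ as $t\to 0$; here one inducts on the linearization order, the inductive hypothesis supplying the lower-order derivatives $\partial_\xi^k q_\xi$. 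Since $q_\xi$ is stationary, $\langle q_\xi\rangle$ is independent of the base point, so convergence of the spatially-averaged difference quotient over $B_1$ suffices to conclude convergence of $\tfrac1t(\partial_\xi^{L-1}\bar A(\xi+tw_L)[\cdots]-\partial_\xi^{L-1}\bar A(\xi)[\cdots])$ to $\langle q_{\xi,B}\rangle$; this gives both the differentiability claim and the representation~\eqref{eq:repDerivHomOperator}.

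For the quantitative bound~\eqref{eq:boundDerivHomOperator}, I would estimate $|\langle q_{\xi,B}\rangle|$ directly: expanding $q_{\xi,B}$ via the chain rule into the partition sum described above, each summand is bounded pointwise by a product of $\|\partial_\xi^{|\pi|}A\|_\infty\le\Lambda$ with factors $|w_j|+|\nabla\phi_{\xi,w_{j_1}\odot\cdots}|$; taking expectations and using Cauchy--Schwarz together with~\eqref{eq:correctorGradientBound} (at each order $\le L$, with $q$ chosen large enough to absorb the products) yields $|\langle q_{\xi,B}\rangle|\lesssim |w_1|\cdots|w_L| = |B|$ up to a constant of the stated dependence. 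The only subtlety is bookkeeping the combinatorics of the partition sum and ensuring all the multi-linear corrector norms that appear are covered by Theorem~\ref{theo:correctorBounds}; the constant $C$ then inherits its dependence on $(d,\lambda,\Lambda,\nu,\rho,\eta,M,L)$ from that theorem.

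The hard part will be the first, qualitative step: justifying the interchange of $\xi$-differentiation with the expectation $\langle\cdot\rangle$ and with the (implicit, PDE-defined) dependence of $\phi_\xi$ on $\xi$. This is precisely what the difference-quotient estimate based on~\eqref{eq:correctorGradientBoundDifferences} is designed to handle, so the argument is not deep, but it requires care to set up the induction on the linearization order correctly and to invoke the definition of $q_{\xi,B}$ (Definition~\ref{def:correctorsHigherOrderLinearization}, equations~\eqref{eq:PDEMonotoneFlux} and~\eqref{eq:LinearizedFlux}) so that the formal chain-rule expression is recognized as the object whose expectation we claim equals $\partial_\xi^L\bar A(\xi)[B]$.
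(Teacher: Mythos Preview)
Your overall strategy—induct on the linearization order, differentiate the flux, and control the result using the corrector estimates of Theorem~\ref{theo:correctorBounds}—is sound, and the bound~\eqref{eq:boundDerivHomOperator} does follow exactly as you describe from \eqref{eq:correctorGradientBound} and the partition-sum structure of~\eqref{eq:LinearizedFlux}. However, the qualitative differentiability step has a gap as written. The $K=0$ case of~\eqref{eq:correctorGradientBoundDifferences} only gives Lipschitz continuity $\|\nabla\phi_{\xi',B'}-\nabla\phi_{\xi,B'}\|_{L^2_{\langle\cdot\rangle}L^2(B_1)}\lesssim|\xi'-\xi|$; it does \emph{not} give the $o(t)$ needed for the difference quotient of $q_{\xi,B'}$ to converge. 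In the decomposition of $q_{\xi+tw_L,B'}-q_{\xi,B'}-t\,q_{\xi,B}$, the leading term is $a_\xi(\nabla\phi_{\xi+tw_L,B'}-\nabla\phi_{\xi,B'}-t\nabla\phi_{\xi,B})$, and no amount of telescoping or Cauchy--Schwarz turns a Lipschitz bound into a vanishing first-order remainder. You would need either the $K=1$ estimate of~\eqref{eq:correctorGradientBoundDifferences} at order $L-1$ (which requires (A2)$_{L+1}$, strictly more than Theorem~\ref{theo:diffHomOperator} assumes), or a separate argument for qualitative differentiability of $\xi\mapsto\nabla\phi_{\xi,B'}$ under only (A2)$_L$.

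The paper avoids this by working at the massive level and passing to the limit. Lemma~\ref{lem:diffMassiveApprox}, applied at linearization order $L{-}1$ (its hypothesis (A4)$_{L-1}$ is implied by (A2)$_L$, (A3)$_L$), directly yields the Taylor estimate~\eqref{eq:regMassiveVersionHomOperator} for the \emph{averaged} massive flux, namely $|\langle q^T_{\xi+he,B'}\rangle-\langle q^T_{\xi,B'}\rangle-\langle q^T_{\xi,B}\rangle h|\le Ch^{2(1-\beta)}$. One then lets $T\to\infty$ using~\eqref{eq:convFluxes} from Lemma~\ref{lem:limitMassiveApprox} (and~\eqref{eq:BaseCaseRegMassiveVersionHomOperator}, \eqref{eq:BaseCaseConvFluxes} for the base case). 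The point is that at the massive level the corrector PDEs carry a genuine energy estimate (Lemma~\ref{lem:expLocalization}), so the first-order Taylor remainder of $\phi^T_{\xi,B'}$ can be controlled by a direct PDE argument and induction (see the $R_0,\ldots,R_4$ decomposition in the proof of Lemma~\ref{lem:diffMassiveApprox}); this is precisely the ingredient your limit-level argument is missing. If you want to stay at the limit level, you must either import that qualitative differentiability (which ultimately comes from the massive approximation anyway) or reprove the inductive Taylor-remainder estimate for $\phi_{\xi,B'}$ without the massive term, which is substantially harder.
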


\subsection{Basic definitions}
We introduce the precise definitions of the 
homogenization and flux correctors and their (higher-order) linearized analogues.
We start by recalling these notions on the level of the nonlinear problem.

Given $\xi\in\Rd$, the equation for the homogenization corrector is given by
\begin{subequations}
\begin{align}
\label{eq:PDEMonotoneHomCorrector}
&-\nabla\cdot A(\omega,\xi{+}\nabla\phi_\xi)
= 0.
\end{align}
Abbreviating the flux by means of
\begin{align}
\label{eq:PDEMonotoneFlux}
q_{\xi} &:= A(\omega,\xi{+}\nabla\phi_\xi),
\end{align}
the equation for the corresponding flux corrector is given by
\begin{align}
\label{eq:PDEMonotoneFluxCorrector}
- \Delta\sigma_{\xi,kl}
&= (e_l\otimes e_k - e_k\otimes e_l) : \nabla q_{\xi}.
\end{align}
Sublinear growth of the flux corrector gives rise to
\begin{align}
\label{eq:PDEMonotoneHelmholtzDecomp}
q_\xi - \langle q_\xi \rangle
= \nabla\cdot\sigma_\xi.
\end{align}
\end{subequations}

\begin{definition}[Homogenization correctors and flux correctors of the nonlinear problem]
\label{def:correctorsMonotonePDE}
Let the requirements and notation of (A1), (A2)$_0$ and (A3)$_0$ of
Assumption~\ref{assumption:operators}, as well as (P1) and (P2) of
Assumption~\ref{assumption:ensembleParameterFields} be in place.
Let $\xi\in\Rd$ be given. The corresponding 
\emph{homogenization corrector}~$\phi_{\xi}$
and \emph{flux corrector} $\sigma_{\xi}$
are two random fields
\begin{align*}
(\phi_{\xi},\sigma_{\xi})\colon\Omega\times\Rd
\to \mathbb{R}\times\mathbb{R}^{d\times d}_{\mathrm{skew}}
\end{align*}
subject to the following list of requirements:
\begin{itemize}[leftmargin=0.7cm]
\item[(i)]  It holds $\Prob$-almost surely that $\phi_{\xi}\in H^1_{\mathrm{loc}}(\Rd)$,
            $\sigma_{\xi}\in H^1_{\mathrm{loc}}(\Rd;\mathbb{R}^{d\times d}_{\mathrm{skew}})$
						as well as $\dashint_{B_1}(\phi_{\xi},\sigma_{\xi})\dx = 0$.
						In addition, the associated PDEs \eqref{eq:PDEMonotoneHomCorrector} resp.\
						\eqref{eq:PDEMonotoneFluxCorrector}~and~\eqref{eq:PDEMonotoneHelmholtzDecomp} are satisfied
						in the distributional sense $\Prob$-almost surely.
						
\item[(ii)] The gradients $\nabla\phi_{\xi}$ and $\nabla\sigma_{\xi}$
						are stationary random fields. Moreover, it holds
						\begin{align*}
						\big\langle(\nabla\phi_{\xi},\nabla\sigma_{\xi})\big\rangle=0, \quad
						\big\langle|\nabla\phi_{\xi}|^2\big\rangle
						+\big\langle|\nabla\sigma_{\xi}|^2\big\rangle < \infty.
						\end{align*}
\item[(iii)] The two random fields $\phi_{\xi}$ and $\sigma_{\xi}$ 
						 feature $\Prob$-almost surely sublinear growth at infinity
						 \begin{align*}
						 \lim_{R\to\infty}\frac{1}{R^2}\dashint_{B_R} 
						 \big|(\phi_{\xi},\sigma_{\xi})\big|^2 \dx = 0.
						 \end{align*}
\end{itemize}
\end{definition}

We next introduce the (higher-order) linearized analogues of
the corrector equations~\eqref{eq:PDEMonotoneHomCorrector}--\eqref{eq:PDEMonotoneHelmholtzDecomp}
by formally differentiating in the macroscopic variable.
To this end, let a linearization order $L\in\N$ be fixed.
We also fix vectors $w_1,\ldots,w_L\in\Rd$ and let
$B:=w_1\odot\cdots\odot w_{L}$. Finally, fix $\xi\in\Rd$ and
denote by $a_\xi$ the coefficient field $\partial_\xi A(\omega,\xi+\nabla\phi_\xi)$. 
Due to (A1) and (A2)$_{0}$ from Assumption~\ref{assumption:operators}, this coefficient field
is uniformly elliptic and bounded with respect to the same constants $(\lambda,\Lambda)$
from Assumption~\ref{assumption:operators}. In terms of statistical properties,
it is stationary and ergodic. 

As suggested by the Fa\`a~di~Bruno formula, the equation for the \emph{$L$th-order 
linearized homogenization corrector in direction $B$} shall be given by
\begin{subequations}
\begin{equation}
\label{eq:PDEhigherOrderLinearizedCorrector}
\begin{aligned}
&- \nabla\cdot a_\xi(\mathds{1}_{L=1}B + \nabla \phi_{\xi,B})
\\
&= \nabla\cdot\sum_{\substack{\Pi\in\mathrm{Par}\{1,\ldots,L\} \\ \Pi\neq\{\{1,\ldots,L\}\}}}
\partial_\xi^{|\Pi|} A(\omega,\xi{+}\nabla \phi_\xi)
\Big[\bigodot_{\pi\in\Pi}(\mathds{1}_{|\pi|=1}B'_\pi + \nabla \phi_{\xi,B'_\pi})\Big],
\end{aligned}
\end{equation}
where we also introduced the notational convention
\begin{align*}
B'_\pi := \bigodot_{m\in\pi} v_{m},\quad\forall\pi\in\Pi,\,\Pi\in \mathrm{Par}\{1,\ldots,L\}.
\end{align*}
Note that the right hand side only features linearized correctors of order $\leq L{-}1$, if any.
Motivated by this observation, existence of solutions to the 
linearized corrector problem~\eqref{eq:PDEhigherOrderLinearizedCorrector}
with stationary gradient and (almost sure) sublinear growth at infinity will be given \textit{inductively} through 
approximation with an additional massive term, see~\eqref{eq:PDEhigherOrderLinearizedCorrectorLocalized}
for the associated corrector problem. For the latter, solutions may be constructed---again 
inductively---on purely deterministic grounds (under suitable assumptions which are in particular
modeled on the small-scale regularity condition~(R) from Assumption~\ref{assumption:smallScaleReg}). 
For more details, we refer the reader to the discussion in Section~\ref{subsec:indHypo} below.

To state the equation for the linearized flux corrector, we first define the 
linearized flux by means of
\begin{equation}
\label{eq:LinearizedFlux}
\begin{aligned}
q_{\xi,B} &:= a_\xi(\mathds{1}_{L=1}B + \nabla \phi_{\xi,B})
\\&~~~~
+ \sum_{\substack{\Pi\in\mathrm{Par}\{1,\ldots,L\} \\ \Pi\neq\{\{1,\ldots,L\}\}}}
\partial_\xi^{|\Pi|} A(\omega,\xi{+}\nabla \phi_\xi)
\Big[\bigodot_{\pi\in\Pi}(\mathds{1}_{|\pi|=1}B'_\pi + \nabla \phi_{\xi,B'_\pi})\Big].
\end{aligned}
\end{equation}
The associated flux corrector shall then be a solution of
\begin{align}
\label{eq:PDEhigherOrderLinearizedFluxCorrector}
- \Delta\sigma_{\xi,B,kl} 
&= (e_l\otimes e_k - e_k\otimes e_l) : \nabla q_{\xi,B}.
\end{align}
Due to the sublinear growth of the correctors, the previous relations entail that
\begin{align}
\label{eq:PDEhigherOrderLinearizedHelmholtzDecomp}
q_{\xi,B} - \langle q_{\xi,B} \rangle
= \nabla\cdot\sigma_{\xi,B}.
\end{align}
\end{subequations}

\begin{definition}[Higher-order linearized homogenization correctors and flux correctors]
\label{def:correctorsHigherOrderLinearization}
Let $L\in\N$ and $\xi\in\Rd$ be fixed.
Let the requirements and notation of (A1), (A2)$_L$ and (A3)$_L$ of
Assumption~\ref{assumption:operators}, (P1) and (P2) of
Assumption~\ref{assumption:ensembleParameterFields}, and (R) of 
Assumption~\ref{assumption:smallScaleReg} be in place.
We also fix a set of vectors $w_1,\ldots,w_L\in\Rd$ and define
$B:=w_1\odot\cdots\odot w_{L}$. The corresponding
\emph{linearized homogenization corrector}~$\phi_{\xi,B}$
and \emph{flux corrector} $\sigma_{\xi,B}$
are two random fields
\begin{align*}
(\phi_{\xi,B},\sigma_{\xi,B})\colon\Omega\times\Rd
\to \mathbb{R}\times\mathbb{R}^{d\times d}_{\mathrm{skew}}
\end{align*}
subject to the following list of requirements:
\begin{itemize}[leftmargin=0.7cm]
\item[(i)]  It holds $\Prob$-almost surely that $\phi_{\xi,B}\in H^1_{\mathrm{loc}}(\Rd)$,
            $\sigma_{\xi,B}\in H^1_{\mathrm{loc}}(\Rd;\mathbb{R}^{d\times d}_{\mathrm{skew}})$
						as well as $\dashint_{B_1}(\phi_{\xi,B},\sigma_{\xi,B})\dx = 0$.
						In addition, the associated PDEs \eqref{eq:PDEhigherOrderLinearizedCorrector} resp.\
						\eqref{eq:PDEhigherOrderLinearizedFluxCorrector}~and~\eqref{eq:PDEhigherOrderLinearizedHelmholtzDecomp} 
						are satisfied in the distributional sense $\Prob$-almost surely.
\item[(ii)] The gradients $\nabla\phi_{\xi,B}$ and $\nabla\sigma_{\xi,B}$
						are stationary random fields. Moreover, it holds
						\begin{align*}
						\big\langle(\nabla\phi_{\xi,B},\nabla\sigma_{\xi,B})\big\rangle=0, \quad
						\big\langle|\nabla\phi_{\xi,B}|^2\big\rangle
						+\big\langle|\nabla\sigma_{\xi,B}|^2\big\rangle < \infty.
						\end{align*}
\item[(iii)] The two random fields $\phi_{\xi,B}$ and $\sigma_{\xi,B}$ 
						 feature $\Prob$-almost surely sublinear growth at infinity
						 \begin{align*}
						 \lim_{R\to\infty}\frac{1}{R^2}\dashint_{B_R} 
						 \big|(\phi_{\xi,B},\sigma_{\xi,B})\big|^2 \dx = 0.
						 \end{align*}
\end{itemize}
\end{definition}

\subsection{Linearized correctors for the dual linearized operator}
\label{subsec:dualCorrectors}
It is an immediate consequence of the proofs that 
analogous results hold true for the (higher-order) correctors
of the dual linearized operator $-\nabla\cdot a^{*}_\xi\nabla$,
where $a^{*}_\xi$ denotes the transpose of the
linearized coefficient field $a_\xi:=\partial_\xi A(\omega,\xi{+}\nabla\phi_\xi)$.
We state these corrector results for the dual linearized operator for ease of reference
for future works. 

Let $L\in\N$ and $M>0$ be fixed.
Let the requirements and notation of (A1), (A2)$_L$ and (A3)$_L$ of
Assumption~\ref{assumption:operators}, (P1) and (P2) of
Assumption~\ref{assumption:ensembleParameterFields}, and (R) of 
Assumption~\ref{assumption:smallScaleReg} be in place.
Fix moreover a set of vectors $w_1,w_2,\ldots,w_L\in\Rd$ and define
$B:=w_1\otimes(w_2\odot\cdots\odot w_{L})$. For a partition 
$\Pi\in\mathrm{Par}\{1,\ldots,L\}$ with $\Pi\neq\{\{1,\ldots,L\}\}$,
denote by $\pi_1^*$ the unique element $\pi\in\Pi$ such that $1\in\pi$.
The equation for the \emph{$L$th-order linearized homogenization corrector 
in direction $B$ of the dual linearized operator} is then given by
\begin{subequations}
\begin{align}
\label{eq:PDEhigherOrderLinearizedCorrectorDual}
&- \nabla\cdot a^*_\xi(\mathds{1}_{L=1}B + \nabla \phi^*_{\xi,B})
\\ \nonumber
&= \nabla\cdot\sum_{\substack{\Pi\in\mathrm{Par}\{1,\ldots,L\} \\ \Pi\neq\{\{1,\ldots,L\}\}}}
\partial_\xi^{|\Pi|{-}1} \left.\big(a_\xi^*v\big)
\right|_{v=\mathds{1}_{|\pi_1^*|=1}B'_{\pi_1^*}{+}\nabla\phi^*_{\xi,B'_{\pi_1^*}}}
\bigg[\bigodot_{\substack{\pi\in\Pi \\ 1\notin \pi}}
\big(\mathds{1}_{|\pi|=1}B'_\pi {+} \nabla \phi_{\xi,B'_\pi}\big)\bigg],
\end{align}
where we also relied, for each $\Pi\in\mathrm{Par}\{1,\ldots,L\}$, on the notational convention
\begin{align*}
B'_\pi := 
\begin{cases}
v_1 \otimes (v_{l_2}\odot \cdots \odot v_{l_{|\pi|}}), 
& \text{ if } \pi = \pi_1^* = \{1,l_2,\ldots,l_{|\pi|}\}
\text{ with } l_2 < \cdots < l_{|\pi|}, \\
\bigodot_{m\in\pi} v_{m}, & \text{ else}.
\end{cases}
\end{align*}
With the dual linearized flux given by 
\begin{align}
\label{eq:LinearizedFluxDual}
q^*_{\xi,B} &:= a^*_\xi(\mathds{1}_{L=1}B + \nabla \phi^*_{\xi,B})
\\&~\nonumber
+\sum_{\substack{\Pi\in\mathrm{Par}\{1,\ldots,L\} \\ \Pi\neq\{\{1,\ldots,L\}\}}}
\partial_\xi^{|\Pi|{-}1} \left.\big(a_\xi^*v\big)
\right|_{v=\mathds{1}_{|\pi_1^*|=1}B'_{\pi_1^*}{+}\nabla\phi^*_{\xi,B'_{\pi_1^*}}}
\bigg[\bigodot_{\substack{\pi\in\Pi \\ 1\notin \pi}}
\big(\mathds{1}_{|\pi|=1}B'_\pi {+} \nabla \phi_{\xi,B'_\pi}\big)\bigg],
\end{align}
the \emph{$L$th order linearized flux corrector in direction $B$ of the dual linearized operator} 
is in turn a solution of
\begin{align}
\label{eq:PDEhigherOrderLinearizedFluxCorrectorDual}
- \Delta\sigma^*_{\xi,B,kl} 
&= (e_l\otimes e_k - e_k\otimes e_l) : \nabla q^*_{\xi,B},
\end{align}
as well as
\begin{align}
\label{eq:PDEhigherOrderLinearizedHelmholtzDecompDual}
q^*_{\xi,B} - \langle q^*_{\xi,B} \rangle
= \nabla\cdot\sigma^*_{\xi,B}.
\end{align}
\end{subequations}
(More precisely, the notion of linearized homogenization and flux correctors of the dual linearized problem are
understood in the precise sense of Definition~\ref{def:correctorsHigherOrderLinearization}, with the 
equations~\eqref{eq:PDEhigherOrderLinearizedCorrector}--\eqref{eq:PDEhigherOrderLinearizedHelmholtzDecomp} replaced by 
the equations~\eqref{eq:PDEhigherOrderLinearizedCorrectorDual}--\eqref{eq:PDEhigherOrderLinearizedHelmholtzDecompDual}).  

Under the above assumptions, the following analogous results to Theorem~\ref{theo:correctorBounds} then hold true.
First, there exists a constant $C=C(d,\lambda,\Lambda,\nu,\rho,\eta,M,L)$ such that for all $|\xi|\leq M$, 
all $q\in [1,\infty)$, all $x_0\in\Rd$, and all compactly supported and 
square-integrable $g_\phi,g_\sigma$ it holds
\begin{align}
\label{eq:linearFunctionalCorrectorGradientBoundDual}
\bigg\langle\bigg|\bigg(\int g_\phi\cdot\nabla\phi^*_{\xi,B}, 
\int g_\sigma^{kl}\cdot\nabla\sigma^*_{\xi,B,kl}\bigg)\bigg|^{2q}\bigg\rangle^\frac{1}{q}
&\leq C^2q^{2C}|B|^2\int \big|\big(g_\phi,g_\sigma\big)\big|^2,
\\\label{eq:correctorGradientBoundDual}
\big\langle\big\|\big(\nabla\phi^*_{\xi,B},\nabla\sigma^*_{\xi,B}
\big)\big\|^{2q}_{L^2(B_1)}\big\rangle^\frac{1}{q}
&\leq C^2q^{2C}|B|^2,
\\\label{eq:correctorGrowthBoundDual}
\bigg\langle\bigg|\,\dashint_{B_1(x_0)}
\big|\big(\phi^*_{\xi,B},\sigma^*_{\xi,B}\big)
\big|^2\bigg|^{q}\bigg\rangle^\frac{1}{q}
&\leq C^2q^{2C}|B|^2\mu_*^2\big(1{+}|x_0|\big),
\end{align}
with the scaling function $\mu_*\colon\mathbb{R}_{>0}\to \mathbb{R}_{>0}$ 
defined in~\eqref{eq:scalingCorrectorBounds}. 

Fix $K\in\N$, and assume that~(A2)$_{L{+}K}$ and~(A3)$_{L{+}K}$ from Assumption~\ref{assumption:operators}
hold true on top of the previous assumptions of this section. Then, both the maps $\xi\mapsto\nabla\phi^*_{\xi,B}$ 
and $\xi\mapsto\nabla\sigma^*_{\xi,B}$ are $\Prob$-almost surely
$K$ times G\^ateaux differentiable with values in the 
Fr\'echet space $L^2_{\langle\cdot\rangle}L^2_{\mathrm{loc}}(\Rd)$. Moreover,
for any collection of vectors $w_{L+1},\ldots,w_{L+K}\in\Rd$ and any $k\in\{1,\ldots,K\}$ 
we have the following representations of the $k$th order G\^ateaux derivatives in direction 
$\hat B_k := w_{L{+}1}\odot\cdots\odot w_{L{+}k}$:
\begin{align}
\partial_\xi^k\phi^*_{\xi,B}[\hat B_k] = 
\phi^*_{\xi,w_1\otimes(w_2\odot\cdots\odot w_{L+k})},
\quad \partial_\xi^k\sigma^*_{\xi,B}[\hat B_k] = 
\sigma^*_{\xi,Bw_1\otimes(w_2\odot\cdots\odot w_{L+k})}. 
\end{align} 

Denote by $\overline{a_\xi^*}\in \Rd[d\times d]$ the 
\emph{homogenized coefficient of the dual linearized operator}
characterized by
\begin{align*}
\overline{a_\xi^*}w = \langle q_{\xi,w}^* \rangle, \quad w\in\Rd.
\end{align*}
Then the following version of Theorem~\ref{theo:diffHomOperator} holds true for $L\geq 1$. 
The map $\xi\mapsto \overline{a_\xi^*}$ is $L{-}1$ times differentiable.
There also exists a constant $C=C(d,\lambda,\Lambda,\nu,\rho,\eta,M,L)$ such that for all $|\xi|\leq M$
its $(L{-}1)$th G\^ateaux derivative in the direction of $B'=w_2\odot\cdots\odot w_L$ admits the bound
\begin{align}
\label{eq:boundDerivHomOperatorDual}
\big|\partial_\xi^{L{-}1}\overline{a_\xi^*}[B']\big| \leq C|B'|.
\end{align}
We finally have for all $w\in\Rd$
the following representation for the $(L{-}1)$th order 
G\^ateaux derivative in direction $B'$
\begin{align}
\label{eq:repDerivHomOperatorDual}
\partial_\xi^{L-1}\big(\,\overline{a_\xi^*}w\big)[B'] = \langle q_{\xi,w\otimes B'}^* \rangle.
\end{align}

\section{Outline of strategy}\label{sec:outlineStrategy}

The proof of the corrector bounds from Theorem~\ref{theo:correctorBounds} is based
on the massive approximation of the operator 
$-\nabla\cdot\partial_\xi A(\omega,\xi+\nabla\phi_\xi)$. For the problem with an additional massive term,
we will argue by an induction with respect to the order of the linearization. This will entail the
following analogue of Theorem~\ref{theo:correctorBounds} in terms of the massive approximation.

\begin{theorem}[Estimates for massive correctors]
\label{theo:correctorBoundsMassiveApprox}
Let $L\in\N$, $M>0$ as well as $T\in [1,\infty)$ be fixed.
Let the requirements and notation of (A1), (A2)$_L$ and (A3)$_L$ of
Assumption~\ref{assumption:operators}, (P1) and (P2) of
Assumption~\ref{assumption:ensembleParameterFields}, and (R) of 
Assumption~\ref{assumption:smallScaleReg} be in place.
Fix a set of unit vectors $v_1,\ldots,v_L\in\Rd$ and define
$B:=v_1\odot\cdots\odot v_{L}$. Let
\begin{align*}
\phi^T_{\xi,B}\in H^1_{\mathrm{uloc}}(\Rd),
\quad \sigma^T_{\xi,B}\in H^1_{\mathrm{uloc}}(\Rd;\mathbb{R}^{d\times d}_{\mathrm{skew}})
\quad\text{and}\quad
\psi^T_{\xi,B}\in H^1_{\mathrm{uloc}}(\Rd;\mathbb{R}^{d})
\end{align*}
denote the unique solutions of the linearized corrector problems
\emph{\eqref{eq:PDEhigherOrderLinearizedCorrectorLocalized}--\eqref{eq:HigherOrderLinearizedFlux}},
which $\Prob$-almost surely exist by means of Lemma~\ref{eq:lemmaExistenceLinearizedCorrectorsShort} below.

There exists a constant $C=C(d,\lambda,\Lambda,\nu,\rho,\eta,M,L)$ such that for all $|\xi|\leq M$, 
all $q\in [1,\infty)$, and all compactly supported and square-integrable deterministic fields $g_\phi,g_\sigma,g_\psi$
resp.\ $f_\phi,f_\sigma,f_\psi$ it holds
\begin{equation}
\label{eq:linearFunctionalCorrectorGradientBoundMassiveApprox}
\begin{aligned}
&\bigg\langle\bigg|\bigg(\int g_\phi\cdot\nabla\phi^T_{\xi,B}, 
\int g_\sigma^{kl}\cdot\nabla\sigma^T_{\xi,B,kl},
\int g_\psi^k\cdot\frac{\nabla\psi^T_{\xi,B,k}}{\sqrt{T}}\bigg)\bigg|^{2q}\bigg\rangle^\frac{1}{q}
\\
&\leq C^2q^{2C}\int \big|\big(g_\phi,g_\sigma,g_\psi\big)\big|^2,
\end{aligned}
\end{equation}
and
\begin{equation}
\label{eq:linearFunctionalCorrectorGradientBoundMassiveApprox2}
\begin{aligned}
&\bigg\langle\bigg|\bigg(\int \frac{1}{T}f_\phi\phi^T_{\xi,B}, 
\int \frac{1}{T}f_\sigma^{kl}\sigma^T_{\xi,B,kl},
\int \frac{1}{T}f_\psi^k\frac{\psi^T_{\xi,B,k}}{\sqrt{T}}\bigg)\bigg|^{2q}\bigg\rangle^\frac{1}{q}
\\
&\leq C^2q^{2C}\int \frac{1}{T}\big|\big(f_\phi,f_\sigma,f_\psi\big)\big|^2,
\end{aligned}
\end{equation}
as well as
\begin{align}
\label{eq:correctorGradientBoundMassiveApprox}
\Big\langle\Big\|\Big(\nabla\phi^T_{\xi,B},
\nabla\sigma^T_{\xi,B},\frac{\nabla\psi^T_{\xi,B}}{\sqrt{T}}
\Big)\Big\|^{2q}_{L^2(B_1)}\Big\rangle^\frac{1}{q}
&\leq C^2q^{2C},
\\\label{eq:correctorSchauderMassiveApprox}
\Big\langle\Big\|\Big(\nabla\phi^T_{\xi,B},
\nabla\sigma^T_{\xi,B},\frac{\nabla\psi^T_{\xi,B}}{\sqrt{T}}
\Big)\Big\|^{2q}_{C^\alpha(B_1)}\Big\rangle^\frac{1}{q}
&\leq C^2q^{2C},
\\\label{eq:correctorGrowthoundMassiveApprox}
\bigg\langle\bigg|\,\dashint_{B_1}\Big(\phi^T_{\xi,B},
\sigma^T_{\xi,B},\frac{\psi^T_{\xi,B}}{\sqrt{T}}
\Big)\bigg|^{2q}\bigg\rangle^\frac{1}{q}
&\leq C^2q^{2C}\mu_*^2(\sqrt{T}),
\end{align}
with the scaling function $\mu_*\colon\mathbb{R}_{>0}\to \mathbb{R}_{>0}$
given in~\eqref{eq:scalingCorrectorBounds}. Moreover, the 
relation~\eqref{eq:PDEhigherOrderLinearizedHelmholtzDecompLocalized} holds true.
\end{theorem}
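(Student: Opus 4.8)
\textbf{Strategy for the proof of Theorem~\ref{theo:correctorBoundsMassiveApprox}.}
The plan is to run an induction over the linearization order $L\in\N$, where the base case $L=1$ is already supplied by the first-order massive corrector estimates of \cite{Fischer2019} (and is recorded in Appendix~\ref{app:baseCaseInd} of the present paper). So fix $L\geq 2$ and assume the full conclusion of the theorem has been established for all linearization orders $1,\ldots,L{-}1$ (for \emph{every} admissible symmetric tensor of that order, since the right-hand sides of \eqref{eq:PDEhigherOrderLinearizedCorrectorLocalized} couple different lower-order correctors through the partition sum). The guiding principle, following~\cite{Gloria2020}--\cite{Gloria2019}, \cite{Fischer2019} and~\cite{Josien2020}, is that the $L$th-order massive corrector $\phi^T_{\xi,B}$ solves a \emph{linear} massive elliptic equation $\frac1T\phi^T_{\xi,B}-\nabla\cdot a_\xi\nabla\phi^T_{\xi,B}=\nabla\cdot h$ with coefficient field $a_\xi=\partial_\xi A(\omega,\xi+\nabla\phi_\xi^T)$ (uniformly elliptic and bounded by~(A1),~(A2)$_L$) and with an explicit inhomogeneity $h$ built out of $\partial_\xi^{|\Pi|}A(\omega,\xi+\nabla\phi_\xi^T)$ and the symmetric products of the lower-order correctors $\nabla\phi^T_{\xi,B'_\pi}$ (and the constant tensors $B'_\pi$ when $|\pi|=1$). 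Thus the task reduces to (i) controlling $h$ in the relevant annealed norms via the induction hypothesis, and (ii) running the by-now standard \emph{linear} machinery---sensitivity estimate via the spectral gap, Meyers perturbation, and the deterministic Green-function / large-scale regularity input---for this one linear problem with random right-hand side.

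First I would set up the deterministic and stochastic inputs. On the deterministic side, I invoke the annealed small-scale regularity of the nonlinear massive corrector, Lemma~\ref{lem:annealedRegCorrectorNonlinear}, and the resulting small-scale H\"older regularity of $a_\xi$, Lemma~\ref{lem:annealedHoelderRegLinCoefficient}; together with the uniform ellipticity of $a_\xi$ this gives a Meyers exponent and the perturbative annealed Calder\'on--Zygmund / Schauder estimates (Appendix~\ref{app:toolsEllipticRegularity}) needed for \eqref{eq:correctorGradientBoundMassiveApprox} and especially the Schauder bound \eqref{eq:correctorSchauderMassiveApprox}. Existence and uniqueness of $(\phi^T_{\xi,B},\sigma^T_{\xi,B},\psi^T_{\xi,B})$ in $H^1_{\mathrm{uloc}}$ are granted by Lemma~\ref{eq:lemmaExistenceLinearizedCorrectorsShort}. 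On the stochastic side, the core estimate is the dual/sensitivity one: for a deterministic test field $g_\phi$ (and similarly for the flux $\sigma$ and for $\psi$), write $\int g_\phi\cdot\nabla\phi^T_{\xi,B}$ and compute its vertical ($\fct$) derivative. Differentiating the equation in $\omega$ produces (a) a term where the coefficient $a_\xi$ is perturbed---this is the term already analyzed at order $L{=}1$, and it is handled by introducing the auxiliary Green's-function-type field solving the adjoint massive equation with right-hand side $\nabla\cdot g_\phi$, exactly as in \cite{Fischer2019}; and (b) terms where the inhomogeneity $h$ is perturbed, which by the product/chain rule reduce to vertical derivatives of the lower-order correctors tested against deterministic fields, i.e.\ precisely the quantities controlled by \eqref{eq:linearFunctionalCorrectorGradientBoundMassiveApprox}--\eqref{eq:linearFunctionalCorrectorGradientBoundMassiveApprox2} at orders $\leq L{-}1$. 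Feeding the resulting pointwise bound on $\dashint_{B_1(x)}|\fct(\int g_\phi\cdot\nabla\phi^T_{\xi,B})|$ into the spectral gap inequality (P2), and using Lemma~\ref{lemma:spectralGapHigherMoments} to upgrade to the $2q$th moment with the stated $q^{2C}$ dependence, yields \eqref{eq:linearFunctionalCorrectorGradientBoundMassiveApprox}; the analogous computation with $\frac1T f_\phi$ in place of $\nabla\cdot g_\phi$ gives \eqref{eq:linearFunctionalCorrectorGradientBoundMassiveApprox2}.

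From the annealed linear-functional bounds the remaining estimates follow by now-routine arguments. The local gradient bound \eqref{eq:correctorGradientBoundMassiveApprox} is obtained by a Caccioppoli/energy estimate for the linear massive equation combined with the $h$-bound; \eqref{eq:correctorSchauderMassiveApprox} then upgrades this via the annealed Schauder estimate for $a_\xi$. The growth bound \eqref{eq:correctorGrowthoundMassiveApprox} with the sharp $\mu_*^2(\sqrt T)$ scaling is the characteristic point: one represents $\dashint_{B_1}\phi^T_{\xi,B}$ as a linear functional of $\nabla\phi^T_{\xi,B}$ against a deterministic field whose decay encodes the massive Green's function (as in \cite{Fischer2019}, \cite{Gloria2020}), whence \eqref{eq:linearFunctionalCorrectorGradientBoundMassiveApprox} delivers exactly the dimension-dependent logarithm (for $d=2$) or $\sqrt T$ (for $d=1$). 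The Helmholtz-type relation \eqref{eq:PDEhigherOrderLinearizedHelmholtzDecompLocalized} is checked directly from the definitions of $q^T_{\xi,B}$, $\sigma^T_{\xi,B}$ and $\psi^T_{\xi,B}$ together with stationarity of the gradients.

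\textbf{Main obstacle.} The delicate part is bookkeeping the partition sum in the inhomogeneity $h$ under differentiation: each $\fct$-derivative of $h$ acts on a product of $|\Pi|$ factors, one of which is $\partial_\xi^{|\Pi|}A(\omega,\xi+\nabla\phi^T_\xi)$ (whose $\omega$-derivative by (A3)$_L$ costs a factor that must be paired correctly and contributes a $\nabla\phi^T_\xi$-dependent local factor, to be absorbed using the base-case corrector bounds and the small-scale H\"older regularity), and the rest being lower-order $\nabla\phi^T_{\xi,B'_\pi}$. Keeping the multilinearity in $v_1,\ldots,v_L$ clean so that the final constant depends only on $(d,\lambda,\Lambda,\nu,\rho,\eta,M,L)$ and the scaling is $|B|^2$-homogeneous---here we use that the $v_i$ are unit vectors so $|B|\lesssim 1$ and the homogeneity is automatic---requires care, but presents no conceptual difficulty once the induction hypothesis is invoked factor by factor via H\"older in probability. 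A secondary point is that the perturbed-coefficient term (a) genuinely re-uses the first-order analysis of \cite{Fischer2019} for the \emph{linearized} operator $-\nabla\cdot a_\xi\nabla$ rather than the nonlinear one; one must verify that the hypotheses of that analysis are met by $a_\xi$, which is exactly the content of Lemmas~\ref{lem:annealedRegCorrectorNonlinear} and~\ref{lem:annealedHoelderRegLinCoefficient} together with (A1)--(A3)$_L$.
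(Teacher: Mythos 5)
Your overall architecture (induction over the linearization order, base case from \cite{Fischer2019}, sensitivity estimate via the spectral gap, Meyers/Schauder input from Lemmas~\ref{lem:annealedRegCorrectorNonlinear} and~\ref{lem:annealedHoelderRegLinCoefficient}, representation of $\dashint_{B_1}\phi^T_{\xi,B}$ against the massive Green's function for the $\mu_*^2(\sqrt T)$ scaling) matches the paper. But there is a genuine gap in your treatment of the induction step. You claim that the terms produced by differentiating the inhomogeneity in $\omega$ ``reduce to \ldots precisely the quantities controlled by \eqref{eq:linearFunctionalCorrectorGradientBoundMassiveApprox}--\eqref{eq:linearFunctionalCorrectorGradientBoundMassiveApprox2} at orders $\leq L{-}1$.'' This is false: differentiating \eqref{eq:PDEhigherOrderLinearizedCorrectorLocalized} produces, among others, the terms $\partial_\omega\partial_\xi A(\omega,\xi{+}\nabla\phi^T_\xi)[\delta\omega\odot\nabla\phi^T_{\xi,B}]$ and $\partial_\xi^2 A(\omega,\xi{+}\nabla\phi^T_\xi)[\nabla\delta\phi^T_\xi\odot\nabla\phi^T_{\xi,B}]$, both of which contain the gradient of the \emph{top-order} corrector $\nabla\phi^T_{\xi,B}$ itself. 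Consequently the spectral-gap estimate for $\int g_\phi\cdot\nabla\phi^T_{\xi,B}$ is self-referential: its right-hand side involves a higher stochastic moment of $\|\nabla\phi^T_{\xi,B}\|_{L^2(B_1)}$ (this is exactly the structure of Lemma~\ref{lem:annealedLinFunctionalsIndStep}, terms $I_1$ and $I_3$), and the induction hypothesis alone cannot close it.

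The paper resolves this with a buckling argument that your proposal omits entirely: one introduces the minimal radius $r_{*,T,\xi,B}$ (Definition~\ref{def:minRadiusHigherOrderLinearizations}), proves an annealed small-scale energy estimate bounding moments of $\nabla\phi^T_{\xi,B}$ by moments of $r_{*,T,\xi,B}$ (Lemma~\ref{lem:annealedSmallScaleEnergyEstimate}), derives a crude a priori bound $\lesssim_q\sqrt T^{\,d}$ from the weighted (exponentially localized) energy estimate to guarantee finiteness of arbitrary moments, and then buckles by optimizing the H\"older exponent $\tau$ and the parameter $\underline\gamma$ so that the self-referential term appears with a strictly smaller power $(1-\tau)^2<1$ and can be absorbed (Lemma~\ref{lem:momentBoundMinRadius}). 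Only after establishing stretched exponential moments of $r_{*,T,\xi,B}$ uniformly in $T$ do the estimates \eqref{eq:linearFunctionalCorrectorGradientBoundMassiveApprox} and \eqref{eq:correctorGradientBoundMassiveApprox} for $\phi^T_{\xi,B}$ follow. Without this step your argument, as written, would fail at the very first application of the spectral gap inequality at order $L$. (A minor additional point: the base case of the induction is the nonlinear corrector $\phi^T_\xi$ itself, i.e.\ linearization order $0$, not order $1$.)
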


As an input for the base case of the induction we will take the localized corrector of the nonlinear problem.
So let us start by quickly reviewing the corresponding results from~\cite{Fischer2019}.

\subsection{Corrector estimates for the nonlinear PDE: A brief review}
Given $\xi\in\Rd$ and $T\in [1,\infty)$,
the equation for the localized homogenization corrector is given by
\begin{subequations}
\begin{align}
\label{eq:PDEMonotoneHomCorrectorLocalized}
&\frac{1}{T}\phi^T_\xi-\nabla\cdot A(\omega,\xi{+}\nabla\phi_\xi^T)
= 0.
\end{align}
Abbreviating the flux by means of
\begin{align}
\label{eq:fluxNonlinear}
q_{\xi}^T &:= A(\omega,\xi{+}\nabla\phi_\xi^T),
\end{align}
the equation for the corresponding localized flux corrector is given by
\begin{align}
\label{eq:PDEMonotoneFluxCorrectorLocalized}
\frac{1}{T}\sigma_{\xi,kl}^T - \Delta\sigma_{\xi,kl}^T 
&= (e_l\otimes e_k - e_k\otimes e_l) : \nabla q_{\xi}^T.
\end{align}
Moreover, we introduce an auxiliary localized corrector by means of
\begin{align}
\label{eq:PDEMonotoneHelmholtzCorrectorLocalized}
\frac{1}{T}\psi^T_\xi - \Delta\psi^T_\xi = q^T_\xi - \langle q^T_\xi\rangle - \nabla\phi^T_\xi.
\end{align}
The motivation behind the introduction of the auxiliary corrector $\psi^T_\xi$ is
to mimic equation~\eqref{eq:PDEMonotoneHelmholtzDecomp} for the flux correction 
at the level of the massive approximation:
\begin{align}
\label{eq:PDEMonotoneHelmholtzDecompLocalized}
q^T_\xi - \langle q^T_\xi \rangle
= \nabla\cdot\sigma^T_\xi + \frac{1}{T}\psi^T_\xi.
\end{align}
\end{subequations}
We then have the following result, which was essentially proven 
by Fischer and Neukamm~\cite{Fischer2019}. For a proof of
those facts which are not explicitly spelled out in~\cite{Fischer2019},
we refer to the beginning of Appendix~\ref{app:baseCaseInd}.

\begin{proposition}[Estimates for localized homogenization correctors of the nonlinear problem]
\label{prop:estimates1stOrderCorrector}
Let the requirements and notation of (A1), (A2)$_0$ and (A3)$_0$ of
Assumption~\ref{assumption:operators}, as well as (P1) and (P2) of
Assumption~\ref{assumption:ensembleParameterFields} be in place.
Let $T\in [1,\infty)$ be fixed, and for any $\xi\in\Rd$ let 
\begin{align*}
\phi^T_{\xi}\in H^1_{\mathrm{uloc}}(\Rd)
\end{align*}
denote the unique solution of the localized corrector problem~\eqref{eq:PDEMonotoneHomCorrectorLocalized}.
The localized homogenization corrector $\phi^T_\xi$ then admits the following
list of estimates:
\begin{itemize}[leftmargin=0.6cm]
\item There exists a constant $C=C(d,\lambda,\Lambda,\nu,\rho)$ such that for
all $q\in [1,\infty)$, and all compactly supported and square-integrable $f,g$ we have \emph{corrector estimates}
\begin{equation}
\label{eq:BaseCaseCorrectorBounds}
\begin{aligned}
\bigg\langle\bigg|\bigg(\int g\cdot\nabla\phi^T_{\xi}, 
\int \frac{1}{T}f \phi^T_{\xi} \bigg)\bigg|^{2q}\bigg\rangle^\frac{1}{q}
&\leq C^2q^{2C}|\xi|^2\int \Big|\Big(g,\frac{f}{\sqrt{T}}\Big)\Big|^2,
\\ 
\Big\langle\Big\|\Big(\frac{\phi^T_{\xi}}{\sqrt{T}},
\nabla\phi^T_{\xi}\Big)\Big\|^{2q}_{L^2(B_1)}\Big\rangle^\frac{1}{q}
&\leq C^2q^{2C}|\xi|^2.
\end{aligned}
\end{equation}
\begin{subequations}
\item Fix $p\in (2,\infty)$, and let $g$ be a compactly supported and $p$-integrable random field. 
						Then there exists a random field
						$G_\xi^T\in L^1_{\mathrm{uloc}}(\Rd;\Rd[n])$ being related to $g$ via $\phi^T_{\xi}$ 
						in the sense that, $\Prob$-almost surely, it holds for all compactly supported and smooth perturbations 
						$\delta\omega\colon\Rd\to \Rd[n]$ with $\|\delta\omega\|_{L^\infty}\leq 1$
						\begin{align}
						\label{eq:BaseCaseRepresentationMalliavinDerivative}
						\int g\cdot\nabla\delta\phi^T_{\xi} = \int G^T_{\xi}\cdot\delta\omega.
						\end{align}
						
						For any $\kappa\in (0,1]$ there moreover exists a constant $C=C(d,\lambda,\Lambda,\nu,\rho,\kappa)$
						such that for all $q\in [1,\infty)$ the random field
						$G^T_{\xi}$ gives rise to a \emph{sensitivity estimate}
						\begin{align}
						\label{eq:BaseCaseSensitivityBound}
						\bigg\langle\bigg|\,\int\bigg(
						\,\dashint_{B_1(x)}|G^T_{\xi}|\,\bigg)^2\bigg|^{q}\bigg\rangle^\frac{1}{q}
						&\leq C^2q^{2C}|\xi|^2\sup_{\langle F^{2q_*}\rangle=1}
						\int\big\langle|Fg|^{2(q/\kappa)_*}\big\rangle^\frac{1}{(q/\kappa)_*}.
						\end{align}
						
						If $(g_r)_{r\geq 1}$ is a sequence of compactly supported and $p$-integrable random fields, denote by
						$G_{\xi}^{T,r}\in L^1_{\mathrm{uloc}}(\Rd;\Rd[n])$, $r\geq 1$, the random field
						associated to $g_r$, $r\geq 1$, in the sense of~\eqref{eq:BaseCaseRepresentationMalliavinDerivative}.
						Let $g$ be an $L^p_{\mathrm{uloc}}(\Rd;\Rd)$-valued random field, and assume that
						$\Prob$-almost surely it holds $g_r\to g$ in $L^p_{\mathrm{uloc}}(\Rd;\Rd)$.
						Then there exists a random field $G^T_{\xi}$ such that $\Prob$-almost surely						
						\begin{align}
						\label{eq:BaseCaseSensitivityApprox}
						G_{\xi}^{T,r} \to G_{\xi}^{T} \text{ as } r\to\infty 
						\text{ in } L^1_{\mathrm{uloc}}(\Rd;\Rd[n]).
						\end{align}
						In the special case of $g_r = \mathds{1}_{B_r}g$, $r\geq 1$, the limit random
						field is in addition subject to	the sensitivity estimate~\eqref{eq:BaseCaseSensitivityBound}.
\end{subequations}
\item Let in addition to the above requirements the condition~(R) of 
Assumption~\ref{assumption:smallScaleReg} be in place, and let $M>0$ be fixed.
There exist $\alpha=\alpha(d,\lambda,\Lambda)\in (0,\eta)$ and $C=C(d,\lambda,\Lambda,\nu,\rho,\eta,M)$
such that for all $q\in [1,\infty)$ and all $|\xi|\leq M$ we have a 
\emph{small-scale annealed Schauder estimate} in form of
\begin{align}
\label{eq:BaseCaseAnnealedSchauder}
\big\langle\big\|\nabla\phi^T_{\xi}\big\|^{2q}_{C^\alpha(B_1)}\big\rangle^\frac{1}{q}
\leq C^2q^{2C}.
\end{align}
\end{itemize}
\end{proposition}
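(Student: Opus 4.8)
The plan is to follow the well-established route of quantitative stochastic homogenization under a spectral gap assumption: establish first the purely deterministic a priori and sensitivity estimates for the massive corrector $\phi^T_\xi$, and then convert the sensitivity estimate into stochastic moment bounds via the higher-moment spectral gap inequality of Lemma~\ref{lemma:spectralGapHigherMoments}. I would begin with existence, uniqueness and the basic energy estimate. Existence and uniqueness of $\phi^T_\xi\in H^1_{\mathrm{uloc}}(\Rd)$ follow from the monotonicity~(A1) and boundedness~(A2)$_0$ of $A(\omega,\cdot)$ together with the coercive zeroth-order term $\frac1T\phi^T_\xi$ (a Browder--Minty argument adapted to the $H^1_{\mathrm{uloc}}$ framework); testing~\eqref{eq:PDEMonotoneHomCorrectorLocalized} against $\phi^T_\xi$ cut off to a unit ball and using $A(\omega,0)=0$ yields the deterministic estimate $\frac1T\|\phi^T_\xi\|^2_{L^2(B_1(x))}+\|\nabla\phi^T_\xi\|^2_{L^2(B_1(x))}\lesssim|\xi|^2$ uniformly in $x\in\Rd$. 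Since the problem is $\Rd$-stationary, this upgrades to $\langle\frac1T|\phi^T_\xi|^2+|\nabla\phi^T_\xi|^2\rangle\lesssim|\xi|^2$, which is already the $q=1$ version of the second line of~\eqref{eq:BaseCaseCorrectorBounds}; the higher moments will be produced by the spectral gap argument below.

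For the sensitivity calculus, fix a smooth compactly supported perturbation $\delta\omega$ with $\|\delta\omega\|_{L^\infty}\leq1$ and differentiate~\eqref{eq:PDEMonotoneHomCorrectorLocalized} at $\omega$ in direction $\delta\omega$. Writing $a^T_\xi:=\partial_\xi A(\omega,\xi+\nabla\phi^T_\xi)$, the directional derivative $\delta\phi^T_\xi$ solves the uniformly elliptic linear equation
\begin{align*}
\tfrac1T\delta\phi^T_\xi-\nabla\cdot\big(a^T_\xi\nabla\delta\phi^T_\xi\big)
=\nabla\cdot\big(\partial_\omega A(\omega,\xi+\nabla\phi^T_\xi)\,\delta\omega\big),
\end{align*}
with ellipticity constants $(\lambda,\Lambda)$ by~(A1)--(A2)$_0$. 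To represent $\int g\cdot\nabla\delta\phi^T_\xi$ I would solve the adjoint massive problem $\frac1T v-\nabla\cdot\big((a^T_\xi)^{*}\nabla v\big)=\nabla\cdot g$ in $H^1_{\mathrm{uloc}}(\Rd)$ and test it against $\delta\phi^T_\xi$; this gives $\int g\cdot\nabla\delta\phi^T_\xi=-\int\partial_\omega A(\omega,\xi+\nabla\phi^T_\xi)\,\delta\omega\cdot\nabla v$, i.e.\ the representation~\eqref{eq:BaseCaseRepresentationMalliavinDerivative} holds with $G^T_\xi$ the pointwise contraction of $\partial_\omega A(\omega,\xi+\nabla\phi^T_\xi)$ against $\nabla v$, obeying by~(A3)$_0$ the pointwise bound $|G^T_\xi|\lesssim\big(|\xi|+|\nabla\phi^T_\xi|\big)|\nabla v|$. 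It remains to bound the $q$-th moment of $\int\big(\dashint_{B_1(x)}|G^T_\xi|\big)^2$; here I would apply an annealed Calder\'on--Zygmund (Meyers-type) estimate for the massive operator $\frac1T-\nabla\cdot(a^T_\xi)^{*}\nabla$, as collected in Appendix~\ref{app:toolsEllipticRegularity}, to pass from the $L^2$-energy bound on $\nabla v$ to a weighted $L^{2(q/\kappa)_*}$-type bound, the exponent $\kappa\in(0,1]$ quantifying the integrability gained from Meyers; the factor $|\xi|+|\nabla\phi^T_\xi|$ is absorbed using this same extra integrability together with the annealed Meyers bound on $\nabla\phi^T_\xi$. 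This yields the sensitivity estimate~\eqref{eq:BaseCaseSensitivityBound}.

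With the sensitivity estimate at hand, the corrector estimates~\eqref{eq:BaseCaseCorrectorBounds} follow by applying Lemma~\ref{lemma:spectralGapHigherMoments} to the random variables $X=\int g\cdot\nabla\phi^T_\xi$ and $X=\int\frac1T f\phi^T_\xi$, whose functional derivatives are given, via the representation above, by pairing with $G^T_\xi$ (for the second functional one tests the $\delta\phi^T_\xi$-equation instead against the solution of $\frac1T v-\nabla\cdot((a^T_\xi)^{*}\nabla v)=\frac1T f$); inserting~\eqref{eq:BaseCaseSensitivityBound}, optimising over the auxiliary field $F$ and tracking the constants produces the stated bound, with the $q$-dependence $q^{2C}$ coming from the $q^2$ of Lemma~\ref{lemma:spectralGapHigherMoments} propagated through the Meyers exponent. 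The $L^2(B_1)$ bound in the second line of~\eqref{eq:BaseCaseCorrectorBounds} follows either from the first line by a duality and localisation argument, or directly from the energy estimate combined with the spectral gap applied to $\|\nabla\phi^T_\xi\|_{L^2(B_1)}$. The stability statements~\eqref{eq:BaseCaseSensitivityApprox} are a consequence of the continuous dependence of $v$, hence of $G^T_\xi$, on the data $g$ in $L^p_{\mathrm{uloc}}(\Rd;\Rd)$, together with the uniform bound~\eqref{eq:BaseCaseSensitivityBound} in the special case $g_r=\mathds{1}_{B_r}g$.

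Finally, for the small-scale annealed Schauder estimate~\eqref{eq:BaseCaseAnnealedSchauder} I would combine interior De Giorgi--Nash--Moser regularity for the monotone equation~\eqref{eq:PDEMonotoneHomCorrectorLocalized} on a fixed unit ball --- which yields $\nabla\phi^T_\xi\in C^{\alpha}$ with $\alpha=\alpha(d,\lambda,\Lambda)$ once the coefficient field is H\"older --- with the H\"older regularity of $\omega$ supplied by condition~(R), transferred to the coefficient through~(A2)$_0$/(A3)$_0$ (cf.\ Lemma~\ref{lem:annealedRegCorrectorNonlinear} and Lemma~\ref{lem:annealedHoelderRegLinCoefficient}); taking $2q$-th moments of the resulting deterministic bound and using the moment bound in~(R) gives~\eqref{eq:BaseCaseAnnealedSchauder}. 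The main obstacle is the feedback loop inherent in the sensitivity estimate: its kernel carries the factor $|\xi|+|\nabla\phi^T_\xi|$, which is itself (essentially) the quantity being estimated, so closing the argument is genuinely a buckling in stochastic integrability. The mechanism that makes it work is the controlled loss of integrability in the annealed Meyers estimates, encoded by the exponent $\kappa$, and importing and adapting that perturbative annealed regularity theory to the massive setting --- following Fischer and Neukamm~\cite{Fischer2019} and Gloria, Neukamm and Otto~\cite{Gloria2020} --- is where the bulk of the technical work lies.
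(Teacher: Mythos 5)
Your overall architecture (sensitivity calculus, dual massive operator, annealed Meyers, spectral gap) is the right one, but it diverges from the paper in a structural way: the paper does \emph{not} re-derive the corrector estimates~\eqref{eq:BaseCaseCorrectorBounds} at all --- it imports them wholesale from Fischer--Neukamm \cite[Lemma~17a), Lemma~19, Estimate~(111)]{Fischer2019} --- and then proves the sensitivity estimate~\eqref{eq:BaseCaseSensitivityBound} \emph{using} those bounds as input, so that the factor $\langle\|\xi+\nabla\phi^T_\xi\|^{2(q/\kappa)}_{L^2(B_1)}\rangle^{\kappa/q}$ appearing after H\"older and the perturbative annealed Calder\'on--Zygmund estimate~\eqref{eq:annealedCZMeyers} is immediately bounded by $C^2q^{2C}|\xi|^2$ with no circularity. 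You instead propose to close the loop the other way around: derive the sensitivity estimate with the self-referential kernel $|\xi|+|\nabla\phi^T_\xi|$, feed it into Lemma~\ref{lemma:spectralGapHigherMoments}, and buckle. That is essentially a re-proof of the cited result of~\cite{Fischer2019}, and while it is the correct mechanism there, executing it requires the additional scaffolding you only gesture at (a suboptimal a priori moment bound to start the buckling, e.g.\ via exponential localization, and a controlled loss of stochastic integrability at each pass --- compare the minimal-radius machinery the paper deploys for the induction step in Lemmas~\ref{lem:annealedSmallScaleEnergyEstimate}--\ref{lem:momentBoundMinRadius}). What the paper's arrangement buys is a short, non-circular proof of exactly the two items it owes the reader (the sensitivity representation/estimate and the approximation statement, the latter via continuity of the dual solution plus Fatou, as you also say); what your arrangement would buy is self-containedness at the cost of reproducing a substantial portion of~\cite{Fischer2019}.

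One further caveat: for~\eqref{eq:BaseCaseAnnealedSchauder} the phrase ``DGNM regularity once the coefficient field is H\"older'' undersells the difficulty. The equation is nonlinear, so there is no a priori H\"older coefficient to plug into Schauder theory; the paper's Lemma~\ref{lem:annealedRegCorrectorNonlinear} instead runs a harmonic-approximation/Campanato iteration with the $\omega$-dependence frozen at a point, combined with a dyadic decomposition over the events where the H\"older seminorm of $\omega$ has a given size, precisely because condition~(R) here is weaker than the corresponding assumption in~\cite{Fischer2019}. You do cite the right lemma, so this is a matter of emphasis rather than a wrong step, but the Campanato argument is where the actual work sits.
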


\subsection{Corrector bounds for higher-order linearizations: the induction hypotheses}
\label{subsec:indHypo}
Let $L\in\N$ and $T\in [1,\infty)$ be fixed. If not otherwise explicitly stated,
let the requirements and notation of (A1), (A2)$_L$ and (A3)$_L$ of
Assumption~\ref{assumption:operators}, (P1) and (P2) of
Assumption~\ref{assumption:ensembleParameterFields}, and (R) of 
Assumption~\ref{assumption:smallScaleReg} be in place.
Before we can formulate the induction hypothesis, we first have to introduce the
analogues for the higher-order linearized homogenization correctors
on the level of the massive approximation. To this end, we fix a set of 
unit vectors $v_1,\ldots,v_L\in\Rd$ and define
$B:=v_1\odot\cdots\odot v_{L}$. Next, fix $\xi\in\Rd$ and
denote by $a_\xi^T$ the coefficient field $\partial_\xi A(\omega,\xi+\nabla\phi^T_\xi)$. 
Due to Assumption~\ref{assumption:operators}, the coefficient field
is uniformly elliptic and bounded with respect to the constants $(\lambda,\Lambda)$
from Assumption~\ref{assumption:operators}.

In anticipation of the higher-order
differentiability of the localized corrector for the nonlinear PDE, we introduce
the equation for the \emph{localized $L$th-order linearized homogenization corrector
in direction $B$} by means of the Fa\`a~di~Bruno formula in form of
\begin{subequations}
\begin{equation}
\label{eq:PDEhigherOrderLinearizedCorrectorLocalized}
\begin{aligned}
&\frac{1}{T}\phi^T_{\xi,B} 
- \nabla\cdot a_\xi^T\big(\mathds{1}_{L=1}B + \nabla \phi^T_{\xi,B}\big)
\\&
= \nabla\cdot\sum_{\substack{\Pi\in\mathrm{Par}\{1,\ldots,L\} \\ \Pi\neq\{\{1,\ldots,L\}\}}}
\partial_\xi^{|\Pi|} A(\omega,\xi{+}\nabla \phi^T_\xi)
\Big[\bigodot_{\pi\in\Pi}(\mathds{1}_{|\pi|=1}B'_\pi + \nabla \phi^T_{\xi,B'_\pi})\Big],
\end{aligned}
\end{equation}
where we also introduced the notational convention
\begin{align*}
B'_\pi := \bigodot_{m\in\pi} v_{m},\quad\forall\pi\in\Pi,\,\Pi\in \mathrm{Par}\{1,\ldots,L\}.
\end{align*}

Note that the right hand side of~\eqref{eq:PDEhigherOrderLinearizedCorrectorLocalized}
only features linearized homogenization correctors up to order $L{-}1$, if any.
Hence, it turns out that we may argue inductively using standard (and, in particular, only deterministic) arguments,
that the corrector problem~\eqref{eq:PDEhigherOrderLinearizedCorrectorLocalized}
admits for every random parameter field $\omega\in\Omega$ a unique solution 
\begin{align*}
\phi^T_{\xi,B}=\phi^T_{\xi,B}(\cdot,\omega)\in H^1_{\mathrm{uloc}}(\Rd).
\end{align*}
In particular, the uniqueness part of this statement entails stationarity of the
linearized corrector $\phi^T_{\xi,B}$
in the sense that for each $z\in\Rd$ and each random $\omega\in\Omega$ it holds
\begin{align*}
\phi^T_{\xi,B}(\cdot + z,\omega) = \phi^T_{\xi,B}(\cdot,\omega(\cdot + z))
\quad\text{almost everywhere in } \Rd.
\end{align*}
An analogous statement holds true for the linearized flux correctors 
\begin{align*}
\sigma^T_{\xi,B}\in H^1_{\mathrm{uloc}}(\Rd;\mathbb{R}^{d\times d}_{\mathrm{skew}})
\quad\text{and}\quad
\psi^T_{\xi,B}\in H^1_{\mathrm{uloc}}(\Rd;\mathbb{R}^{d}).
\end{align*}
These are more precisely the unique solutions of
\begin{align}
\label{eq:PDEhigherOrderLinearizedFluxCorrectorLocalized}
\frac{1}{T}\sigma_{\xi,B,kl}^T - \Delta\sigma_{\xi,B,kl}^T 
&= (e_l\otimes e_k - e_k\otimes e_l) : \nabla q_{\xi,B}^T,
\end{align}
respectively
\begin{align}
\label{eq:PDEhigherOrderLinearizedHelmholtzCorrectorLocalized}
\frac{1}{T}\psi^T_{\xi,B} - \Delta\psi^T_{\xi,B} 
= q^T_{\xi,B} - \langle q^T_{\xi,B}\rangle - \nabla\phi^T_{\xi,B},
\end{align}
with the linearized flux being defined by
\begin{equation}
\label{eq:HigherOrderLinearizedFlux}
\begin{aligned}
q_{\xi,B}^T &:= a_\xi^T(\mathds{1}_{L=1}B + \nabla \phi^T_{\xi,B})
\\&~~~~
+ \sum_{\substack{\Pi\in\mathrm{Par}\{1,\ldots,L\} \\ \Pi\neq\{\{1,\ldots,L\}\}}}
\partial_\xi^{|\Pi|} A(\omega,\xi{+}\nabla \phi^T_\xi)
\Big[\bigodot_{\pi\in\Pi}(\mathds{1}_{|\pi|=1}B'_\pi + \nabla \phi^T_{\xi,B'_\pi})\Big].
\end{aligned}
\end{equation}
As in the case of the corrector for the nonlinear PDE with an additional massive term,
the relations~\eqref{eq:PDEhigherOrderLinearizedCorrectorLocalized}--\eqref{eq:HigherOrderLinearizedFlux} 
will give rise to the equation
\begin{align}
\label{eq:PDEhigherOrderLinearizedHelmholtzDecompLocalized}
q^T_{\xi,B} - \langle q^T_{\xi,B} \rangle
= \nabla\cdot\sigma^T_{\xi,B} + \frac{1}{T}\psi^T_{\xi,B}.
\end{align}
\end{subequations}

With all of this notation in place, we can state the following
result on existence of (higher-order) linearized correctors.
For a proof, we refer the reader to Appendix~\ref{app:existenceCorrectors}
where we also formulate and proof a corresponding result
on the differentiability of (higher-order) linearized correctors
with respect to the parameter field.

\begin{lemma}[Existence of localized correctors]
\label{eq:lemmaExistenceLinearizedCorrectorsShort}
Let $L\in\N$ and $T\in [1,\infty)$ be fixed.
Let the requirements and notation of (A1), (A2)$_{L{-}1}$ and (A3)$_{L{-}1}$ of
Assumption~\ref{assumption:operators} be in place. Fix $\eta\in (0,1)$,
and consider a parameter field $\tilde\omega\colon\Rd\to B^n_1$
such that for all $p\geq 2$ it holds
\begin{align}
\label{eq:ergodicityParameterField}
\sup_{x_0\in\Rd} \limsup_{R\to\infty}\dashint_{B_R(x_0)}
\bigg|\sup_{y,z\in B_1(x),\,y\neq z} \frac{|\tilde\omega(y)-\tilde\omega(z)|}{|y-z|^\eta}\,\bigg|^p < \infty.
\end{align} 

Under these assumptions, one obtains inductively that for all $l\in\{1,\ldots,L\}$ and all $B:=v_1\odot\cdots\odot v_l$
formed by unit vectors $v_1,\ldots,v_l\in\Rd$, there exists a unique solution
\begin{align*}
\phi^T_{\xi,B}=\phi^T_{\xi,B}(\cdot,\tilde\omega)
\in H^1_{\mathrm{uloc}}(\Rd)
\end{align*}
of the linearized corrector problem~\eqref{eq:PDEhigherOrderLinearizedCorrectorLocalized}
with $\omega$ replaced by $\tilde\omega$. The linearized corrector $\phi^T_{\xi,B}(\cdot,\tilde\omega)$
moreover satisfies for all $p\in [2,\infty)$
\begin{align}
\label{eq:ergodicityLinearizedCorrector}
\sup_{x_0\in\Rd} \limsup_{R\to\infty}\dashint_{B_R(x_0)}
\Big|\Big(\frac{\phi^T_{\xi,B}(\cdot,\tilde\omega)}{\sqrt{T}},
\nabla\phi^T_{\xi,B}(\cdot,\tilde\omega)\Big)\Big|^p < \infty.
\end{align} 
There also exist unique solutions
\begin{align*}
\sigma^T_{\xi,B}=\sigma^T_{\xi,B}(\cdot,\tilde\omega)
&\in H^1_{\mathrm{uloc}}(\Rd;\mathbb{R}^{d\times d}_{\mathrm{skew}}),
\\
\psi^T_{\xi,B}=\psi^T_{\xi,B}(\cdot,\tilde\omega)
&\in H^1_{\mathrm{uloc}}(\Rd;\mathbb{R}^{d})
\end{align*}
of the linearized flux corrector problems 
\eqref{eq:PDEhigherOrderLinearizedFluxCorrectorLocalized}
resp.\ \eqref{eq:PDEhigherOrderLinearizedHelmholtzCorrectorLocalized}
with $\omega$ replaced by $\tilde\omega$. The analogue
of~\eqref{eq:ergodicityLinearizedCorrector} holds true for these flux correctors.

In particular, under the requirements of (A1), (A2)$_{L{-}1}$ and (A3)$_{L{-}1}$ of
Assumption~\ref{assumption:operators}, (P1) and (P2) of
Assumption~\ref{assumption:ensembleParameterFields}, and (R) of 
Assumption~\ref{assumption:smallScaleReg}, there exists
a set $\Omega'\subset\Omega$ of full $\Prob$-measure on which the existence of
(higher-order) linearized correctors is guaranteed
in the above sense for all random parameter fields $\omega\in\Omega'$.
\end{lemma}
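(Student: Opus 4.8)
I would argue by induction over the linearization order $l\in\{1,\ldots,L\}$. The mechanism is that, once the linearized correctors of all orders $<l$ are available, the corrector problem~\eqref{eq:PDEhigherOrderLinearizedCorrectorLocalized} becomes a \emph{linear}, $\lambda$-elliptic, $\Lambda$-bounded equation with the additional massive term $\tfrac1T\,\cdot$ whose right-hand side is an explicit algebraic expression in the lower-order corrector gradients and in the $\xi$-derivatives $\partial_\xi^k A(\tilde\omega,\xi{+}\nabla\phi^T_\xi)$. As base input I take the nonlinear localized corrector $\phi^T_\xi$: the deterministic well-posedness theory for the massive monotone problem (the part of Proposition~\ref{prop:estimates1stOrderCorrector} reviewed at the start of Appendix~\ref{app:baseCaseInd}) yields for every parameter field $\tilde\omega$ subject to~\eqref{eq:ergodicityParameterField} a unique $\phi^T_\xi\in H^1_{\mathrm{uloc}}(\Rd)$ solving~\eqref{eq:PDEMonotoneHomCorrectorLocalized}, and feeding~\eqref{eq:ergodicityParameterField} into the interior Schauder estimate for~\eqref{eq:PDEMonotoneHomCorrectorLocalized} gives $\nabla\phi^T_\xi\in C^\alpha_{\mathrm{uloc}}(\Rd)$ for some $\alpha=\alpha(d,\lambda,\Lambda)\in(0,\eta)$. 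Composing with the maps $\xi\mapsto\partial_\xi^kA(\tilde\omega,\xi)$ and invoking the boundedness and the ($\xi$- and $\omega$-)Lipschitz bounds of~(A2)$_{L-1}$ and~(A3)$_{L-1}$, one obtains $a^T_\xi=\partial_\xi A(\tilde\omega,\xi{+}\nabla\phi^T_\xi)$ and $\partial_\xi^kA(\tilde\omega,\xi{+}\nabla\phi^T_\xi)$, $1\le k\le L$, in $C^\alpha_{\mathrm{uloc}}(\Rd)$, with uniformly local norms controlled by $\Lambda$ and the quantity in~\eqref{eq:ergodicityParameterField}.

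\textbf{Induction step.} Suppose $\phi^T_{\xi,B'},\sigma^T_{\xi,B'},\psi^T_{\xi,B'}$ have been constructed for all $B'$ of order $<l$ with gradients in $C^\alpha_{\mathrm{uloc}}(\Rd)$. Then each summand on the right of~\eqref{eq:PDEhigherOrderLinearizedCorrectorLocalized} is a product of the bounded H\"older field $\partial_\xi^{|\Pi|}A(\tilde\omega,\xi{+}\nabla\phi^T_\xi)$ (with $2\le|\Pi|\le l\le L$, so this derivative exists by~(A2)$_{L-1}$) and finitely many factors $\mathds{1}_{|\pi|=1}B'_\pi+\nabla\phi^T_{\xi,B'_\pi}$, each a constant unit vector or a $C^\alpha_{\mathrm{uloc}}$ corrector gradient of order $<l$; hence the right-hand side equals $\nabla\cdot h$ with $h\in C^\alpha_{\mathrm{uloc}}(\Rd;\Rd)$ of controlled norm. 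It remains to solve in $H^1_{\mathrm{uloc}}$ the linear massive equation $\tfrac1T\phi^T_{\xi,B}-\nabla\cdot(a^T_\xi\nabla\phi^T_{\xi,B})=\nabla\cdot(h+\mathds{1}_{L=1}a^T_\xi B)$ with $a^T_\xi$ bounded, $\lambda$-elliptic and H\"older. Existence and uniqueness here amount to the standard well-posedness of the massive approximation in uniformly local spaces: solve on balls $B_R$ with homogeneous boundary data by Lax--Milgram, derive $H^1$ bounds uniform in $R$ and uniformly local in the center by testing against the solution times an exponential cutoff $e^{-c|x-x_0|/\sqrt T}$ and absorbing the cutoff error into the $\tfrac1T$-term, and let $R\to\infty$; the same weighted test against a difference of two solutions gives uniqueness. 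Interior Schauder applied to the equation just solved upgrades this to $\nabla\phi^T_{\xi,B}\in C^\alpha_{\mathrm{uloc}}(\Rd)$, and the energy estimate --- in which the massive term controls $\phi^T_{\xi,B}/\sqrt T$ in $L^2_{\mathrm{uloc}}$ --- together with the $C^\alpha$-gradient bound yields $(\phi^T_{\xi,B}/\sqrt T,\nabla\phi^T_{\xi,B})\in L^p_{\mathrm{uloc}}(\Rd)$ for all $p$, which dominates the ball-average $\limsup$ in~\eqref{eq:ergodicityLinearizedCorrector}. The flux correctors are handled identically: \eqref{eq:PDEhigherOrderLinearizedFluxCorrectorLocalized} and~\eqref{eq:PDEhigherOrderLinearizedHelmholtzCorrectorLocalized} are constant-coefficient massive equations with data algebraic in the already-controlled fields (in divergence form for $\sigma^T_{\xi,B}$, since $(e_l\otimes e_k-e_k\otimes e_l):\nabla q^T_{\xi,B}$ is a divergence; merely in $C^\alpha_{\mathrm{uloc}}$ for $\psi^T_{\xi,B}$), so the same existence, uniqueness and Schauder regularity apply; skew-symmetry of $\sigma^T_{\xi,B}$ follows from uniqueness because the right-hand side of~\eqref{eq:PDEhigherOrderLinearizedFluxCorrectorLocalized} is antisymmetric in $(k,l)$. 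Finally, identity~\eqref{eq:PDEhigherOrderLinearizedHelmholtzDecompLocalized} is obtained by checking, via $\nabla\cdot q^T_{\xi,B}=\tfrac1T\phi^T_{\xi,B}$ (read off from~\eqref{eq:PDEhigherOrderLinearizedCorrectorLocalized} and~\eqref{eq:HigherOrderLinearizedFlux}) together with~\eqref{eq:PDEhigherOrderLinearizedFluxCorrectorLocalized} and~\eqref{eq:PDEhigherOrderLinearizedHelmholtzCorrectorLocalized}, that $w:=q^T_{\xi,B}-\langle q^T_{\xi,B}\rangle-\nabla\cdot\sigma^T_{\xi,B}-\tfrac1T\psi^T_{\xi,B}$ solves $(\tfrac1T-\Delta)w=0$ in $H^1_{\mathrm{uloc}}$, hence $w=0$ by uniqueness.

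\textbf{From deterministic to almost sure.} Stationarity of $\phi^T_{\xi,B},\sigma^T_{\xi,B},\psi^T_{\xi,B}$ in the sense recorded above is immediate from uniqueness together with the translation covariance of~\eqref{eq:PDEhigherOrderLinearizedCorrectorLocalized}, and joint measurability in $\omega$ follows from the constructive nature of the solutions. For the last assertion, (P2) implies ergodicity of $\Prob$, so Birkhoff's theorem applied to the stationary random variable $\big|\sup_{y\neq z\in B_1(\cdot)}|\omega(y){-}\omega(z)|/|y{-}z|^\eta\big|^p$, which has finite $p$th moment for every $p$ by~(R), shows that $\Prob$-almost surely its ball-averages $\dashint_{B_R(x_0)}(\cdots)$ converge to $\langle(\cdots)\rangle<\infty$; a continuity/covering argument in $x_0$ upgrades this to the $\sup_{x_0}$ in~\eqref{eq:ergodicityParameterField}. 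On the resulting full-measure set $\Omega'$ the deterministic construction above applies for every $\omega\in\Omega'$.

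\textbf{Expected main obstacle.} The genuinely delicate point is the $R$-uniform, uniformly local $H^1$ theory of the massive linear equations with the correct scaling in $T$ and its stability under $R\to\infty$ --- precisely where the exponential-weight localization exploiting the massive term is needed. The secondary point that must be monitored at each step is the regularity bookkeeping: one has to verify that~(A2)$_{L-1}$ and~(A3)$_{L-1}$ supply exactly enough differentiability and Lipschitz continuity of $A$ in $\xi$ and in $\omega$ for the composed right-hand sides to land in $C^\alpha_{\mathrm{uloc}}$ rather than merely $L^\infty$ --- it is this H\"older (rather than bounded) regularity that keeps the interior Schauder estimate, and hence the induction, available.
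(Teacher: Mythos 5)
Your skeleton coincides with the paper's: induction over the linearization order, approximation by Lax--Milgram solutions (the paper truncates the right-hand side to $B_R$ and solves on all of $\Rd$ rather than solving Dirichlet problems on balls, but this is immaterial), passage to the limit and uniqueness via the exponential-weight energy estimate~\eqref{eq:expLocalization} exploiting the massive term, an elliptic regularity upgrade to get the higher integrability~\eqref{eq:ergodicityLinearizedCorrector} needed to make sense of the next-order right-hand side, and Birkhoff's theorem to produce the full-measure set $\Omega'$. You also correctly identify the two load-bearing points.

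However, your regularity bookkeeping contains a genuine gap. You claim that $\nabla\phi^T_\xi$, the linearized coefficient $a^T_\xi=\partial_\xi A(\tilde\omega,\xi{+}\nabla\phi^T_\xi)$, and subsequently all lower-order corrector gradients lie in $C^\alpha_{\mathrm{uloc}}(\Rd)$, i.e.\ with H\"older norms bounded \emph{uniformly} in the center of the unit ball, and you then run the interior Schauder estimate~\eqref{eq:localSchauder} with a uniformly controlled constant. But the hypothesis~\eqref{eq:ergodicityParameterField} (and, in the random setting, condition~(R), which gives only stretched exponential moments of the local H\"older seminorm of $\omega$) controls the local H\"older seminorm of $\tilde\omega$ only in a spatially \emph{averaged} $L^p$ sense; for the stationary ensembles considered here the pointwise supremum $\sup_{x_0}\|\omega\|_{C^\eta(B_1(x_0))}$ is almost surely infinite. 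Since the constant in~\eqref{eq:localSchauder} depends polynomially on $\|a\|_{C^\alpha(B_2)}$, no uniform $C^\alpha_{\mathrm{uloc}}$ bound on the corrector gradients follows, and your induction as stated does not close. The paper's repair is exactly at this point: it transplants the proof of the annealed H\"older estimate of Lemma~\ref{lem:annealedRegCorrectorNonlinear} to the deterministic setting, replacing stochastic moments by $\limsup_{R\to\infty}\dashint_{B_R(x_0)}|\cdot|^p$, which yields only the averaged bound~\eqref{eq:ergodicHoelderRegLinCoefficient} on $\|a^T_\xi\|_{C^\alpha(B_1(x))}$; it then closes the induction with the local Calder\'on--Zygmund estimate~\eqref{eq:localCalderonZygmund} (which requires only $L^p$ data, not $C^\alpha$ data) combined with H\"older's inequality on the averaged quantities, propagating precisely the averaged statement~\eqref{eq:ergodicityLinearizedCorrector} for all $p$ rather than a uniform H\"older bound. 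Your argument becomes correct if every occurrence of ``$C^\alpha_{\mathrm{uloc}}$ with controlled norm'' is replaced by this averaged control and the Schauder step is replaced by the Calder\'on--Zygmund step; as written, the uniform claims are false under the stated assumptions.
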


We have by now everything in place to proceed with the statement of the

\begin{indhypothesis}
Let $L\in\N$, $M>0$ and $T\in [1,\infty)$ be fixed. 
Let the requirements and notation of (A1), (A2)$_L$ and (A3)$_L$ of
Assumption~\ref{assumption:operators}, (P1) and~(P2) of
Assumption~\ref{assumption:ensembleParameterFields}, and (R) of 
Assumption~\ref{assumption:smallScaleReg} be in place.
For any $l\leq L{-}1$ and any 
collection of unit vectors $v_1',\ldots,v_l'\in\Rd$
we assume that under the above conditions 
the associated localized $l$th-order linearized homogenization corrector $\phi^T_{\xi,B'}$ 
in direction $B':=v_1'\odot\cdots\odot v_l'$ satisfies the following list of conditions
(if $l=0$---and thus $B'$ being an empty symmetric tensor product---$\phi^T_{\xi,B'}$ is understood to denote the localized homogenization corrector of the nonlinear PDE):
\begin{itemize}[leftmargin=0.6cm]
\item There exists a constant $C=C(d,\lambda,\Lambda,\nu,\rho,\eta,M,L)$ such that for all $|\xi|\leq M$, 
all $q\in [1,\infty)$, and all compactly supported and square-integrable $f,g$ we have \emph{corrector estimates}
\begin{equation}
\label{eq:indHypoCorrectorBounds}
\tag{H1}
\begin{aligned}
\nonumber
\bigg\langle\bigg|\bigg(\int g\cdot\nabla\phi^T_{\xi,B'}, 
\int \frac{1}{T}f \phi^T_{\xi,B'} \bigg)\bigg|^{2q}\bigg\rangle^\frac{1}{q}
&\leq C^2q^{2C}\int \Big|\Big(g,\frac{f}{\sqrt{T}}\Big)\Big|^2,
\\ 
\Big\langle\Big\|\Big(\frac{\phi^T_{\xi,B'}}{\sqrt{T}},
\nabla\phi^T_{\xi,B'}\Big)\Big\|^{2q}_{L^2(B_1)}\Big\rangle^\frac{1}{q}
&\leq C^2q^{2C},
\end{aligned}
\end{equation}
with the scaling function $\mu_*\colon\mathbb{R}_{>0}\to \mathbb{R}_{>0}$ 
from~\eqref{eq:scalingCorrectorBounds}.
\item Fix $p>2$, and let $g$ be a compactly supported and $p$-integrable random field. Then there exists a random field
						$G_{\xi,B'}^T\in L^1_{\mathrm{uloc}}(\Rd;\Rd[n])$ being related to $g$ via $\phi^T_{\xi,B'}$ 
						in the sense that, $\Prob$-almost surely, it holds for all compactly supported and smooth perturbations 
						$\delta\omega\colon\Rd\to \Rd[n]$ with $\|\delta\omega\|_{L^\infty}\leq 1$
						\begin{align}
						\label{eq:representationMalliavinDerivative}
						\tag{H2a}
						\int g\cdot\nabla\delta\phi^T_{\xi,B'} = \int G^T_{\xi,B'}\cdot\delta\omega.
						\end{align}
						Here, $(\frac{\delta\phi^T_{\xi,B'}}{\sqrt{T}},\nabla\delta\phi^T_{\xi,B'})
						\in L^2_{\mathrm{uloc}}(\Rd;\Rd[]{\times}\Rd)$ denotes the G\^ateaux derivative
						of the linearized corrector $\phi^T_{\xi,B'}$ and its gradient in direction $\delta\omega$,
						cf.\ Lemma~\ref{eq:lemmaExistenceLinearizedCorrectorsExtended}.
						
						For any $\kappa\in (0,1]$ there moreover exists $C=C(d,\lambda,\Lambda,\nu,\rho,\eta,M,L,\kappa)$
						such that for all $|\xi|\leq M$ and all $q\in [1,\infty)$ the random field
						$G^T_{\xi,B'}$ gives rise to a \emph{sensitivity estimate} of the form
						\begin{align}
						\label{eq:indHypoSensitivityBound}
						\tag{H2b}
						\bigg\langle\bigg|\,\int\bigg(
						\,\dashint_{B_1(x)}|G^T_{\xi,B'}|\,\bigg)^2\bigg|^{q}\bigg\rangle^\frac{1}{q}
						&\leq C^2q^{2C}\sup_{\langle F^{2q_*}\rangle=1}
						\int\big\langle|Fg|^{2(q/\kappa)_*}\big\rangle^\frac{1}{(q/\kappa)_*}.
						\end{align}
						
						If $(g_r)_{r\geq 1}$ is a sequence of compactly supported and $p$-integrable random fields, denote by
						$G_{\xi,B'}^{T,r}\in L^1_{\mathrm{uloc}}(\Rd;\Rd[n])$, $r\geq 1$, the random field
						associated to $g_r$, $r\geq 1$, in the sense of~\eqref{eq:representationMalliavinDerivative}.
						Let $g$ be an $L^p_{\mathrm{uloc}}(\Rd;\Rd)$-valued random field, and assume that
						$\Prob$-almost surely it holds $g_r\to g$ in $L^p_{\mathrm{uloc}}(\Rd;\Rd)$.
						Then there exists a random field $G^T_{\xi,B'}$ such that $\Prob$-almost surely
						\begin{align}
						\label{eq:indHypoSensitivityApprox}
						\tag{H2c}
						G_{\xi,B'}^{T,r} \to G_{\xi,B'}^{T} \text{ as } r\to\infty 
						\text{ in } L^1_{\mathrm{uloc}}(\Rd;\Rd[n]).
						\end{align}
						In the special case of $g_r = \mathds{1}_{B_r}g$, $r\geq 1$, the limit random
						field is in addition subject to	the sensitivity estimate~\eqref{eq:indHypoSensitivityBound}.
\item There exist $\alpha=\alpha(d,\lambda,\Lambda)\in (0,\eta)$ and $C=C(d,\lambda,\Lambda,\nu,\rho,\eta,M,L)$
such that for all $|\xi|\leq M$ and all $q\in [1,\infty)$ we have a 
\emph{small-scale annealed Schauder estimate} of the form
\begin{align}
\label{eq:indHypoAnnealedSchauder}
\tag{H3}
\big\langle\big\|\nabla\phi^T_{\xi,B'}\big\|^{2q}_{C^\alpha(B_1)}\big\rangle^\frac{1}{q}
\leq C^2q^{2C}.
\end{align}
\end{itemize}
\end{indhypothesis}

\subsection{Corrector bounds for higher-order linearizations: the base case}
The first step in the proof of Theorem~\ref{theo:correctorBounds}---on the level
of the massive approximation in form of Theorem~\ref{theo:correctorBoundsMassiveApprox}---is of 
course to verify the induction hypotheses
\eqref{eq:indHypoCorrectorBounds}--\eqref{eq:indHypoAnnealedSchauder} for the
corrector of the nonlinear problem~\eqref{eq:PDEMonotoneHomCorrectorLocalized}.
This is covered by Proposition~\ref{prop:estimates1stOrderCorrector}
which constitutes one of the main results of~\cite{Fischer2019}. We briefly summarize
at the beginning of Appendix~\ref{app:baseCaseInd} how to obtain the assertions
of Proposition~\ref{prop:estimates1stOrderCorrector}.

\subsection{Corrector bounds for higher-order linearizations: the induction step}
\label{subsec:indStep}
The main step in the proof of Theorem~\ref{theo:correctorBoundsMassiveApprox}
consists of lifting the induction hypotheses
\eqref{eq:indHypoCorrectorBounds}--\eqref{eq:indHypoAnnealedSchauder}
to the $L$th-order linearized homogenization corrector $\phi^T_{\xi,B}$
satisfying~\eqref{eq:PDEhigherOrderLinearizedCorrectorLocalized}.
This task is performed by means of several auxiliary results
where we are guided by the well-established literature on
quantitative stochastic homogenization, cf.\ for instance 
\cite{Gloria2020}, \cite{Gloria2019}, \cite{Fischer2019} and~\cite{Josien2020}.
We start with the concept of a minimal radius for the (higher-order)
linearized corrector equation~\eqref{eq:PDEhigherOrderLinearizedCorrectorLocalized}.

\begin{definition}[Minimal radius for linearized corrector problem]
\label{def:minRadiusHigherOrderLinearizations}
Let the assumptions and notation of Section~\ref{subsec:indHypo} be in place;
in particular, the induction hypotheses \eqref{eq:indHypoCorrectorBounds}--\eqref{eq:indHypoAnnealedSchauder}.
For a given constant $\underline{\gamma}>0$ we then define a random variable
\begin{align*}
r_{*,T,\xi,B}
&:= \inf\bigg\{2^k\colon k\in\N_0,\,
\text{and for all } R=2^l,\,l\geq k,\,\text{it holds:}
\\&~~~~~~~~~~~~~~~~~~~~~~~~~~~~~~~~~~~~~~~~~
\inf_{b\in\mathbb{R}}\bigg\{\frac{1}{R^2}\dashint_{B_R}
\big|\phi^T_{\xi,B}{-}b\big|^2
{+}\frac{1}{T}|b|^2\bigg\}
\leq 1,
\\&~~~~~~~~~~~~~~~~~~~~
\sup_{\substack{\Pi\in\mathrm{Par}\{1,\ldots,L\} \\ \Pi\neq\{\{1,\ldots,L\}\}}}
\sup_{\pi\in\Pi}\, \dashint_{B_R} \big|\mathds{1}_{|\pi|=1}B'_\pi + \nabla \phi^T_{\xi,B'_\pi}\big|^{4|\Pi|}
\leq R^{4\underline{\gamma}}\bigg\}.
\end{align*}
The stationary extension $r_{*,T,\xi,B}(x,\omega):=
r_{*,T,\xi,B}\big(\omega(\cdot{+}x)\big)$ is called the \emph{minimal radius
for the linearized corrector problem~\eqref{eq:PDEhigherOrderLinearizedCorrectorLocalized}}.
\end{definition}

Stochastic moments of the linearized homogenization correctors 
are related to stochastic moments of the minimal radius in the following way.

\begin{lemma}[Annealed small-scale energy estimate]
\label{lem:annealedSmallScaleEnergyEstimate}
Let the assumptions and notation of Section~\ref{subsec:indHypo} be in place;
in particular, the induction hypotheses \emph{\eqref{eq:indHypoCorrectorBounds}--\eqref{eq:indHypoAnnealedSchauder}}.
Let $r_{*,T,\xi,B}$ denote the minimal radius for the linearized corrector 
problem~\eqref{eq:PDEhigherOrderLinearizedCorrectorLocalized}
from Definition~\eqref{def:minRadiusHigherOrderLinearizations}.
Then, there exists a constant $C=C(d,\lambda,\Lambda,L)$
and an exponent $\delta=\delta(d,\lambda,\Lambda)$ such that for all $\xi\in\Rd$ and all $q\in [1,\infty)$ it holds
\begin{align}
\label{eq:annealedSmallScaleEnergyEstimateMinRadius}
\Big\langle\Big\|\Big(\frac{\phi^T_{\xi,B}}{\sqrt{T}},
\nabla\phi^T_{\xi,B}\Big)\Big\|^{2q}_{L^2(B_1)}\Big\rangle^\frac{1}{q}
\leq C^2\big\langle r_{*,T,\xi,B}^{(d-\delta+2\underline{\gamma})q}\big\rangle^\frac{1}{q}.
\end{align}
Moreover, we have the suboptimal estimate
\begin{align}
\label{eq:annealedSmallScaleEnergyEstimateSuboptimal}
\Big\langle\Big\|\Big(\frac{\phi^T_{\xi,B}}{\sqrt{T}},
\nabla\phi^T_{\xi,B}\Big)\Big\|^{2q}_{L^2(B_1)}\Big\rangle^\frac{1}{q}
\lesssim_q \sqrt{T}^d.
\end{align}
\end{lemma}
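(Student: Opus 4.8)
The plan is to prove the two estimates separately, relying on a deterministic Caccioppoli-type bound for the linearized corrector equation~\eqref{eq:PDEhigherOrderLinearizedCorrectorLocalized} combined with the definition of the minimal radius. For~\eqref{eq:annealedSmallScaleEnergyEstimateMinRadius}, the key observation is that the right-hand side of~\eqref{eq:PDEhigherOrderLinearizedCorrectorLocalized} is controlled, on any ball $B_R$ with $R \geq r_{*,T,\xi,B}$, by the quantity $\sup_{\Pi,\pi}\dashint_{B_R}|\mathds{1}_{|\pi|=1}B'_\pi + \nabla\phi^T_{\xi,B'_\pi}|^{4|\Pi|}$, which by definition of $r_{*,T,\xi,B}$ is bounded by $R^{4\underline{\gamma}}$. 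First I would establish a deterministic large-scale $C^{1,\alpha}$-type or mean-value estimate for solutions of the uniformly elliptic equation $\frac{1}{T}u - \nabla\cdot a^T_\xi \nabla u = \nabla\cdot h$: since $a^T_\xi$ is uniformly elliptic and bounded, on scales above the minimal radius one has an estimate of the shape
\begin{align*}
\Big\|\Big(\frac{\phi^T_{\xi,B}}{\sqrt{T}}, \nabla\phi^T_{\xi,B}\Big)\Big\|_{L^2(B_1)}^2
\lesssim r_{*,T,\xi,B}^{d-\delta}\,\Big(\frac{1}{r_{*,T,\xi,B}^2}\dashint_{B_{r_*}}|\phi^T_{\xi,B}-b|^2 + \frac{1}{T}|b|^2\Big) + (\text{contribution from }h),
\end{align*}
where the exponent $d-\delta$ (with $\delta = \delta(d,\lambda,\Lambda)$ from the deterministic $C^{0,\alpha}$ regularity of the homogeneous equation, e.g.\ De Giorgi--Nash--Moser) quantifies the growth of the $L^2$-oscillation as one descends from scale $r_*$ to scale $1$. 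The first bracket is $\leq 1$ by the first condition in Definition~\ref{def:minRadiusHigherOrderLinearizations}, and the contribution from $h$ is handled by absorbing $\sup_\Pi\dashint_{B_R}|\cdots|^{4|\Pi|} \leq R^{4\underline{\gamma}}$ with an appropriate dyadic summation over $R \in \{1,2,4,\ldots,r_{*,T,\xi,B}\}$, which — after Hölder in the sum — produces exactly the exponent $d-\delta+2\underline{\gamma}$ on $r_{*,T,\xi,B}$. Taking the $2q$-th stochastic moment and using that the right-hand side is a deterministic function of $r_{*,T,\xi,B}$ then yields~\eqref{eq:annealedSmallScaleEnergyEstimateMinRadius}.

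For the suboptimal bound~\eqref{eq:annealedSmallScaleEnergyEstimateSuboptimal}, the plan is to forgo the minimal-radius machinery entirely and instead test the equation~\eqref{eq:PDEhigherOrderLinearizedCorrectorLocalized} directly with (a cutoff of) $\phi^T_{\xi,B}$ itself. The massive term $\frac{1}{T}\phi^T_{\xi,B}$ provides $L^2$-control of $\phi^T_{\xi,B}/\sqrt{T}$ at the expense of a factor $\sqrt{T}$, and the energy identity gives, uniformly in the base point $x_0$,
\begin{align*}
\frac{1}{T}\|\phi^T_{\xi,B}\|_{L^2(B_1(x_0))}^2 + \|\nabla\phi^T_{\xi,B}\|_{L^2(B_1(x_0))}^2
\lesssim \|h\|_{L^2(B_R(x_0))}^2 \cdot (\text{powers of }\sqrt{T}),
\end{align*}
where $h$ is again the right-hand side of~\eqref{eq:PDEhigherOrderLinearizedCorrectorLocalized}, built from lower-order correctors. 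The induction hypothesis~\eqref{eq:indHypoCorrectorBounds}, applied to each $\phi^T_{\xi,B'_\pi}$ with $|\pi| \leq L-1$, controls the stochastic moments of $\|\nabla\phi^T_{\xi,B'_\pi}\|_{L^2(B_1)}$ by $C^2 q^{2C}$, and the annealed Schauder estimate~\eqref{eq:indHypoAnnealedSchauder} upgrades these to pointwise-in-space $L^\infty$-type control that survives the products appearing in the Faà di Bruno sum. A crude counting of the powers of $\sqrt{T}$ lost in the energy estimate — at worst one factor per step of the recursion, giving a polynomial in $\sqrt{T}$ that is dominated by $\sqrt{T}^d$ once $d$ is large enough, and which can in any case be absorbed because $r_{*,T,\xi,B} \lesssim \sqrt{T}$ deterministically (the minimal radius never exceeds the localization scale, since the massive term trivializes the defining conditions at scale $\sqrt{T}$) — then yields~\eqref{eq:annealedSmallScaleEnergyEstimateSuboptimal} without any sharp tracking of constants.

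The main obstacle I anticipate is the bookkeeping in the first estimate: getting the precise exponent $d-\delta+2\underline{\gamma}$ (rather than something larger) requires care in how the deterministic large-scale regularity interacts with the dyadic growth hypothesis in the definition of $r_{*,T,\xi,B}$ — specifically, one must split the right-hand side contribution scale by scale, apply at each scale the correct power of the mean-value estimate, and sum, all while keeping the $\underline{\gamma}$-loss linear rather than, say, proportional to $|\Pi|$. A secondary subtlety is that the deterministic large-scale estimate for $\frac{1}{T}u - \nabla\cdot a^T_\xi\nabla u = \nabla\cdot h$ must be formulated so that it is insensitive to the massive term on scales $R \leq \sqrt{T}$ (where $\frac{1}{T}R^2 \leq 1$) — on such scales the equation behaves like the massless one and the classical interior regularity applies, while the coupling $\frac{1}{T}|b|^2 \leq 1$ in Definition~\ref{def:minRadiusHigherOrderLinearizations} is exactly what is needed to close the argument when $r_{*,T,\xi,B}$ itself is comparable to $\sqrt{T}$. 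Both difficulties are of a technical, not conceptual, nature, and follow the template of~\cite{Gloria2020} and~\cite{Fischer2019}.
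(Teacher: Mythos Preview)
Your approach to~\eqref{eq:annealedSmallScaleEnergyEstimateMinRadius} is correct in spirit but more elaborate than needed. The paper applies the hole filling estimate~\eqref{eq:holeFillingEstimate} \emph{once}, going directly from $B_1$ to $B_{r_*}$ (this is where the factor $r_*^{d-\delta}$ comes from --- $\delta$ is the hole-filling exponent, not a De Giorgi--Nash--Moser exponent), followed by a single Caccioppoli step~\eqref{eq:Caccioppoli} from $B_{r_*}$ to $B_{2r_*}$. No dyadic iteration is needed: the right-hand-side contribution enters the hole-filling estimate as $\int_{B_{r_*}}|x|^{-\delta}\prod_\pi|\cdots|^2$, which after a single H\"older in space is bounded by $r_*^{d-\delta}\prod_\pi\big(\dashint_{B_{r_*}}|\cdots|^{4|\Pi|}\big)^{1/(2|\Pi|)} \leq r_*^{d-\delta+2\underline{\gamma}}$ directly from the second clause in Definition~\ref{def:minRadiusHigherOrderLinearizations}. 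The output is already a deterministic pointwise bound $\|(\frac{\phi^T_{\xi,B}}{\sqrt{T}},\nabla\phi^T_{\xi,B})\|_{L^2(B_1)}^2 \lesssim r_*^{d-\delta+2\underline\gamma}$; taking moments finishes.

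For~\eqref{eq:annealedSmallScaleEnergyEstimateSuboptimal}, your primary plan (energy estimate exploiting the massive term, plus induction hypotheses on the lower-order correctors) matches the paper, which implements it via the weighted energy estimate~\eqref{eq:expLocalization} with exponential weight $\ell_{\gamma,\sqrt{T}}$: the factor $\sqrt{T}^d$ arises because $\ell_{\gamma,\sqrt{T}}\sim\sqrt{T}^{-d}$ on $B_1$, and the right-hand side reduces to $\int\ell_{\gamma,\sqrt{T}}\langle\prod_\pi|\cdots|^{2q}\rangle^{1/q}$, controlled by stationarity together with~\eqref{eq:indHypoCorrectorBounds} and~\eqref{eq:indHypoAnnealedSchauder}. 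However, your fallback claim that $r_{*,T,\xi,B}\lesssim\sqrt{T}$ holds \emph{deterministically} is not correct: neither condition in Definition~\ref{def:minRadiusHigherOrderLinearizations} is automatically satisfied at scale $\sqrt{T}$ for a fixed realization (taking $b=0$ in the first condition would require $\frac{1}{T}\dashint_{B_{\sqrt{T}}}|\phi^T_{\xi,B}|^2\leq 1$, which is only an annealed statement). Drop this claim --- it is unnecessary, and the exponential-localization route is self-contained.
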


As it is already the case for the proof of corrector estimates with respect to 
first-order linearizations, cf.\ \cite{Fischer2019}, the argument
for the realization of the induction step relies on a small-scale 
regularity estimate for the linearized correctors too.

\begin{lemma}[Annealed small-scale Schauder estimate]
\label{lem:annealedSmallScaleSchauderIndStep}
Let again the assumptions and notation of Section~\ref{subsec:indHypo} be in place;
in particular, the induction hypotheses \emph{\eqref{eq:indHypoCorrectorBounds}--\eqref{eq:indHypoAnnealedSchauder}}.
There exists $\alpha=\alpha(d,\lambda,\Lambda)\in (0,\eta)$,
and for every $\tau\in (0,1)$ a constant $C=C(d,\lambda,\Lambda,\nu,\rho,\eta,M,L,\tau)$,
such that for all $|\xi|\leq M$ and all $q\in [1,\infty)$ it holds
\begin{align}
\label{eq:annealedSmallScaleSchauderIndStep}
\Big\langle\Big\|\Big(\frac{\phi^T_{\xi,B}}{\sqrt{T}},
\nabla\phi^T_{\xi,B}\Big)\Big\|^{2q}_{C^\alpha(B_1)}\Big\rangle^\frac{1}{q}
\leq C^2q^{2C}\bigg\{1{+}\Big\langle\Big\|\Big(\frac{\phi^T_{\xi,B}}{\sqrt{T}},
\nabla\phi^T_{\xi,B}\Big)\Big\|^\frac{2q}{1-\tau}_{L^2(B_1)}\Big\rangle^\frac{1-\tau}{q}\bigg\}.
\end{align}
\end{lemma}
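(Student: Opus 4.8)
The plan is to view equation~\eqref{eq:PDEhigherOrderLinearizedCorrectorLocalized} as a linear (massive) elliptic equation for $\phi^T_{\xi,B}$ with coefficient field $a^T_\xi=\partial_\xi A(\omega,\xi{+}\nabla\phi^T_\xi)$, whose right-hand side is given in divergence form by the partition sum on the right of~\eqref{eq:PDEhigherOrderLinearizedCorrectorLocalized} (plus, when $L=1$, the constant term $a^T_\xi B$). First I would invoke the deterministic small-scale Schauder theory for massive elliptic equations in divergence form (the versions collected in Appendix~\ref{app:toolsEllipticRegularity}), which for a Hölder-continuous coefficient field produces a $C^\alpha(B_1)$ bound on $(\tfrac{\phi^T_{\xi,B}}{\sqrt T},\nabla\phi^T_{\xi,B})$ in terms of (i) the $L^2(B_2)$ energy of $(\tfrac{\phi^T_{\xi,B}}{\sqrt T},\nabla\phi^T_{\xi,B})$, (ii) the Hölder seminorm $[a^T_\xi]_{C^\alpha(B_2)}$ (entering polynomially, with a loss in the exponent $\alpha$), and (iii) the $C^\alpha(B_2)$ norm of the right-hand side field $h^T_{\xi,B}:=\mathds{1}_{L=1}a^T_\xi B+\sum_{\Pi\neq\{\{1,\dots,L\}\}}\partial_\xi^{|\Pi|}A(\omega,\xi{+}\nabla\phi^T_\xi)[\bigodot_\pi(\mathds{1}_{|\pi|=1}B'_\pi+\nabla\phi^T_{\xi,B'_\pi})]$. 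The point is that all of (ii) and (iii) involve only the \emph{nonlinear} corrector $\phi^T_\xi$ and the \emph{lower-order} linearized correctors $\phi^T_{\xi,B'_\pi}$ with $|B'_\pi|\le L{-}1$, which are controlled by the induction hypotheses.

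Next I would control the stochastic moments of each of these ingredients. For (ii): by Lemma~\ref{lem:annealedHoelderRegLinCoefficient} (small-scale Hölder regularity of the linearized coefficient field, a consequence of condition~(R) and Lemma~\ref{lem:annealedRegCorrectorNonlinear}), $\langle\|a^T_\xi\|^{2q}_{C^\alpha(B_2)}\rangle^{1/q}\le C^2q^{2C}$. For (iii): the field $h^T_{\xi,B}$ is built from $\partial_\xi^{|\Pi|}A(\omega,\xi{+}\nabla\phi^T_\xi)$ — whose $C^\alpha(B_2)$ norm is bounded in all moments by~(A2)$_L$, (A3)$_L$, and the nonlinear small-scale Schauder estimate~\eqref{eq:BaseCaseAnnealedSchauder} of Proposition~\ref{prop:estimates1stOrderCorrector} — contracted against products $\bigodot_{\pi\in\Pi}(\mathds{1}_{|\pi|=1}B'_\pi+\nabla\phi^T_{\xi,B'_\pi})$, each factor of which has all stochastic moments of its $C^\alpha(B_2)$ norm under control by the induction hypothesis~\eqref{eq:indHypoAnnealedSchauder} (with $\alpha$ possibly reduced to a common exponent). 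A Hölder inequality in probability across the (boundedly many, since $|\Pi|\le L$) factors then yields $\langle\|h^T_{\xi,B}\|^{2q}_{C^\alpha(B_2)}\rangle^{1/q}\le C^2q^{2C}$. Combining via another Hölder inequality in probability to split off the deterministic-Schauder product bound — here is where the exponent $\tfrac{2q}{1-\tau}$ on the $L^2(B_1)$ norm and the loss $q^{2C}$ appear, the small parameter $\tau$ absorbing the probabilistic Hölder weight needed to factor out the polynomially-growing-in-$[a^T_\xi]_{C^\alpha}$ prefactor against the moment bound $q^{2C}$ — gives precisely~\eqref{eq:annealedSmallScaleSchauderIndStep}. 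One should also reduce the $B_2$-radius to $B_1$ by a standard covering/translation-invariance argument using stationarity.

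The main obstacle I anticipate is the careful bookkeeping in the deterministic Schauder estimate: one needs a version in which the dependence on $[a^T_\xi]_{C^\alpha}$ is explicit and polynomial (so that, after raising to the $2q$th power and taking expectations, it can be absorbed into $C^2q^{2C}$ by the moment bounds on the coefficient field), and in which the massive term $\tfrac1T\phi^T_{\xi,B}$ is handled uniformly in $T\ge 1$ — i.e., the constants must not degenerate as $T\to\infty$. This is the place where the specific structure of the rescaled Schauder estimate (scaling $x\mapsto x/\sqrt T$ turning the mass into a zeroth-order term with $O(1)$ coefficient on a ball of radius $1/\sqrt T\le 1$) has to be used, exactly as in the first-order case of~\cite{Fischer2019}; the generalization to order $L$ is then a matter of tracking the combinatorics of the partition sum rather than a genuinely new analytic difficulty.
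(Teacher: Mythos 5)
Your proposal is correct and follows essentially the same route as the paper: apply the deterministic local Schauder estimate~\eqref{eq:localSchauder} to~\eqref{eq:PDEhigherOrderLinearizedCorrectorLocalized} (with the $\mathds{1}_{L=1}$ term moved to the right-hand side), control the Hölder norm of $a^T_\xi$ via Lemma~\ref{lem:annealedHoelderRegLinCoefficient} and the right-hand side via the induction hypothesis~\eqref{eq:indHypoAnnealedSchauder}, and split off the energy term by Hölder's inequality in probability with exponents $(\tfrac{1}{\tau},\tfrac{1}{1-\tau})$, which is exactly where the exponent $\tfrac{2q}{1-\tau}$ arises. The points you flag as potential obstacles (explicit polynomial dependence on $\|a^T_\xi\|_{C^\alpha}$ and uniformity in $T\ge 1$) are precisely what Lemma~\ref{lem:locRegularity} supplies.
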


Lemma~\ref{lem:annealedSmallScaleEnergyEstimate} shifts the task
of establishing stretched exponential moment bounds for the linearized corrector
to the task of proving stretched exponential moments for the associated minimal radius.
For the latter, a key input are stochastic moment bounds for linear functionals
of the linearized corrector gradient. This in turn is the content of the following result.

\begin{lemma}[Annealed estimates for linear functionals of the homogenization corrector and its gradient]
\label{lem:annealedLinFunctionalsIndStep}
Let the assumptions and notation of Section~\ref{subsec:indHypo} be in place;
in particular, the induction hypotheses \emph{\eqref{eq:indHypoCorrectorBounds}--\eqref{eq:indHypoAnnealedSchauder}}.
Let $g,f$ be two square-integrable and compactly supported deterministic fields. For every $\tau\in (0,1)$
there exists a constant $C=C(d,\lambda,\Lambda,\nu,\rho,\eta,M,L,\tau)$
such that for all $|\xi|\leq M$ and all $q\in [C,\infty)$ it holds
\begin{equation}
\label{eq:annealedLinFunctionalsIndStep}
\begin{aligned}
&\bigg\langle\bigg|\bigg(\int g\cdot\nabla\phi^T_{\xi,B}, 
\int \frac{1}{T}f \phi^T_{\xi,B} \bigg)\bigg|^{2q}\bigg\rangle^\frac{1}{q}
\\&
\leq C^2q^{2C} \bigg\{1+\Big\langle\Big\|\Big(\frac{\phi^T_{\xi,B}}{\sqrt{T}},
\nabla\phi^T_{\xi,B}\Big)\Big\|^\frac{2q}{(1-\tau)^2}_{L^2(B_1)}\Big\rangle^\frac{(1-\tau)^2}{q}\bigg\}
\int \Big|\Big(g,\frac{f}{\sqrt{T}}\Big)\Big|^2.
\end{aligned}
\end{equation}
\end{lemma}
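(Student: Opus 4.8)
The plan is to estimate the random variable $F := \int g\cdot\nabla\phi^T_{\xi,B} + \int \frac{1}{T}f\,\phi^T_{\xi,B}$ via the spectral gap inequality in the form of Lemma~\ref{lemma:spectralGapHigherMoments}, after first controlling the expectation $\langle F\rangle$ and then the functional ("Malliavin") derivative $\partial^{\mathrm{fct}}F$. First I would treat $\langle F\rangle$: testing the corrector equation~\eqref{eq:PDEhigherOrderLinearizedCorrectorLocalized} against a deterministic auxiliary function (the solution of the massive constant-coefficient or homogenized adjoint problem with data $(g,f)$), using stationarity to move derivatives onto the test function, and invoking the induction hypothesis~\eqref{eq:indHypoCorrectorBounds} for the lower-order correctors $\phi^T_{\xi,B'_\pi}$ appearing on the right-hand side of~\eqref{eq:PDEhigherOrderLinearizedCorrectorLocalized}. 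This yields a bound on $|\langle F\rangle|$ by $C q^C \big(\int|(g,f/\sqrt T)|^2\big)^{1/2}$ up to the desired energy-norm factor, which is of lower order than the fluctuation term and hence harmless.

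For the fluctuation part, the key is to differentiate the defining PDE with respect to the parameter field $\omega$. Perturbing $\omega \rightsquigarrow \omega + t\,\delta\omega$ and linearizing, $\delta\phi^T_{\xi,B}$ solves a massive elliptic equation $\frac1T\delta\phi^T_{\xi,B} - \nabla\cdot a^T_\xi\nabla\delta\phi^T_{\xi,B} = \nabla\cdot(\text{stuff})$ where the right-hand side involves: (i) $\partial_\omega a^T_\xi\,\delta\omega$ contracted against $(\mathds 1_{L=1}B + \nabla\phi^T_{\xi,B})$, (ii) $\delta\phi^T_\xi$ (the nonlinear corrector's sensitivity, controlled by Proposition~\ref{prop:estimates1stOrderCorrector}) multiplied by second $\xi$-derivatives of $A$, and (iii) the sensitivities $\delta\phi^T_{\xi,B'_\pi}$ of all lower-order correctors, controlled by~\eqref{eq:representationMalliavinDerivative}--\eqref{eq:indHypoSensitivityBound}. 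Representing $\int g\cdot\nabla\delta\phi^T_{\xi,B}$ by duality through the massive Green's function (with data $g$) and collecting terms, one obtains a representation $\int g\cdot\nabla\delta\phi^T_{\xi,B} = \int G^T_{\xi,B}\cdot\delta\omega$; the pointwise bound on $|G^T_{\xi,B}|$ is a product of the deterministic dual Green's function (with its $L^2_{\mathrm{uloc}}$-type decay), the lower-order sensitivity kernels $G^T_{\xi,B'_\pi}$, and the correctors $\nabla\phi^T_{\xi,B'_\pi}$ themselves, the last of which carry the small-scale Schauder control~\eqref{eq:indHypoAnnealedSchauder}. An analogous computation handles $\int\frac1T f\,\delta\phi^T_{\xi,B}$ using the zeroth-order (non-gradient) Green's function.

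The hard part, and where the parameter $\tau$ enters, is closing the resulting estimate: after plugging $G^T_{\xi,B}$ into the right side of~\eqref{eq:spectralGapHigherMoments} and applying H\"older's inequality in both the spatial and probabilistic variables, one is left with $\langle|F-\langle F\rangle|^{2q}\rangle^{1/q}$ bounded by $C q^{2C}\big(\int|(g,f/\sqrt T)|^2\big)$ times a high moment of $\|(\phi^T_{\xi,B}/\sqrt T,\nabla\phi^T_{\xi,B})\|_{L^2(B_1)}$ — the term (i) above, involving $\partial_\omega a^T_\xi$ hitting the $L$th-order corrector itself, is the only genuinely new ("diagonal") contribution and is responsible for this factor. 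To absorb the loss incurred by the annealed H\"older inequalities (a factor $\langle F^{2q_*}\rangle^{1/q_*}$-type supremum that must be traded against a slightly larger stochastic exponent), one applies Lemma~\ref{lemma:spectralGapHigherMoments} at exponent $q/(1-\tau)$ rather than $q$, iterates the trade-off once more to reach the exponent $2q/(1-\tau)^2$ displayed on the right-hand side, and uses the suboptimal a~priori bound~\eqref{eq:annealedSmallScaleEnergyEstimateSuboptimal} to guarantee finiteness of all moments appearing along the way. Collecting the expectation bound and the fluctuation bound gives~\eqref{eq:annealedLinFunctionalsIndStep}.
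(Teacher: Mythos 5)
Your overall architecture---differentiate the corrector equation in $\omega$, dualize with the massive adjoint operator $(\tfrac1T-\nabla\cdot a^{T,*}_\xi\nabla)^{-1}$, feed the resulting sensitivity kernel into the spectral gap inequality, and close using the induction hypotheses---is exactly the paper's proof. Two points in your sketch would not survive being written out, though. First, the mechanism you give for reaching the exponent $\tfrac{2q}{(1-\tau)^2}$ is off: the spectral gap inequality is applied at exponent $q$ (applying it at $q/(1-\tau)$ controls a larger moment of $F$ on the left but does nothing for the trade-off on the right). The two factors of $(1-\tau)$ arise on the right-hand side from two \emph{distinct} diagonal terms. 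Your term (i), $\partial_\omega\partial_\xi A[\delta\omega\odot\nabla\phi^T_{\xi,B}]$, is handled by duality in $L^q_{\langle\cdot\rangle}$ and one H\"older inequality with exponents $(\tfrac{q}{1-\tau},(\tfrac{q}{1-\tau})_*)$, yielding only $\langle\|\nabla\phi^T_{\xi,B}\|^{2q/(1-\tau)}_{L^2(B_1)}\rangle^{(1-\tau)/q}$. The second loss comes from your term (ii), which---contrary to your claim that (i) is the only new diagonal contribution---also carries a factor $\nabla\phi^T_{\xi,B}$: it reads $\partial_\xi^2A(\omega,\xi+\nabla\phi^T_\xi)[\nabla\delta\phi^T_\xi\odot\nabla\phi^T_{\xi,B}]$, and since $\nabla\delta\phi^T_\xi$ is only accessible through the sensitivity estimate~\eqref{eq:indHypoSensitivityBound} (which produces a pointwise-in-space rather than $L^2(B_1)$-averaged weight), one is forced to use the $C^\alpha(B_1)$-norm of $\nabla\phi^T_{\xi,B}$ at exponent $\tfrac{2q}{1-\tau}$; converting this back to the $L^2(B_1)$-norm via Lemma~\ref{lem:annealedSmallScaleSchauderIndStep} costs the second factor of $(1-\tau)$.

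Second, you never say how the dual quantity
$\sup_{\langle F^{2q_*}\rangle=1}\int\langle|\nabla(\tfrac1T-\nabla\cdot a^{T,*}_\xi\nabla)^{-1}(\tfrac1TFf+\nabla\cdot Fg)|^{2q_*}\rangle^{1/q_*}$
gets bounded by $\int|(g,f/\sqrt T)|^2$; this is not a Green's-function decay statement but the annealed Calder\'on--Zygmund estimate in the perturbative Meyers regime (Lemma~\ref{lem:annealedCZMeyers}), and it is precisely the requirement that $|q_*-1|$ be small which produces the restriction $q\in[C,\infty)$ in the statement---a restriction your sketch leaves unexplained. (A minor simplification: $\langle F\rangle=0$ exactly, by stationarity and taking the expectation of the corrector equation, so no auxiliary deterministic test function is needed; the suboptimal bound~\eqref{eq:annealedSmallScaleEnergyEstimateSuboptimal} is likewise not needed here but only later in the buckling argument.)
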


We have everything in place to prove a stretched exponential moment bound
for the minimal radius $r_{*,T,\xi,B}$. The key ingredient of the proof
is a buckling argument based on the annealed 
estimates~\eqref{eq:annealedSmallScaleEnergyEstimateMinRadius}--\eqref{eq:annealedLinFunctionalsIndStep}.

\begin{lemma}[Stretched exponential moment bound for minimal radius]
\label{lem:momentBoundMinRadius}
Let the assumptions and notation of Section~\ref{subsec:indHypo} be in place;
in particular, the induction hypotheses \emph{\eqref{eq:indHypoCorrectorBounds}--\eqref{eq:indHypoAnnealedSchauder}}.
Let $r_{*,T,\xi,B}$ denote the minimal radius for the linearized corrector 
problem~\eqref{eq:PDEhigherOrderLinearizedCorrectorLocalized}
from Definition~\eqref{def:minRadiusHigherOrderLinearizations}.
Then, there exists $\underline{\gamma}=\underline{\gamma}(d,\lambda,\Lambda)$, 
a constant $C=C(d,\lambda,\Lambda,\nu,\rho,\eta,M,L)$
and an exponent $\bar\nu=\bar\nu(d,\lambda,\Lambda,\nu,\rho,\eta,M,L)$ 
such that for all $|\xi|\leq M$ it holds
\begin{align}
\label{eq:momentBoundMinRadius}
\bigg\langle\exp\bigg(\bigg(\frac{r_{*,T,\xi,B}}{C}\bigg)^{\bar\nu}\bigg)\bigg\rangle \leq 2.
\end{align}
\end{lemma}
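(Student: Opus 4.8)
The plan is to run a \emph{buckling argument} on the stochastic moments of the minimal radius, in the spirit of \cite{Gloria2020}, \cite{Fischer2019} and \cite{Josien2020}. The starting point is the observation that, by its very definition, $r_{*,T,\xi,B} > 2^k$ forces one of the two defining conditions to fail at some dyadic scale $R = 2^l \geq 2^k$. The second condition (the one controlling $\dashint_{B_R}|\mathds{1}_{|\pi|=1}B'_\pi + \nabla\phi^T_{\xi,B'_\pi}|^{4|\Pi|}$) only involves correctors of order $\leq L-1$, hence is already under control by the induction hypotheses \eqref{eq:indHypoCorrectorBounds}--\eqref{eq:indHypoAnnealedSchauder}; combined with the small-scale annealed Schauder estimate \eqref{eq:indHypoAnnealedSchauder} (which upgrades $L^2$-averages to $C^\alpha$-averages and thus to pointwise-in-$B_1$ control after covering $B_R$ by unit balls), its failure at scale $R$ has a stretched-exponentially small probability. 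So the entire difficulty is localized in the first condition, i.e.\ in controlling $\inf_{b\in\mathbb{R}}\{\frac{1}{R^2}\dashint_{B_R}|\phi^T_{\xi,B}-b|^2 + \frac{1}{T}|b|^2\}$ from above, uniformly in $R$, with stretched exponential stochastic integrability.

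The first step is to convert this spatial average of $\phi^T_{\xi,B}$ into a linear functional to which Lemma~\ref{lem:annealedLinFunctionalsIndStep} applies: writing $\frac{1}{R^2}\dashint_{B_R}|\phi^T_{\xi,B}-b|^2$ via a suitable choice of $b$ (e.g.\ the average over $B_R$, or over $B_1$) and a Poincaré/Caccioppoli-type argument reduces it to $\dashint_{B_R}|\nabla\phi^T_{\xi,B}|^2$ plus lower-order contributions, which is then estimated by \eqref{eq:annealedLinFunctionalsIndStep} with $g = \mathds{1}_{B_R}\nabla(\text{cutoff})$-type test fields; alternatively one applies \eqref{eq:annealedLinFunctionalsIndStep} directly to $f = \mathds{1}_{B_R}$. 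The right-hand side of \eqref{eq:annealedLinFunctionalsIndStep}, however, still contains the quantity $\langle\|(\tfrac{\phi^T_{\xi,B}}{\sqrt T},\nabla\phi^T_{\xi,B})\|^{2q/(1-\tau)^2}_{L^2(B_1)}\rangle^{(1-\tau)^2/q}$ that we are ultimately trying to bound, and this is exactly where the buckling enters. By Lemma~\ref{lem:annealedSmallScaleEnergyEstimate}, this $L^2(B_1)$-moment is itself controlled by $\langle r_{*,T,\xi,B}^{(d-\delta+2\underline\gamma)q}\rangle^{1/q}$, and by stationarity the same holds for the $L^2(B_R)$-moment with an extra factor $R^d$; the exponent $d-\delta+2\underline\gamma$ is \emph{strictly subdimensional} provided $\underline\gamma < \delta/2$, which is the sense in which we must choose $\underline\gamma = \underline\gamma(d,\lambda,\Lambda)$ small.

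The core of the argument is then the following self-improving inequality: one shows that, for a suitable exponent and constant, $\langle r_{*,T,\xi,B}^{mq}\rangle^{1/q}$ is bounded by $C^2 q^{2C}$ times a \emph{smaller power} of $\langle r_{*,T,\xi,B}^{mq}\rangle^{1/q}$ (this is where the gap $\delta - 2\underline\gamma > 0$ is spent — it produces a genuine contraction in the exponent), plus a constant; together with the a~priori suboptimal bound \eqref{eq:annealedSmallScaleEnergyEstimateSuboptimal}, which guarantees finiteness of all these moments to begin with, the buckling closes and yields $\langle r_{*,T,\xi,B}^{mq}\rangle^{1/q} \leq C^2 q^{2C}$ for all $q$, with $C$ independent of $T$. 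A bound of this form — polynomial growth of the $2q$-th moment root in $q$ of the shape $C^2 q^{2C}$ — is classically equivalent to a stretched exponential moment bound $\langle\exp((r_{*,T,\xi,B}/C)^{\bar\nu})\rangle \leq 2$ for $\bar\nu = \bar\nu(d,\lambda,\Lambda,\nu,\rho,\eta,M,L)$, by expanding the exponential into its Taylor series and optimizing; this gives \eqref{eq:momentBoundMinRadius}. The main obstacle I anticipate is bookkeeping the interplay of the three $\tau$-dependent losses (one from Lemma~\ref{lem:annealedSmallScaleSchauderIndStep}, two from Lemma~\ref{lem:annealedLinFunctionalsIndStep}) so that, after choosing $\tau$ small depending on the spectral-dimension gap $\delta$, the net exponent on $\langle r_{*,T,\xi,B}^{mq}\rangle$ on the right-hand side stays strictly below $1$ and uniformly in $R$; a secondary technical point is the summation over the dyadic scales $R = 2^l$, which is handled by a union bound combined with the fact that each individual bad-event probability decays stretched-exponentially in $R$ as well as in the threshold, so the geometric sum converges.
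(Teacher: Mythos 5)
Your global architecture matches the paper's: a moment series over dyadic scales, a case distinction between failure of the condition on the lower--order correctors (handled by Markov and the induction hypotheses) and failure of the sublinearity condition on $\phi^T_{\xi,B}$ itself, a first pass yielding a suboptimal but finite moment bound via \eqref{eq:annealedSmallScaleEnergyEstimateSuboptimal}, a buckling step exploiting the strictly subdimensional exponent $d-\delta+2\underline{\gamma}$ in \eqref{eq:annealedSmallScaleEnergyEstimateMinRadius}, and the standard conversion of $\langle r_{*}^{2q}\rangle^{1/q}\leq C^2q^{2C}$ into stretched exponential moments. All of that is correct and is exactly how the paper proceeds.

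There is, however, a genuine gap in the one step that carries the quantitative content of Case~2. You propose to control $\inf_b\{\frac{1}{R^2}\dashint_{B_R}|\phi^T_{\xi,B}-b|^2+\frac{1}{T}|b|^2\}$ by ``a Poincar\'e/Caccioppoli-type argument'' reducing it to $\dashint_{B_R}|\nabla\phi^T_{\xi,B}|^2$, to be estimated by Lemma~\ref{lem:annealedLinFunctionalsIndStep}. This fails for two reasons. First, Poincar\'e gives $\frac{1}{R^2}\dashint_{B_R}|\phi^T_{\xi,B}-\dashint_{B_R}\phi^T_{\xi,B}|^2\lesssim\dashint_{B_R}|\nabla\phi^T_{\xi,B}|^2$, and the right-hand side is an order-one quantity by stationarity; the factor $R^{-2}$ is entirely consumed, so you cannot conclude that the probability of the left-hand side exceeding $1$ is small. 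Second, $\dashint_{B_R}|\nabla\phi^T_{\xi,B}|^2$ is \emph{quadratic} in $\nabla\phi^T_{\xi,B}$ and is not of the form $\int g\cdot\nabla\phi^T_{\xi,B}$ with deterministic $g$, so \eqref{eq:annealedLinFunctionalsIndStep} does not apply to it. The missing idea is the intermediate-scale mollification: compare $\phi^T_{\xi,B}$ with its average $(\phi^T_{\xi,B})_{R'}$ on scale $R'=\theta R^{1-\underline{\gamma}}$. The rough part satisfies $\frac{1}{R^2}\dashint_{B_R}|\phi^T_{\xi,B}-(\phi^T_{\xi,B})_{R'}|^2\lesssim(\tfrac{R'}{R})^2\dashint_{B_{2R}}|\nabla\phi^T_{\xi,B}|^2$, and on the bad event the energy grows at most like $R^{2\underline{\gamma}}$ by Caccioppoli combined with the minimal-radius condition on the lower-order correctors, so the product is $\lesssim\theta^2$ and can be absorbed into the left-hand side ``$1$'' by choosing $\theta$ small. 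Only the remaining terms, namely $\dashint_{B_{2R}}|\dashint_{B_{R'}(x)}\nabla\phi^T_{\xi,B}|^2$ and $\dashint_{B_{2R}}|\dashint_{B_{R'}(x)}\frac{1}{\sqrt{T}}\phi^T_{\xi,B}|^2$, are genuine (averages of) linear functionals with deterministic test fields of mass $\int|g|^2\sim(R')^{-d}$, and it is these to which Lemma~\ref{lem:annealedLinFunctionalsIndStep} is applied; the decay $(R')^{-d}=\theta^{-d}R^{-d(1-\underline{\gamma})}$ is what produces the factor $2^{-k(1-\underline{\gamma})2dq}$ that beats the weight $2^{k(d-\delta/2)2q}$ in the moment series once $\underline{\gamma}$ is chosen small enough. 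Without this two-scale splitting the argument does not close.
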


A rather straightforward post-processing of Lemma~\ref{lem:annealedSmallScaleEnergyEstimate},
Lemma~\ref{lem:annealedSmallScaleSchauderIndStep} and Lemma~\ref{lem:annealedLinFunctionalsIndStep}
based on the stretched exponential moment bounds for the minimal radius from 
Lemma~\ref{lem:momentBoundMinRadius} now allows to conclude the induction step.

\begin{lemma}
\label{lem:closingIndStep}
Let the assumptions and notation of Section~\ref{subsec:indHypo} be in place;
in particular, the induction hypotheses \emph{\eqref{eq:indHypoCorrectorBounds}--\eqref{eq:indHypoAnnealedSchauder}}.
Then the $L$th-order linearized homogenization corrector $\phi^T_{\xi,B}$ 
also satisfies \emph{\eqref{eq:indHypoCorrectorBounds}--\eqref{eq:indHypoAnnealedSchauder}}.
\end{lemma}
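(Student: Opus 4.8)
The plan is to combine the preceding four lemmas in sequence, using the stretched exponential moment bound for the minimal radius as the crucial quantitative input that upgrades all ``conditional'' small-scale estimates into the unconditional form demanded by the induction hypotheses. Throughout, $\underline{\gamma}$ is fixed as in Lemma~\ref{lem:momentBoundMinRadius}, so that~\eqref{eq:momentBoundMinRadius} holds, and we fix $|\xi|\leq M$.

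\textit{Step 1 (from the minimal radius to $L^2$-moments of the corrector).}
First I would invoke the annealed small-scale energy estimate~\eqref{eq:annealedSmallScaleEnergyEstimateMinRadius}: for every $q\in[1,\infty)$,
\begin{align*}
\Big\langle\Big\|\Big(\tfrac{\phi^T_{\xi,B}}{\sqrt{T}},\nabla\phi^T_{\xi,B}\Big)\Big\|^{2q}_{L^2(B_1)}\Big\rangle^\frac1q
\leq C^2\big\langle r_{*,T,\xi,B}^{(d-\delta+2\underline{\gamma})q}\big\rangle^\frac1q.
\end{align*}
Since $r_{*,T,\xi,B}$ has a stretched exponential moment by Lemma~\ref{lem:momentBoundMinRadius}, a standard conversion (e.g.\ via $t^{m}\leq C(m)\exp((t/C)^{\bar\nu})$ together with stationarity and $\langle\exp((r_{*,T,\xi,B}/C)^{\bar\nu})\rangle\leq 2$) yields $\langle r_{*,T,\xi,B}^{p}\rangle^{1/p}\leq C p^{1/\bar\nu}$ for all $p\geq 1$. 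Substituting $p=(d-\delta+2\underline{\gamma})q$ gives the algebraic-in-$q$ bound
\begin{align*}
\Big\langle\Big\|\Big(\tfrac{\phi^T_{\xi,B}}{\sqrt{T}},\nabla\phi^T_{\xi,B}\Big)\Big\|^{2q}_{L^2(B_1)}\Big\rangle^\frac1q
\leq C^2 q^{2C},
\end{align*}
which is exactly the second line of~\eqref{eq:indHypoCorrectorBounds}. (One must check $\underline{\gamma}$ is chosen small enough that $d-\delta+2\underline{\gamma}>0$ causes no issue; this is harmless since $d-\delta+2\underline{\gamma}$ is a fixed exponent depending only on $d,\lambda,\Lambda$.)

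\textit{Step 2 (Schauder and linear-functional estimates).}
With the $L^2$-moment bound of Step~1 in hand, I would feed it into the annealed small-scale Schauder estimate of Lemma~\ref{lem:annealedSmallScaleSchauderIndStep}: choosing any fixed $\tau\in(0,1)$, the right-hand side of~\eqref{eq:annealedSmallScaleSchauderIndStep} involves $\langle\|(\cdot)\|^{2q/(1-\tau)}_{L^2(B_1)}\rangle^{(1-\tau)/q}$, which by Step~1 (applied with the exponent $q/(1-\tau)$ in place of $q$) is bounded by $C^2q^{2C}$; hence
\begin{align*}
\Big\langle\Big\|\Big(\tfrac{\phi^T_{\xi,B}}{\sqrt{T}},\nabla\phi^T_{\xi,B}\Big)\Big\|^{2q}_{C^\alpha(B_1)}\Big\rangle^\frac1q
\leq C^2q^{2C},
\end{align*}
which gives~\eqref{eq:indHypoAnnealedSchauder}. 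In exactly the same fashion, plugging the Step~1 bound into Lemma~\ref{lem:annealedLinFunctionalsIndStep} (now with exponent $q/(1-\tau)^2$ and $q\geq C$, and absorbing the finitely many small $q$ by enlarging $C$) produces the first line of~\eqref{eq:indHypoCorrectorBounds}:
\begin{align*}
\bigg\langle\bigg|\bigg(\int g\cdot\nabla\phi^T_{\xi,B},\int\tfrac1T f\,\phi^T_{\xi,B}\bigg)\bigg|^{2q}\bigg\rangle^\frac1q
\leq C^2q^{2C}\int\Big|\Big(g,\tfrac{f}{\sqrt T}\Big)\Big|^2.
\end{align*}

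\textit{Step 3 (the sensitivity/Malliavin part~\eqref{eq:representationMalliavinDerivative}--\eqref{eq:indHypoSensitivityApprox}).}
It remains to produce the representation~\eqref{eq:representationMalliavinDerivative} of the G\^ateaux derivative of $\int g\cdot\nabla\phi^T_{\xi,B}$, the sensitivity estimate~\eqref{eq:indHypoSensitivityBound}, and the approximation statement~\eqref{eq:indHypoSensitivityApprox}. The G\^ateaux differentiability of $\phi^T_{\xi,B}$ (and its gradient) in the parameter field, together with the existence of the representing field $G^T_{\xi,B}\in L^1_{\mathrm{uloc}}$, is provided by the differentiability statement referenced as Lemma~\ref{eq:lemmaExistenceLinearizedCorrectorsExtended}; differentiating the corrector equation~\eqref{eq:PDEhigherOrderLinearizedCorrectorLocalized} in direction $\delta\omega$ exhibits $\delta\phi^T_{\xi,B}$ as the solution of a linear localized equation whose coefficients and right-hand side are, by the Fa\`a~di~Bruno structure, built from $\delta\omega$, the nonlinear corrector $\phi^T_\xi$ and its $\delta\omega$-derivative, and the lower-order linearized correctors $\phi^T_{\xi,B'_\pi}$ (all with $|B'_\pi|<L$) together with their $\delta\omega$-derivatives. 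I would then test this equation against the solution of the adjoint localized equation with right-hand side $g$ — precisely the standard duality argument of~\cite{Fischer2019} and~\cite{Gloria2020} — to obtain the representation~\eqref{eq:representationMalliavinDerivative} and an explicit formula for $G^T_{\xi,B}$ as a sum of products of (i) the adjoint solution and its gradient, (ii) derivatives of $A$, and (iii) the lower-order correctors and $G^T_{\xi,B'_\pi}$'s. The sensitivity estimate~\eqref{eq:indHypoSensitivityBound} then follows by H\"older's inequality in probability, the annealed Schauder and $L^2$-moment bounds from Steps~1--2 (applied to $\phi^T_{\xi,B}$, the adjoint, and the lower-order correctors), the induction hypothesis~\eqref{eq:indHypoSensitivityBound} for the lower-order fields $G^T_{\xi,B'_\pi}$, and the deterministic annealed Calder\'on--Zygmund/Meyers estimates for the adjoint equation listed in Appendix~\ref{app:toolsEllipticRegularity}; the arbitrary small loss in stochastic integrability is absorbed into the parameter $\kappa$. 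Finally,~\eqref{eq:indHypoSensitivityApprox} is the usual stability statement: the map $g\mapsto G^T_{\xi,B}$ is (locally) continuous from $L^p_{\mathrm{uloc}}$ to $L^1_{\mathrm{uloc}}$ because the adjoint solution depends continuously on $g$ and the corrector fields entering $G^T_{\xi,B}$ are fixed, with the estimate for the truncations $g_r=\mathds{1}_{B_r}g$ following from~\eqref{eq:indHypoSensitivityBound} applied to each $g_r$.

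\textit{Main obstacle.}
The hard part is Step~3, specifically the bookkeeping for the sensitivity estimate~\eqref{eq:indHypoSensitivityBound}: because the right-hand side of~\eqref{eq:PDEhigherOrderLinearizedCorrectorLocalized} (and hence of the $\delta\omega$-linearized equation) is a sum over partitions of products of up to $L$ lower-order corrector factors, $G^T_{\xi,B}$ is a sum of many terms, each a product of several random fields. Estimating its $L^1_{\mathrm{uloc}}$-weighted $2q$-th moment requires carefully iterated H\"older inequalities in the probability variable, matching each factor to the appropriate already-established moment bound (stretched exponential for minimal radii, algebraic-in-$q$ for the $L^2$- and $C^\alpha$-norms, and the induction hypothesis for the lower-order $G$'s), while controlling the combinatorial growth of the constants in $L$ and keeping the $q$-dependence of the form $q^{2C}$ and the integrability loss confined to $\kappa$. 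This is exactly the kind of term-by-term estimate that is routine in spirit but delicate in execution; everything else is a direct application of the four preceding lemmas.
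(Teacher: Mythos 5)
Your proposal is correct and follows essentially the same route as the paper: the $L^2$-moment bound comes from combining the annealed small-scale energy estimate with the stretched exponential moment bound for the minimal radius, this is then fed into the Schauder and linear-functional lemmas, and the sensitivity part is obtained by differentiating the corrector equation, dualizing against the adjoint massive operator, and re-running the $I_1,\dots,I_5$ estimates with the integrability loss shifted into $\kappa$ (the paper takes $\tau=1-\kappa$, $\kappa_1=\kappa/2$, $\kappa_2=\kappa/2$). Your identification of the term-by-term bookkeeping in the sensitivity estimate as the delicate step, and of Fatou's lemma plus Meyers for the approximation statement, matches the paper's execution.
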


\subsection{Estimates for higher-order linearized flux correctors}
In view of the defining equations~\eqref{eq:PDEhigherOrderLinearizedFluxCorrectorLocalized}
and~\eqref{eq:PDEhigherOrderLinearizedHelmholtzCorrectorLocalized}
for the linearized flux correctors, it is natural to establish first the analogues of the
estimates~\eqref{eq:linearFunctionalCorrectorGradientBoundMassiveApprox}--\eqref{eq:correctorGrowthoundMassiveApprox}
for the linearized flux $q^T_{\xi,B}$. Actually, it suffices to establish
the pendant of induction hypothesis~\eqref{eq:indHypoSensitivityBound}. This is captured in the following result.

\begin{lemma}[Sensitivity estimate for the linearized flux]
\label{lem:correctorEstimatesLinearizedFlux}
Let the assumptions and notation of Section~\ref{subsec:indHypo} be in place.
In particular, let $q^T_{\xi,B}$ be the linearized flux as defined by~\eqref{eq:HigherOrderLinearizedFlux}.
Fix $p\in (2,\infty)$, and consider a compactly supported and $p$-integrable field $g$. 
Then there exists a random field
$Q^T_{\xi,B}\in L^1_{\mathrm{uloc}}(\Rd;\Rd[n])$ being related to $g$ via $q^T_{\xi,B}$
in the sense that for all compactly supported and smooth perturbations 
$\delta\omega\colon\Rd\to \Rd[n]$ with $\|\delta\omega\|_{L^\infty}\leq 1$ it holds
\begin{align}
\label{eq:representationMalliavinLinearizedFlux}
\int g\cdot\delta q^T_{\xi,B} = \int Q^T_{\xi,B}\cdot\delta\omega.
\end{align}
In addition, there exists $C=C(d,\lambda,\Lambda,\nu,\rho,\eta,M,L)$
such that for all $|\xi|\leq M$ and all $q\in [1,\infty)$ the random field
$Q^T_{\xi,B}$ gives rise to a \emph{sensitivity estimate} of the form
\begin{align}
\label{eq:sensitivityBoundLinearizedFlux}
\bigg\langle\bigg|\,\int\bigg(
\,\dashint_{B_1(x)}|Q^T_{\xi,B}|\,\bigg)^2\bigg|^{q}\bigg\rangle^\frac{1}{q}
&\leq C^2q^{2C}\sup_{\langle F^{2q_*}\rangle=1}
\int\big\langle|Fg|^{2q_*}\big\rangle^\frac{1}{q_*}.
\end{align}

If $(g_r)_{r\geq 1}$ is a sequence of compactly supported and $p$-integrable random fields, denote by
						$Q_{\xi,B}^{T,r}\in L^1_{\mathrm{uloc}}(\Rd;\Rd[n])$, $r\geq 1$, the random field
						associated to $g_r$, $r\geq 1$, in the sense of~\eqref{eq:representationMalliavinLinearizedFlux}.
						Let $g$ be an $L^p_{\mathrm{uloc}}(\Rd;\Rd)$-valued random field, and assume that
						$\Prob$-almost surely it holds $g_r\to g$ in $L^p_{\mathrm{uloc}}(\Rd;\Rd)$.
						Then there exists a random field $Q^T_{\xi,B}$ such that $\Prob$-almost surely
						\begin{align}
						\label{eq:indHypoSensitivityApproxFlux}
						Q_{\xi,B}^{T,r} \to Q_{\xi,B}^{T} \text{ as } r\to\infty 
						\text{ in } L^1_{\mathrm{uloc}}(\Rd;\Rd[n]),
						\end{align}
						and the sensitivity estimate~\eqref{eq:sensitivityBoundLinearizedFlux} holds true.
\end{lemma}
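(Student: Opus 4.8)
The plan is to differentiate the defining relation \eqref{eq:HigherOrderLinearizedFlux} for the linearized flux $q^T_{\xi,B}$ with respect to the parameter field in direction $\delta\omega$, and to express the resulting sensitivity $\delta q^T_{\xi,B}$ through the G\^ateaux derivatives $\delta\phi^T_\xi$, $\delta\phi^T_{\xi,B'_\pi}$ of the various correctors entering the formula (whose existence and basic bounds we take from Lemma~\ref{eq:lemmaExistenceLinearizedCorrectorsExtended} and the induction hypotheses). Writing out $\delta q^T_{\xi,B}$ by the product and chain rules, one gets a sum of terms of three types: (i) the term $(\partial_\omega a^T_\xi)\delta\omega\cdot(\mathds{1}_{L=1}B+\nabla\phi^T_{\xi,B})$ coming from direct differentiation of the coefficient field in $\omega$, together with its analogues $(\partial_\omega\partial_\xi^{|\Pi|}A)\delta\omega[\cdots]$; (ii) the term $a^T_\xi\nabla\delta\phi^T_{\xi,B}$, involving the sensitivity of the \emph{top-order} corrector $\phi^T_{\xi,B}$ itself; (iii) mixed terms $(\partial_\xi a^T_\xi)[\nabla\delta\phi^T_\xi](\mathds{1}_{L=1}B+\nabla\phi^T_{\xi,B})$ and the lower-order pieces $\partial_\xi^{|\Pi|+1}A[\nabla\delta\phi^T_\xi\odot\cdots]$, $\partial_\xi^{|\Pi|}A[\cdots\odot\nabla\delta\phi^T_{\xi,B'_\pi}\odot\cdots]$, which involve sensitivities of correctors of order $\leq L-1$ and the nonlinear corrector $\phi^T_\xi$. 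Testing the resulting identity against a deterministic $g$, one reads off the representation \eqref{eq:representationMalliavinLinearizedFlux}, with $Q^T_{\xi,B}$ being the sum of the explicit $\partial_\omega$-contributions (type~(i), which are pointwise controlled by $|g|$ times the relevant corrector gradients, using (A3)$_L$) plus the contributions from types (ii) and (iii), each of which is of the form "$G$-field associated to a modified test field" in the sense of induction hypothesis~\eqref{eq:representationMalliavinDerivative} resp.\ the base-case \eqref{eq:BaseCaseRepresentationMalliavinDerivative}.

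Concretely, for a type-(ii) contribution $\int g\cdot a^T_\xi\nabla\delta\phi^T_{\xi,B}=\int ((a^T_\xi)^*g)\cdot\nabla\delta\phi^T_{\xi,B}$, one applies the representation from \eqref{eq:closingIndStep}/Lemma~\ref{lem:closingIndStep} (the induction step has just been closed for $\phi^T_{\xi,B}$, so \eqref{eq:representationMalliavinDerivative}--\eqref{eq:indHypoSensitivityBound} are available for it) with test field $(a^T_\xi)^*g$; this yields a $G$-field $G^T_{\xi,B}[(a^T_\xi)^*g]$ satisfying the sensitivity bound \eqref{eq:indHypoSensitivityBound}. Since $a^T_\xi$ is bounded by $\Lambda$, the right-hand side of that bound is controlled by the same quantity with $g$ in place of $(a^T_\xi)^*g$, which is exactly the form appearing on the right of \eqref{eq:sensitivityBoundLinearizedFlux}. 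The type-(iii) contributions are handled identically but invoking \eqref{eq:representationMalliavinDerivative}--\eqref{eq:indHypoSensitivityBound} for the lower-order correctors $\phi^T_{\xi,B'_\pi}$ and \eqref{eq:BaseCaseRepresentationMalliavinDerivative}--\eqref{eq:BaseCaseSensitivityBound} for $\phi^T_\xi$, with test fields $g$ multiplied by products of corrector gradients $\mathds{1}_{|\pi|=1}B'_\pi+\nabla\phi^T_{\xi,B'_\pi}$ and derivatives $\partial_\xi^{k}A$; here one uses H\"older's inequality in probability, the annealed Schauder / moment bounds \eqref{eq:indHypoAnnealedSchauder}, \eqref{eq:indHypoCorrectorBounds} (and \eqref{eq:correctorGradientBoundMassiveApprox}, \eqref{eq:correctorSchauderMassiveApprox} for the top-order corrector) to absorb the extra corrector-gradient factors at the cost of enlarging $q$ by a $\kappa$-dependent factor, and then chooses $\kappa$ so that the exponents close. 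The type-(i) terms are the easiest: $|Q$-contribution$|\lesssim_\Lambda |g|\,|\mathds{1}_{L=1}B+\nabla\phi^T_{\xi,B}|$ plus lower-order analogues, and the $L^2$-in-$x$, $2q$-in-$\omega$ norm of the local average is bounded by H\"older and the stretched-exponential corrector-gradient bounds again.

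The final assertion \eqref{eq:indHypoSensitivityApproxFlux} --- stability under approximation $g_r\to g$ --- follows by combining the approximation statements \eqref{eq:indHypoSensitivityApprox} and \eqref{eq:BaseCaseSensitivityApprox} for each of the finitely many constituent $G$-fields (the modified test fields $(a^T_\xi)^*g_r$, $g_r\cdot(\text{corrector products})$, etc., converge in $L^p_{\mathrm{uloc}}$ whenever $g_r\to g$ does, using $L^\infty$-type bounds on $a^T_\xi$ and the $C^\alpha$-bounds on the corrector gradients), together with the explicit convergence of the type-(i) terms, and then passing to the limit in the already-established sensitivity bound. The main obstacle I expect is purely bookkeeping: correctly organizing the Fa\`a~di~Bruno sum over $\Pi\in\mathrm{Par}\{1,\ldots,L\}$ after differentiation so that every resulting term is manifestly of one of the three types above and every corrector gradient appearing as an extra factor has a high enough stochastic moment (via \eqref{eq:indHypoCorrectorBounds}, \eqref{eq:indHypoAnnealedSchauder}) to be absorbed after the H\"older step --- in particular verifying that the number of such extra factors (at most $|\Pi|\le L$) is compatible with the $\kappa$-loss one can afford in \eqref{eq:indHypoSensitivityBound}. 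There is no genuinely new analytic difficulty beyond what is already present in the proof of the induction step; the lemma is essentially a corollary of the representation-and-sensitivity machinery assembled in Subsection~\ref{subsec:indStep}.
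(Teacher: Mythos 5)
Your proposal is correct and follows essentially the same route as the paper: differentiate the Fa\`a~di~Bruno formula for $q^T_{\xi,B}$, split $\delta q^T_{\xi,B}$ into the direct $\partial_\omega$-terms, the top-order term $a^T_\xi\nabla\delta\phi^T_{\xi,B}$, and the lower-order mixed terms, then invoke the representation and sensitivity machinery \eqref{eq:representationMalliavinDerivative}--\eqref{eq:indHypoSensitivityBound} (available up to order $L$ by Lemma~\ref{lem:closingIndStep}) with the modified test fields, and conclude the approximation statement from \eqref{eq:indHypoSensitivityApprox} together with Fatou's lemma. The paper organizes this by reusing verbatim the remainder fields $R^{T,(i)}_{\xi,B}$ from the sensitivity computation \eqref{eq:PDEperturbedLinearizedHomCorrector} and applies \eqref{eq:indHypoSensitivityBound} with $\kappa=1$, but this is only a notational streamlining of what you describe.
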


Once this result is established, the asserted estimates in Theorem~\ref{theo:correctorBoundsMassiveApprox} 
for the massive linearized flux correctors~$(\sigma^T_{\xi,B},\psi^T_{\xi,B})$ follow readily.

\subsection{Differentiability of the massive correctors and the massive approximation
of the homogenized operator}
As a preparation for the proof of the 
estimates~\eqref{eq:correctorGradientBoundDifferences}--\eqref{eq:correctorGrowthBoundDifferences},
which in particular contain estimates for differences of linearized correctors,
and the higher-order differentiability of the homogenized operator
in form of Theorem~\ref{theo:diffHomOperator}, we establish the desired
differentiability properties on the level of the massive approximation.
A first step in this direction are the following estimates for differences
of linearized correctors.

\begin{lemma}[Estimates for differences of linearized correctors]
\label{lem:differencesLinearizedCorrectors}
Let $L\in\N$, $M>0$ as well as $T\in [1,\infty)$ be fixed.
Let the requirements and notation of (A1), (A2)$_L$, (A3)$_L$ and (A4)$_L$ of
Assumption~\ref{assumption:operators}, (P1) and (P2) of
Assumption~\ref{assumption:ensembleParameterFields}, and (R) of 
Assumption~\ref{assumption:smallScaleReg} be in place.
We also fix a set of unit vectors $v_1,\ldots,v_L\in\Rd$ and define
$B:=v_1\odot\cdots\odot v_{L}$. 

For every $\beta\in (0,1)$, there exists a constant 
$C=C(d,\lambda,\Lambda,\nu,\rho,\eta,M,L,\beta)$ such that for all $|\xi|\leq M$, 
all $q\in [1,\infty)$, all unit vectors $e\in\Rd$ and all $|h|\leq 1$ it holds
\begin{align}
\label{eq:correctorBoundForDifferencesMassiveApprox}
\big\langle\big\|\big(
\nabla\phi^T_{\xi+he,B}{-}\nabla\phi^T_{\xi,B},
\nabla\sigma^T_{\xi+he,B}{-}\nabla\sigma^T_{\xi,B}
\big)\big\|^{2q}_{L^2(B_1)}\big\rangle^\frac{1}{q}
&\leq C^2q^{2C}|h|^{2(1{-}\beta)},
\\\label{eq:SchauderEstimatesDifferencesMassiveApprox}
\big\langle\big\|\big(
\nabla\phi^T_{\xi+he,B}{-}\nabla\phi^T_{\xi,B},
\nabla\sigma^T_{\xi+he,B}{-}\nabla\sigma^T_{\xi,B}
\big)\big\|^{2q}_{C^\alpha(B_1)}\big\rangle^\frac{1}{q}
&\leq C^2q^{2C}|h|^{2(1{-}\beta)},
\end{align}
as well as for all compactly supported and square-integrable $g_\phi,g_\sigma$
\begin{equation}
\label{eq:LinearFunctionalEstimatesDifferencesMassiveApprox}
\begin{aligned}
&\bigg\langle\bigg|\bigg(\int g_\phi\cdot(\nabla\phi^T_{\xi+he,B}{-}\nabla\phi^T_{\xi,B}), 
\int g_\sigma^{kl}\cdot(\nabla\sigma^T_{\xi+he,B,kl}{-}\nabla\sigma^T_{\xi,B,kl})
\bigg)\bigg|^{2q}\bigg\rangle^\frac{1}{q}
\\
&\leq C^2q^{2C}|h|^{2(1-\beta)}\int \big|\big(g_\phi,g_\sigma\big)\big|^2.
\end{aligned}
\end{equation}
\end{lemma}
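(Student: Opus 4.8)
The plan is to derive an equation for the difference of correctors and then invoke the corrector estimates of Theorem~\ref{theo:correctorBoundsMassiveApprox} (and the inductive machinery behind them) applied to this difference. Fix $\xi$ and $h$, write $\xi_h := \xi + he$, and set $\delta\phi := \phi^T_{\xi_h,B} - \phi^T_{\xi,B}$, and analogously $\delta\sigma$, $\delta\psi$. Subtracting the two copies of~\eqref{eq:PDEhigherOrderLinearizedCorrectorLocalized} (one at $\xi_h$, one at $\xi$), I would organize the resulting right-hand side into two groups: (i) the terms in which the leading coefficient field changes, $a^T_{\xi_h} - a^T_\xi = \partial_\xi A(\omega,\xi_h + \nabla\phi^T_{\xi_h}) - \partial_\xi A(\omega,\xi + \nabla\phi^T_{\xi})$, which by~(A2)$_L$, (A4)$_L$ is controlled by $|h| + |\nabla\phi^T_{\xi_h} - \nabla\phi^T_\xi|$ times a bounded factor; and (ii) the differences of the lower-order source terms $\partial_\xi^{|\Pi|}A(\omega,\xi{+}\nabla\phi^T_\xi)[\bigodot_\pi(\ldots)]$, which involve only linearized correctors of order $\le L-1$ together with the nonlinear corrector $\phi^T_\xi$. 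The key point is that $\delta\phi$ then solves a massive linearized equation of the same structural type as~\eqref{eq:PDEhigherOrderLinearizedCorrectorLocalized}, with the \emph{same} coefficient field $a^T_{\xi,B}$ (say), but with a modified right-hand side which is a divergence of a field whose $L^2(B_1)$-norm and annealed linear functionals are, by the chain rule, controlled by $|h|$ plus the differences of lower-order correctors $\nabla\phi^T_{\xi_h,B'} - \nabla\phi^T_{\xi,B'}$ for $|B'| < |B|$, plus the difference $\nabla\phi^T_{\xi_h} - \nabla\phi^T_{\xi}$ of the nonlinear correctors.

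Accordingly I would run an induction over the linearization order $L$, mirroring the induction already used for Theorem~\ref{theo:correctorBoundsMassiveApprox}. The base case $L=0$ — differences of the nonlinear localized corrector, $\nabla\phi^T_{\xi_h} - \nabla\phi^T_\xi$ — is exactly the kind of Lipschitz-in-$\xi$ estimate established by Fischer--Neukamm; this follows from testing the difference of the two nonlinear corrector equations with $\delta\phi$ itself, using $\lambda$-monotonicity of $A(\omega,\cdot)$ on the left, $|\partial_\xi A|\le\Lambda$ for the inhomogeneity created by shifting $\xi\mapsto\xi+he$, and Caccioppoli/Meyers-type small-scale annealed regularity (cf. Proposition~\ref{prop:estimates1stOrderCorrector} and the tools in Appendix~\ref{app:toolsEllipticRegularity}) to upgrade the energy estimate to~\eqref{eq:BaseCaseAnnealedSchauder}-type bounds and to annealed linear functionals. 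For the inductive step at order $L$, having the analogous difference estimates~\eqref{eq:correctorBoundForDifferencesMassiveApprox}--\eqref{eq:LinearFunctionalEstimatesDifferencesMassiveApprox} available for all orders $<L$, one feeds the explicitly bounded right-hand side described above into the already-proven estimates for the massive linearized corrector problem — concretely, into Lemma~\ref{lem:annealedSmallScaleEnergyEstimate}, Lemma~\ref{lem:annealedSmallScaleSchauderIndStep}, Lemma~\ref{lem:annealedLinFunctionalsIndStep} and the sensitivity estimate Lemma~\ref{lem:correctorEstimatesLinearizedFlux} — all of which are \emph{linear in the data} $(g,f)$ and only care about the minimal radius $r_{*,T,\xi,B}$, whose stretched exponential moments (Lemma~\ref{lem:momentBoundMinRadius}) are already in hand. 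This immediately yields~\eqref{eq:correctorBoundForDifferencesMassiveApprox}, \eqref{eq:SchauderEstimatesDifferencesMassiveApprox} and~\eqref{eq:LinearFunctionalEstimatesDifferencesMassiveApprox}, but with $|h|^2$ rather than $|h|^{2(1-\beta)}$.

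The recovery of the essentially optimal power $|h|^{2(1-\beta)}$ — and this is the step I expect to be the main obstacle — has to deal with the one genuinely nonlinear feature: the coefficient-field increment $a^T_{\xi_h,B} - a^T_{\xi,B}$ multiplying $\nabla\phi^T_{\xi_h,B}$, which is only Lipschitz (via~(A4)$_L$) in $\nabla\phi^T_{\xi_h} - \nabla\phi^T_\xi$, and the latter increment of the nonlinear corrector gradient is itself only $L^{2q}$-bounded by $|h|$ with a constant that deteriorates as $q\to\infty$ — there is no uniform-in-$q$ Lipschitz bound, only a stretched-exponential one. The standard remedy, and the reason for the loss $\beta$, is to interpolate: combine the crude $O(|h|)$ bound in a \emph{lower} stochastic moment (where the Lipschitz constant is harmless) with the uniform-in-$h$ boundedness~\eqref{eq:correctorGradientBoundMassiveApprox} of $\nabla\phi^T_{\xi,B}$ itself at \emph{high} moments, via Hölder's inequality in probability with exponents tuned by $\beta$; equivalently, one splits the product $(a^T_{\xi_h,B}-a^T_{\xi,B})\nabla\phi^T_{\xi_h,B}$ and absorbs the Lipschitz-coefficient factor at a sacrificed exponent. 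Care is needed to propagate this loss consistently through the induction — since the order-$(L-1)$ estimates already carry a $\beta'$-loss, one chooses $\beta'<\beta$ at each stage so that the finitely many nested interpolations still close with the prescribed $\beta$ — and to keep track of the massive terms $\tfrac1T\delta\phi$, $\tfrac1T\delta\psi$ with the same $\tfrac1{\sqrt T}$-weighting used throughout the paper, which is handled exactly as in the proof of Theorem~\ref{theo:correctorBoundsMassiveApprox}.
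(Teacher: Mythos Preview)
Your overall strategy---induction over the linearization order, writing the difference equation with right-hand side $R^T_{\xi,B,h,e}$, and invoking the base case from Fischer--Neukamm---matches the paper's. However, two aspects of the proposal are off and would not close as written.

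First, the claim that one can simply ``feed the explicitly bounded right-hand side into the already-proven estimates'' (Lemmas~\ref{lem:annealedSmallScaleEnergyEstimate}--\ref{lem:annealedLinFunctionalsIndStep}) oversimplifies the decisive step. Those lemmas are not black-box estimates for an arbitrary right-hand side; the linear-functional bound (Lemma~\ref{lem:annealedLinFunctionalsIndStep}) proceeds via the spectral gap and therefore requires differentiating the \emph{solution} in~$\omega$. For the difference $\phi^T_{\xi+he,B}-\phi^T_{\xi,B}$ this means differentiating the full difference equation, and the resulting sensitivity expression contains terms proportional to $\nabla\phi^T_{\xi+he,B}-\nabla\phi^T_{\xi,B}$ itself (through $R^{T,(1)}$, $R^{T,(2)}$ in the paper's decomposition). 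Consequently the linear-functional estimate for the difference is self-referential: it has the form of~\eqref{eq:estimateLinFunctionalsDiffAbsorption}, with the target quantity on the right-hand side raised to a power $(1-\tau)^2<1$. This is closed not by a direct application but by a buckling argument---combining hole-filling and Caccioppoli at a free scale $R$, inserting the self-referential functional estimate, interpolating against the already-known $h$-independent bound~\eqref{eq:correctorGradientBoundMassiveApprox} to reduce the exponent to $(1-\tau)^3$, and then choosing $R$ and $\tau=\tau(\beta)$ to absorb.

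Second, your diagnosis of the $\beta$-loss is not correct. The base-case difference estimate~\eqref{eq:BaseCaseCorrectorBoundsDiff} carries the full rate $|h|^2$ at \emph{every} stochastic moment with the usual $q^{2C}$ constant; there is no special deterioration to interpolate away. The loss $|h|^{2(1-\beta)}$ arises entirely from the buckling just described: closing a relation of the type $X\lesssim R^{-\delta}X^{(1-\tau)^3}+R^{d-\delta}|h|^{2}$ by optimizing over $R$ necessarily costs a small power of $|h|$. Your proposed interpolation (combine $O(|h|)$ at a fixed low moment with $O(1)$ at high moments) would yield an $h$-exponent that degenerates as $q\to\infty$, not a uniform $|h|^{2(1-\beta)}$.
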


The already mentioned differentiability result for the massive linearized correctors
and the massive approximation of the homogenized operator now reads as follows.

\begin{lemma}[Differentiability of massive correctors and the massive version of the homogenized operator]
\label{lem:diffMassiveApprox}
Let $L\in\N$, $M>0$ and $T\in [1,\infty)$ be fixed.
Let the requirements and notation of (A1), (A2)$_L$, (A3)$_L$ and (A4)$_L$ of
Assumption~\ref{assumption:operators}, (P1) and (P2) of
Assumption~\ref{assumption:ensembleParameterFields}, and (R) of 
Assumption~\ref{assumption:smallScaleReg} be in place.
We also fix a set of unit vectors $v_1,\ldots,v_L\in\Rd$ and define
$B:=v_1\odot\cdots\odot v_{L}$. 
Then, both the maps $\xi\mapsto\nabla\phi^T_{\xi,B}$ and $\xi\mapsto\nabla\sigma^T_{\xi,B}$
are Fr\'echet differentiable with values in the Fr\'echet space $L^2_{\langle\cdot\rangle}L^2_{\mathrm{loc}}(\Rd)$.

Given a vector $\xi\in\Rd$, a unit vector $e\in\Rd$ and some $|h|\leq 1$ we define
\begin{align*}
\bar{A}^T_{B,e,h}(\xi) &:= \big\langle q^T_{\xi+he,B} \big\rangle
													 - \big\langle q^T_{\xi,B} \big\rangle
													 - \big\langle q^T_{\xi,B\odot e} \big\rangle h.
\end{align*}
For every $\beta\in (0,1)$, there then exists a constant 
$C=C(d,\lambda,\Lambda,\nu,\rho,\eta,M,L,\beta)$ such that for all $|\xi|\leq M$, 
all unit vectors $e\in\Rd$, all $q\in [1,\infty)$, and all $|h|\leq 1$ it holds
\begin{align}
\label{eq:regMassiveVersionHomOperator}
\big|\bar{A}^T_{B,e,h}(\xi)\big|^{2}
&\leq C^2h^{4(1-\beta)}.
\end{align}

Assume in addition to the above conditions that the stronger forms of~(A2)$_{L{+}1}$,
(A3)$_{L{+}1}$ and (A4)$_{L{+}1}$ from Assumption~\ref{assumption:operators} hold true.
We then have the following quantitative estimates on first-order Taylor expansions
of the linearized correctors $\phi^T_{\xi,B}$ and $\sigma^T_{\xi,B}$.
Given a vector $\xi\in\Rd$, a unit vector $e\in\Rd$ and some $|h|\leq 1$ we define
\begin{align*}
\phi^T_{\xi,B,e,h} &:= \phi^T_{\xi+he,B} - \phi^T_{\xi,B} - \phi^T_{\xi,B\odot e}h,
\\
\sigma^T_{\xi,B,e,h} &:= \sigma^T_{\xi+he,B} - \sigma^T_{\xi,B} - \sigma^T_{\xi,B\odot e}h.
\end{align*}
For every $\beta\in (0,1)$, there then exists a constant 
$C=C(d,\lambda,\Lambda,\nu,\rho,\eta,M,L,\beta)$ such that for all $|\xi|\leq M$, 
all unit vectors $e\in\Rd$, all $q\in [1,\infty)$, all $|h|\leq 1$
and all compactly supported and square-integrable $g_\phi,g_\sigma$ it holds
\begin{align}
\label{eq:regGradient}
\big\langle\big\|\big(\nabla\phi^T_{\xi,B,e,h},
\nabla\sigma^T_{\xi,B,e,h}\big)\big\|^{2q}_{L^2(B_1)}
\big\rangle^\frac{1}{q} &\leq C^2q^{2C}h^{4(1-\beta)},
\\
\label{eq:regLinearFunctionals}
\bigg\langle\bigg|\bigg(\int g_\phi\cdot\nabla\phi^T_{\xi,B,e,h}, 
\int g_\sigma^{kl}\cdot\nabla\sigma^T_{\xi,B,e,h}
\bigg)\bigg|^{2q}\bigg\rangle^\frac{1}{q}
&\leq C^2q^{2C}|h|^{4(1-\beta)}\int \big|\big(g_\phi,g_\sigma\big)\big|^2.
\end{align}
\end{lemma}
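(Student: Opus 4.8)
The plan is to prove Lemma~\ref{lem:diffMassiveApprox} by differentiating the defining PDEs in the macroscopic parameter~$\xi$ and then bootstrapping the difference estimates of Lemma~\ref{lem:differencesLinearizedCorrectors}. First I would establish Fr\'echet differentiability of $\xi\mapsto\nabla\phi^T_{\xi,B}$ and $\xi\mapsto\nabla\sigma^T_{\xi,B}$: the candidate derivative in direction~$e$ is $\nabla\phi^T_{\xi,B\odot e}$ (resp.\ $\nabla\sigma^T_{\xi,B\odot e}$), since differentiating the Fa\`a~di~Bruno equation~\eqref{eq:PDEhigherOrderLinearizedCorrectorLocalized} in~$\xi$ formally produces exactly the $(L{+}1)$st-order linearized corrector equation in direction $B\odot e$. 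To make this rigorous one writes down the PDE solved by the difference quotient $h^{-1}(\phi^T_{\xi+he,B}-\phi^T_{\xi,B})$, subtracts the equation for $\phi^T_{\xi,B\odot e}$, and observes that the resulting error terms involve (i) difference quotients of the lower-order correctors $\phi^T_{\xi,B'_\pi}$ and of $\nabla\phi^T_\xi$, which converge by the induction hypothesis and Proposition~\ref{prop:estimates1stOrderCorrector}, and (ii) difference quotients of $a^T_\xi=\partial_\xi A(\omega,\xi{+}\nabla\phi^T_\xi)$ and of the higher derivatives $\partial_\xi^{|\Pi|}A$, which are controlled using (A4)$_L$. Standard massive-operator energy estimates (the $T^{-1}$-coercive bilinear form) then give convergence of the difference quotient in $L^2_{\langle\cdot\rangle}L^2_{\mathrm{loc}}$, hence differentiability.

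Next I would prove the regularity bound~\eqref{eq:regMassiveVersionHomOperator} for $\bar A^T_{B,e,h}(\xi)$. Here the point is that $\langle q^T_{\xi+he,B}\rangle-\langle q^T_{\xi,B}\rangle-\langle q^T_{\xi,B\odot e}\rangle h$ is, term by term in the definition~\eqref{eq:HigherOrderLinearizedFlux} of the linearized flux, an expectation of products of corrector gradients in which at least one factor is a \emph{difference} of the form $\nabla\phi^T_{\xi+he,B'}-\nabla\phi^T_{\xi,B'}$ or a difference of a coefficient $\partial_\xi^k A$ evaluated at $\xi{+}\nabla\phi^T_{\xi+he}$ versus $\xi{+}\nabla\phi^T_\xi$. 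Using the Leibniz-type telescoping and $|h|^{2(1-\beta)}$-bounds from~\eqref{eq:correctorBoundForDifferencesMassiveApprox}, together with the uniform $L^2(B_1)$-moment bounds~\eqref{eq:correctorGradientBoundMassiveApprox} and Schauder bounds~\eqref{eq:correctorSchauderMassiveApprox} for the factors that are \emph{not} differenced, and the Lipschitz bounds (A4)$_L$ on $\partial_\xi^{L+1}A$ and $\partial_\omega\partial_\xi^L A$, one arrives at a product of an $h$-gain factor with finite-moment factors; stationarity then removes the spatial average and H\"older's inequality in probability yields the deterministic bound $h^{4(1-\beta)}$ after noting $\langle\cdot\rangle$ of a quantity $\lesssim h^{2(1-\beta)}\times(\text{bounded})$ squared.

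For the second-order statements~\eqref{eq:regGradient}--\eqref{eq:regLinearFunctionals}, the strategy upgrades the first-difference analysis to a second-difference (Taylor-remainder) analysis. I would write the PDE solved by $\phi^T_{\xi,B,e,h}=\phi^T_{\xi+he,B}-\phi^T_{\xi,B}-\phi^T_{\xi,B\odot e}h$; its right-hand side is a sum of terms each carrying a factor that is a \emph{second-order} finite difference of either a lower-order corrector, or a coefficient $\partial_\xi^k A(\omega,\xi{+}\nabla\phi^T_\xi)$, the latter now requiring the stronger hypotheses (A2)$_{L+1}$, (A3)$_{L+1}$, (A4)$_{L+1}$ to produce a $|h|^{2(1-\beta)}$-type gain for the \emph{second} difference (via Lipschitz continuity of $\partial_\xi^{L+1}A$ and of $\partial_\omega\partial^L_\xi A$), plus terms that are products of two \emph{first} differences (each gaining $|h|^{1-\beta}$). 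Feeding these through the $L^2$-energy estimate for the massive operator gives~\eqref{eq:regGradient}; combining with the deterministic Schauder/Caccioppoli machinery used for~\eqref{eq:SchauderEstimatesDifferencesMassiveApprox} would give the $C^\alpha$-analogue if needed. For the linear-functional bound~\eqref{eq:regLinearFunctionals} I would instead test with the (massive) Green's function against $g_\phi,g_\sigma$ and run the sensitivity/duality argument exactly as in Lemma~\ref{lem:annealedLinFunctionalsIndStep} and Lemma~\ref{lem:correctorEstimatesLinearizedFlux}, but applied to the second-difference quantity $\phi^T_{\xi,B,e,h}$, so that the $|h|^{4(1-\beta)}$ gain already established on the $L^2(B_1)$-level propagates to the functional level.

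The main obstacle I anticipate is bookkeeping the Fa\`a~di~Bruno right-hand side under two successive finite differences: one must show that \emph{every} resulting term either contains a single second-order difference (of a corrector or a coefficient) or a product of two first-order differences, and that in all cases the remaining, un-differenced factors are controlled by the uniform moment bounds~\eqref{eq:correctorGradientBoundMassiveApprox}--\eqref{eq:correctorSchauderMassiveApprox} so that H\"older's inequality in both space and probability closes. The delicate point is the chain-rule terms $\partial_\xi^{|\Pi|}A(\omega,\xi{+}\nabla\phi^T_\xi)$: a second difference in $\xi$ hits both the explicit $\xi$-slot and the implicit dependence through $\nabla\phi^T_\xi$, and extracting the full $|h|^{2(1-\beta)}$ (rather than only $|h|^{1-\beta}$) from the second difference of $\nabla\phi^T_\xi$ requires the first-order Taylor estimate for the \emph{nonlinear} corrector $\phi^T_\xi$ — i.e.\ the base case of Lemma~\ref{lem:differencesLinearizedCorrectors}/Lemma~\ref{lem:diffMassiveApprox} at $L=0$ — which is why the inductive structure and the strengthened assumptions (A$\cdot$)$_{L+1}$ are essential here.
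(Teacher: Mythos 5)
Your overall strategy for the differentiability statement and the $q=1$ bound — write the PDE for the Taylor remainder $\phi^T_{\xi,B,e,h}$, check that every term on the right-hand side carries either a second-order difference of a lower-order constituent or a product of two first-order differences, and close with the massive energy estimate plus an induction over the linearization order — is exactly the paper's argument, including the observation that the chain-rule terms $\partial_\xi^{|\Pi|}A(\omega,\xi{+}\nabla\phi^T_\xi)$ must be Taylor-expanded in both the explicit $\xi$-slot and through $\nabla\phi^T_\xi$. However, two steps of your proposal have genuine gaps.

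First, your argument for~\eqref{eq:regMassiveVersionHomOperator} as written only yields $|\bar A^T_{B,e,h}(\xi)|^2\lesssim h^{2(1-\beta)}$, not $h^{4(1-\beta)}$. You describe $\langle q^T_{\xi+he,B}\rangle-\langle q^T_{\xi,B}\rangle$ as a sum of products in which ``at least one factor is a difference,'' but a single first-order difference controlled by~\eqref{eq:correctorBoundForDifferencesMassiveApprox} gains only $|h|^{1-\beta}$ in $L^2$-norm per factor, hence $|h|^{2(1-\beta)}$ after squaring. The extra gain comes precisely from subtracting $\langle q^T_{\xi,B\odot e}\rangle h$, and pairing each piece of $q^T_{\xi,B\odot e}$ (which lives on partitions of $\{1,\ldots,L{+}1\}$) with the matching first-difference term so that what remains is a genuine second-order remainder or a product of two first differences. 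This partition-matching is the nontrivial bookkeeping of the proof (the decomposition $q^T_{\xi,B,e,h}=R_0+\cdots+R_4$ in the paper); your sketch skips it, and without it the claimed rate does not follow.

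Second, your route to general $q\in[1,\infty)$ in~\eqref{eq:regGradient}--\eqref{eq:regLinearFunctionals} is not closed. The weighted energy estimate is deterministic and, after taking expectations via stationarity, naturally delivers only the $q=1$ bound; redoing the second-difference analysis at the level of $2q$-th moments (and redoing the whole sensitivity/duality machinery for the Taylor remainder, as you propose for~\eqref{eq:regLinearFunctionals}) is a substantial additional construction that you do not carry out. The paper avoids this entirely: once qualitative Fr\'echet differentiability is known from the $q=1$ step, one writes
\begin{align*}
\nabla\phi^T_{\xi,B,e,h}=h\int_0^1\big(\nabla\phi^T_{\xi+she,B\odot e}-\nabla\phi^T_{\xi,B\odot e}\big)\ds,
\end{align*}
and Minkowski's inequality together with the difference estimates of Lemma~\ref{lem:differencesLinearizedCorrectors} applied to the \emph{$(L{+}1)$-st order} corrector $\phi^T_{\xi,B\odot e}$ immediately gives all $q$-moments. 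This is also the actual reason the strengthened hypotheses (A2)$_{L+1}$, (A3)$_{L+1}$, (A4)$_{L+1}$ are required for~\eqref{eq:regGradient}--\eqref{eq:regLinearFunctionals}: they are needed to invoke Lemma~\ref{lem:differencesLinearizedCorrectors} at order $L{+}1$, not (as you suggest) for the base-case Taylor expansion of the nonlinear corrector $\phi^T_\xi$, which only uses (A2)$_1$, (A3)$_1$ and is available already under the weaker assumptions.
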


\begin{remark}
In case of $q=1$, the estimates~\eqref{eq:regGradient} and~\eqref{eq:regLinearFunctionals}
actually hold true requiring only~(A2)$_{L}$, (A3)$_{L}$ and (A4)$_L$ from Assumption~\ref{assumption:operators}.
This in turn represents exactly the form of~Assumption~\ref{assumption:operators} for which qualitative
differentiability of the linearized corrector gradients is established in Lemma~\ref{lem:diffMassiveApprox}.
A proof of this claim is contained in the proof of Lemma~\ref{lem:diffMassiveApprox}.
\end{remark}

\subsection{The limit passage in the massive approximation}
\label{subsec:limitPassage}
The last main ingredient in the proof of Theorem~\ref{theo:correctorBounds}
and Theorem~\ref{theo:diffHomOperator} consists of studying the limit $T\to\infty$
in the massive approximation. More precisely, we establish the following result.

\begin{lemma}[Limit passage in the massive approximation]
\label{lem:limitMassiveApprox}
Let $L\in\N$ and $M>0$ be fixed.
Let the requirements and notation of (A1), (A2)$_L$ and (A3)$_L$ of
Assumption~\ref{assumption:operators}, (P1) and (P2) of
Assumption~\ref{assumption:ensembleParameterFields}, and (R) of 
Assumption~\ref{assumption:smallScaleReg} be in place.
We also fix a set of unit vectors $v_1,\ldots,v_L\in\Rd$ and define
$B:=v_1\odot\cdots\odot v_{L}$. 

Then the sequence
\begin{align*}
\Big(\nabla\phi^T_{\xi,B},\,\nabla\sigma^T_{\xi,B}\Big)_{T\in [1,\infty)}
\end{align*}
is Cauchy in $L^2_{\langle\cdot\rangle}L^2_{\mathrm{loc}}(\Rd)$ (with respect
to the strong topology). Moreover, 
there exists $C=C(d,\lambda,\Lambda,\nu,\rho,\eta,M,L)$ 
such that for all $|\xi|\leq M$ and all $T\in [1,\infty)$ we have the estimates
\begin{align}
\label{eq:convLinearizedCorrectors}
\big\langle\big\|\big(\nabla\phi^{2T}_{\xi,B} - \nabla\phi^T_{\xi,B},
\nabla\sigma^{2T}_{\xi,B} - \nabla\sigma^T_{\xi,B}\big)\big\|^{2}_{L^2(B_1)}\big\rangle
&\leq C^2\frac{\mu_*^2(\sqrt{T})}{T},
\\\label{eq:convHomOperator}
\big|\big\langle q^{2T}_{\xi,B}\big\rangle - \big\langle q^T_{\xi,B}\big\rangle\big|^2
&\leq C^2\frac{\mu_*^2(\sqrt{T})}{T}.
\end{align}
The corresponding limits give rise to higher-order linearized homogenization correctors and flux correctors
in the sense of Definition~\ref{def:correctorsHigherOrderLinearization}. Moreover,
\begin{align}
\label{eq:convFluxes}
\big\langle\big|q^{T}_{\xi,B}-q_{\xi,B}\big|^2\big\rangle \to 0,
\end{align}
with the limiting linearized flux $q_{\xi,B}$ defined in~\eqref{eq:LinearizedFlux}.
\end{lemma}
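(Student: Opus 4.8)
The plan is to argue by induction on the linearization order $L$. The base case $L=0$---convergence of the massive corrector of the nonlinear problem---is contained in~\cite{Fischer2019} and comes with all stochastic moments. For the inductive step fix $T\le S\le 2T$ and set $\delta\phi:=\phi^{S}_{\xi,B}-\phi^{T}_{\xi,B}$, $\delta\sigma:=\sigma^{S}_{\xi,B}-\sigma^{T}_{\xi,B}$, $\delta q:=q^{S}_{\xi,B}-q^{T}_{\xi,B}$. Writing $h^{T}$ for the Fa\`a~di~Bruno right-hand side of~\eqref{eq:PDEhigherOrderLinearizedCorrectorLocalized} and $G:=\bigl(a^{S}_{\xi}-a^{T}_{\xi}\bigr)(\mathds{1}_{L=1}B+\nabla\phi^{T}_{\xi,B})+\bigl(h^{S}-h^{T}\bigr)$, subtracting the two instances of~\eqref{eq:PDEhigherOrderLinearizedCorrectorLocalized} and of~\eqref{eq:HigherOrderLinearizedFlux} shows that $\delta\phi$ solves the massive linear equation
\[
\tfrac1S\delta\phi-\nabla\cdot\bigl(a^{S}_{\xi}\nabla\delta\phi\bigr)=\bigl(\tfrac1T-\tfrac1S\bigr)\phi^{T}_{\xi,B}+\nabla\cdot G,
\]
with the \emph{same} uniformly elliptic coefficient $a^{S}_{\xi}$ as the linearized corrector problem at parameter $S$; moreover $\delta q=a^{S}_{\xi}\nabla\delta\phi+G$, and $\delta\sigma_{kl}$ solves $\tfrac1S\delta\sigma_{kl}-\Delta\delta\sigma_{kl}=\bigl(\tfrac1T-\tfrac1S\bigr)\sigma^{T}_{\xi,B,kl}+(e_l\otimes e_k-e_k\otimes e_l):\nabla\delta q$.

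The crux is the bound $\langle|G|^{2q}\rangle^{1/q}\lesssim_{q}\mu_*^{2}(\sqrt T)/T$. By~(A2)$_L$ (since $L\ge 1$, this bounds $\partial_\xi^{2}A,\dots,\partial_\xi^{L+1}A$ by $\Lambda$) the difference $a^{S}_{\xi}-a^{T}_{\xi}=\partial_\xi A(\omega,\xi{+}\nabla\phi^{S}_\xi)-\partial_\xi A(\omega,\xi{+}\nabla\phi^{T}_\xi)$ is pointwise $\lesssim|\nabla\phi^{S}_\xi-\nabla\phi^{T}_\xi|$, and $|\nabla\phi^{S}_\xi-\nabla\phi^{T}_\xi|$ carries the rate $\mu_*^{2}(\sqrt T)/T$ in every stochastic moment by the $L=0$ case~\cite{Fischer2019}; since $\mathds{1}_{L=1}B+\nabla\phi^{T}_{\xi,B}$ has arbitrary moments bounded via~\eqref{eq:correctorGradientBoundMassiveApprox} and~\eqref{eq:correctorSchauderMassiveApprox}, H\"older's inequality controls the first summand of $G$. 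Expanding $h^{S}-h^{T}$ telescopically, block by block, over the partitions $\Pi\neq\{\{1,\dots,L\}\}$ writes it as a finite sum of products, each containing exactly one factor that is either of the form $\partial_\xi^{|\Pi|}A(\omega,\xi{+}\nabla\phi^{S}_\xi)-\partial_\xi^{|\Pi|}A(\omega,\xi{+}\nabla\phi^{T}_\xi)$ (again $\lesssim|\nabla\phi^{S}_\xi-\nabla\phi^{T}_\xi|$) or of the form $\nabla\phi^{S}_{\xi,B'_\pi}-\nabla\phi^{T}_{\xi,B'_\pi}$ with $|B'_\pi|\le L-1$, which carries the rate by the inductive hypothesis, while the remaining factors are uniformly bounded $\xi$-derivatives of $A$ and lower-order massive corrector gradients, all with arbitrary moments under control by Theorem~\ref{theo:correctorBoundsMassiveApprox}. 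A further application of H\"older's inequality then gives the bound for $G$; the same ingredients, together with $\langle(\phi^{T}_{\xi,B})^{2q}\rangle^{1/q}+\langle(\sigma^{T}_{\xi,B})^{2q}\rangle^{1/q}\lesssim_{q}\mu_*^{2}(\sqrt T)$ (from~\eqref{eq:correctorGrowthoundMassiveApprox}, upgraded from ball averages to pointwise bounds via the Schauder estimate~\eqref{eq:correctorSchauderMassiveApprox}), control the mass-mismatch source terms.

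Granted this, the induction step closes. For~\eqref{eq:convLinearizedCorrectors} (i.e.\ the case $q=1$) I test the $\delta\phi$-equation with $\delta\phi$---legitimate since $\delta\phi$ is stationary with finite variance---and pass from spatial averages to $\langle\cdot\rangle$ via stationarity; ellipticity and Young's inequality absorb $\tfrac1S\langle|\delta\phi|^{2}\rangle$ and part of $\langle|\nabla\delta\phi|^{2}\rangle$, leaving $\langle|\nabla\delta\phi|^{2}\rangle\lesssim\tfrac1T\langle(\phi^{T}_{\xi,B})^{2}\rangle+\langle|G|^{2}\rangle\lesssim\mu_*^{2}(\sqrt T)/T$, so that $\langle\|\nabla\delta\phi\|_{L^{2}(B_1)}^{2}\rangle=|B_1|\,\langle|\nabla\delta\phi|^{2}\rangle$ obeys the same bound. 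Then $\langle|\delta q|^{2}\rangle\lesssim\langle|\nabla\delta\phi|^{2}\rangle+\langle|G|^{2}\rangle\lesssim\mu_*^{2}(\sqrt T)/T$; this feeds the analogous constant-coefficient energy estimate for $\delta\sigma$, yielding~\eqref{eq:convLinearizedCorrectors} for $\sigma$, and, since $|\langle q^{S}_{\xi,B}\rangle-\langle q^{T}_{\xi,B}\rangle|\le\langle|\delta q|^{2}\rangle^{1/2}$, also~\eqref{eq:convHomOperator}; the choice $S=2T$ is exactly the stated form of these two estimates. The higher stochastic moments needed to sustain the induction follow by the same arguments run at the level of arbitrary $q$, the only additional inputs being the annealed small-scale regularity of the coefficient field $a^{S}_{\xi}$ (Lemma~\ref{lem:annealedHoelderRegLinCoefficient}) and the stretched-exponential moment bound for the minimal radius (Lemma~\ref{lem:momentBoundMinRadius}, Definition~\ref{def:minRadiusHigherOrderLinearizations}); $\delta\sigma$, governed by the constant-coefficient operator $\tfrac1S-\Delta$, is handled directly. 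A telescoping argument over dyadic scales then upgrades these bounds to the Cauchy property of $(\nabla\phi^{T}_{\xi,B},\nabla\sigma^{T}_{\xi,B})_{T\ge1}$ in $L^{2}_{\langle\cdot\rangle}L^{2}_{\mathrm{loc}}(\Rd)$.

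It remains to identify the limit $(\nabla\phi_{\xi,B},\nabla\sigma_{\xi,B})$: one reconstructs the potentials $\phi_{\xi,B},\sigma_{\xi,B}$ from the limiting gradients by imposing $\dashint_{B_1}(\phi_{\xi,B},\sigma_{\xi,B})=0$ and using Poincar\'e's inequality; stationarity, vanishing of the expectation and finiteness of the second moments of the limiting gradients pass to the limit from the massive correctors; $\Prob$-almost sure sublinear growth follows from the ergodic theorem applied to the stationary mean-zero field $\nabla\phi_{\xi,B}$ as in~\cite{Fischer2019} and~\cite{Gloria2020}; and the PDEs~\eqref{eq:PDEhigherOrderLinearizedCorrector}--\eqref{eq:PDEhigherOrderLinearizedHelmholtzDecomp} follow by passing to the limit $T\to\infty$ in the weak formulations of~\eqref{eq:PDEhigherOrderLinearizedCorrectorLocalized}--\eqref{eq:PDEhigherOrderLinearizedHelmholtzDecompLocalized}, using that $\tfrac1T\phi^{T}_{\xi,B}\to 0$ and $\tfrac1T\psi^{T}_{\xi,B}=\tfrac1{\sqrt T}\,\tfrac{\psi^{T}_{\xi,B}}{\sqrt T}\to 0$ by~\eqref{eq:correctorGrowthoundMassiveApprox}, the continuity of $\xi\mapsto\partial_\xi^{k}A(\omega,\xi)$ combined with $\nabla\phi^{T}_{\xi}\to\nabla\phi_{\xi}$, and the convergence of the lower-order linearized correctors. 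Finally, $(q^{T}_{\xi,B})_{T}$ is Cauchy in $L^{2}_{\langle\cdot\rangle}$ by the bound on $\langle|\delta q|^{2}\rangle$, and its limit equals $q_{\xi,B}$ from~\eqref{eq:LinearizedFlux} by the same limit passage, which is~\eqref{eq:convFluxes}. I expect the main obstacle to be the estimate of $G$: the nonlinearity forces it to contain products of a difference factor of order $\le L-1$---which must carry the optimal rate $\mu_*^{2}(\sqrt T)/T$---with bounded factors assembled from lower-order correctors, and controlling such products is what compels running the induction at the level of all stochastic moments and supplementing the bare stationary energy estimate by the minimal-radius machinery; the combinatorial bookkeeping of the Fa\`a~di~Bruno partitions in $h^{S}-h^{T}$ is, by contrast, routine.
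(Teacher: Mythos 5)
Your proposal is correct and follows essentially the same route as the paper: induction over the linearization order, the difference equation for $\phi^{2T}_{\xi,B}-\phi^{T}_{\xi,B}$ whose right-hand side is split into one factor carrying the rate $\mu_*^2(\sqrt T)/T$ (a coefficient difference controlled by $\nabla\phi^{2T}_{\xi}-\nabla\phi^{T}_{\xi}$, or a lower-order corrector difference supplied by the induction hypothesis) times factors with bounded moments, followed by an energy estimate, the flux bound, and the identification of the limit. The only cosmetic deviations are that the paper uses the exponentially weighted energy estimate~\eqref{eq:expLocalization} instead of testing directly with the (only uniformly locally $H^1$) difference, and derives sublinear growth of the limit from the quantitative linear-functional bounds rather than the ergodic theorem; the minimal-radius machinery you invoke for higher moments is not actually needed here, since the all-moment corrector bounds of Theorem~\ref{theo:correctorBoundsMassiveApprox} are already available as input.
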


\section{Proofs}\label{sec:proofs}

\subsection{Proof of Lemma~\ref{lem:annealedSmallScaleEnergyEstimate} 
{\normalfont (Annealed small-scale energy estimate)}}
Applying the hole filling estimate~\eqref{eq:holeFillingEstimate} 
to equation~\eqref{eq:PDEhigherOrderLinearizedCorrectorLocalized}
for the linearized homogenization corrector (putting the term 
$-\nabla\cdot a_\xi^T\mathds{1}_{L=1}B$ on the right hand side) 
yields in combination with~(A2)$_{L}$ from Assumption~\ref{assumption:operators}
\begin{align*}
&\Big\|\Big(\frac{\phi^T_{\xi,B}}{\sqrt{T}},
\nabla\phi^T_{\xi,B}\Big)\Big\|^2_{L^2(B_1)}
\\
&\lesssim_{d,\lambda,\Lambda,L} r^{d-\delta}_{*,T,\xi,B}\,
\dashint_{B_{r_{*,T,\xi,B}}}\Big|\Big(\frac{\phi^T_{\xi,B}}{\sqrt{T}},
\nabla\phi^T_{\xi,B}\Big)\Big|^2 + r^{d-\delta}_{*,T,\xi,B}\mathds{1}_{L=1}
\\&~~~
+ r^{d}_{*,T,\xi,B} \sum_{\substack{\Pi\in\mathrm{Par}\{1,\ldots,L\} 
\\ \Pi\neq\{\{1,\ldots,L\}\}}}
\,\dashint_{B_{r_{*,T,\xi,B}}}\frac{1}{|x|^\delta}\prod_{\pi\in\Pi}
\big|\mathds{1}_{|\pi|=1}B'_\pi + \nabla \phi^T_{\xi,B'_\pi}\big|^2.
\end{align*}
For the first right hand side term, we proceed by making use of
Caccioppoli's inequality~\eqref{eq:Caccioppoli} with respect to 
equation~\eqref{eq:PDEhigherOrderLinearizedCorrectorLocalized};
and in the course of this we again rely on~(A2)$_{L}$ from Assumption~\ref{assumption:operators}
in order to bound the right hand side term appearing in
equation~\eqref{eq:PDEhigherOrderLinearizedCorrectorLocalized}.
For the second right hand side term, we simply argue by H\"older's inequality.
In total, we obtain the estimate
\begin{align*}
&\Big\|\Big(\frac{\phi^T_{\xi,B}}{\sqrt{T}},
\nabla\phi^T_{\xi,B}\Big)\Big\|^2_{L^2(B_1)}
\\
&\lesssim_{d,\lambda,\Lambda,L} r^{d-\delta}_{*,T,\xi,B}
\inf_{b\in\mathbb{R}}\,\bigg\{\frac{1}{(2r_{*,T,\xi,B})^2}\,\dashint_{B_{2r_{*,T,\xi,B}}}
\big|\phi^T_{\xi,B}{-}b\big|^2 + \frac{1}{T}|b|^2\bigg\}
\\&~~~
+ r^{d-\delta}_{*,T,\xi,B} \sum_{\substack{\Pi\in\mathrm{Par}\{1,\ldots,L\} 
\\ \Pi\neq\{\{1,\ldots,L\}\}}}
\,\dashint_{B_{2r_{*,T,\xi,B}}}\prod_{\pi\in\Pi}
\big|\mathds{1}_{|\pi|=1}B'_\pi + \nabla \phi^T_{\xi,B'_\pi}\big|^2
\\&~~~
+ r^{d-\delta}_{*,T,\xi,B}\mathds{1}_{L=1}
\\&~~~
+r^{d-\delta}_{*,T,\xi,B} \sum_{\substack{\Pi\in\mathrm{Par}\{1,\ldots,L\} 
\\ \Pi\neq\{\{1,\ldots,L\}\}}}
\prod_{\pi\in\Pi}\bigg(\,\dashint_{B_{r_{*,T,\xi,B}}}
\big|\mathds{1}_{|\pi|=1}B'_\pi + \nabla \phi^T_{\xi,B'_\pi}\big|^{4|\Pi|}\bigg)^\frac{1}{2|\Pi|}.
\end{align*}
Taking into account Definition~\ref{def:minRadiusHigherOrderLinearizations} of the minimal radius,
the fact that $r_{*,T,\xi,B}\geq 1$, as well as H\"older's and Jensen's inequality
(to deal with the second right hand side term in the previous display) we deduce from this
\begin{align*}
&\Big\|\Big(\frac{\phi^T_{\xi,B}}{\sqrt{T}},
\nabla\phi^T_{\xi,B}\Big)\Big\|^2_{L^2(B_1)}
\lesssim_{d,\lambda,\Lambda,L} r^{d-\delta+2\gamma}_{*,T,\xi,B}.
\end{align*}
Taking stochastic moments thus entails the asserted estimate~\eqref{eq:annealedSmallScaleEnergyEstimateMinRadius}.

For a proof of~\eqref{eq:annealedSmallScaleEnergyEstimateSuboptimal}, we rely on 
the weighted energy estimate~\eqref{eq:expLocalization}. In order to apply it,
we first have to check the polynomial growth at infinity (in the precise
sense of the statement of Lemma~\ref{lem:expLocalization}) of the constituents in
the linearized corrector problem~\eqref{eq:PDEhigherOrderLinearizedCorrectorLocalized}. 
For the solution $(\phi^T_{\xi,B},\nabla\phi^T_{\xi,B})$ itself, this is a consequence
of the fact that $\phi^T_{\xi,B}\in H^1_{\mathrm{uloc}}(\Rd)$. For the right hand side
term in equation~\eqref{eq:PDEhigherOrderLinearizedCorrectorLocalized} we argue as follows.
First, thanks to the ergodic theorem we may choose almost surely a radius $R_0>0$ such that
\begin{equation}
\label{eq:ergodicTheorem}
\begin{aligned}
&\dashint_{B_R}\prod_{\pi\in\Pi}
\big|\mathds{1}_{|\pi|=1}B'_\pi + \nabla \phi^T_{\xi,B'_\pi}\big|^2
\\&
\leq 1 + \Big\langle\prod_{\pi\in\Pi}
\big|\mathds{1}_{|\pi|=1}B'_\pi + \nabla \phi^T_{\xi,B'_\pi}\big|^2\Big\rangle
\quad\text{for all } R\geq R_0,
\end{aligned} 
\end{equation}
uniformly over all partitions $\Pi\in\mathrm{Par}\{1,\ldots,L\}$
with $\Pi\neq\{\{1,\ldots,L\}\}$. Thanks to the induction
hypothesis~\eqref{eq:indHypoCorrectorBounds} and~\eqref{eq:indHypoAnnealedSchauder},
we may then smuggle in spatial averages over the unit ball $B_1$
followed by an application of H\"older's inequality to deduce that
\begin{align*}
&\Big\langle\prod_{\pi\in\Pi}
\big|\mathds{1}_{|\pi|=1}B'_\pi + \nabla \phi^T_{\xi,B'_\pi}\big|^2\Big\rangle
\\
&\lesssim \Big\langle\prod_{\pi\in\Pi}
\big\|\mathds{1}_{|\pi|=1}B'_\pi + \nabla \phi^T_{\xi,B'_\pi}\big\|_{C^\alpha(B_1)}^2\Big\rangle
+ \Big\langle\prod_{\pi\in\Pi}\int_{B_1}
\big|\mathds{1}_{|\pi|=1}B'_\pi + \nabla \phi^T_{\xi,B'_\pi}\big|^2\Big\rangle
\\
&\lesssim \prod_{\pi\in\Pi}\Big\langle
\big\|\mathds{1}_{|\pi|=1}B'_\pi + 
\nabla \phi^T_{\xi,B'_\pi}\big\|_{C^\alpha(B_1)}^{2|\Pi|}\Big\rangle^\frac{1}{|\Pi|}
+ \prod_{\pi\in\Pi}\Big\langle
\big\|\mathds{1}_{|\pi|=1}B'_\pi + 
\nabla \phi^T_{\xi,B'_\pi}\big\|_{L^2(B_1)}^{2|\Pi|}\Big\rangle^\frac{1}{|\Pi|}
\\&
\lesssim 1,
\end{align*}
uniformly over all partitions $\Pi\in\mathrm{Par}\{1,\ldots,L\}$
with $\Pi\neq\{\{1,\ldots,L\}\}$. Inserting this back into~\eqref{eq:ergodicTheorem}
shows that also the right hand side term of equation~\eqref{eq:PDEhigherOrderLinearizedCorrectorLocalized} 
features at most polynomial growth at infinity.


Hence, we may apply the weighted energy estimate~\eqref{eq:expLocalization} to
equation~\eqref{eq:PDEhigherOrderLinearizedCorrectorLocalized} which 
entails in combination with Jensen's and H\"older's inequality
\begin{align*}
&\Big\langle\Big\|\Big(\frac{\phi^T_{\xi,B}}{\sqrt{T}},
\nabla\phi^T_{\xi,B}\Big)\Big\|^{2q}_{L^2(B_1)}\Big\rangle^\frac{1}{q}
\\&
\lesssim \sqrt{T}^d\mathds{1}_{L=1} 
+ \sqrt{T}^d\sum_{\substack{\Pi\in\mathrm{Par}\{1,\ldots,L\} \\ \Pi\neq\{\{1,\ldots,L\}\}}}
\int \ell_{\gamma,T}\Big\langle\prod_{\pi\in\Pi}
\big|\mathds{1}_{|\pi|=1}B'_\pi + \nabla \phi^T_{\xi,B'_\pi}\big|^{2q}
\Big\rangle^\frac{1}{q}
\\&
\lesssim \sqrt{T}^d\mathds{1}_{L=1} + \sqrt{T}^d\sum_{\substack{\Pi\in\mathrm{Par}\{1,\ldots,L\} 
\\ \Pi\neq\{\{1,\ldots,L\}\}}} \int \ell_{\gamma,T} \prod_{\pi\in\Pi}
\Big\langle\big|\mathds{1}_{|\pi|=1}B'_\pi 
+ \nabla \phi^T_{\xi,B'_\pi}\big|^{2q|\Pi|}\Big\rangle^\frac{1}{q|\Pi|}.
\end{align*}
Taking into account the induction hypothesis~\eqref{eq:indHypoCorrectorBounds} 
and~\eqref{eq:indHypoAnnealedSchauder}---the latter in particular allowing us
to smuggle in a spatial average over unit balls $B_1(x)$---
and making use of stationarity of the linearized homogenization correctors, we thus infer
from the previous display the asserted estimate~\eqref{eq:annealedSmallScaleEnergyEstimateSuboptimal}. \qed

\subsection{Proof of Lemma~\ref{lem:annealedSmallScaleSchauderIndStep} 
{\normalfont (Annealed small-scale Schauder estimate)}}
We aim to apply the local Schauder estimate~\eqref{eq:localSchauder}
to equation~\eqref{eq:PDEhigherOrderLinearizedCorrectorLocalized}
for the linearized homogenization corrector (putting to this end the term 
$-\nabla\cdot a_\xi^T\mathds{1}_{L=1}B$ on the right hand side). 
This is facilitated by the annealed H\"older regularity of the linearized
coefficient field $a_\xi^T=\partial_\xi A(\omega,\xi+\nabla\phi^T_\xi)$,
cf.\ Lemma~\ref{lem:annealedHoelderRegLinCoefficient} in Appendix~\ref{app:toolsEllipticRegularity}.
Hence, in view of the local Schauder estimate~\eqref{eq:localSchauder}
we may estimate by an application of H\"older's inequality
with respect to the exponents $(\frac{1}{\tau},\frac{1}{1-\tau})$, $\tau\in (0,1)$,
\begin{align*}
&\Big\langle\Big\|\Big(\frac{\phi^T_{\xi,B}}{\sqrt{T}},
\nabla\phi^T_{\xi,B}\Big)\Big\|^{2q}_{C^\alpha(B_1)}
\Big\rangle^\frac{1}{q}
\\& 
\leq C^2\big\langle\big\|a_\xi^T\big\|^{\frac{2q}{\tau}\frac{d}{\alpha}(\frac{1}{2}{+}\frac{1}{d})}_{C^\alpha(B_2)}
\big\rangle^\frac{\tau}{q}\Big\langle\Big\|\Big(\frac{\phi^T_{\xi,B}}{\sqrt{T}},
\nabla\phi^T_{\xi,B}\Big)\Big\|^\frac{2q}{1-\tau}_{L^2(B_2)}\Big\rangle^\frac{1-\tau}{q}
\\&~~~
+  C^2\big\langle\big\|a_\xi^T\big\|^{4q(\frac{1}{\alpha}-1)}_{C^\alpha(B_2)}
\big\rangle^\frac{1}{2q}\sum_{\substack{\Pi\in\mathrm{Par}\{1,\ldots,L\} 
\\ \Pi\neq\{\{1,\ldots,L\}\}}}\Big\langle\Big\|\prod_{\pi\in\Pi}
\big|\mathds{1}_{|\pi|=1}B'_\pi + \nabla \phi^T_{\xi,B'_\pi}\big|\Big\|^{4q}_{C^\alpha(B_2)}\Big\rangle^\frac{1}{2q}
\\&~~~
+  C^2\big\langle\big\|a_\xi^T\big\|^{\frac{2q}{\alpha}}_{C^\alpha(B_2)}
\big\rangle^\frac{1}{q}.
\end{align*}
A combination of the annealed estimate~\eqref{eq:annealedHoelderRegLinCoefficient}
for the H\"older norm of the linearized coefficient, the small-scale annealed 
Schauder estimate from induction hypothesis~\eqref{eq:indHypoAnnealedSchauder},
the stationarity of the linearized coefficient field and of the linearized homogenization correctors,
and H\"older's inequality updates the previous display to
\begin{align*}
&\Big\langle\Big\|\Big(\frac{\phi^T_{\xi,B}}{\sqrt{T}},
\nabla\phi^T_{\xi,B}\Big)\Big\|^{2q}_{C^\alpha(B_1)}
\Big\rangle^\frac{1}{q}
\\&
\leq C^2q^{2C}\bigg\{1+\Big\langle\Big\|\Big(\frac{\phi^T_{\xi,B}}{\sqrt{T}},
\nabla\phi^T_{\xi,B}\Big)\Big\|^\frac{2q}{1-\tau}_{L^2(B_1)}\Big\rangle^\frac{1-\tau}{q}\bigg\}
\\&~~~
+ C^2q^{2C}\sum_{\substack{\Pi\in\mathrm{Par}\{1,\ldots,L\} 
\\ \Pi\neq\{\{1,\ldots,L\}\}}}\prod_{\pi\in\Pi}\Big\langle\Big\|
\big|\mathds{1}_{|\pi|=1}B'_\pi + \nabla \phi^T_{\xi,B'_\pi}\big|\Big\|^{4q|\Pi|}_{C^\alpha(B_1)}
\Big\rangle^\frac{1}{2q|\Pi|}
\\&
\leq C^2q^{2C}\bigg\{1+\Big\langle\Big\|\Big(\frac{\phi^T_{\xi,B}}{\sqrt{T}},
\nabla\phi^T_{\xi,B}\Big)\Big\|^\frac{2q}{1-\tau}_{L^2(B_1)}\Big\rangle^\frac{1-\tau}{q}\bigg\}.
\end{align*}
This concludes the proof of Lemma~\ref{lem:annealedSmallScaleSchauderIndStep}. \qed

\subsection{Proof of Lemma~\ref{lem:annealedLinFunctionalsIndStep} 
{\normalfont (Annealed estimates for linear functionals of the homogenization corrector and its gradient)}}
The proof proceeds in three steps. In the course of it, we will make use
of the abbreviation $\sum_{\Pi}:=\sum_{\Pi\in\mathrm{Par}\{1,\ldots,L\},\,\Pi\neq\{\{1,\ldots,L\}\}}$.
Since $(\frac{\phi^T_{\xi,B}}{\sqrt{T}},\nabla\phi^T_{\xi,B})\in
L^2_{\mathrm{uloc}}(\Rd;\Rd[]{\times}\Rd)$, we may assume
for the proof of~\eqref{eq:annealedLinFunctionalsIndStep} without loss 
of generality through an approximation argument that
\begin{align}
\label{eq:assumpRegTestVectors}
(g,f)\in C^\infty_{\mathrm{cpt}}(\Rd;\Rd{\times}\Rd[]).
\end{align} 

\textit{Step 1 (Computation of functional derivative):} 
We start by computing the functional derivative of
\begin{align}
\label{def:LinFunctionalLinearizedHomCorrector}
F_\phi := \int g\cdot\nabla\phi^T_{\xi,B} - \int \frac{1}{T}f\phi^T_{\xi,B}.
\end{align}
To this end, let $\delta\omega\colon\Rd\to \Rd[n]$ be compactly supported and smooth such that
$\|\delta\omega\|_{L^\infty}\leq 1$. Based on Lemma~\ref{eq:lemmaExistenceLinearizedCorrectorsExtended},
we may $\Prob$-almost surely differentiate the defining 
equation~\eqref{eq:PDEhigherOrderLinearizedCorrectorLocalized} for the linearized homogenization
corrector with respect to the parameter field
in the direction of $\delta\omega$. This yields $\Prob$-almost surely the following PDE for the variation
$(\frac{\delta\phi^T_{\xi,B}}{\sqrt{T}},\nabla\delta\phi^T_{\xi,B})
\in L^2_{\mathrm{uloc}}(\Rd;\Rd[]{\times}\Rd)$ of the linearized corrector $\phi^T_{\xi,B}$
with massive term:
\begin{align}
\nonumber
&\frac{1}{T}\delta\phi^T_{\xi,B} - \nabla\cdot a_\xi^T\nabla\delta\phi^T_{\xi,B}
\\&\nonumber
= \nabla\cdot\partial_\omega\partial_\xi A(\omega,\xi+\nabla\phi^T_\xi)
\big[\delta\omega\odot\nabla \phi^T_{\xi,B}\big]
\\&~~~\nonumber
+ \nabla\cdot\partial_\xi^2 A(\omega,\xi+\nabla\phi^T_\xi)
\big[\nabla\delta\phi^T_\xi\odot\nabla \phi^T_{\xi,B}\big]
\\&~~~\nonumber
+\nabla\cdot \sum_{\Pi} \partial_\omega\partial_\xi^{|\Pi|} A(\omega,\xi{+}\nabla \phi^T_\xi)
\Big[\delta\omega\odot\bigodot_{\pi\in\Pi}(\mathds{1}_{|\pi|=1}B'_\pi {+} \nabla \phi^T_{\xi,B'_\pi})\Big]
\\&~~~\nonumber
+\nabla\cdot \sum_{\Pi} \partial_\xi^{1+|\Pi|} A(\omega,\xi{+}\nabla \phi^T_\xi)
\Big[\nabla\delta\phi_\xi^T\odot\bigodot_{\pi\in\Pi}(\mathds{1}_{|\pi|=1}B'_\pi {+} \nabla \phi^T_{\xi,B'_\pi})\Big]
\\&~~~\nonumber
+\nabla\cdot \sum_{\Pi} \partial_\xi^{|\Pi|} A(\omega,\xi{+}\nabla \phi^T_\xi)
\Big[\sum_{\pi\in\Pi}\nabla\delta\phi^T_{\xi,B'_\pi}\odot
\bigodot_{\substack{{\pi'\in\Pi} \\ \pi'\neq \pi}}
(\mathds{1}_{|\pi'|=1}B'_{\pi'} {+} \nabla \phi^T_{\xi,B'_{\pi'}})\Big]
\\& \label{eq:PDEperturbedLinearizedHomCorrector}
=: \nabla\cdot R^{T,(1)}_{\xi,B}\delta\omega
+ \nabla\cdot R^{T,(2)}_{\xi,B}\nabla\delta\phi^T_\xi
+ \nabla\cdot R^{T,(3)}_{\xi,B}\delta\omega
+ \nabla\cdot R^{T,(4)}_{\xi,B}\nabla\delta\phi^T_\xi
\\&~~~~\nonumber
+  \sum_{\Pi}\sum_{\pi\in\Pi}\nabla\cdot 
R^{T,(5),\pi}_{\xi,B}\nabla\delta\phi^T_{\xi,B'_\pi}.
\end{align}
Observe that as a consequence of~\eqref{eq:ergodicityLinearizedCorrector}
and~(A2)$_L$ resp.\ (A3)$_L$ of Assumption~\ref{assumption:operators}
we have $\Prob$-almost surely
\begin{align}
\label{eq:regAuxSensitivity}
R^{T,(1)}_{\xi,B},R^{T,(3)}_{\xi,B} \in L^p_{\mathrm{uloc}}(\Rd;\Rd[d{\times}n])
,\,R^{T,(2)}_{\xi,B},R^{T,(4)}_{\xi,B}, R^{T,(5),\pi}_{\xi,B}
\in L^p_{\mathrm{uloc}}(\Rd;\Rd[d{\times}d])
\end{align}
for all $p\geq 2$. For $r\geq 1$, let $(\frac{\delta\phi^{T,r}_{\xi,B}}{\sqrt{T}},\nabla\delta\phi^{T,r}_{\xi,B})
\in L^2(\Rd;\Rd[]{\times}\Rd)$ be the unique Lax--Milgram solution of
\begin{align}
\nonumber
&\frac{1}{T}\delta\phi^{T,r}_{\xi,B} - \nabla\cdot a_\xi^T\nabla\delta\phi^{T,r}_{\xi,B}
\\& \label{eq:PDEperturbedLinearizedHomCorrectorApprox}
=: \nabla\cdot \mathds{1}_{B_r} R^{T,(1)}_{\xi,B}\delta\omega
+ \nabla\cdot \mathds{1}_{B_r} R^{T,(2)}_{\xi,B}\nabla\delta\phi^T_\xi
+ \nabla\cdot \mathds{1}_{B_r} R^{T,(3)}_{\xi,B}\delta\omega
\\&~~~~\nonumber
+ \nabla\cdot \mathds{1}_{B_r} R^{T,(4)}_{\xi,B}\nabla\delta\phi^T_\xi
+  \sum_{\Pi}\sum_{\pi\in\Pi}\nabla\cdot 
\mathds{1}_{B_r} R^{T,(5),\pi}_{\xi,B}\nabla\delta\phi^T_{\xi,B'_\pi}.
\end{align}
Note that $\Prob$-almost surely
\begin{align}
\label{eq:convAuxSensitivity}
\Big(\frac{\delta\phi^{T,r}_{\xi,B}}{\sqrt{T}},\nabla\delta\phi^{T,r}_{\xi,B}\Big)
\to \Big(\frac{\delta\phi^{T}_{\xi,B}}{\sqrt{T}},\nabla\delta\phi^{T}_{\xi,B}\Big) 
\text{ as } r\to\infty \text{ in } L^2_{\mathrm{uloc}}(\Rd;\Rd[d{\times}d])
\end{align}
by means of applying the weighted energy estimate~\eqref{eq:expLocalization} to the 
difference of the equations~\eqref{eq:PDEperturbedLinearizedHomCorrector}
and~\eqref{eq:PDEperturbedLinearizedHomCorrectorApprox}; recall to this
end also~\eqref{eq:ergodicityLinearizedCorrectorGateaux}.

Denoting the transpose of $a^T_\xi$ by $a^{T,*}_\xi$, we may now compute by means of the
dual operator $(\frac{1}{T}-\nabla\cdot a^{T,*}_\xi\nabla)$, \eqref{eq:regAuxSensitivity},
\eqref{eq:PDEperturbedLinearizedHomCorrectorApprox}, \eqref{eq:convAuxSensitivity} 
and~\eqref{eq:ergodicityLinearizedCorrectorGateaux}
\begin{align}
\nonumber
\delta F_\phi &=
\int g\cdot\nabla\delta\phi^T_{\xi,B} - \int \frac{1}{T}f\delta\phi^T_{\xi,B}
\\& \nonumber
= \lim_{r\to\infty} \bigg\{\int g\cdot\nabla\delta\phi^{T,r}_{\xi,B} 
- \int \frac{1}{T}f\delta\phi^{T,r}_{\xi,B}\bigg\}
\\& \nonumber
= - \lim_{r\to\infty} \bigg\{\int \nabla\delta\phi^{T,r}_{\xi,B}\cdot a^{T,*}_\xi
\nabla\Big(\frac{1}{T}{-}\nabla\cdot a^{T,*}_\xi\nabla\Big)^{-1}
\Big(\frac{1}{T}f{+}\nabla\cdot g\Big)
\\&~~~~~~~~~~~~~~~~ \nonumber
+ \int \delta\phi^{T,r}_{\xi,B}\,
\frac{1}{T}\Big(\frac{1}{T}{-}\nabla\cdot a^{T,*}_\xi\nabla\Big)^{-1}
\Big(\frac{1}{T}f{+}\nabla\cdot g\Big) \bigg\}
\\& \label{eq:funcDerivLinFunctionalsLinearizedHomCorrector}
= \lim_{r\to\infty} \bigg\{\int \mathds{1}_{B_r} R^{T,(1)}_{\xi,B}\delta\omega\cdot 
\nabla\Big(\frac{1}{T}{-}\nabla\cdot a^{T,*}_\xi\nabla\Big)^{-1}
\Big(\frac{1}{T}f{+}\nabla\cdot g\Big)
\\&~~~~~~~ \nonumber
+ \int \mathds{1}_{B_r} R^{T,(2)}_{\xi,B}\nabla\delta\phi^T_\xi\cdot 
\nabla\Big(\frac{1}{T}{-}\nabla\cdot a^{T,*}_\xi\nabla\Big)^{-1}
\Big(\frac{1}{T}f{+}\nabla\cdot g\Big)
\\&~~~~~~~ \nonumber
+ \int \mathds{1}_{B_r} R^{T,(3)}_{\xi,B}\delta\omega\cdot 
\nabla\Big(\frac{1}{T}{-}\nabla\cdot a^{T,*}_\xi\nabla\Big)^{-1}
\Big(\frac{1}{T}f{+}\nabla\cdot g\Big)
\\&~~~~~~~ \nonumber
+ \int \mathds{1}_{B_r} R^{T,(4)}_{\xi,B}\nabla\delta\phi^T_\xi\cdot 
\nabla\Big(\frac{1}{T}{-}\nabla\cdot a^{T,*}_\xi\nabla\Big)^{-1}
\Big(\frac{1}{T}f{+}\nabla\cdot g\Big)
\\&~~~~~~~ \nonumber
+ \sum_{\Pi}\sum_{\pi\in\Pi} \int \mathds{1}_{B_r} R^{T,(5),\pi}_{\xi,B}
\nabla\delta\phi^T_{\xi,B'_\pi}\cdot \nabla\Big(\frac{1}{T}{-}\nabla\cdot a^{T,*}_\xi\nabla\Big)^{-1}
\Big(\frac{1}{T}f{+}\nabla\cdot g\Big)\bigg\}.
\end{align}
Note that thanks to the approximation argument, we may indeed use $\delta\phi^{T,r}_{\xi,B}$
as a test function in the equation of $\big(\frac{1}{T}{-}\nabla\cdot a^{T,*}_\xi\nabla\big)^{-1}
\big(\frac{1}{T}f{+}\nabla\cdot g\big)$, and vice versa. Moreover, by~\eqref{eq:regAuxSensitivity},
\eqref{eq:assumpRegTestVectors} and the (local) Meyers estimate for the operator 
$(\frac{1}{T}-\nabla\cdot a^{T,*}_\xi\nabla)$, we obtain that $\Prob$-almost surely
\begin{equation}
\label{eq:auxReg100}
\begin{aligned}
\big(R^{T,(i)}_{\xi,B}\big)^*\nabla\Big(\frac{1}{T}{-}\nabla\cdot a^{T,*}_\xi\nabla\Big)^{-1}
\Big(\frac{1}{T}f{+}\nabla\cdot g\Big) &\in L^{p'}_{\mathrm{uloc}}(\Rd;\Rd[n]),\,i\in\{1,3\},
\\
\big(R^{T,(i)}_{\xi,B}\big)^*\nabla\Big(\frac{1}{T}{-}\nabla\cdot a^{T,*}_\xi\nabla\Big)^{-1}
\Big(\frac{1}{T}f{+}\nabla\cdot g\Big) &\in L^{p'}_{\mathrm{uloc}}(\Rd;\Rd),\,i\in\{2,4\},
\\
\big(R^{T,(5),\pi}_{\xi,B}\big)^*\nabla\Big(\frac{1}{T}{-}\nabla\cdot a^{T,*}_\xi\nabla\Big)^{-1}
\Big(\frac{1}{T}f{+}\nabla\cdot g\Big) &\in L^{p'}_{\mathrm{uloc}}(\Rd;\Rd)
\end{aligned}
\end{equation}
for some suitable Meyers exponents $p' > 2$.
We have everything in place to proceed with the next step of the proof.

\textit{Step 2 (Application of the spectral gap inequality):}
Note first that $\langle F_\phi \rangle = 0$ for the functional $F_\phi$ from~\eqref{def:LinFunctionalLinearizedHomCorrector}.
Indeed, $\langle(\phi^T_{\xi,B},\nabla\phi^T_{\xi,B})\rangle=0$ is a direct consequence of stationarity and testing the
linearized corrector problem~\eqref{eq:PDEhigherOrderLinearizedCorrectorLocalized}.
Hence, we may apply the spectral gap inequality in form of~\eqref{eq:spectralGapHigherMoments}
and thus obtain in view of~(A2)$_L$ and~(A3)$_L$ of Assumption~\ref{assumption:operators},
\eqref{eq:funcDerivLinFunctionalsLinearizedHomCorrector}, 
\eqref{eq:auxReg100}, induction hypothesis~\eqref{eq:indHypoSensitivityApprox},
and the sensitivity estimates from induction hypothesis~\eqref{eq:indHypoSensitivityBound} the estimate
(with $\kappa_1,\kappa_2\in (0,1]$ yet to be determined)
\begin{align}
\nonumber
&\big\langle\big|F_\phi\big|^{2q}\big\rangle^\frac{1}{q}
\\&\nonumber
\leq C^2q^{2}\bigg\langle\bigg|
\int\bigg(\,\dashint_{B_1(x)}\big|\nabla\phi^T_{\xi,B}\big|^2\bigg)
\bigg(\,\dashint_{B_1(x)}\Big|\Big(\frac{1}{T}{-}\nabla\cdot a^{T,*}_\xi\nabla\Big)^{-1}
\Big(\frac{1}{T}f{+}\nabla\cdot g\Big)\Big|^2\bigg)
\bigg|^q\bigg\rangle^{\frac{1}{q}}
\\&~~~\nonumber
+ C^2q^{2}\sum_{\Pi} \bigg\langle\bigg|
\int\bigg(\,\dashint_{B_1(x)}\prod_{\pi\in\Pi}
\big|\mathds{1}_{|\pi|=1}B'_\pi {+} \nabla \phi^T_{\xi,B'_\pi}\big|^2\bigg)
\\&~~~~~~~~~~~~~~~~~~~~~~~~~~~~~\nonumber\times
\bigg(\,\dashint_{B_1(x)}\Big|\nabla\Big(\frac{1}{T}{-}\nabla\cdot a^{T,*}_\xi\nabla\Big)^{-1}
\Big(\frac{1}{T}f{+}\nabla\cdot g\Big)\Big|^2\bigg)
\bigg|^q\bigg\rangle^{\frac{1}{q}}
\\&~~~\nonumber
+ C^2q^{2C} \sup_{\langle F^{2q_*}\rangle=1} \int
\Big\langle\Big|\nabla\phi^T_{\xi,B}\Big|^{2(\frac{q}{\kappa_1})_*}
\Big|\nabla\Big(\frac{1}{T}{-}\nabla\cdot a^{T,*}_\xi\nabla\Big)^{-1}
\Big(\frac{1}{T}Ff{+}\nabla\cdot Fg\Big)\Big|^{2(\frac{q}{\kappa_1})_*}
\Big\rangle^\frac{1}{(\frac{q}{\kappa_1})_*}
\\&~~~\nonumber
+ C^2q^{2C} \sum_{\Pi} \sup_{\langle F^{2q_*}\rangle=1}
\int\Big\langle\Big|\nabla\Big(\frac{1}{T}{-}\nabla\cdot a^{T,*}_\xi\nabla\Big)^{-1}
\Big(\frac{1}{T}Ff{+}\nabla\cdot Fg\Big)\Big|^{2(\frac{q}{\kappa_2})_*}
\\&~~~~~~~~~~~~~~~~~~~~~~~~~~~~~~~~~~~~~~~~~~~~~~~~~~~~\nonumber\times
\prod_{\pi\in\Pi} \big|\mathds{1}_{|\pi|=1}B'_\pi {+} \nabla \phi^T_{\xi,B'_\pi}\big|^{2(\frac{q}{\kappa_2})_*}
\Big\rangle^\frac{1}{(\frac{q}{\kappa_2})_*}
\\&~~~\nonumber
+ C^2q^{2C} \sum_{\Pi}\sum_{\pi\in\Pi} \sup_{\langle F^{2q_*}\rangle=1}
\int\Big\langle\Big|\nabla\Big(\frac{1}{T}{-}\nabla\cdot a^{T,*}_\xi\nabla\Big)^{-1}
\Big(\frac{1}{T}Ff{+}\nabla\cdot Fg\Big)\Big|^{2(\frac{q}{\kappa_2})_*}
\\&~~~~~~~~~~~~~~~~~~~~~~~~~~~~~~~~~~~~~~~~~~~~~~~~~~~~\nonumber\times
\prod_{\substack{\pi'\in\Pi \\ \pi'\neq\pi}} 
\big|\mathds{1}_{|\pi'|=1}B'_{\pi'} {+} \nabla \phi^T_{\xi,B'_{\pi'}}\big|^{2(\frac{q}{\kappa_2})_*}
\Big\rangle^\frac{1}{(\frac{q}{\kappa_2})_*}
\\&\label{eq:sensitivityEstimateLinearizedHomCorrector}
=: I_1 + I_2 + I_3 + I_4 + I_5. 
\end{align}
For the last three right hand side terms in the previous display, we also exploited the fact that $F$
is purely random so that one can simply multiply with $F$ the equation satisfied 
by $(\frac{1}{T}-\nabla\cdot a^{T,*}_\xi\nabla)^{-1}(\frac{1}{T}f + \nabla\cdot g)$.

\textit{Step 3 (Post-processing the right hand side of~\eqref{eq:sensitivityEstimateLinearizedHomCorrector}):}
We estimate each of the right hand side terms of~\eqref{eq:sensitivityEstimateLinearizedHomCorrector}
separately. By duality in $L^q_{\langle\cdot\rangle}$, stationarity of the
linearized homogenization corrector $\phi^T_{\xi,B}$, and H\"older's inequality
with respect to the exponents $(\frac{q}{1-\tau},(\frac{q}{1-\tau})_*)$, $\tau\in (0,1)$,
we estimate the contribution from~$I_1$ by
\begin{align*}
|I_1| &\leq C^2q^2 \Big\langle\Big\|\Big(\frac{\phi^T_{\xi,B}}{\sqrt{T}},
\nabla\phi^T_{\xi,B}\Big)\Big\|^\frac{2q}{1-\tau}_{L^2(B_1)}\Big\rangle^\frac{1-\tau}{q}
\\&~~~~~\times
\sup_{\langle F^{2q_*}\rangle=1} \int\dashint_{B_1(x)} 
\Big\langle\Big|\nabla\Big(\frac{1}{T}{-}\nabla\cdot a^{T,*}_\xi\nabla\Big)^{-1}
\Big(\frac{1}{T}Ff{+}\nabla\cdot Fg\Big)\Big|^{2(\frac{q}{1-\tau})_*}
\Big\rangle^\frac{1}{(\frac{q}{1-\tau})_*}.
\end{align*}
As $(\frac{q}{1-\tau})_*=\frac{q}{q-(1-\tau)} < \frac{q}{q-1} = q_*$,
it follows from Jensen's inequality, the fact that $\int\dashint_{B_1(x)}h = \int h$
for all non-negative $h$, and the annealed Calder\'on--Zygmund
estimate~\eqref{eq:annealedCZMeyers} that
\begin{equation}
\label{eq:I1}
\begin{aligned}
|I_1| &\leq C^2q^2\Big\langle\Big\|\Big(\frac{\phi^T_{\xi,B}}{\sqrt{T}},
\nabla\phi^T_{\xi,B}\Big)\Big\|^\frac{2q}{1-\tau}_{L^2(B_1)}\Big\rangle^\frac{1-\tau}{q}
\sup_{\langle F^{2q_*}\rangle=1} \int \Big\langle
\Big|\Big(\frac{Ff}{\sqrt{T}},Fg\Big)\Big|^{2q_*}\Big\rangle^\frac{1}{q_*}
\\&
\leq C^2q^2\Big\langle\Big\|\Big(\frac{\phi^T_{\xi,B}}{\sqrt{T}},
\nabla\phi^T_{\xi,B}\Big)\Big\|^\frac{2q}{1-\tau}_{L^2(B_1)}\Big\rangle^\frac{1-\tau}{q}
\int \Big|\Big(\frac{f}{\sqrt{T}},g\Big)\Big|^2
\end{aligned}
\end{equation}
provided $|q_*-1|$ is sufficiently small. In other words, we obtain a bound
of required type for all sufficiently large $q\in [1,\infty)$. 

For the contribution from $I_2$, we may estimate for all sufficiently large $q\in [1,\infty)$ 
based on the same ingredients as in the estimate of $I_1$ (we could actually take $\tau=0$
but prefer to keep the general form for later reference)
\begin{align}
\nonumber
|I_2| &\leq C^2q^2\sum_{\Pi}\prod_{\pi\in\Pi} \bigg\langle\,\dashint_{B_1}
\big|\mathds{1}_{|\pi|=1}B'_\pi {+} \nabla \phi^T_{\xi,B'_\pi}
\big|^{\frac{2q|\Pi|}{1-\tau}}\bigg\rangle^\frac{1-\tau}{q|\Pi|}
\\&~~~~~\nonumber\times
\sup_{\langle F^{2q_*}\rangle=1} \int\dashint_{B_1(x)} 
\Big\langle\Big|\nabla\Big(\frac{1}{T}{-}\nabla\cdot a^{T,*}_\xi\nabla\Big)^{-1}
\Big(\frac{1}{T}Ff{+}\nabla\cdot Fg\Big)\Big|^{2(\frac{q}{1-\tau})_*}\Big\rangle^{\frac{1}{(\frac{q}{1-\tau})_*}}
\\&\label{eq:I2}
\leq C^2q^{2C}\int \Big|\Big(\frac{f}{\sqrt{T}},g\Big)\Big|^2,
\end{align}
where in the second step we in addition made use of the small-scale annealed 
Schauder estimate from induction hypothesis~\eqref{eq:indHypoAnnealedSchauder}
(by smuggling in a spatial average over the unit ball)
and the corrector estimates from induction hypothesis~\eqref{eq:indHypoCorrectorBounds}.

We next estimate the contribution from the term $I_3$. To this end,
we choose $\kappa_1=\frac{\tau}{2}$ and estimate via stationarity
of the linearized homogenization corrector $\phi^T_{\xi,B}$,
the fact that $(\frac{q}{\kappa_1})_*=\frac{q}{q-\kappa_1}$,
an application of H\"older's inequality with respect
to the exponents $(\frac{q-\kappa_1}{1-\tau},(\frac{q-\kappa_1}{1-\tau})_*)$,
and an application of Jensen's inequality based on
$(\frac{q}{\kappa_1})_*(\frac{q-\kappa_1}{1-\tau})_*
=\frac{q}{q-\kappa_1-(1-\tau)} < \frac{q}{q-1}=q_*$
\begin{align*}
 |I_3| &\leq C^2q^{2C} \big\langle
\big\|\nabla\phi^T_{\xi,B}\big\|_{C^\alpha(B_1)}^\frac{2q}{1-\tau}
\big\rangle^\frac{q}{1-\tau}
\\&~~~~~\times
\sup_{\langle F^{2q_*}\rangle=1} \int 
\Big\langle\Big|\nabla\Big(\frac{1}{T}{-}\nabla\cdot a^{T,*}_\xi\nabla\Big)^{-1}
\Big(\frac{1}{T}Ff{+}\nabla\cdot Fg\Big)\Big|^{2q_*}\Big\rangle^\frac{1}{q_*}.
\end{align*}
Hence, by means of the annealed Calder\'on--Zygmund
estimate~\eqref{eq:annealedCZMeyers} and the annealed small-scale 
Schauder estimate~\eqref{eq:annealedSmallScaleSchauderIndStep}
(with $q$ replaced by $\frac{q}{1-\tau}$) we obtain
\begin{align}
\label{eq:I3}
|I_3| \leq C^2q^{2C}
\Big\langle\Big\|\Big(\frac{\phi^T_{\xi,B}}{\sqrt{T}},
\nabla\phi^T_{\xi,B}\Big)\Big\|^\frac{2q}{(1-\tau)^2}_{L^2(B_1)}\Big\rangle^\frac{(1-\tau)^2}{q}
\int \Big|\Big(\frac{f}{\sqrt{T}},g\Big)\Big|^2,
\end{align}
at least for sufficiently large $q\in [1,\infty)$. This is again a bound of required type.

We next deal with the contribution from $I_4$. To this end, we simply choose $\kappa_2=\frac{1}{2}$
and argue based on stationarity of the linearized homogenization correctors,  
$(\frac{q}{\kappa_2})_*=\frac{q}{q-\kappa_2}$, an application of H\"older's inequality with respect
to the exponents $(\frac{q-\kappa_2}{q-1},(\frac{q-\kappa_2}{q-1})_*)$, and the fact that 
$(\frac{q}{\kappa_2})_*(\frac{q-\kappa_2}{q-1})_* = \frac{q}{1-\kappa_2} = 2q$
\begin{align*}
|I_4| &\leq  C^2q^{2C}\sum_{\Pi}\prod_{\pi\in\Pi} \Big\langle
\big|\mathds{1}_{|\pi|=1}B'_\pi {+} \nabla \phi^T_{\xi,B'_\pi}
\big|^{2q|\Pi|}\Big\rangle^\frac{1}{q|\Pi|}
\\&~~~~~\times
\sup_{\langle F^{2q_*}\rangle=1} \int 
\Big\langle\Big|\nabla\Big(\frac{1}{T}{-}\nabla\cdot a^{T,*}_\xi\nabla\Big)^{-1}
\Big(\frac{1}{T}Ff{+}\nabla\cdot Fg\Big)\Big|^{2q_*}\Big\rangle^\frac{1}{q_*}.
\end{align*}
As it is by now routine, the second factor in the right hand side term of the previous display
is dealt with by appealing to the annealed Calder\'on--Zygmund estimate~\eqref{eq:annealedCZMeyers}
for which we only have to choose $q\in [1,\infty)$ sufficiently large. For the first factor,
we may smuggle in a spatial average over the ball $B_1$ and then estimate by
means of the induction hypotheses~\eqref{eq:indHypoCorrectorBounds}
and~\eqref{eq:indHypoAnnealedSchauder}. In total, we obtain
\begin{align}
\label{eq:I4}
|I_4| &\leq C^2q^{2C} \int \Big|\Big(\frac{f}{\sqrt{T}},g\Big)\Big|^2
\end{align}
provided $q\in [1,\infty)$ is sufficiently large. As the contribution
from $I_5$ can be treated analogously, the combination of
the estimates~\eqref{eq:I1}--\eqref{eq:I4} establishes the asserted bound~\eqref{eq:annealedLinFunctionalsIndStep}.
This concludes the proof of Lemma~\ref{lem:annealedLinFunctionalsIndStep}. \qed

\subsection{Proof of Lemma~\ref{lem:momentBoundMinRadius}
{\normalfont (Stretched exponential moment bound for minimal radius
of the linearized corrector problem)}}
We start with the estimate
\begin{align}
\label{eq:momentsViaProb}
\big\langle r_{*,T,\xi,B}^{(d-\frac{\delta}{2})2q}\big\rangle
\leq 1 + \sum_{k=1}^\infty 2^{k(d-\frac{\delta}{2})2q}
\Prob\big[\big\{r_{*,T,\xi,B}=2^k\big\}\big].
\end{align}
Fix an integer $k\geq 1$, and let $R:=2^k$. By Definition~\ref{def:minRadiusHigherOrderLinearizations}
of the minimal radius $r_{*,T,\xi,B}$, in the event of~$\{r_{*,T,\xi,B}=R\}$ we either have
\begin{equation}
\label{eq:case1}
\begin{aligned}
&\exists\Pi\in\mathrm{Par}\{1,\ldots,L\},\,\Pi\neq\{\{1,\ldots,L\}\}\colon
\exists\pi\in\Pi\text{ such that}
\\&
\dashint_{B_\frac{R}{2}} \big|\mathds{1}_{|\pi|=1}B'_\pi + \nabla \phi^T_{\xi,B'_\pi}\big|^{4|\Pi|}
> \Big(\frac{R}{2}\Big)^{4\underline{\gamma}},
\end{aligned}
\end{equation}
or that
\begin{align}
\label{eq:case2}
\inf_{b\in\mathbb{R}}\bigg\{\frac{1}{(R/2)^2}\dashint_{B_\frac{R}{2}}
\big|\phi^T_{\xi,B}{-}b\big|^2{+}\frac{1}{T}|b|^2\bigg\} > 1.
\end{align}
We distinguish in the following between these two events, and provide estimates
on their probability separately.

\textit{Case 1: (Estimate in the event of~\eqref{eq:case1})}
Fix a partition $\Pi\in\mathrm{Par}\{1,\ldots,L\}$, $\Pi\neq\{\{1,\ldots,L\}\}$,
and some $\pi\in\Pi$ such that the conclusion of~\eqref{eq:case1} holds true.
Covering $B_\frac{R}{2}$ by a family of $\sim R^d$ many open unit balls,
using stationarity of the linearized homogenization correctors, Jensen's inequality, 
the small-scale annealed Schauder estimate from induction hypothesis~\eqref{eq:indHypoAnnealedSchauder}
which in particular allows to smuggle in a spatial average over the unit ball,
and finally the corrector bounds from induction hypotheses~\eqref{eq:indHypoCorrectorBounds}, we infer that
\begin{align*}
\bigg\langle\bigg|\,\dashint_{B_\frac{R}{2}} \big|\mathds{1}_{|\pi|=1}B'_\pi 
+ \nabla \phi^T_{\xi,B'_\pi}\big|^{4|\Pi|}\bigg|^{2q}\bigg\rangle^\frac{1}{q}
\leq C^2q^{2C}
\end{align*}
for all $q\in [1,\infty)$. 
It thus follows from Markov's inequality
and the previous display (with $q$ replaced by $\frac{dq}{\underline{\gamma}}$) that
\begin{align}
\label{eq:boundProbCase1}
\Prob\big[\big\{r_{*,T,\xi,B}=2^k\big\}
\cap\{\eqref{eq:case1}\text{ holds true}\}\big]
\leq C^{2q}q^{2Cq}2^{-k8dq}
\end{align}
with a constant $C$ only depending on the
admissible data $(d,\lambda,\Lambda,\nu,\rho,M,L)$.

\textit{Case 2: (Suboptimal estimate in the event of~\eqref{eq:case2} but not~\eqref{eq:case1}})
Let $0 < R'\leq R$, and abbreviate by $(h)_{R'}(x):=\dashint_{B_{R'}(x)} h$
mollification on scale~$R'$ for any locally integrable~$h$. In the event
of $\{\eqref{eq:case2}\text{ holds true}\}\cap\{\eqref{eq:case1}\text{ holds not true}\}$
we deduce from the triangle inequality that
\begin{equation}
\label{eq:case2Aux10}
\begin{aligned}
1 &\lesssim \frac{1}{R^2}\dashint_{B_R} \big|\phi^T_{\xi,B}{-}(\phi^T_{\xi,B})_{R'}\big|^2
\\&~~~
+ \frac{1}{R^2}\dashint_{B_R} \bigg|(\phi^T_{\xi,B})_{R'}
{-}\,\dashint_{B_{R}}(\phi^T_{\xi,B})_{R'}\bigg|^2
+ \frac{1}{T}\bigg|\,\dashint_{B_{R}}(\phi^T_{\xi,B})_{R'}\bigg|^2.
\end{aligned}
\end{equation}
The first term on the right hand side of the previous display is estimated by
\begin{align*}
\frac{1}{R^2} \dashint_{B_R} \big|\phi^T_{\xi,B}{-}(\phi^T_{\xi,B})_{R'}\big|^2
\lesssim \Big(\frac{R'}{R}\Big)^2 \dashint_{B_{2R}} 
\big|\nabla\phi^T_{\xi,B}\big|^2,
\end{align*}
the second right hand side term based on Poincar\'e's inequality and the
definition of $(\cdot)_{R'}$ by
\begin{align*}
\frac{1}{R^2}\dashint_{B_R} \bigg|(\phi^T_{\xi,B})_{R'}
{-}\,\dashint_{B_{R}}(\phi^T_{\xi,B})_{R'}\bigg|^2
\lesssim \dashint_{B_{2R}} \bigg|\,\dashint_{B_{R'}(x)}\nabla\phi^T_{\xi,B}\bigg|^2,
\end{align*}
and the third right hand side term simply by plugging in the definition of $(\cdot)_{R'}$
and Jensen's inequality
\begin{align*}
\frac{1}{T}\bigg|\,\dashint_{B_{R}}(\phi^T_{\xi,B})_{R'}\bigg|^2
\lesssim \dashint_{B_{2R}} \bigg|\,\dashint_{B_{R'}(x)}
\frac{1}{\sqrt{T}}\phi^T_{\xi,B}\bigg|^2.
\end{align*}
In the event of $\{\eqref{eq:case2}\text{ holds true}\}\cap\{\eqref{eq:case1}\text{ holds not true}\}$,
the combination of the last four displays therefore entails
\begin{align}
\label{eq:case2Aux1}
1 \lesssim \Big(\frac{R'}{R}\Big)^2 \dashint_{B_{2R}} 
\big|\nabla\phi^T_{\xi,B}\big|^2
+ \dashint_{B_{2R}} \bigg|\,\dashint_{B_{R'}(x)}\nabla\phi^T_{\xi,B}\bigg|^2
+ \dashint_{B_{2R}} \bigg|\,\dashint_{B_{R'}(x)}
\frac{1}{\sqrt{T}}\phi^T_{\xi,B}\bigg|^2.
\end{align}
Applying next the Caccioppoli inequality~\eqref{eq:Caccioppoli}
to equation~\eqref{eq:PDEhigherOrderLinearizedCorrectorLocalized}
for the linearized homogenization corrector (putting the term 
$-\nabla\cdot a_\xi^T\mathds{1}_{L=1}B$ on the right hand side),
and making use of Definition~\eqref{def:minRadiusHigherOrderLinearizations} of the minimal radius $r_{*,T,\xi,B}$
we obtain in the event of $\{\eqref{eq:case2}\text{ holds true}\}\cap\{\eqref{eq:case1}\text{ holds not true}\}$
\begin{align}
\label{eq:case2Absorption}
\Big(\frac{R'}{R}\Big)^2 \dashint_{B_{2R}} 
\big|\nabla\phi^T_{\xi,B}\big|^2
&\leq C(d,\lambda,\Lambda)\Big(\frac{R'}{R}\Big)^2\big(1 + R^{2\bar\gamma}\big).
\end{align}
Hence, restricting $\underline{\gamma}=\underline{\gamma}(d,\lambda,\Lambda)\in (0,\frac{1}{2})$ 
(but otherwise yet to be determined),
choosing $R'=\theta R^{1-\underline{\gamma}}$ with $\theta=\theta(d,\lambda,\Lambda)$ such that 
$2\theta^2C(d,\lambda,\Lambda)=\frac{1}{2}$, we infer from~\eqref{eq:case2Aux1}
and~\eqref{eq:case2Absorption} in the event 
of $\{\eqref{eq:case2}\text{ holds true}\}\cap\{\eqref{eq:case1}\text{ holds not true}\}$
\begin{align}
\label{eq:case2Aux2}
1 \lesssim_{d,\lambda,\Lambda} \dashint_{B_{2R}} 
\bigg|\,\dashint_{B_{\theta R^{1-\underline{\gamma}}}(x)}\nabla\phi^T_{\xi,B}\bigg|^2
+ \dashint_{B_{2R}} \bigg|\,\dashint_{B_{\theta R^{1-\underline{\gamma}}}(x)}
\frac{1}{\sqrt{T}}\phi^T_{\xi,B}\bigg|^2.
\end{align}
It thus follows from an application of Markov's inequality in combination
with stationarity of the linearized homogenization correctors and Jensen's inequality that
\begin{equation}
\label{eq:case2SubotimalBoundProbAux}
\begin{aligned}
&\Prob\big[\big\{r_{*,T,\xi,B}=2^k\big\}
\cap\{\eqref{eq:case2}\text{ holds true}\}
\cap\{\eqref{eq:case1}\text{ holds not true}\}\big]
\\&
\leq C(d,\lambda,\Lambda)^{2q}
\bigg\langle\bigg|
\,\dashint_{B_{\theta 2^{k(1-\underline{\gamma})}}}
\nabla \phi^T_{\xi,B},\,
\,\dashint_{B_{\theta 2^{k(1-\underline{\gamma})}}}
\frac{1}{\sqrt{T}}\phi^T_{\xi,B}
\bigg|^{4q}\bigg\rangle.
\end{aligned}
\end{equation}
Thanks to the moment bounds~\eqref{eq:annealedLinFunctionalsIndStep} 
for linear functionals of the linearized homogenization corrector and its gradient
(applied with, say, $\tau=\frac{1}{2}$) 
in combination with the suboptimal small-scale energy estimate~\eqref{eq:annealedSmallScaleEnergyEstimateSuboptimal},
we get the following update of the previous display
\begin{equation}
\label{eq:case2SubotimalBoundProb}
\begin{aligned}
&\Prob\big[\big\{r_{*,T,\xi,B}=2^k\big\}
\cap\{\eqref{eq:case2}\text{ holds true}\}
\cap\{\eqref{eq:case1}\text{ holds not true}\}\big]
\\&
\lesssim_{d,\lambda,\Lambda,q} \sqrt{T}^{2dq}2^{-k(1-\underline{\gamma})2dq},
\end{aligned}
\end{equation}
which is---with respect to the scaling in the stochastic integrability $q$ and
the massive approximation $T$---a highly supobtimal estimate for this probability.

\textit{Intermediate summary: (Suboptimal estimate on stochastic moments of the minimal radius)}
We collect the information provided by the estimates~\eqref{eq:boundProbCase1}
and~\eqref{eq:case2SubotimalBoundProb}, and combine it with~\eqref{eq:momentsViaProb}
resulting in
\begin{equation}
\label{eq:suboptimalMomentBoundMinRadiusAux}
\begin{aligned}
\big\langle r_{*,T,\xi,B}^{(d-\frac{\delta}{2})2q}\big\rangle
&\leq 1 + C^{2q}q^{2Cq}\sum_{k=1}^\infty 2^{k(d-\frac{\delta}{2})2q}2^{-k8dq}
\\&~~~
+ \bar C_q\sqrt{T}^{2dq}\sum_{k=1}^\infty 2^{k(d-\frac{\delta}{2})2q}
2^{-k(1-\underline{\gamma})2dq}
\end{aligned}
\end{equation}
with a constant $C$ only depending on the
admissible data $(d,\lambda,\Lambda,\nu,\rho,M,L)$,
and a constant $\bar C_q$ which in addition depends
in a possibly highly suboptimal way on $q\in[1,\infty)$.
Choosing $\underline{\gamma}:=\frac{1}{2}\wedge\frac{\delta}{4d}$
thus entails the suboptimal estimate
\begin{align}
\label{eq:suboptimalMomentBoundMinRadius}
\big\langle r_{*,T,\xi,B}^{(d-\frac{\delta}{2})2q}\big\rangle \leq
\bar C_q\sqrt{T}^{2dq}.
\end{align}

\textit{Conclusion: (From suboptimal moment bounds to stretched exponential moments)}
The merit of~\eqref{eq:suboptimalMomentBoundMinRadius} is that
it at least provides finiteness of arbitrarily high stochastic moments
of the minimal radius~$r_{*,T,\xi,B}$. We now leverage on that information
in a buckling argument. We observe from the previous argument that
suboptimality was only a result of using~\eqref{eq:annealedLinFunctionalsIndStep}
with a non-optimized $\tau\in (0,1)$ and using the suboptimal
small-scale energy estimate~\eqref{eq:annealedSmallScaleEnergyEstimateSuboptimal}
in order to transition from~\eqref{eq:case2SubotimalBoundProbAux}
to~\eqref{eq:case2SubotimalBoundProb}. 

If we instead apply~\eqref{eq:annealedLinFunctionalsIndStep} with $\tau\in(0,1)$ yet to be optimized,
and then feed in the annealed small-scale energy estimate from Lemma~\ref{lem:annealedSmallScaleEnergyEstimate} 
in form of~\eqref{eq:annealedSmallScaleEnergyEstimateMinRadius}
(with $q$ replaced by $\frac{2q}{(1-\tau)^2}$), we may update~\eqref{eq:case2SubotimalBoundProb} to
\begin{equation}
\label{eq:case2OptimalBoundProb}
\begin{aligned}
&\Prob\big[\big\{r_{*,T,\xi,B}=2^k\big\}
\cap\{\eqref{eq:case2}\text{ holds true}\}
\cap\{\eqref{eq:case1}\text{ holds not true}\}\big]
\\&
\leq C^{2q}q^{2Cq} 2^{-k(1-\underline{\gamma})2dq} 
\Big\langle r_{*,T,\xi,B}^{\frac{d-\delta+2\underline{\gamma}}{(1-\tau)^2}2q}\Big\rangle^{(1-\tau)^2},
\end{aligned}
\end{equation}
with a constant $C$ only depending on the
admissible data $(d,\lambda,\Lambda,\nu,\rho,M,L,\tau)$.
This in turn provides the following improvement of~\eqref{eq:suboptimalMomentBoundMinRadiusAux}
\begin{align*}
\big\langle r_{*,T,\xi,B}^{(d-\frac{\delta}{2})2q}\big\rangle
\leq C^{2q}q^{2Cq} + C^{2q}q^{2Cq}\sum_{k=1}^\infty 2^{k(d-\frac{\delta}{2})2q}
2^{-k(1-\underline{\gamma})2dq} 
\Big\langle r_{*,T,\xi,B}^{\frac{d-\delta+2\underline{\gamma}}{(1-\tau)^2}2q}\Big\rangle^{(1-\tau)^2}.
\end{align*}
Choosing $\underline{\gamma}:=\frac{1}{2}\wedge\frac{\delta}{8d}$
and then $\tau\in (0,1)$ such that 
$\frac{d-\delta+2\underline{\gamma}}{(1-\tau)^2}=d-\frac{\delta}{2}$
we obtain
\begin{align*}
\big\langle r_{*,T,\xi,B}^{(d-\frac{\delta}{2})2q}\big\rangle
\leq C^{2q}q^{2Cq}\Big(1 + \big\langle r_{*,T,\xi,B}^{(d-\frac{\delta}{2})2q}\big\rangle^{(1-\tau)^2}\Big).
\end{align*}
Because of~\eqref{eq:suboptimalMomentBoundMinRadius}
and $r_{*,T,\xi,B}\geq 1$, this concludes the proof of 
Lemma~\ref{lem:momentBoundMinRadius}. \qed

\subsection{Proof of Lemma~\ref{lem:closingIndStep}
{\normalfont (Conclusion of the induction step)}}
Validity of the corrector bounds from~\eqref{eq:indHypoCorrectorBounds}
with $\phi^T_{\xi,B'}$ replaced by $\phi^T_{\xi,B}$
is an immediate consequence of~\eqref{eq:annealedSmallScaleEnergyEstimateMinRadius},
\eqref{eq:annealedLinFunctionalsIndStep}, and~\eqref{eq:momentBoundMinRadius}.
The small-scale annealed Schauder estimate from
induction hypothesis~\eqref{eq:indHypoAnnealedSchauder}
with $\phi^T_{\xi,B'}$ replaced by $\phi^T_{\xi,B}$
follows from combining~\eqref{eq:annealedSmallScaleEnergyEstimateMinRadius},
\eqref{eq:annealedSmallScaleSchauderIndStep}, and~\eqref{eq:momentBoundMinRadius}.

It remains to establish induction 
hypotheses~\eqref{eq:representationMalliavinDerivative}--\eqref{eq:indHypoSensitivityApprox}
with $\phi^T_{\xi,B'}$ replaced by~$\phi^T_{\xi,B}$. Starting point is now 
the following adaption of~\eqref{eq:funcDerivLinFunctionalsLinearizedHomCorrector}
\begin{align}
\nonumber
\int g\cdot\nabla\delta\phi^T_{\xi,B}
&= - \lim_{r\to\infty} \bigg\{\int \nabla\delta\phi^{T,r}_{\xi,B}\cdot a^{T,*}_\xi
\nabla\Big(\frac{1}{T}{-}\nabla\cdot a^{T,*}_\xi\nabla\Big)^{-1}
(\nabla\cdot g)
\\&~~~~~~~~~~~~~~~~ \nonumber
+ \int \delta\phi^{T,r}_{\xi,B}\,
\frac{1}{T}\Big(\frac{1}{T}{-}\nabla\cdot a^{T,*}_\xi\nabla\Big)^{-1}
(\nabla\cdot g) \bigg\}
\\& \label{eq:funcDerivLinFunctionalsLinearizedHomCorrectorIndStep}
= \lim_{r\to\infty} \bigg\{\int \mathds{1}_{B_r} R^{T,(1)}_{\xi,B}\delta\omega\cdot 
\nabla\Big(\frac{1}{T}{-}\nabla\cdot a^{T,*}_\xi\nabla\Big)^{-1}
(\nabla\cdot g)
\\&~~~~~~~ \nonumber
+ \int \mathds{1}_{B_r} R^{T,(2)}_{\xi,B}\nabla\delta\phi^T_\xi\cdot 
\nabla\Big(\frac{1}{T}{-}\nabla\cdot a^{T,*}_\xi\nabla\Big)^{-1}
(\nabla\cdot g)
\\&~~~~~~~ \nonumber
+ \int \mathds{1}_{B_r} R^{T,(3)}_{\xi,B}\delta\omega\cdot 
\nabla\Big(\frac{1}{T}{-}\nabla\cdot a^{T,*}_\xi\nabla\Big)^{-1}
(\nabla\cdot g)
\\&~~~~~~~ \nonumber
+ \int \mathds{1}_{B_r} R^{T,(4)}_{\xi,B}\nabla\delta\phi^T_\xi\cdot 
\nabla\Big(\frac{1}{T}{-}\nabla\cdot a^{T,*}_\xi\nabla\Big)^{-1}
(\nabla\cdot g)
\\&~~~~~~~ \nonumber
+ \sum_{\Pi}\sum_{\pi\in\Pi} \int \mathds{1}_{B_r} R^{T,(5),\pi}_{\xi,B}
\nabla\delta\phi^T_{\xi,B'_\pi}\cdot \nabla\Big(\frac{1}{T}{-}\nabla\cdot a^{T,*}_\xi\nabla\Big)^{-1}
(\nabla\cdot g)\bigg\},
\end{align}
with the five remainder terms being defined in~\eqref{eq:PDEperturbedLinearizedHomCorrector}.
Since the five right hand side terms of~\eqref{eq:funcDerivLinFunctionalsLinearizedHomCorrectorIndStep} 
only contain variations of linearized homogenization correctors
up to order $L{-}1$, the representation~\eqref{eq:representationMalliavinDerivative}
with $\phi^T_{\xi,B'}$ replaced by $\phi^T_{\xi,B}$ follows from
the induction hypotheses~\eqref{eq:representationMalliavinDerivative}
and~\eqref{eq:indHypoSensitivityApprox} thanks to~\eqref{eq:auxReg100}.
In addition, we deduce the following representation of the random
field $G^T_{\xi,B}=G^T_{\xi,B}\big[g\big]$ in form of
\begin{equation}
\label{eq:indStepMalliavinDerivative}
\begin{aligned}
G^T_{\xi,B}\big[g\big] &=
\Big(R^{T,(1)}_{\xi,B}{+}R^{T,(3)}_{\xi,B}\Big)^*
\nabla\Big(\frac{1}{T}{-}\nabla\cdot a^{T,*}_\xi\nabla\Big)^{-1}\big(\nabla\cdot g\big)
\\&~~~
+ G^T_{\xi}\bigg[\Big(R^{T,(2)}_{\xi,B}{+}R^{T,(4)}_{\xi,B}\Big)^*
\nabla\Big(\frac{1}{T}{-}\nabla\cdot a^{T,*}_\xi\nabla\Big)^{-1}\big(\nabla\cdot g\big)\bigg]
\\&~~~
+ \sum_\Pi\sum_{\pi\in\Pi} G^T_{\xi,B'_\pi}\bigg[\Big(R^{T,(5),\pi}_{\xi,B}\Big)^*
\nabla\Big(\frac{1}{T}{-}\nabla\cdot a^{T,*}_\xi\nabla\Big)^{-1}\big(\nabla\cdot g\big)\bigg].
\end{aligned}
\end{equation}
Now, the same argument leading to \eqref{eq:sensitivityEstimateLinearizedHomCorrector} shows
\begin{align}
\label{eq:estimateMalliavinIndStep}
\bigg\langle\bigg|\,\int\bigg(
\,\dashint_{B_1(x)}|G^T_{\xi,B}|\,\bigg)^2\bigg|^{q}\bigg\rangle^\frac{1}{q}
\leq I_1 + I_2 + I_3 + I_4 + I_5,
\end{align}
with the right hand side terms being identical to those of~\eqref{eq:sensitivityEstimateLinearizedHomCorrector};
except for the slight notational simplification as $f\equiv 0$. In order to post-process
the right hand side of~\eqref{eq:estimateMalliavinIndStep} we may in fact follow very closely
the arguments from \textit{Step 3} in the proof of Lemma~\ref{lem:annealedLinFunctionalsIndStep}.
So let us only mention the minor differences. 

For the contribution from~$I_1$, we simply choose $\tau:= 1-\kappa$ and avoid
the use of Jensen's inequality in the corresponding argument which yields
\begin{align*}
|I_1| \leq C^2q^{2C} \sup_{\langle F^{2q_*}\rangle=1}
\int\big\langle|Fg|^{2(q/\kappa)_*}\big\rangle^\frac{1}{(q/\kappa)_*}.
\end{align*}
Note that for an application of the annealed Calder\'on--Zygmund estimate~\eqref{eq:annealedCZMeyers}
we have to ensure in this argument that $|(q/\kappa)_*-1|$ is sufficiently small.  Or equivalently,
that $q$ is sufficiently large which this time also depends on the fixed $\kappa\in (0,1)$. Finally,
note that we can get rid of the energy term appearing on the right hand side of~\eqref{eq:I1}
since we already have in place the corrector bounds from induction hypothesis~\eqref{eq:indHypoCorrectorBounds}
with $\phi^T_{\xi,B'}$ replaced by $\phi^T_{\xi,B}$. As the last two remarks also apply
to all of the remaining right hand side terms in~\eqref{eq:estimateMalliavinIndStep}, we will
not mention them anymore from now on.

For the contribution from the second term $I_2$, the only change concerns taking 
$\tau:=1-\kappa$ in the argument for~\eqref{eq:I2} in order to deduce
\begin{align*}
|I_2| \leq C^2q^{2C} \sup_{\langle F^{2q_*}\rangle=1}
\int\big\langle|Fg|^{2(q/\kappa)_*}\big\rangle^\frac{1}{(q/\kappa)_*}.
\end{align*}
With respect to the term $I_3$, the corresponding argument is the one leading to~\eqref{eq:I3}.
To adapt it to our needs here, we choose $\kappa_1:=\frac{\kappa}{2}$ and $1-\tau = \frac{\kappa}{2}$
which then entails the estimate
\begin{align*}
|I_3| \leq C^2q^{2C} \sup_{\langle F^{2q_*}\rangle=1}
\int\big\langle|Fg|^{2(q/\kappa)_*}\big\rangle^\frac{1}{(q/\kappa)_*}.
\end{align*}
Last but not least---as $I_5$ can again be treated analogously---the adaption of the argument for $I_4$
leading to~\eqref{eq:I4} consists of taking $\kappa_2=\frac{\kappa}{2}$ and
applying H\"older's inequality with respect to the exponents 
$(\frac{q-\kappa_2}{q-\kappa},(\frac{q-\kappa_2}{q-\kappa})_*)$. Based on these modifications, we obtain
\begin{align*}
|I_4| \leq C^2q^{2C} \sup_{\langle F^{2q_*}\rangle=1}
\int\big\langle|Fg|^{2(q/\kappa)_*}\big\rangle^\frac{1}{(q/\kappa)_*}.
\end{align*}
In summary, the preliminary estimate~\eqref{eq:estimateMalliavinIndStep}
updates to the desired bound~\eqref{eq:indHypoSensitivityBound} with $\phi^T_{\xi,B'}$ replaced by $\phi^T_{\xi,B}$.

Finally, the validity of~\eqref{eq:indHypoSensitivityApprox} with 
$\phi^T_{\xi,B'}$ replaced by $\phi^T_{\xi,B}$ is a consequence of
the induction hypothesis~\eqref{eq:indHypoSensitivityApprox}
and the identity~\eqref{eq:indStepMalliavinDerivative} in the following way. 
First, we observe that for some $p'\in (2,p)$ it holds by means of
the (local) Meyers estimate for the operator $(\frac{1}{T}-\nabla\cdot a^{T,*}_\xi\nabla)$ that
\begin{align*}
\nabla\Big(\frac{1}{T}{-}\nabla\cdot a^{T,*}_\xi\nabla\Big)^{-1}(\nabla\cdot g_r)
\to \nabla\Big(\frac{1}{T}{-}\nabla\cdot a^{T,*}_\xi\nabla\Big)^{-1}(\nabla\cdot g) \in L^{p'}_{\mathrm{uloc}}(\Rd;\Rd).
\end{align*}
Together with the regularity estimates~\eqref{eq:regAuxSensitivity}
and the induction hypothesis~\eqref{eq:indHypoSensitivityApprox},
we deduce that the right hand side of~\eqref{eq:indStepMalliavinDerivative}
(with $g$ replaced by $g_r$) converges $\Prob$-almost surely in $L^1_{\mathrm{uloc}}(\Rd;\Rd[n])$
to some random field $G^T_{\xi,B}$ as $r\to\infty$. The validity of~\eqref{eq:indHypoSensitivityBound}
for $(G^T_{\xi,B},g)$ then follows based on the following two ingredients:
\textit{i)} we already have~\eqref{eq:indHypoSensitivityBound}
at our disposal with respect to the pair $(G^{T,r}_{\xi,B},g_r)$, and \textit{ii)}
we may apply Fatou's lemma. This, however, concludes the proof of 
Lemma~\ref{lem:closingIndStep}. \qed

\subsection{Proof of Lemma~\ref{lem:correctorEstimatesLinearizedFlux}
{\normalfont (Sensitivity estimate for the linearized flux)}} 
Consider some $\delta\omega\colon\Rd\to \Rd[n]$ which is compactly supported, smooth, 
and in addition satisfies $\|\delta\omega\|_{L^\infty}\leq 1$. It is immediate
from the definition~\eqref{eq:HigherOrderLinearizedFlux} of the linearized flux
and the computation~\eqref{eq:PDEperturbedLinearizedHomCorrector} concerning 
the variation $\delta\phi^T_{\xi,B}$ for the linearized homogenization corrector that
$\Prob$-almost surely
\begin{align}
 \nonumber
\delta q^T_{\xi,B}
&=  \partial_\omega\partial_\xi A(\omega,\xi+\nabla\phi^T_\xi)
\big[\delta\omega\odot\nabla \phi^T_{\xi,B}\big]
\\&~~~\nonumber
+  \partial_\xi^2 A(\omega,\xi+\nabla\phi^T_\xi)
\big[\nabla\delta\phi^T_\xi\odot\nabla \phi^T_{\xi,B}\big]
\\&~~~\nonumber
+  \sum_{\Pi} \partial_\omega\partial_\xi^{|\Pi|} A(\omega,\xi{+}\nabla \phi^T_\xi)
\Big[\delta\omega\odot\bigodot_{\pi\in\Pi}(\mathds{1}_{|\pi|=1}B'_\pi {+} \nabla \phi^T_{\xi,B'_\pi})\Big]
\\&~~~\nonumber
+  \sum_{\Pi} \partial_\xi^{1+|\Pi|} A(\omega,\xi{+}\nabla \phi^T_\xi)
\Big[\nabla\delta\phi_\xi\odot\bigodot_{\pi\in\Pi}(\mathds{1}_{|\pi|=1}B'_\pi {+} \nabla \phi^T_{\xi,B'_\pi})\Big]
\\&~~~\nonumber
+  \sum_{\Pi} \partial_\xi^{|\Pi|} A(\omega,\xi{+}\nabla \phi^T_\xi)
\Big[\sum_{\pi\in\Pi}\nabla\delta\phi^T_{\xi,B'_\pi}\odot
\bigodot_{\substack{{\pi'\in\Pi} \\ \pi'\neq \pi}}
(\mathds{1}_{|\pi'|=1}B'_{\pi'} {+} \nabla \phi^T_{\xi,B'_{\pi'}})\Big]
\\&~~~\nonumber
+  \partial_\xi A(\omega,\xi+\nabla\phi^T_\xi)\nabla\delta\phi^T_{\xi,B}
\\& \label{eq:PerturbedLinearizedFlux}
=:  R^{T,(1)}_{\xi,B}\delta\omega
+  R^{T,(2)}_{\xi,B}\nabla\delta\phi^T_\xi
+  R^{T,(3)}_{\xi,B}\delta\omega
+  R^{T,(4)}_{\xi,B}\nabla\delta\phi^T_\xi
\\&~~~~\nonumber
+  \sum_{\Pi}\sum_{\pi\in\Pi}
R^{T,(5),\pi}_{\xi,B}\nabla\delta\phi^T_{\xi,B'_\pi}
+  \partial_\xi A(\omega,\xi+\nabla\phi^T_\xi)\nabla\delta\phi^T_{\xi,B}.
\end{align}
In particular, denoting again by $a_\xi^{T,*}$ the transpose
of the uniformly elliptic and bounded coefficient field 
$a_\xi^T:=\partial_\xi A(\omega,\xi+\nabla\phi^T_\xi)$ we get
\begin{align}
\label{eq:auxMalliavinLinFlux}
\int g\cdot\delta q^T_{\xi,B} 
&= \int a_{\xi}^{T,*}g\cdot\nabla\delta\phi^T_{\xi,B}
+ \int \big(R^{T,(1)}_{\xi,B}{+}R^{T,(3)}_{\xi,B}\big)^*g\cdot\delta\omega
\\&~~~\nonumber
+ \int \big(R^{T,(2)}_{\xi,B}{+}R^{T,(4)}_{\xi,B}\big)^*g\cdot\nabla\delta\phi^T_\xi
+ \sum_{\Pi}\sum_{\pi\in\Pi} \int \big(R^{T,(5),\pi}_{\xi,B}\big)^*g\cdot\nabla\delta\phi^T_{\xi,B'_\pi}.
\end{align}
Hence, we obtain a representation of the asserted form~\eqref{eq:representationMalliavinLinearizedFlux}
because of~\eqref{eq:representationMalliavinDerivative}, which as a result of Lemma~\ref{lem:closingIndStep}
is even available for linearized homogenization correctors up to order $L$. By the same argument,
we may then derive~\eqref{eq:sensitivityBoundLinearizedFlux} from~\eqref{eq:indHypoSensitivityBound}
(applied with $\kappa=1$). The convergence assertion~\eqref{eq:indHypoSensitivityApproxFlux}
is in light of~\eqref{eq:auxMalliavinLinFlux} an immediate consequence of~\eqref{eq:indHypoSensitivityApprox}
(which again is already available up to linearization order $L$ thanks to Lemma~\ref{lem:closingIndStep}),
whereas the validity of~ \eqref{eq:sensitivityBoundLinearizedFlux} for the limit pair $(Q^T_{\xi,B},g)$
follows from~\eqref{eq:sensitivityBoundLinearizedFlux} applied to the already admissible pair $(Q^{T,r}_{\xi,B},g_r)$
and Fatou's lemma. This in turn concludes the proof of Lemma~\ref{lem:correctorEstimatesLinearizedFlux}. \qed

\subsection{Proof of Theorem~\ref{theo:correctorBoundsMassiveApprox}
{\normalfont (Estimates for massive correctors)}} 
We split the proof into four parts.

\textit{Step 1: (Proof of the estimates~\eqref{eq:linearFunctionalCorrectorGradientBoundMassiveApprox}
and~\eqref{eq:linearFunctionalCorrectorGradientBoundMassiveApprox2})}
In case of the linearized homogenization corrector $\phi^T_{\xi,B}$ this already follows from
Lemma~\ref{lem:closingIndStep} in form of~\eqref{eq:indHypoCorrectorBounds}.
For the linearized flux correctors $(\sigma^T_{\xi,B},\psi^T_{\xi,B})$, we start 
by computing the functional derivatives of
\begin{align}
\label{def:LinFunctionalLinearizedFluxCorrector}
F_\sigma := \int g_\sigma^{kl}\cdot\nabla\sigma^T_{\xi,B,kl},
\quad F_\psi := \int g_\psi^k\cdot\frac{\nabla\psi^T_{\xi,B,k}}{\sqrt{T}}.
\end{align}
To this end, let $\delta\omega\colon\Rd\to \Rd[n]$ be compactly supported and smooth such that
$\|\delta\omega\|_{L^\infty}\leq 1$. Since $(\nabla\sigma^T_{\xi,B,kl},\nabla\psi^T_{\xi,B,k})\in
L^2_{\mathrm{uloc}}(\Rd;\Rd{\times}\Rd)$, we may assume
for the proof of~~\eqref{eq:linearFunctionalCorrectorGradientBoundMassiveApprox}
and~\eqref{eq:linearFunctionalCorrectorGradientBoundMassiveApprox2} without loss 
of generality through an approximation argument that
\begin{align}
\label{eq:assumpRegTestVectors2}
(g_\sigma^{kl},g_\psi^k)\in C^\infty_{\mathrm{cpt}}(\Rd;\Rd{\times}\Rd).
\end{align}
Differentiating the defining 
equation~\eqref{eq:PDEhigherOrderLinearizedFluxCorrectorLocalized} for the linearized flux
corrector $\sigma^T_{\xi,B,kl}$ with respect to the parameter field
in the direction of $\delta\omega$ yields $\Prob$-almost surely
\begin{equation}
\label{eq:PDEperturbedLinearizedFluxCorrector}
\begin{aligned}
\frac{1}{T}\delta\sigma_{\xi,B,kl}^T - \Delta\delta\sigma_{\xi,B,kl}^T 
&= (e_l\otimes e_k - e_k\otimes e_l) : \nabla \delta q_{\xi,B}^T
\\&
= - \nabla\cdot \big((e_l\otimes e_k - e_k\otimes e_l)\delta q^T_{\xi,B}\big).
\end{aligned}
\end{equation}
Moreover, differentiating~\eqref{eq:PDEhigherOrderLinearizedHelmholtzCorrectorLocalized}
for the linearized flux corrector~$\psi^T_{\xi,B,k}$ entails
\begin{align}
\label{eq:PDEperturbedLinearizedHelmholtzorrector}
\frac{1}{T}\delta\psi^T_{\xi,B,k} - \Delta\delta\psi^T_{\xi,B,k} 
= e_k\cdot\delta q^T_{\xi,B} - e_k\cdot\nabla\delta\phi^T_{\xi,B}.
\end{align}
For $r\geq 1$, denote by $(\frac{\delta\sigma^{T,r}_{\xi,B,kl}}{\sqrt{T}},\nabla\delta\sigma^{T,r}_{\xi,B,kl})
\in L^2(\Rd;\Rd[]{\times}\Rd)$ the unique Lax--Milgram solution of
\begin{equation}
\label{eq:PDEperturbedLinearizedFluxCorrectorApprox}
\begin{aligned}
\frac{1}{T}\delta\sigma_{\xi,B,kl}^{T,r} - \Delta\delta\sigma_{\xi,B,kl}^{T,r} 
&= (e_l\otimes e_k - e_k\otimes e_l) : \nabla \mathds{1}_{B_r}\delta q_{\xi,B}^T
\\&
= - \nabla\cdot \mathds{1}_{B_r}\big((e_l\otimes e_k - e_k\otimes e_l)\delta q^T_{\xi,B}\big),
\end{aligned}
\end{equation}
as well as by $(\frac{\delta\psi^{T,r}_{\xi,B,kl}}{\sqrt{T}},
\nabla\delta\psi^{T,r}_{\xi,B,kl}) \in L^2(\Rd;\Rd[]{\times}\Rd)$
the unique Lax--Milgram solution of
\begin{align}
\label{eq:PDEperturbedLinearizedHelmholtzorrectorApprox}
\frac{1}{T}\delta\psi^{T,r}_{\xi,B,k} - \Delta\delta\psi^{T,r}_{\xi,B,k} 
= \mathds{1}_{B_r}e_k\cdot\delta q^T_{\xi,B} - \mathds{1}_{B_r}e_k\cdot\nabla\delta\phi^T_{\xi,B}.
\end{align}
Note that $\Prob$-almost surely
\begin{align}
\label{eq:convAuxSensitivityFlux}\hspace*{-1ex}
\Big(\frac{\delta\sigma^{T,r}_{\xi,B,kl}}{\sqrt{T}},\nabla\delta\sigma^{T,r}_{\xi,B,kl}\Big)
\to \Big(\frac{\delta\sigma^{T}_{\xi,B,kl}}{\sqrt{T}},\nabla\delta\sigma^{T}_{\xi,B,kl}\Big) 
\text{ as } r\to\infty \text{ in } L^2_{\mathrm{uloc}}(\Rd;\Rd[]{\times}\Rd)
\end{align}
as well as $\Prob$-almost surely
\begin{align}
\label{eq:convAuxSensitivityFlux2}
\Big(\frac{\delta\psi^{T,r}_{\xi,B,k}}{\sqrt{T}},\nabla\delta\psi^{T,r}_{\xi,B,k}\Big)
\to \Big(\frac{\delta\psi^{T}_{\xi,B,k}}{\sqrt{T}},\nabla\delta\psi^{T}_{\xi,B,k}\Big) 
\text{ as } r\to\infty \text{ in } L^2_{\mathrm{uloc}}(\Rd;\Rd[]{\times}\Rd)
\end{align}
as a consequence of applying the weighted energy estimate~\eqref{eq:expLocalization} to the 
difference of the equations~\eqref{eq:PDEperturbedLinearizedFluxCorrector}
and~\eqref{eq:PDEperturbedLinearizedFluxCorrectorApprox}, 
respectively~\eqref{eq:PDEperturbedLinearizedHelmholtzorrector}
and~\eqref{eq:PDEperturbedLinearizedHelmholtzorrectorApprox}. 
We thus deduce from~\eqref{eq:PDEperturbedLinearizedFluxCorrectorApprox},
\eqref{eq:PDEperturbedLinearizedHelmholtzorrectorApprox}, \eqref{eq:convAuxSensitivityFlux}
and~\eqref{eq:convAuxSensitivityFlux2} that  
\begin{equation}
\label{eq:funcDerivLinFunctionalsLinearizedFluxCorrector}
\begin{aligned}
\delta F_\sigma 
&= \lim_{r\to\infty} \int g_\sigma^{kl}\cdot\nabla\delta\sigma^{T,r}_{\xi,B,kl}
\\&
= - \lim_{r\to\infty}\bigg\{\int \nabla\delta\sigma^T_{\xi,B,kl}\cdot
\nabla\Big(\frac{1}{T}{-}\Delta\Big)^{-1}\big(\nabla\cdot g_\sigma^{kl}\big)
\\&~~~~~~~~~~~~~~~~
+ \int \delta\sigma^T_{\xi,B,kl}
\frac{1}{T}\Big(\frac{1}{T}{-}\Delta\Big)^{-1}\big(\nabla\cdot g_\sigma^{kl}\big)\bigg\}
\\&
= \lim_{r\to\infty} \int \mathds{1}_{B_r}(e_l\otimes e_k - e_k\otimes e_l)
\nabla\Big(\frac{1}{T}{-}\Delta\Big)^{-1}\big(\nabla\cdot g_\sigma^{kl}\big)
\cdot \delta q^T_{\xi,B}, 
\end{aligned}
\end{equation}
as well as
\begin{equation}
\label{eq:funcDerivLinFunctionalsLinearizedHelmholtzCorrector}
\begin{aligned}
\delta F_\psi &= \lim_{r\to\infty}\int g_\psi^{k}\cdot\frac{\nabla\delta\psi^{T,r}_{\xi,B,k}}{\sqrt{T}}
\\
&= - \lim_{r\to\infty}\bigg\{\int \mathds{1}_{B_r}\Big(\frac{1}{T}{-}\Delta\Big)^{-1}
\Big(\nabla\cdot\frac{g_\psi^{k}}{\sqrt{T}}\Big)e_k\cdot \delta q^T_{\xi,B}
\\&~~~~~~~~~~~~~~~~
- \int \mathds{1}_{B_r}\Big(\frac{1}{T}{-}\Delta\Big)^{-1}\Big(\nabla\cdot\frac{g_\psi^{k}}{\sqrt{T}}\Big)
e_k\cdot \nabla\delta \phi^T_{\xi,B}\bigg\}.
\end{aligned}
\end{equation}
Note that thanks to the approximation argument, we may indeed use $\delta\sigma^{T,r}_{\xi,B,kl}$
resp.\ $\delta\psi^{T,r}_{\xi,B,k}$ as test functions in the weak formulation of the equations satisfied by
$\big(\frac{1}{T}{-}\Delta\big)^{-1}\big(\nabla\cdot g^{kl}_\sigma\big)$
resp.\ $\big(\frac{1}{T}{-}\Delta\big)^{-1}\big(\nabla\cdot g^{k}_\psi\big)$, and vice versa. 
Moreover, by the Meyers estimate for the operator 
$(\frac{1}{T}-\Delta)$ in combination with the assumption~\eqref{eq:assumpRegTestVectors2}, 
we obtain that $\Prob$-almost surely
\begin{equation}
\label{eq:auxReg200}
\begin{aligned}
(e_l\otimes e_k - e_k\otimes e_l)
\nabla\Big(\frac{1}{T}{-}\Delta\Big)^{-1}\big(\nabla\cdot g_\sigma^{kl}\big)
 &\in L^{p'}(\Rd;\Rd),
\\
\Big(\frac{1}{T}{-}\Delta\Big)^{-1}
\Big(\nabla\cdot\frac{g_\psi^{k}}{\sqrt{T}}\Big)e_k
 &\in L^{p'}(\Rd;\Rd)
\end{aligned}
\end{equation}
for some suitable Meyers exponents $p' > 2$.
Hence, applying the spectral gap inequality in form of~\eqref{eq:spectralGapHigherMoments}
with respect to the centered random variable $F_\sigma$ yields because 
of~\eqref{eq:funcDerivLinFunctionalsLinearizedFluxCorrector}, \eqref{eq:auxReg200},
\eqref{eq:sensitivityBoundLinearizedFlux}, \eqref{eq:indHypoSensitivityApproxFlux}, and a simple energy estimate
\begin{align*}
\big\langle\big|F_\sigma\big|^{2q}\big\rangle^\frac{1}{q}
&\leq C^2q^{2C}\sup_{\langle F^{2q_*}\rangle=1}
\int\Big\langle\Big|F(e_l\otimes e_k - e_k\otimes e_l)
\nabla\Big(\frac{1}{T}{-}\Delta\Big)^{-1}\big(\nabla\cdot g_\sigma^{kl}\big)
\Big|^{2q_*}\Big\rangle^\frac{1}{q_*}
\\&
\leq C^2q^{2C}\sup_{kl}\int \Big|\nabla\Big(\frac{1}{T}{-}\Delta\Big)^{-1}
\big(\nabla\cdot g_\sigma^{kl}\big)\Big|^2
\\&
\leq  C^2q^{2C}\int \big|g_\sigma\big|^2.
\end{align*}
Moreover, applying the spectral gap inequality~\eqref{eq:spectralGapHigherMoments}
with respect to the centered random variable $F_\psi$ entails the estimate
\begin{align*}
\big\langle\big|F_\psi\big|^{2q}\big\rangle^\frac{1}{q}
&\leq C^2q^{2C}\sup_{\langle F^{2q_*}\rangle=1}
\int\Big\langle\Big|F\Big(\frac{1}{T}{-}\Delta\Big)^{-1}
\Big(\nabla\cdot \frac{g_\psi^{k}}{\sqrt{T}}\Big)e_k
\Big|^{2q_*}\Big\rangle^\frac{1}{q_*}
\\&
\leq C^2q^{2C}\sup_{k} \int \frac{1}{T}\Big|\Big(\frac{1}{T}{-}\Delta\Big)^{-1}
\big(\nabla\cdot g_\psi^{k}\big)\Big|^2
\\&
\leq  C^2q^{2C}\int \big|g_\psi\big|^2.
\end{align*}
For the previous display, we relied on a combination of~\eqref{eq:funcDerivLinFunctionalsLinearizedHelmholtzCorrector}, 
\eqref{eq:auxReg200}, \eqref{eq:sensitivityBoundLinearizedFlux}, 
\eqref{eq:indHypoSensitivityApproxFlux}, \eqref{eq:indHypoSensitivityBound}
applied to $\phi^T_{\xi,B}$ with $\kappa=1$ (which is admissible thanks to Lemma~\ref{lem:closingIndStep}) 
and again a simple energy estimate. This concludes the 
proof of~\eqref{eq:linearFunctionalCorrectorGradientBoundMassiveApprox}.
The proof of~\eqref{eq:linearFunctionalCorrectorGradientBoundMassiveApprox2}
for the linearized flux correctors $(\sigma^T_{\xi,B},\psi^T_{\xi,B})$ follows along similar lines.

\textit{Step 2: (Proof of the estimate~\eqref{eq:correctorGradientBoundMassiveApprox})}
In case of the linearized homogenization corrector $\phi^T_{\xi,B}$ this again already follows from
Lemma~\ref{lem:closingIndStep} in form of~\eqref{eq:indHypoCorrectorBounds}.
Hence, we only have to discuss the case of the linearized flux correctors.

Applying the Caccioppoli estimate~\eqref{eq:Caccioppoli}
to equation~\eqref{eq:PDEhigherOrderLinearizedFluxCorrectorLocalized} for the linearized flux
corrector~$\sigma^T_{\xi,B,kl}$
entails the estimate
\begin{align*}
&\Big\|\Big(\frac{\sigma^T_{\xi,B,kl}}{\sqrt{T}},
\nabla \sigma^T_{\xi,B,kl}\Big)\Big\|_{L^2(B_1)}^2
\\& 
\lesssim_{d,\lambda,\Lambda}  
\inf_{b\in\Rd[]} \Big\{\frac{1}{R^2}\big\|\sigma^T_{\xi,B,kl}-b\big\|_{L^2(B_{2})}^2 
+ \frac{1}{T}|b|^2 \Big\} + \big\|q^T_{\xi,B}\|_{L^2(B_2)}^2.
\end{align*}
By the same argument which starts from the right hand side of~\eqref{eq:case2Aux10}
and produces the right hand side of~\eqref{eq:case2Aux1}, we obtain
(with $R$ replaced by $2$, $R'=\theta 2$, $\theta=\theta(d,\lambda,\Lambda)$
yet to be determined)  
\begin{align*}
\Big\|\Big(\frac{\sigma^T_{\xi,B,kl}}{\sqrt{T}},
\nabla \sigma^T_{\xi,B,kl}\Big)\Big\|_{L^2(B_1)}^2
& \lesssim_{d,\lambda,\Lambda} \theta^2 \dashint_{B_{4}} 
\big|\nabla\sigma^T_{\xi,B,kl}\big|^2  + \big\|q^T_{\xi,B}\|_{L^2(B_2)}^2
\\&~~~
+ \dashint_{B_{4}} \bigg|\,\dashint_{B_{\theta 2}(x)}\nabla\sigma^T_{\xi,B,kl}\bigg|^2
+ \dashint_{B_{4}} \bigg|\,\dashint_{B_{\theta 2}(x)}\frac{1}{\sqrt{T}}\sigma^T_{\xi,B,kl}\bigg|^2.
\end{align*}
Hence, taking stochastic moments and exploiting stationarity yields
\begin{align}
\nonumber
&\Big\langle\Big\|\Big(\frac{\sigma^T_{\xi,B,kl}}{\sqrt{T}},
\nabla \sigma^T_{\xi,B,kl}\Big)\Big\|_{L^2(B_1)}^{2q}\Big\rangle^\frac{1}{q}
\\&\label{eq:aux1ProofTheoremMassiveCorr}
\lesssim_{d,\lambda,\Lambda} \theta^2\Big\langle\Big\|\Big(\frac{\sigma^T_{\xi,B,kl}}{\sqrt{T}},
\nabla \sigma^T_{\xi,B,kl}\Big)\Big\|_{L^2(B_1)}^{2q}\Big\rangle^\frac{1}{q}
+ \big\langle\big\|q^T_{\xi,B}\|_{L^2(B_1)}^{2q}\big\rangle^\frac{1}{q}
\\&~~~\nonumber
+ \bigg\langle\bigg|\,\dashint_{B_{\theta 2}}\nabla\sigma^T_{\xi,B,kl}\bigg|^{2q}\bigg\rangle^\frac{1}{q}
+ \bigg\langle\bigg|\,\dashint_{B_{\theta 2}}\frac{1}{\sqrt{T}}\sigma^T_{\xi,B,kl}\bigg|^{2q}\bigg\rangle^\frac{1}{q}.
\end{align}
In principle, we would like to absorb now the first right hand side term of the previous
display into the left hand side by choosing $\theta$ appropriately. However, we first have
to verify finiteness of $\langle \| (\frac{\sigma^T_{\xi,B,kl}}{\sqrt{T}},
\nabla \sigma^T_{\xi,B,kl} ) \|_{L^2(B_1)}^{2q} \rangle^\frac{1}{q}$. This is done
by appealing to the weighted energy estimate~\eqref{eq:expLocalization}, which
with respect to equation~\eqref{eq:PDEhigherOrderLinearizedFluxCorrectorLocalized} for the linearized flux
corrector~$\sigma^T_{\xi,B,kl}$ entails
\begin{align*}
&\Big\langle\Big\|\Big(\frac{\sigma^T_{\xi,B,kl}}{\sqrt{T}},
\nabla\sigma^T_{\xi,B,kl}\Big)\Big\|^{2q}_{L^2(B_1)}\Big\rangle^\frac{1}{q}
\lesssim \sqrt{T}^d \int \ell_{\gamma,T}\big\langle
\big|q^T_{\xi,B}\big|^{2q}\big\rangle^\frac{1}{q}.
\end{align*}
Plugging in the definition~\eqref{eq:HigherOrderLinearizedFlux} for the linearized flux
and making use of~(A2)$_{L}$ in Assumption~\ref{assumption:operators} 
then gives the following update of the previous display
\begin{align*}
&\Big\langle\Big\|\Big(\frac{\sigma^T_{\xi,B,kl}}{\sqrt{T}},
\nabla\sigma^T_{\xi,B,kl}\Big)\Big\|^{2q}_{L^2(B_1)}\Big\rangle^\frac{1}{q}
\\&
\lesssim \sqrt{T}^d \int \ell_{\gamma,T}\big\langle
\big|\mathds{1}_{L=1}B + \nabla \phi^T_{\xi,B}\big|^{2q}\big\rangle^\frac{1}{q}
\\&~~~
+ \sqrt{T}^d \sum_{\substack{\Pi\in\mathrm{Par}\{1,\ldots,L\} \\ \Pi\neq\{\{1,\ldots,L\}\}}}
\int \ell_{\gamma,T}\prod_{\pi\in\Pi}\Big\langle\Big|\mathds{1}_{|\pi|=1}B'_\pi 
+ \nabla \phi^T_{\xi,B'_\pi}\Big|^{2q|\Pi|}\Big\rangle^\frac{1}{q|\Pi|}.
\end{align*}
Hence, it now follows from stationarity of the linearized homogenization correctors,
smuggling in spatial averages over the unit ball based on~\eqref{eq:indHypoAnnealedSchauder}
(which is available also for $\phi^T_{\xi,B}$ thanks to Lemma~\ref{lem:closingIndStep}),
and finally the corrector bounds from~\eqref{eq:linearFunctionalCorrectorGradientBoundMassiveApprox} 
that $$\Big\langle\Big\|\Big(\frac{\sigma^T_{\xi,B,kl}}{\sqrt{T}},
\nabla \sigma^T_{\xi,B,kl}\Big)\Big\|_{L^2(B_1)}^{2q}\Big\rangle^\frac{1}{q} \lesssim \sqrt{T}^d.$$

We may now run an absorption argument for the first right hand side term in~\eqref{eq:aux1ProofTheoremMassiveCorr},
and then combine this with a bound for $\big\langle\big\|q^T_{\xi,B}\|_{L^2(B_1)}^{2q}\big\rangle^\frac{1}{q}$
(by plugging in~\eqref{eq:HigherOrderLinearizedFlux} and 
using again the corrector bounds from~\eqref{eq:linearFunctionalCorrectorGradientBoundMassiveApprox} 
similar to the preceding discussion) as well as the already established
estimates~\eqref{eq:linearFunctionalCorrectorGradientBoundMassiveApprox}
and~\eqref{eq:linearFunctionalCorrectorGradientBoundMassiveApprox2} to infer
\begin{align*}
\Big\langle\Big\|\Big(\frac{\sigma^T_{\xi,B,kl}}{\sqrt{T}},
\nabla \sigma^T_{\xi,B,kl}\Big)\Big\|_{L^2(B_1)}^{2q}\Big\rangle^\frac{1}{q}
\leq C^2q^{2C}.
\end{align*}
As the argument for $\psi^T_{\xi,B}$ proceeds along the same lines, 
this time of course based on the defining 
equation~\eqref{eq:PDEhigherOrderLinearizedHelmholtzCorrectorLocalized}
in form of
\begin{align*}
\frac{1}{T}\frac{\psi^T_{\xi,B,k}}{\sqrt{T}} 
- \Delta\frac{\psi^T_{\xi,B,k}}{\sqrt{T}} 
= \frac{1}{T} e_k\cdot \sqrt{T}q^T_{\xi,B}
- \frac{1}{T} e_k\cdot \sqrt{T}\nabla \phi^T_{\xi,B},
\end{align*}
we move on to the next step of the proof.

\textit{Step 3: (Proof of the estimate~\eqref{eq:correctorSchauderMassiveApprox})}
This is a direct consequence of the corrector bounds~\eqref{eq:correctorGradientBoundMassiveApprox},
the definition~\eqref{eq:HigherOrderLinearizedFlux} of the linearized flux,
the annealed H\"older regularity of the linearized coefficient~\eqref{eq:annealedHoelderRegLinCoefficient},
and the local Schauder estimate~\eqref{eq:localSchauder} applied to
the localized corrector equations~\eqref{eq:PDEhigherOrderLinearizedCorrectorLocalized},
\eqref{eq:PDEhigherOrderLinearizedFluxCorrectorLocalized}, 
and~\eqref{eq:PDEhigherOrderLinearizedHelmholtzCorrectorLocalized}.

\textit{Step 4: (Proof of the estimate~\eqref{eq:correctorGrowthoundMassiveApprox})}
We may compute
\begin{align*}
\dashint_{B_1} \phi^T_{\xi,B} &= 
\int\phi^T_{\xi,B}\frac{1}{T}\Big(\frac{1}{T}-\Delta\Big)^{-1}
\Big(\frac{\mathds{1}_{B_1}}{|B_1|}\Big)
+\int\nabla\phi^T_{\xi,B}\cdot\nabla
\Big(\frac{1}{T}-\Delta\Big)^{-1}
\Big(\frac{\mathds{1}_{B_1}}{|B_1|}\Big).
\end{align*}
As a consequence of~\eqref{eq:linearFunctionalCorrectorGradientBoundMassiveApprox} we obtain
\begin{align*}
\bigg\langle\bigg|\,\dashint_{B_1} \phi^T_{\xi,B}\bigg|^{2q}\bigg\rangle^\frac{1}{q}
&\leq C^2q^{2C} \int \Big|\Big(\frac{1}{T}-\Delta\Big)^{-1}
\Big(\frac{\mathds{1}_{B_1}}{|B_1|}\Big)\Big|^2
\leq C^2q^{2C}\mu_*^2(\sqrt{T}).
\end{align*}
The bound for the linearized flux correctors
$(\sigma^T_{\xi,B},\psi^T_{\xi,B})$ follows analogously. 

\textit{Step 5: (Proof of equation~\eqref{eq:PDEhigherOrderLinearizedHelmholtzDecompLocalized})}
By the sublinear growth of the flux corrector, it suffices to verify
\begin{align}
\label{eq:indStepFluxCorrectorEquation}
\Big(\frac{1}{T}-\Delta\Big)\Big(\nabla\cdot\sigma^T_{\xi,B}\Big)
= \Big(\frac{1}{T}-\Delta\Big)\Big(q^T_{\xi,B} - \langle q^T_{\xi,B} \rangle
+ \frac{1}{T}\psi^T_{\xi,B}\Big).
\end{align}
We obtain from~\eqref{eq:PDEhigherOrderLinearizedCorrectorLocalized},
\eqref{eq:PDEhigherOrderLinearizedFluxCorrectorLocalized}
and~\eqref{eq:HigherOrderLinearizedFlux} that
\begin{align*}
\Big(\frac{1}{T}-\Delta\Big)\Big(\nabla\cdot\sigma^T_{\xi,B}\Big)
= \partial_l\partial_k \big(q^T_{\xi,B}\big)_k
- \partial_k\partial_k \big(q^T_{\xi,B}\big)_l
= \frac{1}{T}\partial_l\phi^T_{\xi,B} - \Delta q_l.
\end{align*}
Hence, plugging in~\eqref{eq:PDEhigherOrderLinearizedHelmholtzCorrectorLocalized}
yields the asserted identity~\eqref{eq:indStepFluxCorrectorEquation}.
This in turn concludes the proof of Theorem~\ref{theo:correctorBoundsMassiveApprox}. \qed

\subsection{Proof of Lemma~\ref{lem:differencesLinearizedCorrectors}
{\normalfont (Estimates for differences of linearized correctors)}}
The proof 
of~\eqref{eq:correctorBoundForDifferencesMassiveApprox}--\eqref{eq:LinearFunctionalEstimatesDifferencesMassiveApprox} 
proceeds via an induction over the linearization order.

\textit{Step 1: (Induction hypotheses)} Let $L\in\N$, $T\in [1,\infty)$ and $M>0$ be fixed. 
Let the requirements and notation of (A1), (A2)$_L$, (A3)$_L$ and (A4)$_L$ of
Assumption~\ref{assumption:operators}, (P1) and (P2) of
Assumption~\ref{assumption:ensembleParameterFields}, and (R) of 
Assumption~\ref{assumption:smallScaleReg} be in place.

For any $0\leq l\leq L{-}1$, any $|\xi|\leq M$, any $|h|\leq 1$, and any
collection of unit vectors $v_1',\ldots,v_l'\in\Rd$ the difference
$\phi^T_{\xi+he,B'}{-}\phi^T_{\xi,B'}$ of linearized homogenization commutators
in direction $B':=v_1'\odot\cdots\odot v_l'$ is assumed to satisfy---under the above conditions---the 
following list of estimates (if $l=0$---and thus $B'$ being an empty symmetric tensor product---$\phi^T_{\xi,B'}$ is 
understood to denote the localized homogenization corrector $\phi^T_\xi$ of the nonlinear problem
with a massive term):
\begin{itemize}[leftmargin=0.4cm]
\item For any $\beta\in (0,1)$, 
there exists a constant $C=C(d,\lambda,\Lambda,\nu,\rho,\eta,M,L,\beta)$ such that for all $q\in [1,\infty)$, 
and all compactly supported and square-integrable $f,g$ we have \emph{corrector estimates for differences}
\begin{equation}
\label{eq:indHypoCorrectorBoundsDiff}
\tag{Hdiff1}
\begin{aligned}
\nonumber
\bigg\langle\bigg|\int g\cdot\big(\nabla\phi^T_{\xi+he,B'}{-}\nabla\phi^T_{\xi,B'}\big)
\bigg|^{2q}\bigg\rangle^\frac{1}{q}
&\leq C^2q^{2C}|h|^{2(1-\beta)}\int \big|g\big|^2,
\\ 
\nonumber
\bigg\langle\bigg|\int \frac{1}{T}f \big(\phi^T_{\xi+he,B'}{-}\phi^T_{\xi,B'}\big)\bigg|^{2q}\bigg\rangle^\frac{1}{q}
&\leq C^2q^{2C}|h|^{2(1-\beta)}\int \Big|\frac{f}{\sqrt{T}}\Big|^2,
\\
\Big\langle\Big\|\Big(\frac{\phi^T_{\xi+he,B'}{-}\phi^T_{\xi,B'}}{\sqrt{T}},
\nabla\phi^T_{\xi+he,B'}{-}\nabla\phi^T_{\xi,B'}\Big)\Big\|^{2q}_{L^2(B_1)}\Big\rangle^\frac{1}{q}
&\leq C^2q^{2C}|h|^{2(1-\beta)}.
\end{aligned}
\end{equation}
\item Fix $p\in (2,\infty)$, and let $g$ be a compactly supported and $p$-integrable random field. 
						Then there exists a random field
						$G_{\xi,B',h,e}^T\in L^1_{\mathrm{uloc}}(\Rd;\Rd[n])$ being related to $g$ via $\phi^T_{\xi+he,B'}{-}\phi^T_{\xi,B'}$ 
						in the sense that, $\Prob$-almost surely, it holds for all compactly supported and smooth perturbations 
						$\delta\omega\colon\Rd\to \Rd[n]$ with $\|\delta\omega\|_{L^\infty}\leq 1$
						\begin{align}
						\label{eq:representationMalliavinDerivativeDiff}
						\tag{Hdiff2a}
						\int g\cdot\nabla\big(\delta\phi^T_{\xi+he,B'}{-}\delta\phi^T_{\xi,B'}\big) 
						= \int G^T_{\xi,B',h,e}\cdot\delta\omega;
						\end{align}
						see also Lemma~\ref{eq:lemmaExistenceLinearizedCorrectorsExtended} for the G\^ateaux derivative
						of the linearized corrector and its gradient in direction $\delta\omega$.						
						
						For any $\kappa\in (0,1]$ and any $\beta\in (0,1)$, 
						there moreover exists some constant $C=C(d,\lambda,\Lambda,\nu,\rho,\eta,M,L,\kappa,\beta)$
						such that for all $q\in [1,\infty)$ and all $|\xi|\leq M$ the random field
						$G^T_{\xi,B',h,e}$ gives rise to a \emph{sensitivity estimate for differences} of the form
						\begin{align}
						\label{eq:indHypoSensitivityBoundDiff}
						\tag{Hdiff2b}
						\bigg\langle\bigg|\,\int\bigg(
						\,\dashint_{B_1(x)}|G^T_{\xi,B',h,e}|\,\bigg)^2\bigg|^{q}\bigg\rangle^\frac{1}{q}
						&\leq C^2q^{2C}|h|^{2(1-\beta)}\sup_{\langle F^{2q_*}\rangle=1}
						\int\big\langle|Fg|^{2(\frac{q}{\kappa})_*}\big\rangle^\frac{1}{(\frac{q}{\kappa})_*}.
						\end{align}
						
						If $(g_r)_{r\geq 1}$ is a sequence of compactly supported and $p$-integrable random fields, denote by
						$G_{\xi,B',h,e}^{T,r}\in L^1_{\mathrm{uloc}}(\Rd;\Rd[n])$, $r\geq 1$, the random field
						associated to $g_r$, $r\geq 1$, in the sense of~\eqref{eq:representationMalliavinDerivativeDiff}.
						Let $g$ be an $L^p_{\mathrm{uloc}}(\Rd;\Rd)$-valued random field, and assume that
						$\Prob$-almost surely it holds $g_r\to g$ in $L^p_{\mathrm{uloc}}(\Rd;\Rd)$.
						Then there exists a random field $G^T_{\xi,B',h,e}$ such that $\Prob$-almost surely
						\begin{align}
						\label{eq:indHypoSensitivityApproxDiff}
						\tag{Hdiff2c}
						G_{\xi,B',h,e}^{T,r} \to G_{\xi,B',h,e}^{T} \text{ as } r\to\infty 
						\text{ in } L^1_{\mathrm{uloc}}(\Rd;\Rd[n]).
						\end{align}
						In the special case of $g_r = \mathds{1}_{B_r}g$, $r\geq 1$, the limit random
						field is in addition subject to	the sensitivity estimate~\eqref{eq:indHypoSensitivityBoundDiff}.
\item There exists $\alpha=\alpha(d,\lambda,\Lambda)\in (0,\eta)$ such that for all $\beta\in (0,1)$
there exists a constant $C=C(d,\lambda,\Lambda,\nu,\rho,\eta,M,L,\beta)$
such that for all $q\in [1,\infty)$ and all $|\xi|\leq M$ we have a 
\emph{small-scale annealed Schauder estimate for differences} 
\begin{align}
\label{eq:indHypoAnnealedSchauderdiff}
\tag{Hdiff3}
\big\langle\big\|\nabla\phi^T_{\xi+he,B'}{-}\nabla\phi^T_{\xi,B'}
\big\|^{2q}_{C^\alpha(B_1)}\big\rangle^\frac{1}{q}
\leq C^2q^{2C}|h|^{2(1-\beta)}.
\end{align}
\end{itemize}

\textit{Step 2: (Base case of the induction)} The base case concerns
the correctors of the nonlinear problem with an additional massive term.
A proof of the corresponding assertions from the induction hypotheses 
is given in Appendix~\ref{app:baseCaseInd} by means of Lemma~\ref{prop:estimatesDiffHomCorrectorNonlinear}.

\textit{Step 3: (Induction step---Reduction to linear functionals)}
Subtracting the defining equations~\eqref{eq:PDEhigherOrderLinearizedCorrectorLocalized}
for $\phi^T_{\xi+he,B}$ resp.\ $\phi^T_{\xi,B}$, as well as adding zero yields
the following equation for the difference of linearized correctors
\begin{align}
\nonumber
&\frac{1}{T}\big(\phi^T_{\xi+he,B}{-}\phi^T_{\xi,B}\big)
-\nabla\cdot a_\xi^T\big(\nabla\phi^T_{\xi+he,B}{-}\nabla\phi^T_{\xi,B}\big)
\\&\label{eq:PDEdiffLinearizedHomogenizationCorrectors}
=  \nabla\cdot\big(a_{\xi+he}^T-a^T_\xi\big)
\big(\mathds{1}_{L=1}B + \nabla \phi^T_{\xi+he,B}\big)
\\&~~~\nonumber
+ \nabla\cdot\sum_{\substack{\Pi\in\mathrm{Par}\{1,\ldots,L\} \\ \Pi\neq\{\{1,\ldots,L\}\}}}
\partial_\xi^{|\Pi|} A(\omega,\xi{+}he{+}\nabla \phi^T_{\xi+he})
\Big[\bigodot_{\pi\in\Pi}(\mathds{1}_{|\pi|=1}B'_\pi + \nabla \phi^T_{\xi+he,B'_\pi})\Big]
\\&~~~\nonumber
-\nabla\cdot\sum_{\substack{\Pi\in\mathrm{Par}\{1,\ldots,L\} \\ \Pi\neq\{\{1,\ldots,L\}\}}}
\partial_\xi^{|\Pi|} A(\omega,\xi{+}\nabla \phi^T_\xi)
\Big[\bigodot_{\pi\in\Pi}(\mathds{1}_{|\pi|=1}B'_\pi + \nabla \phi^T_{\xi,B'_\pi})\Big]
\\&\nonumber
=: \nabla\cdot R^T_{\xi,B,h,e}.
\end{align}
Recall that we denote by $a^T_\xi$
the uniformly elliptic and bounded coefficient field
$\partial_\xi A(\omega,\xi+\nabla\phi^T_\xi)$ with respect
to the constants $(\lambda,\Lambda)$ from~Assumption~\ref{assumption:operators}. 
Let a radius $R\in [1,\infty)$ be fixed. Applying first the hole filling 
estimate~\eqref{eq:holeFillingEstimate} and then
Caccioppoli's estimate~\eqref{eq:Caccioppoli} to 
equation~\eqref{eq:PDEdiffLinearizedHomogenizationCorrectors} yields
\begin{align*}
&\Big\langle\Big\|\Big(\frac{\phi^T_{\xi+he,B}{-}\phi^T_{\xi,B}}{\sqrt{T}},
\nabla\phi^T_{\xi+he,B}{-}\nabla\phi^T_{\xi,B}\Big)\Big\|^{2q}_{L^2(B_1)}\Big\rangle^\frac{1}{q}
\\&
\lesssim_{d,\lambda,\Lambda}
R^{d-\delta}\bigg\langle\bigg|\frac{1}{(2R)^2}\inf_{b\in\Rd[]}\,\dashint_{B_{2R}} 
\big|\phi^T_{\xi+he,B}{-}\phi^T_{\xi,B}{-}b\big|^2
+ \frac{1}{T}|b|^2\bigg|^{q}\bigg\rangle^\frac{1}{q}
\\&~~~
+ R^{d-\delta}\bigg\langle\bigg|\,\dashint_{B_{2R}} \big|R^T_{\xi,B,h,e}\big|^2
\bigg|^{q}\bigg\rangle^\frac{1}{q}
+ R^d\bigg\langle\bigg|\,\dashint_{B_{R}} \frac{1}{|x|^\delta}\big|R^T_{\xi,B,h,e}\big|^2
\bigg|^{q}\bigg\rangle^\frac{1}{q}.
\end{align*}
By the same argument which starts from the right hand side of~\eqref{eq:case2Aux10}
and produces the right hand side of~\eqref{eq:case2Aux1} (with $R'=\theta R$, 
$\theta=\theta(d,\lambda,\Lambda)$ yet to be determined, and with $\phi^T_{\xi,B}$ 
replaced by $\phi^T_{\xi+he,B}{-}\phi^T_{\xi,B}$), and the stationarity
of linearized homogenization correctors we obtain  
\begin{align*}
&\bigg\langle\bigg|\frac{1}{(2R)^2}\inf_{b\in\Rd[]}\,\dashint_{B_{2R}} 
\big|\phi^T_{\xi+he,B}{-}\phi^T_{\xi,B}{-}b\big|^2
+ \frac{1}{T}|b|^2\bigg|^{q}\bigg\rangle^\frac{1}{q}
\\&
\lesssim_{d,\lambda,\Lambda} \theta^2
\bigg\langle\bigg|\,\dashint_{B_{R}}
\big|\nabla\phi^T_{\xi+he,B}{-}\nabla\phi^T_{\xi,B}\big|^2
\bigg|^{q}\bigg\rangle^\frac{1}{q}
\\&~~~
+ \bigg\langle\bigg|\,\dashint_{B_{\theta R}}
\nabla\phi^T_{\xi+he,B}{-}\nabla\phi^T_{\xi,B} 
\bigg|^{2q}\bigg\rangle^\frac{1}{q}
+ \bigg\langle\bigg|\,\dashint_{B_{\theta R}}
\frac{1}{\sqrt{T}}\phi^T_{\xi+he,B}{-}\frac{1}{\sqrt{T}}\phi^T_{\xi,B} 
\bigg|^{2q}\bigg\rangle^\frac{1}{q}
\\&
\lesssim_{d,\lambda,\Lambda} \theta^2\bigg\langle\bigg|\frac{1}{(2R)^2}\inf_{b\in\Rd[]}\,\dashint_{B_{2R}} 
\big|\phi^T_{\xi+he,B}{-}\phi^T_{\xi,B}{-}b\big|^2
+ \frac{1}{T}|b|^2\bigg|^{q}\bigg\rangle^\frac{1}{q}
\\&~~~
+ \theta^2\bigg\langle\bigg|\,\dashint_{B_{2R}} \big|R^T_{\xi,B,h,e}\big|^2
\bigg|^{q}\bigg\rangle^\frac{1}{q}
\\&~~~
+ \bigg\langle\bigg|\,\dashint_{B_{\theta R}}
\nabla\phi^T_{\xi+he,B}{-}\nabla\phi^T_{\xi,B} 
\bigg|^{2q}\bigg\rangle^\frac{1}{q}
+ \bigg\langle\bigg|\,\dashint_{B_{\theta R}}
\frac{1}{\sqrt{T}}\phi^T_{\xi+he,B}{-}\frac{1}{\sqrt{T}}\phi^T_{\xi,B} 
\bigg|^{2q}\bigg\rangle^\frac{1}{q}.
\end{align*}
In the second step, we again used Caccioppoli's inequality
with respect to equation~\eqref{eq:PDEdiffLinearizedHomogenizationCorrectors}.
Choosing $\theta=\theta(d,\lambda,\Lambda)$ sufficiently small,
and making use of stationarity of the right hand side
term~$R^T_{\xi,B,h,e}$ of equation~\eqref{eq:PDEdiffLinearizedHomogenizationCorrectors},
then entails in light of the previous two displays by an
absorption argument that
\begin{align}
\nonumber
&\Big\langle\Big\|\Big(\frac{\phi^T_{\xi+he,B}{-}\phi^T_{\xi,B}}{\sqrt{T}},
\nabla\phi^T_{\xi+he,B}{-}\nabla\phi^T_{\xi,B}\Big)\Big\|^{2q}_{L^2(B_1)}\Big\rangle^\frac{1}{q}
\\& \label{eq:intermediateDiff}
\lesssim_{d,\lambda,\Lambda} R^{d-\delta}
\bigg\langle\bigg|\,\dashint_{B_{\theta R}}
\nabla\phi^T_{\xi+he,B}{-}\nabla\phi^T_{\xi,B} 
\bigg|^{2q}\bigg\rangle^\frac{1}{q}
\\&~~~\nonumber
+ R^{d-\delta}\bigg\langle\bigg|\,\dashint_{B_{\theta R}}
\frac{1}{\sqrt{T}}\phi^T_{\xi+he,B}{-}\frac{1}{\sqrt{T}}\phi^T_{\xi,B} 
\bigg|^{2q}\bigg\rangle^\frac{1}{q}
\\&~~~\nonumber
+ R^{d-\delta}\bigg\langle\bigg|\,\dashint_{B_1} \big|R^T_{\xi,B,h,e}\big|^4
\bigg|^\frac{q}{2}\bigg\rangle^\frac{1}{q}.
\end{align}

For the remaining parts of the proof, let us make use
of the abbreviation $\sum_{\Pi}:=\sum_{\Pi\in\mathrm{Par}\{1,\ldots,L\},\, 
\Pi\neq\{\{1,\ldots,L\}\}}$. By adding zero, we may then express the right hand side
term~$R^T_{\xi,B,h,e}$ in equation~\eqref{eq:PDEdiffLinearizedHomogenizationCorrectors} in
the following equivalent form:
\begin{align}
\label{eq:remainderDiff}
&R^T_{\xi,B,h,e} 
\\&\nonumber
= \big(\partial_\xi A(\omega,\xi{+}he{+}\nabla \phi^T_{\xi+he})
- \partial_\xi A(\omega,\xi{+}\nabla \phi^T_\xi)\big)
\big(\mathds{1}_{L=1}B {+} \nabla \phi^T_{\xi+he,B}\big)
\\&~~~\nonumber
+ \sum_{\Pi}\big(\partial_\xi^{|\Pi|} A(\omega,\xi{+}he{+}\nabla \phi^T_{\xi+he})
- \partial_\xi^{|\Pi|} A(\omega,\xi{+}\nabla \phi^T_\xi)\big)
\Big[\bigodot_{\pi\in\Pi}(\mathds{1}_{|\pi|=1}B'_\pi {+} \nabla \phi^T_{\xi+he,B'_\pi})\Big]
\\&~~~\nonumber
- \sum_{\Pi}\partial_\xi^{|\Pi|} A(\omega,\xi{+}\nabla \phi^T_\xi)
\Big[\bigodot_{\pi\in\Pi}(\mathds{1}_{|\pi|=1}B'_\pi {+} \nabla \phi^T_{\xi,B'_\pi})
- \bigodot_{\pi\in\Pi}(\mathds{1}_{|\pi|=1}B'_\pi {+} \nabla \phi^T_{\xi+he,B'_\pi})\Big].
\end{align}
As a consequence of an application of H\"older's inequality,
stationarity of the linearized homogenization correctors,
and~(A2)$_{L}$ from Assumption~\ref{assumption:operators}, we deduce from the previous display that
\begin{align*}
&\bigg\langle\bigg|\,\dashint_{B_1} \big|R^T_{\xi,B,h,e}\big|^2
\bigg|^{q}\bigg\rangle^\frac{1}{q}
\\&
\lesssim_{\Lambda}
\big\langle\big\|\mathds{1}_{L=1}B {+} \nabla \phi^T_{\xi+he,B}
\big\|^{4q}_{C^\alpha(B_1)}\big\rangle^\frac{1}{2q}
\bigg\langle\bigg|\,\dashint_{B_1}
\big| he{+}\nabla\phi^T_{\xi+he}{-}\nabla\phi^T_{\xi}\big|^2
\bigg|^{2q}\bigg\rangle^\frac{1}{2q}
\\&~~~
+ \sum_{\Pi}\prod_{\pi\in\Pi}\big\langle\big\|
\mathds{1}_{|\pi|=1}B'_\pi {+} \nabla \phi^T_{\xi+he,B'_\pi}
\big\|^{4q|\Pi|}_{C^\alpha(B_1)}\big\rangle^\frac{1}{2q|\Pi|}
\bigg\langle\bigg|\,\dashint_{B_1}
\big| he{+}\nabla\phi^T_{\xi+he}{-}\nabla\phi^T_{\xi}\big|^2
\bigg|^{2q}\bigg\rangle^\frac{1}{2q}
\\&~~~
+ \sum_{\Pi}\sup_{\pi\in\Pi}
\bigg\langle\bigg|\,\dashint_{B_1}
\big|\nabla\phi^T_{\xi+he,B'_{\pi}}{-}\nabla\phi^T_{\xi,B'_{\pi}}\big|^2
\bigg|^{q|\Pi|}\bigg\rangle^\frac{1}{q|\Pi|}
\\&~~~~~~~~~~~~~~~~~\times
\sup_{\substack{\pi'\in\Pi \\ \pi'\neq\pi}}
\Big\{1{+}\big\langle\big\|\nabla \phi^T_{\xi+he,B'_\pi}
\big\|^{2q|\Pi|}_{C^\alpha(B_1)}\big\rangle^\frac{1}{q|\Pi|}
+\big\langle\big\|\nabla \phi^T_{\xi,B'_\pi}
\big\|^{2q|\Pi|}_{C^\alpha(B_1)}\big\rangle^\frac{1}{q|\Pi|}\Big\}.
\end{align*}
It thus follows from the induction hypothesis~\eqref{eq:indHypoCorrectorBoundsDiff},
the small-scale annealed Schauder estimates~\eqref{eq:indHypoAnnealedSchauder}
(which are available to any linearization order $\leq L$) and the previous
three displays that
\begin{align}
\label{eq:prelimRemainderEstimate}
&\bigg\langle\bigg|\,\dashint_{B_1} \big|R^T_{\xi,B,h,e}\big|^2
\bigg|^{q}\bigg\rangle^\frac{1}{q} \leq C^2q^{2C}|h|^{2(1-\frac{\beta}{2})}.
\end{align}
Observe also that based on induction hypothesis~\eqref{eq:indHypoAnnealedSchauderdiff}
and the above argument for the right hand side
term~$R^T_{\xi,B,h,e}$ of equation~\eqref{eq:PDEdiffLinearizedHomogenizationCorrectors},
we also get the estimate
\begin{align}
\label{eq:annealedSchauderRemainderDiff}
&\big\langle\big\| R^T_{\xi,B,h,e} \big\|^{2q}_{C^\alpha(B_1)}\big\rangle^\frac{1}{q}
\leq C^2q^{2C}|h|^{2(1-\frac{\beta}{2})}.
\end{align}
Smuggling in a spatial average over the unit ball, we deduce from the 
previous two displays that
\begin{align*}
&\bigg\langle\bigg|\,\dashint_{B_1} \big|R^T_{\xi,B,h,e}\big|^4
\bigg|^\frac{q}{2}\bigg\rangle^\frac{1}{q}
\\&
\lesssim \big\langle\big\| R^T_{\xi,B,h,e} \big\|^{2q}_{C^\alpha(B_1)}\big\rangle^\frac{1}{q}
+ \bigg\langle\bigg|\,\dashint_{B_1} \big|R^T_{\xi,B,h,e}\big|^2
\bigg|^{q}\bigg\rangle^\frac{1}{q} \leq C^2q^{2C}|h|^{2(1-\frac{\beta}{2})}.
\end{align*}
This in turn entails the following update of~\eqref{eq:intermediateDiff}:
\begin{align}
\nonumber
&\Big\langle\Big\|\Big(\frac{\phi^T_{\xi+he,B}{-}\phi^T_{\xi,B}}{\sqrt{T}},
\nabla\phi^T_{\xi+he,B}{-}\nabla\phi^T_{\xi,B}\Big)\Big\|^{2q}_{L^2(B_1)}\Big\rangle^\frac{1}{q}
\\&\label{eq:intermediateDiff2}
\lesssim_{d,\lambda,\Lambda} R^{d-\delta}
\bigg\langle\bigg|\,\dashint_{B_{\theta R}}
\nabla\phi^T_{\xi+he,B}{-}\nabla\phi^T_{\xi,B} 
\bigg|^{2q}\bigg\rangle^\frac{1}{q}
\\&~~~\nonumber
+ R^{d-\delta}\bigg\langle\bigg|\,\dashint_{B_{\theta R}}
\frac{1}{\sqrt{T}}\phi^T_{\xi+he,B}{-}\frac{1}{\sqrt{T}}\phi^T_{\xi,B} 
\bigg|^{2q}\bigg\rangle^\frac{1}{q}
\\&~~~\nonumber
+ R^{d-\delta}C^2q^{2C}|h|^{2(1-\frac{\beta}{2})}.
\end{align}
The upshot of the argument is now the following. We will establish
in the next step of the proof that for any $\tau=\tau(d,\lambda,\Lambda,\beta)\in (0,1)$
\begin{equation}
\label{eq:estimateLinFunctionalsDiffAbsorption}
\begin{aligned}
&\bigg\langle\bigg|\int g\cdot\big(\nabla\phi^T_{\xi+he,B}{-}\nabla\phi^T_{\xi,B}\big)
\bigg|^{2q}\bigg\rangle^\frac{1}{q}
\\&
\leq C^2q^{2C} \Big\langle\Big\|\nabla\phi^T_{\xi+he,B}{-}\nabla\phi^T_{\xi,B}
\Big\|^{2\frac{q}{(1-\tau)^2}}_{L^2(B_1)}\Big\rangle^\frac{(1-\tau)^2}{q}
\int \big|g\big|^2
\\&~~~
+ C^2q^{2C}|h|^{2(1-\frac{\beta}{2})}\int \big|g\big|^2,
\end{aligned}
\end{equation}
as well as
\begin{equation}
\label{eq:estimateLinFunctionalsDiffAbsorption2}
\begin{aligned}
&\bigg\langle\bigg|\int \frac{1}{T}f 
\big(\phi^T_{\xi+he,B'}{-}\phi^T_{\xi,B'}\big)\bigg|^{2q}\bigg\rangle^\frac{1}{q}
\\&
\leq C^2q^{2C} \Big\langle\Big\|\nabla\phi^T_{\xi+he,B}{-}\nabla\phi^T_{\xi,B}
\Big\|^{2\frac{q}{(1-\tau)^2}}_{L^2(B_1)}\Big\rangle^\frac{(1-\tau)^2}{q}
\int \Big|\frac{f}{\sqrt{T}}\Big|^2
\\&~~~
+ C^2q^{2C}|h|^{2(1-\frac{\beta}{2})}\int \Big|\frac{f}{\sqrt{T}}\Big|^2.
\end{aligned}
\end{equation}
The latter two estimates in turn update~\eqref{eq:intermediateDiff2} to
\begin{align}
\nonumber
&\Big\langle\Big\|\Big(\frac{\phi^T_{\xi+he,B}{-}\phi^T_{\xi,B}}{\sqrt{T}},
\nabla\phi^T_{\xi+he,B}{-}\nabla\phi^T_{\xi,B}\Big)\Big\|^{2q}_{L^2(B_1)}\Big\rangle^\frac{1}{q}
\\&\label{eq:intermediateDiff3}
\lesssim_{d,\lambda,\Lambda}
R^{-\delta}C^2q^{2C}\Big\langle\Big\|\nabla\phi^T_{\xi+he,B}{-}\nabla\phi^T_{\xi,B}
\Big\|^{2\frac{q}{(1-\tau)^2}}_{L^2(B_1)}\Big\rangle^\frac{(1-\tau)^2}{q}
\\&~~~\nonumber
+ R^{d-\delta}C^2q^{2C}|h|^{2(1-\frac{\beta}{2})}.
\end{align}
We are one step away from choosing a suitable radius $R\in [1,\infty)$.
Before we do so, we first want to exploit that we already have---to any linearization 
order $\leq L$---the corrector estimates~\eqref{eq:correctorGradientBoundMassiveApprox} at our disposal.
We leverage on that in form of decomposing $\frac{q}{(1-\tau)^2}=q(1{-}\tau) + q(\frac{1}{(1-\tau)^2}-(1{-}\tau))$
and applying H\"older's inequality with respect to the exponents $(\frac{1}{1-\tau},\frac{1}{\tau})$
\begin{align*}
&\Big\langle\Big\|\nabla\phi^T_{\xi+he,B}{-}\nabla\phi^T_{\xi,B}
\Big\|^{2\frac{q}{(1-\tau)^2}}_{L^2(B_1)}\Big\rangle^\frac{(1-\tau)^2}{q}
\leq C^2q^{2C}\Big\langle\Big\|\nabla\phi^T_{\xi+he,B}{-}\nabla\phi^T_{\xi,B}
\Big\|^{2q}_{L^2(B_1)}\Big\rangle^\frac{(1-\tau)^3}{q}
\end{align*}
with the constant $C$ independent of $|h|\leq 1$. This provides an
upgrade of~\eqref{eq:intermediateDiff3} in form of
\begin{align}
\nonumber
&\Big\langle\Big\|\Big(\frac{\phi^T_{\xi+he,B}{-}\phi^T_{\xi,B}}{\sqrt{T}},
\nabla\phi^T_{\xi+he,B}{-}\nabla\phi^T_{\xi,B}\Big)\Big\|^{2q}_{L^2(B_1)}\Big\rangle^\frac{1}{q}
\\&\label{eq:intermediateDiff4}
\lesssim_{d,\lambda,\Lambda}
R^{-\delta}C^2q^{2C}\Big\langle\Big\|\Big(\frac{\phi^T_{\xi+he,B}{-}\phi^T_{\xi,B}}{\sqrt{T}},
\nabla\phi^T_{\xi+he,B}{-}\nabla\phi^T_{\xi,B}\Big)\Big\|^{2q}_{L^2(B_1)}\Big\rangle^\frac{(1-\tau)^3}{q}
\\&~~~\nonumber
+ R^{d-\delta}C^2q^{2C}|h|^{2(1-\frac{\beta}{2})}.
\end{align}
We may assume without loss of generality that the target term (i.e, the left hand
side term of the previous display) is $\geq |h|^{2}$ (otherwise,
there is nothing to prove in the first place). We then choose the radius
$R\in [1,\infty)$ and the parameter $\tau=\tau(d,\lambda,\Lambda,\beta)$ in form of
\begin{align*}
(1-\tau)^3 &:= 1 - \frac{\beta}{2}\frac{\delta}{d-\delta},
\\
R^\delta &:= 
\frac{\Theta(d,\lambda,\Lambda)C^2q^{2C}}
{1\wedge  \langle \| (\frac{\phi^T_{\xi+he,B}{-}\phi^T_{\xi,B}}{\sqrt{T}},
\nabla\phi^T_{\xi+he,B}{-}\nabla\phi^T_{\xi,B} ) \|^{2q}_{L^2(B_1)} \rangle^\frac{1-(1-\tau)^3}{q}},  
\end{align*}
with $\Theta(d,\lambda,\Lambda)$ sufficiently large in order to allow
for an absorption argument in~\eqref{eq:intermediateDiff4}. In summary, we obtain
\begin{align*}
\Big\langle\Big\|\Big(\frac{\phi^T_{\xi+he,B}{-}\phi^T_{\xi,B}}{\sqrt{T}},
\nabla\phi^T_{\xi+he,B}{-}\nabla\phi^T_{\xi,B}\Big)\Big\|^{2q}_{L^2(B_1)}\Big\rangle^\frac{1}{q}
\leq C^2q^{2C}|h|^{2(1-\beta)}.
\end{align*}
In other words, we proved that the last estimate in~\eqref{eq:indHypoCorrectorBoundsDiff} with $B'$ replaced by $B$
is satisfied. Plugging this information back into~\eqref{eq:estimateLinFunctionalsDiffAbsorption}
and~\eqref{eq:estimateLinFunctionalsDiffAbsorption2} in turn establishes the first two estimates
from~\eqref{eq:indHypoCorrectorBoundsDiff} with $B'$ replaced by~$B$.

Last but not least, applying the local Schauder estimate~\eqref{eq:localSchauder} to 
equation~\eqref{eq:PDEdiffLinearizedHomogenizationCorrectors} 
(which is admissible based on
the annealed H\"older regularity of the linearized coefficient 
field, see~Lemma~\ref{lem:annealedHoelderRegLinCoefficient}) and making
use of H\"older's inequality in combination with the 
estimates~\eqref{eq:annealedHoelderRegLinCoefficient}
and~\eqref{eq:annealedSchauderRemainderDiff} moreover yields
\begin{align}
\label{eq:indStepSchauderPrelim}
&\Big\langle\Big\|\Big(\frac{\phi^T_{\xi+he,B}{-}\phi^T_{\xi,B}}{\sqrt{T}},
\nabla\phi^T_{\xi+he,B}{-}\nabla\phi^T_{\xi,B}\Big)\Big\|^{2q}_{C^\alpha(B_1)}\Big\rangle^\frac{1}{q}
\\&\nonumber
\leq C^2q^{2C}|h|^{2(1-\frac{\beta}{2})} + C^2q^{2C}
\Big\langle\Big\|\Big(\frac{\phi^T_{\xi+he,B}{-}\phi^T_{\xi,B}}{\sqrt{T}},
\nabla\phi^T_{\xi+he,B}{-}\nabla\phi^T_{\xi,B}\Big)\Big\|^{2\frac{q}{1-\tau}}_{L^2(B_1)}\Big\rangle^\frac{1-\tau}{q}.
\end{align}
In particular, the small-scale annealed Schauder estimate~\eqref{eq:indHypoAnnealedSchauderdiff}
with $B'$ replaced by~$B$ also holds true by the above reasoning once the validity of the
estimates~\eqref{eq:estimateLinFunctionalsDiffAbsorption}
and~\eqref{eq:estimateLinFunctionalsDiffAbsorption2} is established.

\textit{Step 4: (Induction step---Estimates~\eqref{eq:estimateLinFunctionalsDiffAbsorption}
and~\eqref{eq:estimateLinFunctionalsDiffAbsorption2} for linear functionals)}
For notational convenience, we only discuss in detail the derivation
of the estimate~\eqref{eq:estimateLinFunctionalsDiffAbsorption}. The second one
follows along the same lines. 

We start with the computation of the functional derivative of the
centered random variable $F_{\phi}^{\mathrm{diff}}:=\int g\cdot(\nabla\phi^T_{\xi+he,B}-\nabla\phi^T_{\xi,B})$.
To this end, let $\delta\omega\colon\Rd\to \Rd[n]$ be compactly supported and smooth with
$\|\delta\omega\|_{L^\infty}\leq 1$. 
Based on Lemma~\ref{eq:lemmaExistenceLinearizedCorrectorsExtended},
we may $\Prob$-almost surely differentiate the 
equation~\eqref{eq:PDEdiffLinearizedHomogenizationCorrectors} for differences
of linearized homogenization correctors with respect to the parameter field
in the direction of $\delta\omega$ (taking already into account the representation~\eqref{eq:remainderDiff} 
of the right hand side term). This yields $\Prob$-almost surely the following PDE for the variation
$$\Big(\frac{(\delta\phi^T_{\xi+he,B}-\delta\phi^T_{\xi,B})}{\sqrt{T}},
\nabla(\delta\phi^T_{\xi+he,B}-\delta\phi^T_{\xi,B})\Big)
\in L^2_{\mathrm{uloc}}(\Rd;\Rd[]{\times}\Rd)$$ of differences
of linearized homogenization correctors:
\begin{align}
\nonumber
&\frac{1}{T}(\delta\phi^T_{\xi+he,B}-\delta\phi^T_{\xi,B})
- \nabla\cdot a_\xi^T\nabla(\delta\phi^T_{\xi+he,B}-\delta\phi^T_{\xi,B}) 
\\&\nonumber
= \nabla\cdot\partial_\omega\partial_\xi A(\omega,\xi+\nabla\phi^T_\xi)
\big[\delta\omega\odot(\nabla \phi^T_{\xi+he,B}{-}\nabla\phi^T_{\xi,B})\big]
\\&~~~\nonumber
+ \nabla\cdot\partial_\xi^2 A(\omega,\xi+\nabla\phi^T_\xi)
\big[\nabla\delta\phi^T_\xi\odot(\nabla \phi^T_{\xi+he,B}{-}\nabla\phi^T_{\xi,B})\big]
\\&~~~\nonumber
- \nabla\cdot\big(\partial_\omega\partial_\xi A(\omega,\xi{+}he{+}\nabla \phi^T_{\xi+he})
- \partial_\omega\partial_\xi A(\omega,\xi{+}\nabla \phi^T_\xi)\big)
\big[\delta\omega\odot(\mathds{1}_{L=1}B {+} \nabla \phi^T_{\xi+he,B})\big]
\\&~~~\nonumber
-\nabla\cdot\big(\partial_\xi^2 A(\omega,\xi{+}he{+}\nabla \phi^T_{\xi+he})
- \partial_\xi^2 A(\omega,\xi{+}\nabla \phi^T_\xi)\big)
\big[\nabla\delta\phi^T_{\xi+he}\odot(\mathds{1}_{L=1}B {+} \nabla \phi^T_{\xi+he,B})\big]
\\&~~~\nonumber
-\nabla\cdot\partial_\xi^2 A(\omega,\xi{+}\nabla \phi^T_\xi)
\big[\nabla\delta(\phi^T_{\xi+he}{-}\phi^T_{\xi})\odot
(\mathds{1}_{L=1}B {+} \nabla \phi^T_{\xi+he,B})\big]
\\&~~~\nonumber
- \nabla\cdot\big(\partial_\xi A(\omega,\xi{+}he{+}\nabla \phi^T_{\xi+he})
- \partial_\xi A(\omega,\xi{+}\nabla \phi^T_\xi)\big)\nabla \delta\phi^T_{\xi+he,B}
\\&~~~\nonumber
+\nabla\cdot\sum_{\Pi}\big(\partial_\omega\partial_\xi^{|\Pi|} A(\omega,\xi{+}he{+}\nabla \phi^T_{\xi+he})
- \partial_\omega\partial_\xi^{|\Pi|} A(\omega,\xi{+}\nabla \phi^T_\xi)\big)
\Big[\delta\omega\odot\bigodot_{\pi\in\Pi}(\mathds{1}_{|\pi|=1}B'_\pi {+} \nabla \phi^T_{\xi+he,B'_\pi})\Big]
\\&~~~\nonumber
+\nabla\cdot\sum_{\Pi}\big(\partial_\xi^{|\Pi|+1} A(\omega,\xi{+}he{+}\nabla \phi^T_{\xi+he})
- \partial_\xi^{|\Pi|+1} A(\omega,\xi{+}\nabla \phi^T_\xi)\big)
\Big[\nabla\delta\phi^T_{\xi+he}\odot\bigodot_{\pi\in\Pi}(\mathds{1}_{|\pi|=1}B'_\pi {+} \nabla \phi^T_{\xi+he,B'_\pi})\Big]
\\&~~~\nonumber
+\nabla\cdot\sum_{\Pi}\partial_\xi^{|\Pi|+1} A(\omega,\xi{+}\nabla \phi^T_\xi)
\Big[\nabla\delta(\phi^T_{\xi+he}{-}\phi^T_{\xi})\odot\bigodot_{\pi\in\Pi}
(\mathds{1}_{|\pi|=1}B'_\pi {+} \nabla \phi^T_{\xi+he,B'_\pi})\Big]
\\&~~~\nonumber
+ \nabla\cdot\sum_{\Pi}\big(\partial_\xi^{|\Pi|} A(\omega,\xi{+}he{+}\nabla \phi^T_{\xi+he})
- \partial_\xi^{|\Pi|} A(\omega,\xi{+}\nabla \phi^T_\xi)\big)
\Big[\sum_{\pi\in\Pi}\nabla\delta\phi^T_{\xi+he,B'_\pi}\odot
\bigodot_{\substack{\pi'\in\Pi \\ \pi'\neq \pi}}
(\mathds{1}_{|\pi'|=1}B'_{\pi'}{+}\nabla \phi^T_{\xi+he,B'_{\pi'}})\Big]
\\&~~~\nonumber
-\nabla\cdot\sum_{\Pi}\partial_\omega\partial_\xi^{|\Pi|} A(\omega,\xi{+}\nabla \phi^T_\xi)
\Big[\delta\omega\odot\bigodot_{\pi\in\Pi}(\mathds{1}_{|\pi|=1}B'_\pi {+} \nabla \phi^T_{\xi,B'_\pi})
- \delta\omega\odot\bigodot_{\pi\in\Pi}(\mathds{1}_{|\pi|=1}B'_\pi {+} \nabla \phi^T_{\xi+he,B'_\pi})\Big]
\\&~~~\nonumber
- \nabla\cdot\sum_{\Pi}\partial_\xi^{|\Pi|} A(\omega,\xi{+}\nabla \phi^T_\xi)
\Big[\sum_{\pi\in\Pi}\nabla\delta(\phi^T_{\xi,B'_\pi}{-}\phi^T_{\xi+he,B'_{\pi}})\odot
\bigodot_{\substack{\pi'\in\Pi \\ \pi'\neq \pi}}
(\mathds{1}_{|\pi'|=1}B'_{\pi'}{+}\nabla \phi^T_{\xi,B'_{\pi'}})\Big]
\\&~~~\nonumber
+ \nabla\cdot\sum_{\Pi}\partial_\xi^{|\Pi|} A(\omega,\xi{+}\nabla \phi^T_\xi)
\Big[\sum_{\pi\in\Pi}\nabla\delta\phi^T_{\xi+he,B'_\pi}\odot
\bigodot_{\substack{\pi'\in\Pi \\ \pi'\neq \pi}}
(\mathds{1}_{|\pi'|=1}B'_{\pi'}{+}\nabla \phi^T_{\xi+he,B'_{\pi'}})\Big]
\\&~~~\nonumber
- \nabla\cdot\sum_{\Pi}\partial_\xi^{|\Pi|} A(\omega,\xi{+}\nabla \phi^T_\xi)
\Big[\sum_{\pi\in\Pi}\nabla\delta\phi^T_{\xi+he,B'_\pi}\odot
\bigodot_{\substack{\pi'\in\Pi \\ \pi'\neq \pi}}
(\mathds{1}_{|\pi'|=1}B'_{\pi'}{+}\nabla \phi^T_{\xi,B'_{\pi'}})\Big]
\\&\label{eq:representationMalliavinDiffLinearizedHomCorrectors}
=: \nabla\cdot R^{T,(1)}_{\xi,B,h,e}\delta\omega 
+ \nabla\cdot R^{T,(2)}_{\xi,B,h,e}\nabla\delta\phi^T_\xi
+ \nabla\cdot R^{T,(3)}_{\xi,B,h,e}\delta\omega 
\\&~~~~\nonumber
+ \nabla\cdot R^{T,(4)}_{\xi,B,h,e}\nabla\delta\phi^T_{\xi+he}
+ \nabla\cdot R^{T,(5)}_{\xi,B,h,e}\nabla(\delta\phi^T_{\xi+he}{-}\delta\phi^T_{\xi}) 
+ \nabla\cdot R^{T,(6)}_{\xi,B,h,e}\nabla\delta\phi^T_{\xi+he,B}
\\&~~~~\nonumber
+ \nabla\cdot R^{T,(7)}_{\xi,B,h,e}\delta\omega  
+ \nabla\cdot R^{T,(8)}_{\xi,B,h,e}\nabla\delta\phi^T_{\xi+he}
+ \nabla\cdot R^{T,(9)}_{\xi,B,h,e}\nabla(\delta\phi^T_{\xi+he}{-}\delta\phi^T_{\xi})
\\&~~~~\nonumber
+ \sum_\Pi\sum_{\pi\in\Pi} \nabla\cdot R^{T,(10),\pi}_{\xi,B,h,e}\nabla\delta\phi^T_{\xi+he,B'_\pi} 
+ \nabla\cdot R^{T,(11)}_{\xi,B,h,e}\delta\omega 
\\&~~~~\nonumber
+ \sum_\Pi\sum_{\pi\in\Pi} \nabla\cdot R^{T,(12),\pi}_{\xi,B,h,e}
\nabla(\delta\phi^T_{\xi+he,B'_\pi}{-}\delta\phi^T_{\xi,B'_\pi})
\\&~~~~\nonumber
+ \sum_\Pi\sum_{\pi\in\Pi} \nabla\cdot R^{T,(13),\pi}_{\xi,B,h,e}
\nabla\delta\phi^T_{\xi+he,B'_\pi}.
\end{align}
In principle, one needs to resort to an approximation argument
in order to proceed from here. As this can be done along
the same lines as in \textit{Step 1} of the proof 
of Lemma~\ref{lem:annealedLinFunctionalsIndStep}, we gloss
over this technical issue and continue directly for the sake of brevity.
More precisely, by a duality argument based on the dual
operator $(\frac{1}{T}-\nabla\cdot a^{T,*}_\xi\nabla)$
we may deduce from~\eqref{eq:representationMalliavinDiffLinearizedHomCorrectors} that  
\begin{align*}
\delta F_\phi^{\mathrm{diff}} &=
\int g\cdot\nabla(\delta\phi^T_{\xi+he,B}{-}\delta\phi^T_{\xi,B})
\\&
= - \int \nabla(\delta\phi^T_{\xi+he,B}{-}\delta\phi^T_{\xi,B})\cdot a^{T,*}_\xi
\nabla\Big(\frac{1}{T}{-}\nabla\cdot a^{T,*}_\xi\nabla\Big)^{-1}
\big(\nabla\cdot g\big)
\\&~~~
- \int \delta(\phi^T_{\xi+he,B}{-}\phi^T_{\xi,B})\,
\frac{1}{T}\Big(\frac{1}{T}{-}\nabla\cdot a^{T,*}_\xi\nabla\Big)^{-1}
\big(\nabla\cdot g\big)
\\&
= \int \big(R^{T,(1)}_{\xi,B,h,e}\delta\omega +
R^{T,(2)}_{\xi,B,h,e}\nabla\delta\phi^T_\xi \big)\cdot 
\nabla\Big(\frac{1}{T}{-}\nabla\cdot a^{T,*}_\xi\nabla\Big)^{-1}
\big(\nabla\cdot g\big)
\\&~~~
+ \int \big(R^{T,(3)}_{\xi,B,h,e} + R^{T,(7)}_{\xi,B,h,e} + 
R^{T,(11)}_{\xi,B,h,e} \big)\delta\omega\cdot 
\nabla\Big(\frac{1}{T}{-}\nabla\cdot a^{T,*}_\xi\nabla\Big)^{-1}
\big(\nabla\cdot g\big)
\\&~~~
+ \int \big(R^{T,(4)}_{\xi,B,h,e} + 
R^{T,(8)}_{\xi,B,h,e} \big)\nabla\delta\phi^T_{\xi+he}\cdot 
\nabla\Big(\frac{1}{T}{-}\nabla\cdot a^{T,*}_\xi\nabla\Big)^{-1}
\big(\nabla\cdot g\big)
\\&~~~
+ \int R^{T,(6)}_{\xi,B,h,e}\nabla\delta\phi^T_{\xi+he,B}\cdot 
\nabla\Big(\frac{1}{T}{-}\nabla\cdot a^{T,*}_\xi\nabla\Big)^{-1}
\big(\nabla\cdot g\big)
\\&~~~
+ \sum_\Pi\sum_{\pi\in\Pi} \int 
\big(R^{T,(10),\pi}_{\xi,B,h,e} {+} R^{T,(13),\pi}_{\xi,B,h,e}\big)
\nabla\delta\phi^T_{\xi+he,B'_\pi}\cdot 
\nabla\Big(\frac{1}{T}{-}\nabla\cdot a^{T,*}_\xi\nabla\Big)^{-1}
\big(\nabla\cdot g\big)
\\&~~~
+ \int \big(R^{T,(5)}_{\xi,B,h,e} {+} R^{T,(9)}_{\xi,B,h,e}\big)
\nabla(\delta\phi^T_{\xi+he}{-}\delta\phi^T_{\xi})\cdot 
\nabla\Big(\frac{1}{T}{-}\nabla\cdot a^{T,*}_\xi\nabla\Big)^{-1}
\big(\nabla\cdot g\big)
\\&~~~
+ \sum_\Pi\sum_{\pi\in\Pi} \int 
R^{T,(12),\pi}_{\xi,B,h,e}
\nabla(\delta\phi^T_{\xi+he,B'_\pi}{-}\delta\phi^T_{\xi,B'_\pi})\cdot 
\nabla\Big(\frac{1}{T}{-}\nabla\cdot a^{T,*}_\xi\nabla\Big)^{-1}
\big(\nabla\cdot g\big).
\end{align*}
Thanks to the induction hypothesis~\eqref{eq:representationMalliavinDerivativeDiff}
and~\eqref{eq:representationMalliavinDerivative} (the latter being already available
to any linearization order $\leq L$) we obtain a representation of the form
\begin{align}
\label{eq:sensDiffAux10}
\delta F_\phi^{\mathrm{diff}} &= \sum_{i=1}^{13} \int G^{T,(i)}_{\xi,B,h,e}\cdot\delta\omega.
\end{align}
(For the rigorous argument based on an approximation procedure in the spirit
of \textit{Step 1} of the proof of Lemma~\ref{lem:annealedLinFunctionalsIndStep},
one in addition relies on~\eqref{eq:indHypoSensitivityApproxDiff} and~\eqref{eq:indHypoSensitivityApprox};
the latter again up to linearization order $\leq L$ which is admissible thanks to
Lemma~\ref{lem:closingIndStep}.) We can feed~\eqref{eq:sensDiffAux10} into the spectral 
gap inequality in form of~\eqref{eq:spectralGapHigherMoments} which entails
\begin{equation}
\label{eq:spectralGapDiff}
\begin{aligned}
\big\langle\big|F_\phi^{\mathrm{diff}}\big|^{2q}\big\rangle^\frac{1}{q}
&\leq C^2q^2\sum_{i=1}^{13}\bigg\langle\bigg|\int\bigg(
\,\dashint_{B_1(x)}\big|G^{T,(i)}_{\xi,B,h,e}\big|\bigg)^2\bigg|^q\bigg\rangle^\frac{1}{q}.
\end{aligned}
\end{equation}
It remains to estimate the terms on the right hand side of the previous display. 

We start with the first two terms on the right hand side of~\eqref{eq:spectralGapDiff},
which are precisely those being responsible for the first right hand side term 
in~\eqref{eq:estimateLinFunctionalsDiffAbsorption}. By duality in $L^q_{\langle\cdot\rangle}$,
H\"older's inequality, stationarity of the linearized homogenization correctors,
and~(A3)$_{L}$ from Assumption~\ref{assumption:operators} we get
\begin{align*}
&\bigg\langle\bigg|\int\bigg(
\,\dashint_{B_1(x)}\big|G^{T,(1)}_{\xi,B,h,e}\big|\bigg)^2\bigg|^q\bigg\rangle^\frac{1}{q}
\\&
\leq C^2\big\langle\big\|\nabla\phi^T_{\xi+he,B}{-}\nabla\phi^T_{\xi,B}\big\|^{2q}_{L^2(B_1)}\big\rangle^\frac{1}{q}
\sup_{\langle F^{2q_*}\rangle=1}\int\Big\langle\Big|\Big(\frac{1}{T}{-}\nabla\cdot a^{T,*}_\xi\nabla\Big)^{-1}
\big(\nabla\cdot Fg\big)\Big|^{2q_*}\Big\rangle^\frac{1}{q_*}. 
\end{align*}
Hence, at least for sufficiently large $q\in [1,\infty)$ such that $|q_*-1|$
is small enough in order to be in the perturbative regime of the annealed Calder\'on--Zygmund
estimate in form of~\eqref{lem:annealedCZMeyers}, we deduce from the previous display that
\begin{align}
\label{eq:rhs1}
&\bigg\langle\bigg|\int\bigg(
\,\dashint_{B_1(x)}\big|G^{T,(1)}_{\xi,B,h,e}\big|\bigg)^2\bigg|^q\bigg\rangle^\frac{1}{q}
\leq C^2\big\langle\big\|\nabla\phi^T_{\xi+he,B}{-}\nabla\phi^T_{\xi,B}\big\|^{2q}_{L^2(B_1)}
\big\rangle^\frac{1}{q} \int |g|^2.
\end{align}
For the second term, we instead rely on~\eqref{eq:indHypoSensitivityBound} (applied to $\nabla\phi^T_\xi$
with the choice $\kappa=\tau$) and~(A2)$_{L}$ from Assumption~\ref{assumption:operators} to infer
\begin{align*}
&\bigg\langle\bigg|\int\bigg(
\,\dashint_{B_1(x)}\big|G^{T,(2)}_{\xi,B,h,e}\big|\bigg)^2\bigg|^q\bigg\rangle^\frac{1}{q}
\\&
\leq C^2q^{2C}\sup_{\langle F^{2q_*}\rangle=1}
\int\Big\langle\Big|\nabla\phi^T_{\xi+he,B}{-}\nabla\phi^T_{\xi,B}\Big|^{2(\frac{q}{\tau})_*}
\Big|\Big(\frac{1}{T}{-}\nabla\cdot a^{T,*}_\xi\nabla\Big)^{-1}
\big(\nabla\cdot Fg\big)\Big|^{2(\frac{q}{\tau})_*}\Big\rangle^\frac{1}{(\frac{q}{\tau})_*}.
\end{align*}
Applying H\"older's inequality with exponents $(\frac{q-\tau}{1-\tau},
(\frac{q-\tau}{1-\tau})_*=\frac{q-\tau}{q-1})$ it then follows from
$(\frac{q}{\tau})_*=\frac{q}{q-\tau}$ that
\begin{align*}
&\bigg\langle\bigg|\int\bigg(
\,\dashint_{B_1(x)}\big|G^{T,(2)}_{\xi,B,h,e}\big|\bigg)^2\bigg|^q\bigg\rangle^\frac{1}{q}
\leq  C^2\big\langle\big\|\nabla\phi^T_{\xi+he,B}{-}\nabla\phi^T_{\xi,B}\big\|^{2\frac{q}{1-\tau}}_{C^\alpha(B_1)}
\big\rangle^\frac{1-\tau}{q} \int |g|^2,
\end{align*}
again at least for sufficiently large $q\in [1,\infty)$. Combining this 
with~\eqref{eq:indStepSchauderPrelim} updates the previous display to
\begin{equation}
\label{eq:rhs2}
\begin{aligned}
&\bigg\langle\bigg|\int\bigg(
\,\dashint_{B_1(x)}\big|G^{T,(2)}_{\xi,B,h,e}\big|\bigg)^2\bigg|^q\bigg\rangle^\frac{1}{q}
\\&
\leq C^2q^{2C}|h|^{2(1-\frac{\beta}{2})} \int |g|^2 
+ C^2\big\langle\big\|\nabla\phi^T_{\xi+he,B}{-}\nabla\phi^T_{\xi,B}\big\|^{2\frac{q}{(1-\tau)^2}}_{L^2(B_1)}
\big\rangle^\frac{(1-\tau)^2}{q} \int |g|^2,
\end{aligned}
\end{equation}
at least for sufficiently large $q\in [1,\infty)$.

For the remaining terms on the right hand side of~\eqref{eq:spectralGapDiff}, note that all of them
incorporate a difference of \textit{lower-order} linearized homogenization correctors. In view
of induction hypotheses~\eqref{eq:indHypoCorrectorBoundsDiff}, \eqref{eq:indHypoSensitivityBoundDiff} 
and~\eqref{eq:indHypoAnnealedSchauderdiff}, one thus expects them to contribute only to the second right hand side
term of~\eqref{eq:estimateLinFunctionalsDiffAbsorption}. We verify this by grouping them
into three categories. 

First, we estimate by duality in $L^q_{\langle\cdot\rangle}$, H\"older's inequality,
stationarity of the linearized homogenization correctors, and~(A3)$_{L}$ as
well as~(A4)$_L$ from Assumption~\ref{assumption:operators}
\begin{align*}
&\sum_{i\in\{3,7,11\}}\bigg\langle\bigg|\int\bigg(
\,\dashint_{B_1(x)}\big|G^{T,(i)}_{\xi,B,h,e}\big|\bigg)^2\bigg|^q\bigg\rangle^\frac{1}{q}
\\&
\leq C^2\big\langle\big\|he{+}\nabla\phi^T_{\xi+he}{-}\nabla\phi^T_{\xi}
\big\|^{4q}_{L^2(B_1)}\big\rangle^\frac{1}{2q}
\big\langle\big\|\mathds{1}_{L=1}B{+}\nabla\phi^T_{\xi+he,B}
\big\|^{4q}_{C^\alpha(B_1)}\big\rangle^\frac{1}{2q}
\\&~~~~~~~~~~~~~~~~~~~~~~~~~~~~~~~~~\times
\sup_{\langle F^{2q_*}\rangle=1}\int\Big\langle\Big|\Big(\frac{1}{T}{-}\nabla\cdot a^{T,*}_\xi\nabla\Big)^{-1}
\big(\nabla\cdot Fg\big)\Big|^{2q_*}\Big\rangle^\frac{1}{q_*}
\\&~~~
+ C^2\big\langle\big\|he{+}\nabla\phi^T_{\xi+he}{-}\nabla\phi^T_{\xi}
\big\|^{4q}_{L^2(B_1)}\big\rangle^\frac{1}{2q}
\sum_{\Pi}\prod_{\pi\in\Pi} \big\langle\big\|\mathds{1}_{|\pi|=1}B'_{\pi}{+}\nabla\phi^T_{\xi+he,B'}
\big\|^{4q|\Pi|}_{C^\alpha(B_1)}\big\rangle^\frac{1}{2q|\Pi|}
\\&~~~~~~~~~~~~~~~~~~~~~~~~~~~~~~~~~\times
\sup_{\langle F^{2q_*}\rangle=1}\int\Big\langle\Big|\Big(\frac{1}{T}{-}\nabla\cdot a^{T,*}_\xi\nabla\Big)^{-1}
\big(\nabla\cdot Fg\big)\Big|^{2q_*}\Big\rangle^\frac{1}{q_*}
\\&~~~
+C^2\sum_{\Pi}\sup_{\pi\in\Pi}
\big\langle\big\|
\nabla\phi^T_{\xi+he,B'_{\pi}}{-}\nabla\phi^T_{\xi,B'_{\pi}}
\big\|^{2q|\Pi|}_{L^2(B_1)}\big\rangle^\frac{1}{q|\Pi|}
\\&~~~~~~~~~~~~~~~~~\times
\sup_{\substack{\pi'\in\Pi \\ \pi'\neq\pi}}
\Big\{1{+}\big\langle\big\|\nabla \phi^T_{\xi+he,B'_\pi}
\big\|^{2q|\Pi|}_{C^\alpha(B_1)}\big\rangle^\frac{1}{q|\Pi|}
+\big\langle\big\|\nabla \phi^T_{\xi,B'_\pi}
\big\|^{2q|\Pi|}_{C^\alpha(B_1)}\big\rangle^\frac{1}{q|\Pi|}\Big\}
\\&~~~~~~~~~~~~~~~~~\times
\sup_{\langle F^{2q_*}\rangle=1}\int\Big\langle\Big|\Big(\frac{1}{T}{-}\nabla\cdot a^{T,*}_\xi\nabla\Big)^{-1}
\big(\nabla\cdot Fg\big)\Big|^{2q_*}\Big\rangle^\frac{1}{q_*}.
\end{align*}
Hence, a combination of the induction hypothesis~\eqref{eq:indHypoCorrectorBoundsDiff}
with the annealed small-scale Schauder estimate~\eqref{eq:indHypoAnnealedSchauder}
(which is available to any linearization order $\leq L$) and the
perturbative annealed Calder\'on--Zygmund estimate~\eqref{eq:annealedCZMeyers}
entails for sufficiently large $q\in [1,\infty)$
\begin{align}
\label{eq:rhs3}
\sum_{i\in\{3,7,11\}}\bigg\langle\bigg|\int\bigg(
\,\dashint_{B_1(x)}\big|G^{T,(i)}_{\xi,B,h,e}\big|\bigg)^2\bigg|^q\bigg\rangle^\frac{1}{q}
\leq C^2q^{2C}|h|^{2(1-\frac{\beta}{2})} \int |g|^2.
\end{align}

Second, we estimate by means of~\eqref{eq:indHypoSensitivityBound} with $\kappa=\frac{1}{2}$ (which is 
already available up to any linearization order $\leq L$), stationarity of the linearized
homogenization correctors, H\"older's inequality with respect to the exponents
$\big(\frac{q-\frac{1}{2}}{1-\frac{1}{2}},
(\frac{q-\frac{1}{2}}{1-\frac{1}{2}})_*=\frac{q-\frac{1}{2}}{q-1}\big)$, the fact that
$(\frac{q}{\frac{1}{2}})_*=\frac{q}{q-\frac{1}{2}}$, and~(A2)$_{L}$ as
well as~(A4)$_L$ from Assumption~\ref{assumption:operators}
\begin{align*}
&\sum_{i\in\{4,6,8,10,13\}}\bigg\langle\bigg|\int\bigg(
\,\dashint_{B_1(x)}\big|G^{T,(i)}_{\xi,B,h,e}\big|\bigg)^2\bigg|^q\bigg\rangle^\frac{1}{q}
\\&
\leq C^2\big\langle\big\|he{+}\nabla\phi^T_{\xi+he}{-}\nabla\phi^T_{\xi}
\big\|^{8q}_{C^\alpha(B_1)}\big\rangle^\frac{1}{4q}
\big\langle\big\|\mathds{1}_{L=1}B{+}\nabla\phi^T_{\xi+he,B}
\big\|^{8q}_{C^\alpha(B_1)}\big\rangle^\frac{1}{4q}
\\&~~~~~~~~~~~~~~~~~~~~~~~~~~~~~~~~~\times
\sup_{\langle F^{2q_*}\rangle=1}\int\Big\langle\Big|\Big(\frac{1}{T}{-}\nabla\cdot a^{T,*}_\xi\nabla\Big)^{-1}
\big(\nabla\cdot Fg\big)\Big|^{2q_*}\Big\rangle^\frac{1}{q_*}
\\&~~~
+ C^2\big\langle\big\|he{+}\nabla\phi^T_{\xi+he}{-}\nabla\phi^T_{\xi}
\big\|^{4q}_{C^\alpha(B_1)}\big\rangle^\frac{1}{2q}
\\&~~~~~~~~~~~~~~~~~~~~~~~~~~~~~~~~~\times
\sup_{\langle F^{2q_*}\rangle=1}\int\Big\langle\Big|\Big(\frac{1}{T}{-}\nabla\cdot a^{T,*}_\xi\nabla\Big)^{-1}
\big(\nabla\cdot Fg\big)\Big|^{2q_*}\Big\rangle^\frac{1}{q_*}
\\&~~~
+ C^2\big\langle\big\|he{+}\nabla\phi^T_{\xi+he}{-}\nabla\phi^T_{\xi}
\big\|^{8q}_{C^\alpha(B_1)}\big\rangle^\frac{1}{4q}
\sum_{\Pi}\prod_{\pi\in\Pi} \big\langle\big\|\mathds{1}_{|\pi|=1}B'_{\pi}{+}\nabla\phi^T_{\xi+he,B'}
\big\|^{8q|\Pi|}_{C^\alpha(B_1)}\big\rangle^\frac{1}{4q|\Pi|}
\\&~~~~~~~~~~~~~~~~~~~~~~~~~~~~~~~~~\times
\sup_{\langle F^{2q_*}\rangle=1}\int\Big\langle\Big|\Big(\frac{1}{T}{-}\nabla\cdot a^{T,*}_\xi\nabla\Big)^{-1}
\big(\nabla\cdot Fg\big)\Big|^{2q_*}\Big\rangle^\frac{1}{q_*}
\\&~~~
+C^2\sum_{\Pi}\sup_{\pi\in\Pi}
\big\langle\big\|
\nabla\phi^T_{\xi+he,B'_{\pi}}{-}\nabla\phi^T_{\xi,B'_{\pi}}
\big\|^{4q|\Pi|}_{C^\alpha(B_1)}\big\rangle^\frac{1}{2q|\Pi|}
\\&~~~~~~~~~~~~~~~~~\times
\sup_{\substack{\pi'\in\Pi \\ \pi'\neq\pi}}
\Big\{1{+}\big\langle\big\|\nabla \phi^T_{\xi+he,B'_\pi}
\big\|^{4q|\Pi|}_{C^\alpha(B_1)}\big\rangle^\frac{1}{2q|\Pi|}
+\big\langle\big\|\nabla \phi^T_{\xi,B'_\pi}
\big\|^{4q|\Pi|}_{C^\alpha(B_1)}\big\rangle^\frac{1}{2q|\Pi|}\Big\}
\\&~~~~~~~~~~~~~~~~~\times
\sup_{\langle F^{2q_*}\rangle=1}\int\Big\langle\Big|\Big(\frac{1}{T}{-}\nabla\cdot a^{T,*}_\xi\nabla\Big)^{-1}
\big(\nabla\cdot Fg\big)\Big|^{2q_*}\Big\rangle^\frac{1}{q_*}.
\end{align*}
This time, it thus follows from induction hypothesis~\eqref{eq:indHypoAnnealedSchauderdiff}
in combination with the annealed small-scale Schauder estimate~\eqref{eq:indHypoAnnealedSchauder}
(which is already available to any linearization order $\leq L$) and the
perturbative annealed Calder\'on--Zygmund estimate~\eqref{eq:annealedCZMeyers}
\begin{align}
&\sum_{i\in\{4,6,8,10,13\}}\bigg\langle\bigg|\int\bigg(
\,\dashint_{B_1(x)}\big|G^{T,(i)}_{\xi,B,h,e}\big|\bigg)^2\bigg|^q\bigg\rangle^\frac{1}{q}
\label{eq:rhs4}
\leq C^2q^{2C}|h|^{2(1-\frac{\beta}{2})} \int |g|^2,
\end{align}
at least for sufficiently large $q\in [1,\infty)$.

Third, and last, we estimate based on induction hypothesis~\eqref{eq:indHypoSensitivityBoundDiff}
with $\kappa=\frac{1}{2}$, stationarity of the linearized
homogenization correctors, H\"older's inequality with respect to the exponents
$\big(\frac{q-\frac{1}{2}}{1-\frac{1}{2}},
(\frac{q-\frac{1}{2}}{1-\frac{1}{2}})_*=\frac{q-\frac{1}{2}}{q-1}\big)$, the fact that
$(\frac{q}{\frac{1}{2}})_*=\frac{q}{q-\frac{1}{2}}$, and finally~(A2)$_{L}$ from Assumption~\ref{assumption:operators}
\begin{align*}
&\sum_{i\in\{5,9,12\}}\bigg\langle\bigg|\int\bigg(
\,\dashint_{B_1(x)}\big|G^{T,(i)}_{\xi,B,h,e}\big|\bigg)^2\bigg|^q\bigg\rangle^\frac{1}{q}
\\&
\leq C^2q^{2C}|h|^{2(1-\frac{\beta}{2})}\big\langle\big\|\mathds{1}_{L=1}B{+}\nabla\phi^T_{\xi+he,B}
\big\|^{4q}_{C^\alpha(B_1)}\big\rangle^\frac{1}{2q}
\\&~~~~~~~~~~~~~~~~~~~~~~~~~~~~~~~~~\times
\sup_{\langle F^{2q_*}\rangle=1}\int\Big\langle\Big|\Big(\frac{1}{T}{-}\nabla\cdot a^{T,*}_\xi\nabla\Big)^{-1}
\big(\nabla\cdot Fg\big)\Big|^{2q_*}\Big\rangle^\frac{1}{q_*}
\\&~~~
+ C^2q^{2C}|h|^{2(1-\frac{\beta}{2})}\sum_{\Pi}\prod_{\pi\in\Pi} 
\big\langle\big\|\mathds{1}_{|\pi|=1}B'_{\pi}{+}\nabla\phi^T_{\xi+he,B'}
\big\|^{4q|\Pi|}_{C^\alpha(B_1)}\big\rangle^\frac{1}{2q|\Pi|}
\\&~~~~~~~~~~~~~~~~~~~~~~~~~~~~~~~~~\times
\sup_{\langle F^{2q_*}\rangle=1}\int\Big\langle\Big|\Big(\frac{1}{T}{-}\nabla\cdot a^{T,*}_\xi\nabla\Big)^{-1}
\big(\nabla\cdot Fg\big)\Big|^{2q_*}\Big\rangle^\frac{1}{q_*}.
\end{align*}
We then obtain for sufficiently large $q\in [1,\infty)$ the estimate
\begin{align}
&\sum_{i\in\{5,9,12\}}\bigg\langle\bigg|\int\bigg(
\,\dashint_{B_1(x)}\big|G^{T,(i)}_{\xi,B,h,e}\big|\bigg)^2\bigg|^q\bigg\rangle^\frac{1}{q}
\label{eq:rhs5}
\leq C^2q^{2C}|h|^{2(1-\frac{\beta}{2})} \int |g|^2
\end{align}
by means of the same ingredients as for~\eqref{eq:rhs4}.

Collecting the estimates~\eqref{eq:rhs1}--\eqref{eq:rhs5} and
feeding them back into~\eqref{eq:spectralGapDiff} eventually
entails the asserted estimate~\eqref{eq:estimateLinFunctionalsDiffAbsorption}.

\textit{Step 5: (Induction step---Conclusion)} As we
already argued at the end of \textit{Step~3} of this proof, 
the estimates from~\eqref{eq:indHypoCorrectorBoundsDiff}
and~\eqref{eq:indHypoAnnealedSchauderdiff} now also hold true with
$B'$ replaced by $B$. In order to deduce the validity of~\eqref{eq:representationMalliavinDerivativeDiff}
and~\eqref{eq:indHypoSensitivityBoundDiff}, one may in fact follow the principles
of the proof of Lemma~\ref{lem:closingIndStep} and adapt them to the arguments
from the previous step. This concludes the proof of~\eqref{eq:correctorBoundForDifferencesMassiveApprox}
at least in case of the linearized homogenization correctors.

\textit{Step 6: (Proof of~\eqref{eq:correctorBoundForDifferencesMassiveApprox} 
for linearized flux correctors)}
The difference of two linearized flux correctors satisfies the equation
\begin{equation}
\label{eq:PDEdiffLinearizedFLuxCorrectors}
\begin{aligned}
&\frac{1}{T}(\sigma_{\xi+he,B,kl}^T{-}\sigma_{\xi,B,kl}^T) 
- \Delta(\sigma_{\xi+he,B,kl}^T{-}\sigma_{\xi,B,kl}^T) 
\\&
= - \nabla\cdot \big((e_l\otimes e_k - e_k\otimes e_l) 
(q^T_{\xi+he,B}{-}q^T_{\xi,B})\big).
\end{aligned}
\end{equation}
Following the argument in \textit{Step 2} of the proof of Theorem~\ref{theo:correctorBoundsMassiveApprox}
(see, e.g., \eqref{eq:aux1ProofTheoremMassiveCorr}),
the desired estimate on the difference $\sigma_{\xi+he,B,kl}^T{-}\sigma_{\xi,B,kl}^T$ 
of linearized flux correctors boils down to an estimate of linear functionals for
the difference $\sigma_{\xi+he,B,kl}^T{-}\sigma_{\xi,B,kl}^T$ and an estimate
on the difference of linearized fluxes $q^T_{\xi+he,B}{-}q^T_{\xi,B}$. 

With respect to the latter, we derive from~\eqref{eq:HigherOrderLinearizedFlux}, 
\eqref{eq:PDEdiffLinearizedHomogenizationCorrectors}
and~\eqref{eq:remainderDiff} that
\begin{align}
\nonumber
&q^T_{\xi+he,B}{-}q^T_{\xi,B} 
\\&\label{eq:linearizedFluxDiff}
= \partial_\xi A(\omega,\xi{+}\nabla \phi^T_\xi)\big(\nabla\phi^T_{\xi+he,B}{-}\nabla\phi^T_{\xi,B}\big)
\\&~~~\nonumber
- \big(\partial_\xi A(\omega,\xi{+}he{+}\nabla \phi^T_{\xi+he})
- \partial_\xi A(\omega,\xi{+}\nabla \phi^T_\xi)\big)
\big(\mathds{1}_{L=1}B {+} \nabla \phi^T_{\xi+he,B}\big)
\\&~~~\nonumber
+ \sum_{\Pi}\big(\partial_\xi^{|\Pi|} A(\omega,\xi{+}he{+}\nabla \phi^T_{\xi+he})
- \partial_\xi^{|\Pi|} A(\omega,\xi{+}\nabla \phi^T_\xi)\big)
\Big[\bigodot_{\pi\in\Pi}(\mathds{1}_{|\pi|=1}B'_\pi {+} \nabla \phi^T_{\xi+he,B'_\pi})\Big]
\\&~~~\nonumber
- \sum_{\Pi}\partial_\xi^{|\Pi|} A(\omega,\xi{+}\nabla \phi^T_\xi)
\Big[\bigodot_{\pi\in\Pi}(\mathds{1}_{|\pi|=1}B'_\pi {+} \nabla \phi^T_{\xi,B'_\pi})
- \bigodot_{\pi\in\Pi}(\mathds{1}_{|\pi|=1}B'_\pi {+} \nabla \phi^T_{\xi+he,B'_\pi})\Big].
\end{align}
Since we already have established the last estimate from~\eqref{eq:indHypoCorrectorBoundsDiff}
with $B'$ replaced by $B$, we may conclude together with the argument leading to~\eqref{eq:prelimRemainderEstimate} that
\begin{align}
\label{eq:estimateDiffLinearizedFLux}
\big\langle\big\|q^T_{\xi+he,B}{-}q^T_{\xi,B}\big\|_{L^2(B_1)}^{2q}\big\rangle^\frac{1}{q}
\leq C^2q^{2C}|h|^{2(1-\beta)}.
\end{align}

For an estimate on linear functionals of the difference $\sigma_{\xi+he,B,kl}^T{-}\sigma_{\xi,B,kl}^T$,
the argument from \textit{Step 1} of the proof of Theorem~\ref{theo:correctorBoundsMassiveApprox}
applied to equation~\eqref{eq:PDEdiffLinearizedFLuxCorrectors} shows that it suffices
to have a corresponding estimate on linear functionals of the difference of linearized fluxes
$q^T_{\xi+he,B}{-}q^T_{\xi,B}$ (or more precisely, the analogue of~\eqref{eq:sensitivityBoundLinearizedFlux}
for differences with an additional rate $|h|^{2(1-\beta)}$).
However, this in turn is an immediate consequence of the
argument in \textit{Step 4} of this proof. Indeed, comparing with the right hand side
of~\eqref{eq:representationMalliavinDiffLinearizedHomCorrectors} the only additional term
which has to be dealt with in a sensitivity estimate for~\eqref{eq:linearizedFluxDiff} is given by
\begin{align*}
&\partial_\omega\partial_\xi A(\omega,\xi{+}\nabla \phi^T_\xi)\big[\delta\omega\odot
\big(\nabla\phi^T_{\xi+he,B}{-}\nabla\phi^T_{\xi,B}\big)\big]
\\&
+ \partial_\xi A(\omega,\xi{+}\nabla \phi^T_\xi)
\nabla\big(\delta\phi^T_{\xi+he,B}{-}\delta\phi^T_{\xi,B}\big).
\end{align*}
However, as we already lifted the estimates~\eqref{eq:indHypoCorrectorBoundsDiff}
and~\eqref{eq:indHypoSensitivityBoundDiff} from $B'$ to $B$, we immediately
obtain the desired sensitivity estimate on differences of linearized fluxes. 
This in turn implies the estimates
\begin{equation}
\label{eq:linFuncDiffFluxCor}
\begin{aligned}
\bigg\langle\bigg|\int g\cdot\big(\nabla\sigma^T_{\xi+he,B}{-}\nabla\sigma^T_{\xi,B}\big)
\bigg|^{2q}\bigg\rangle^\frac{1}{q}
&\leq C^2q^{2C}|h|^{2(1-\beta)}\int \big|g\big|^2,
\\
\bigg\langle\bigg|\int \frac{1}{T}f \big(\sigma^T_{\xi+he,B}{-}\sigma^T_{\xi,B}\big)\bigg|^{2q}\bigg\rangle^\frac{1}{q}
&\leq C^2q^{2C}|h|^{2(1-\beta)}\int \Big|\frac{f}{\sqrt{T}}\Big|^2.
\end{aligned}
\end{equation}
Feeding back~\eqref{eq:estimateDiffLinearizedFLux} and~\eqref{eq:linFuncDiffFluxCor}
into the analogue of~\eqref{eq:aux1ProofTheoremMassiveCorr} 
with respect to equation~\eqref{eq:PDEdiffLinearizedFLuxCorrectors}
then yields the asserted estimate~\eqref{eq:correctorBoundForDifferencesMassiveApprox}
for differences of linearized flux correctors.

This also eventually concludes the proof of Lemma~\ref{lem:differencesLinearizedCorrectors}. \qed

\subsection{Proof of Lemma~\ref{lem:diffMassiveApprox}
{\normalfont (Differentiability of massive correctors and the massive version of the homogenized operator)}}
We first consider the case of $q=1$, and argue in favor of~\eqref{eq:regGradient} 
by induction over the linearization order. The base case consisting of
the correctors of the nonlinear problem is treated in Appendix~\ref{app:baseCaseInd}
by means of Lemma~\ref{prop:estimatesRegHomCorrectorNonlinear}.
We then establish the estimates~\eqref{eq:regGradient} and~\eqref{eq:regLinearFunctionals}
for general $q\in[1,\infty)$---first for the linearized homogenization correctors and
then for linearized flux correctors---, and finally conclude with a proof of~\eqref{eq:regMassiveVersionHomOperator}.

\textit{Step 1: (Induction hypothesis)} 
Let $L\in\N$, $T\in [1,\infty)$ and $M>0$ be fixed. 
Let the requirements and notation of (A1), (A2)$_L$, (A3)$_L$ and (A4)$_L$ of
Assumption~\ref{assumption:operators}, (P1) and (P2) of
Assumption~\ref{assumption:ensembleParameterFields}, and (R) of 
Assumption~\ref{assumption:smallScaleReg} be in place.

For any $0\leq l\leq L{-}1$, any $|\xi|\leq M$, any $|h|\leq 1$, and any
collection of unit vectors $v_1',\ldots,v_l',e\in\Rd$ the first-order Taylor expansion
$$\phi^T_{\xi,B',e,h}:=\phi^T_{\xi+he,B'}{-}\phi^T_{\xi,B'}{-}\phi^T_{\xi,B'\odot e}h$$ of linearized homogenization correctors
in direction $B':=v_1'\odot\cdots\odot v_l'$ is assumed to satisfy---under the above conditions---the following estimate
(if $l=0$---and thus $B'$ being an empty symmetric tensor product---$\phi^T_{\xi,B'}$ is 
understood to denote the localized homogenization corrector $\phi^T_\xi$ of the nonlinear problem
with an additional massive term):
\begin{align}
\label{eq:indHypoTaylor1}
\tag{Hreg}
\big\langle\big\|\nabla\phi^T_{\xi,B',e,h}\big\|^2_{L^2(B_1)}
\big\rangle &\leq C^2h^{4(1-\beta)}.
\end{align}

\textit{Step 2: (Induction step)} We start by writing the equation for
the first-order Taylor expansion $\phi^T_{\xi,B,e,h}=\phi^T_{\xi+he,B}{-}\phi^T_{\xi,B}{-}\phi^T_{\xi,B\odot e}h$
in a suitable form. To this end, we first derive a suitable representation
of the first-order Taylor expansion for the linearized fluxes
$q^T_{\xi,B,e,h}=q^T_{\xi+he,B}{-}q^T_{\xi,B}{-}q^T_{\xi,B\odot e}h$. 
Note that these expressions---most importantly $\phi^T_{\xi,B\odot e}$
resp.\ $q^T_{\xi,B\odot e}$---are indeed well-defined $\Prob$-almost surely 
under the assumptions of Lemma~\ref{lem:diffMassiveApprox} thanks to 
Lemma~\ref{eq:lemmaExistenceLinearizedCorrectorsShort}. Furthermore,
for a proof of~\eqref{eq:regGradient} in case of $q=1$ we will not rely on corrector
estimates for $\phi^T_{\xi,B\odot e}$ but only on estimates for (differences
of) correctors up to linearization order $\leq L$. This is the reason why
we can stick for the moment with Assumption~\ref{assumption:operators} realized to linearization order $L$
as claimed in Lemma~\ref{lem:diffMassiveApprox}.

In view of the definition~\eqref{eq:HigherOrderLinearizedFlux} of the linearized fluxes,
we split this task into two substeps. By adding zero and abbreviating as always
$a_\xi^T:=\partial_\xi A(\omega,\xi{+}\nabla\phi^T_\xi)$, we may rewrite the contribution
from the first term on the right hand side of~\eqref{eq:HigherOrderLinearizedFlux} 
as follows
\begin{align}
\nonumber
& a_{\xi+he}^T(\mathds{1}_{L=1}B{+}\nabla\phi^T_{\xi+he,B})
- a_{\xi}^T(\mathds{1}_{L=1}B{+}\nabla\phi^T_{\xi,B}) 
- a_{\xi}^T\nabla\phi^T_{\xi,B\odot e}h
\\&\label{eq:repTaylorLinearizedFluxes1}
= a_{\xi}^T(\nabla\phi^T_{\xi+he,B}{-}\nabla\phi^T_{\xi,B}{-}\nabla\phi^T_{\xi,B\odot e}h)
+ (a_{\xi+he}^T{-}a_{\xi}^T)(\nabla\phi^T_{\xi+he,B}{-}\nabla\phi^T_{\xi,B})
\\&~~~\nonumber
+ (a_{\xi+he}^T{-}a_{\xi}^T)(\mathds{1}_{L=1}B{+}\nabla\phi^T_{\xi,B}).
\end{align}
For the contribution from the second right hand side term of~\eqref{eq:HigherOrderLinearizedFlux},
it is useful to split the sum in case of $q^T_{B\odot e}$ in the following way:
\begin{align}
\nonumber
&\sum_{\substack{\Pi\in\mathrm{Par}\{1,\ldots,L,L{+}1\} \\ \Pi\neq\{\{1,\ldots,L,L{+}1\}\}}}
\partial_\xi^{|\Pi|} A(\omega,\xi{+}\nabla \phi^T_\xi)
\Big[\bigodot_{\pi\in\Pi}(\mathds{1}_{|\pi|=1}(B\odot e)'_\pi {+} \nabla \phi^T_{\xi,(B\odot e)'_\pi})\Big]
\\&\label{eq:repTaylorLinearizedFluxes2}
= \partial_\xi a_\xi^T\big[(eh{+}\nabla\phi^T_{\xi,e}h)\big]\big(\mathds{1}_{L=1}B{+}\nabla\phi^T_{\xi,B}\big)
\\&~~~\nonumber
+ \sum_{\substack{\Pi\in\mathrm{Par}\{1,\ldots,L\} \\ \Pi\neq\{\{1,\ldots,L\}\}}}
\partial_\xi^{|\Pi|+1} A(\omega,\xi{+}\nabla \phi^T_\xi)
\Big[(eh{+}\nabla\phi^T_{\xi,e}h)
\odot\bigodot_{\pi\in\Pi}(\mathds{1}_{|\pi|=1}B'_\pi {+} \nabla \phi^T_{\xi,B'_\pi})\Big]
\\&~~~\nonumber
+ \sum_{\substack{\Pi\in\mathrm{Par}\{1,\ldots,L\} \\ \Pi\neq\{\{1,\ldots,L\}\}}}
\partial_\xi^{|\Pi|} A(\omega,\xi{+}\nabla \phi^T_\xi)
\Big[\sum_{\pi\in\Pi}\nabla\phi^T_{\xi,B'_\pi\odot e}h\odot
\bigodot_{\substack{\pi'\in\Pi \\ \pi'\neq \pi}}(\mathds{1}_{|\pi'|=1}B'_{\pi'} {+} \nabla \phi^T_{\xi,B'_{\pi'}})\Big].
\end{align}
Adding several times zero and combining terms then yields based on~\eqref{eq:repTaylorLinearizedFluxes1}
and~\eqref{eq:repTaylorLinearizedFluxes2} (where we from now on again abbreviate 
$\sum_{\Pi}:=\sum_{\Pi\in\mathrm{Par}\{1,\ldots,L\},\, \Pi\neq\{\{1,\ldots,L\}\}}$)
\begin{align}
\label{eq:repTaylorLinearizedFluxes3}
&q^T_{\xi+he,B}{-}q^T_{\xi,B}{-}q^T_{\xi,B\odot e}h = R_0 + R_1 + R_2 + R_3 + R_4,
\end{align}
with the right hand side terms being given by
\begin{align*}
R_0 &:= a_{\xi}^T(\nabla\phi^T_{\xi+he,B}{-}\nabla\phi^T_{\xi,B}{-}\nabla\phi^T_{\xi,B\odot e}h),
\\
R_1 &:= (a_{\xi+he}^T{-}a_{\xi}^T)(\nabla\phi^T_{\xi+he,B}{-}\nabla\phi^T_{\xi,B})
\\&~~~~
+ \big(a_{\xi+he}^T{-}a_{\xi}^T{-}\partial_\xi a_\xi^T\big[(eh{+}\nabla\phi^T_{\xi,e}h)\big]\big)
\big(\mathds{1}_{L=1}B{+}\nabla\phi^T_{\xi,B}\big),
\\
R_2 &:= \sum_\Pi \Big\{\partial_\xi^{|\Pi|} A(\omega,\xi{+}he{+}\nabla\phi^T_{\xi})
{-}\partial_\xi^{|\Pi|} A(\omega,\xi{+}\nabla\phi^T_{\xi})
{-}\partial_\xi^{|\Pi|+1} A(\omega,\xi{+}\nabla \phi^T_\xi)[eh]\Big\}
\\&~~~~~~~~~~~~~~~~~~~~~~~~~~~~~~~~~~~~~~~~~~~~~~~~~~~~~~~~~~~~~~\times
\Big[\bigodot_{\pi\in\Pi}(\mathds{1}_{|\pi|=1}B'_\pi {+} \nabla \phi^T_{\xi,B'_\pi})\Big],
\\
R_3 &:= \sum_\Pi \Big\{\partial_\xi^{|\Pi|} A(\omega,\xi{+}he{+}\nabla\phi^T_{\xi+he})
{-}\partial_\xi^{|\Pi|} A(\omega,\xi{+}he{+}\nabla\phi^T_{\xi})\Big\}
\\&~~~~~~~~~~~~~~~~~~~~~~~~~~~~~~~~~~~~~~~\times
\Big[\bigodot_{\pi\in\Pi}(\mathds{1}_{|\pi|=1}B'_\pi {+} \nabla \phi^T_{\xi,B'_\pi})\Big]
\\&~~~~
- \sum_\Pi \partial_\xi^{|\Pi|+1} A(\omega,\xi{+}\nabla\phi^T_{\xi}) \Big[\nabla\phi^T_{\xi,e}h
\odot\bigodot_{\pi\in\Pi}(\mathds{1}_{|\pi|=1}B'_\pi {+} \nabla \phi^T_{\xi,B'_\pi})\Big],
\end{align*}
as well as
\begin{align*}
R_4 &:= \sum_\Pi \partial_\xi^{|\Pi|} A(\omega,\xi{+}he{+}\nabla\phi^T_{\xi+he})
\\&~~~~~~~~~\times
\Big[\bigodot_{\pi\in\Pi}(\mathds{1}_{|\pi|=1}B'_\pi {+} \nabla \phi^T_{\xi+he,B'_\pi})
{-}\bigodot_{\pi\in\Pi}(\mathds{1}_{|\pi|=1}B'_\pi {+} \nabla \phi^T_{\xi,B'_\pi})\Big]
\\&~~~~
-  \sum_{\Pi} \partial_\xi^{|\Pi|} A(\omega,\xi{+}\nabla \phi^T_\xi)
\Big[\sum_{\pi\in\Pi}\nabla\phi^T_{\xi,B'_\pi\odot e}h\odot
\bigodot_{\substack{\pi'\in\Pi \\ \pi'\neq \pi}}(\mathds{1}_{|\pi'|=1}B'_{\pi'} 
{+} \nabla \phi^T_{\xi,B'_{\pi'}})\Big].
\end{align*}
In particular, we obtain the following equation for the first-order Taylor expansion 
$\phi^T_{\xi,B,e,h}=\phi^T_{\xi+he,B}{-}\phi^T_{\xi,B}{-}\phi^T_{\xi,B\odot e}h$
of linearized homogenization correctors
\begin{align*}
&\frac{1}{T}(\phi^T_{\xi+he,B}{-}\phi^T_{\xi,B}{-}\phi^T_{\xi,B\odot e}h)
- \nabla\cdot a_\xi^T(\nabla\phi^T_{\xi+he,B}{-}\nabla\phi^T_{\xi,B}{-}\nabla\phi^T_{\xi,B\odot e}h)
= \nabla\cdot \sum_{i=1}^4 R_i.
\end{align*}
Applying the weighted energy estimate~\eqref{eq:expLocalization} to the equation
from the previous display then yields the bound
\begin{align*}
&\int \ell_{\gamma,\sqrt{T}} \Big|\Big(\frac{\phi^T_{\xi+he,B}{-}\phi^T_{\xi,B}{-}\phi^T_{\xi,B\odot e}h}{\sqrt{T}},
\nabla\phi^T_{\xi+he,B}{-}\nabla\phi^T_{\xi,B}{-}\nabla\phi^T_{\xi,B\odot e}h\Big)\Big|^2
\\&
\leq C^2\sup_{i=1,\ldots,4} \int \ell_{\gamma,\sqrt{T}} |R_i|^2.
\end{align*}
By stationarity of the linearized homogenization correctors, we may take the expected value
in the latter estimate and infer
\begin{equation}
\label{eq:regIndStepAux0}
\begin{aligned}
&\Big\langle\Big\|\Big(\frac{\phi^T_{\xi+he,B}{-}\phi^T_{\xi,B}{-}\phi^T_{\xi,B\odot e}h}{\sqrt{T}},
\nabla\phi^T_{\xi+he,B}{-}\nabla\phi^T_{\xi,B}{-}\nabla\phi^T_{\xi,B\odot e}h\Big)\Big\|_{L^2(B_1)}^2\Big\rangle
\\&
\leq C^2\sup_{i=1,\ldots,4}\big\langle\big\|R_i\big\|_{L^2(B_1)}^2\big\rangle.
\end{aligned}
\end{equation}
It remains to post-process the four right hand side terms of the previous display.

\textit{Estimate for $R_1$:} We first rewrite
\begin{equation}
\label{eq:regIndStepAux1}
\begin{aligned}
a_{\xi+he}^T - a_{\xi}^T 
&= \partial_\xi A(\omega,\xi{+}he{+}\nabla\phi^T_{\xi+he})
- \partial_\xi A(\omega,\xi{+}he{+}\nabla\phi^T_{\xi})
\\&~~~
+ \partial_\xi A(\omega,\xi{+}he{+}\nabla\phi^T_{\xi})
- \partial_\xi A(\omega,\xi{+}\nabla\phi^T_{\xi}).
\end{aligned}
\end{equation}
By means of~(A2)$_{L}$ from Assumption~\ref{assumption:operators}, the previous display in particular entails
\begin{align}
\nonumber
&a_{\xi+he}^T{-}a_{\xi}^T{-}\partial_\xi a_\xi^T\big[(eh{+}\nabla\phi^T_{\xi,e}h)\big]
\\&\label{eq:regIndStepAux2}
= \int_0^1 \Big\{\partial_\xi^2 A(\omega,\xi{+}\nabla\phi^T_{\xi}{+}she)
{-}\partial_\xi^{2} A(\omega,\xi{+}\nabla \phi^T_\xi)\big\}[eh] \ds 
\\&~~~\nonumber
+\int_0^1 \partial_\xi^2 A\big(\omega,\xi{+}he{+}s\nabla\phi^T_{\xi+he}{+}(1{-}s)\nabla\phi^T_{\xi}\big)
\big[\nabla\phi^T_{\xi+he}{-}\nabla\phi^T_{\xi}{-}\nabla\phi^T_{\xi,e}h\big] \ds 
\\&~~~\nonumber
+\int_0^1 \Big\{\partial_\xi^2 A\big(\omega,\xi{+}he{+}s\nabla\phi^T_{\xi+he}{+}(1{-}s)\nabla\phi^T_{\xi}\big)
{-}\partial_\xi^{2} A(\omega,\xi{+}\nabla \phi^T_\xi)\Big\} \big[\nabla\phi^T_{\xi,e}h\big] \ds.
\end{align}
Hence, it follows from~\eqref{eq:regIndStepAux1} and~\eqref{eq:regIndStepAux2}, 
the induction hypothesis~\eqref{eq:indHypoTaylor1}, the 
corrector estimates~\eqref{eq:correctorGradientBoundMassiveApprox} and~\eqref{eq:correctorSchauderMassiveApprox},
the corrector estimates for differences~\eqref{eq:correctorBoundForDifferencesMassiveApprox} 
and~\eqref{eq:SchauderEstimatesDifferencesMassiveApprox}, and~(A2)$_{L}$ as
well as~(A4)$_L$ from Assumption~\ref{assumption:operators} that
\begin{align}
\label{eq:estimateR1}
\big\langle\big\|R_1\big\|_{L^2(B_1)}^2\big\rangle
\leq C^2h^{4(1-\beta)}.
\end{align}

\textit{Estimate for $R_2$:} It is a simple consequence of~(A2)$_{L}$ from Assumption~\ref{assumption:operators} that
\begin{equation}
\label{eq:regIndStepAux3}
\begin{aligned}
&\partial_\xi^{|\Pi|} A(\omega,\xi{+}he{+}\nabla\phi^T_{\xi})
{-}\partial_\xi^{|\Pi|} A(\omega,\xi{+}\nabla\phi^T_{\xi})
{-}\partial_\xi^{|\Pi|+1} A(\omega,\xi{+}\nabla \phi^T_\xi)[eh]
\\&
= \int_0^1 \Big\{\partial_\xi^{|\Pi|+1} A(\omega,\xi{+}\nabla \phi^T_\xi{+}she)
{-}\partial_\xi^{|\Pi|+1} A(\omega,\xi{+}\nabla \phi^T_\xi)\Big\}[eh] \ds.
\end{aligned}
\end{equation}
It thus follows from~\eqref{eq:regIndStepAux3}, H\"older's inequality, the 
corrector estimates~\eqref{eq:correctorGradientBoundMassiveApprox} and~\eqref{eq:correctorSchauderMassiveApprox}, 
and~(A2)$_{L}$ as well as~(A4)$_L$ from Assumption~\ref{assumption:operators} that
\begin{align}
\label{eq:estimateR2}
\big\langle\big\|R_2\big\|_{L^2(B_1)}^2\big\rangle
\leq C^2h^{4}.
\end{align}

\textit{Estimate for $R_3$:} We first express $R_3$ in equivalent form as follows:
\begin{align*}
R_3 &= - \sum_\Pi \Big\{ \partial_\xi^{|\Pi|+1} A(\omega,\xi{+}\nabla\phi^T_{\xi})
{-} \partial_\xi^{|\Pi|+1} A(\omega,\xi{+}\nabla\phi^T_{\xi}{+}he) \Big\} 
\\&~~~~~~~~~~~~~~~~~~~~~~~~~~\times
\Big[\nabla\phi^T_{\xi,e}h
\odot\bigodot_{\pi\in\Pi}(\mathds{1}_{|\pi|=1}B'_\pi {+} \nabla \phi^T_{\xi,B'_\pi})\Big] 
\\&~~~
+ \sum_\Pi \Big\{\partial_\xi^{|\Pi|} A(\omega,\xi{+}he{+}\nabla\phi^T_{\xi+he})
{-}\partial_\xi^{|\Pi|} A(\omega,\xi{+}he{+}\nabla\phi^T_{\xi})
\\&~~~~~~~~~~~~~~
{-}\partial_\xi^{|\Pi|+1} A(\omega,\xi{+}he{+}\nabla\phi^T_{\xi})\big[\nabla\phi^T_{\xi,e}h\big]\Big\}
\Big[\bigodot_{\pi\in\Pi}(\mathds{1}_{|\pi|=1}B'_\pi {+} \nabla \phi^T_{\xi,B'_\pi})\Big].
\end{align*}
We also have thanks to~(A2)$_{L}$ from Assumption~\ref{assumption:operators}
\begin{align}
\nonumber
&\partial_\xi^{|\Pi|} A(\omega,\xi{+}he{+}\nabla\phi^T_{\xi+he})
{-}\partial_\xi^{|\Pi|} A(\omega,\xi{+}he{+}\nabla\phi^T_{\xi})
{-}\partial_\xi^{|\Pi|+1} A(\omega,\xi{+}he{+}\nabla\phi^T_{\xi})
\big[\nabla\phi^T_{\xi,e}h\big]
\\&\label{eq:regIndStepAux4}
= \int_0^1 \partial_\xi^{|\Pi|+1} A(\omega,\xi{+}he{+}s\nabla\phi^T_{\xi+he}{+}(1{-}s)\nabla\phi^T_\xi)
\big[\nabla\phi^T_{\xi+he}{-}\nabla\phi^T_{\xi}{-}\nabla\phi^T_{\xi,e}h\big] \ds
\\&~~~\nonumber
+ \int_0^1 \Big\{\partial_\xi^{|\Pi|+1} A(\omega,\xi{+}he{+}s\nabla\phi^T_{\xi+he}{+}(1{-}s)\nabla\phi^T_\xi)
{-}\partial_\xi^{|\Pi|+1} A(\omega,\xi{+}he{+}\nabla\phi^T_{\xi})\Big\}
\big[\nabla\phi^T_{\xi,e}h\big] \ds.
\end{align}
Hence, it follows from~\eqref{eq:regIndStepAux4}, 
the induction hypothesis~\eqref{eq:indHypoTaylor1}, an application of H\"older's inequality, the 
corrector estimates~\eqref{eq:correctorGradientBoundMassiveApprox} and~\eqref{eq:correctorSchauderMassiveApprox},
the corrector estimates for differences~\eqref{eq:correctorBoundForDifferencesMassiveApprox} 
and~\eqref{eq:SchauderEstimatesDifferencesMassiveApprox}, and~(A2)$_{L}$ as
well as~(A4)$_L$ from Assumption~\ref{assumption:operators} that
\begin{align}
\label{eq:estimateR3}
\big\langle\big\|R_3\big\|_{L^2(B_1)}^2\big\rangle
\leq C^2h^{4(1-\beta)}.
\end{align}

\textit{Estimate for $R_4$:} By adding zero, we may decompose
$R_4 = R_4' + R_4''$ with
\begin{align*}
R_4' &:=  \sum_\Pi \partial_\xi^{|\Pi|} A(\omega,\xi{+}\nabla\phi^T_{\xi})
\Big[\bigodot_{\pi\in\Pi}(\mathds{1}_{|\pi|=1}B'_\pi {+} \nabla \phi^T_{\xi+he,B'_\pi})
{-}\bigodot_{\pi\in\Pi}(\mathds{1}_{|\pi|=1}B'_\pi {+} \nabla \phi^T_{\xi,B'_\pi})\Big]
\\&~~~~
-  \sum_{\Pi} \partial_\xi^{|\Pi|} A(\omega,\xi{+}\nabla \phi^T_\xi)
\Big[\sum_{\pi\in\Pi}\nabla\phi^T_{\xi,B'_\pi\odot e}h\odot
\bigodot_{\substack{\pi'\in\Pi \\ \pi'\neq \pi}}(\mathds{1}_{|\pi'|=1}B'_{\pi'} 
{+} \nabla \phi^T_{\xi,B'_{\pi'}})\Big],
\end{align*}
and where $R_4''$ can be treated by the arguments from the previous items.
Thanks to~(A2)$_{L}$ from Assumption~\ref{assumption:operators} and the Leibniz rule for differences
we have the bound
\begin{equation}
\label{eq:regIndStepAux5}
\begin{aligned}
|R_4'|^2 &\leq C^2\sup_{\Pi}\sup_{\substack{\pi,\pi'\in\Pi \\ \pi\neq\pi'}}
\big|\nabla \phi^T_{\xi+he,B'_\pi}{-}\nabla \phi^T_{\xi,B'_\pi}\big|^2
\big|\nabla \phi^T_{\xi+he,B'_{\pi'}}{-}\nabla \phi^T_{\xi,B'_{\pi'}}\big|^2
\\&~~~~~~~~~~~~\times
\bigg\{1 {+} \prod_{\substack{\pi''\in\Pi \\ \pi''\notin\{\pi,\pi'\}}}
\Big(\big|\nabla \phi^T_{\xi,B'_{\pi''}}\big|^2{+}\big|\nabla \phi^T_{\xi+he,B'_{\pi''}}\big|^2\Big)\bigg\}
\\&~~~
+ C^2\sup_{\Pi}\sup_{\pi\in\Pi}
\big|\nabla \phi^T_{\xi+he,B'_\pi}{-}\nabla \phi^T_{\xi,B'_\pi}{-}\nabla\phi^T_{\xi,B'_\pi\odot e}h\big|^2
\\&~~~~~~~~~~~~\times
\bigg\{1 {+} \prod_{\substack{\pi'\in\Pi \\ \pi'\neq\pi}}
\Big(\big|\nabla \phi^T_{\xi,B'_{\pi'}}\big|^2{+}\big|\nabla \phi^T_{\xi+he,B'_{\pi'}}\big|^2\Big)\bigg\}.
\end{aligned}
\end{equation}
Hence, it follows as a combination of~\eqref{eq:regIndStepAux5}, 
the induction hypothesis~\eqref{eq:indHypoTaylor1}, H\"older's inequality, the 
corrector estimates~\eqref{eq:correctorGradientBoundMassiveApprox} and~\eqref{eq:correctorSchauderMassiveApprox},
as well as the corrector estimates for differences~\eqref{eq:correctorBoundForDifferencesMassiveApprox} 
and~\eqref{eq:SchauderEstimatesDifferencesMassiveApprox} that
\begin{align}
\label{eq:estimateR4}
\big\langle\big\|R_4\big\|_{L^2(B_1)}^2\big\rangle
\leq C^2h^{4(1-\beta)}.
\end{align}

Inserting the estimates~\eqref{eq:estimateR1}, \eqref{eq:estimateR2},
\eqref{eq:estimateR3}, and \eqref{eq:estimateR4} back into~\eqref{eq:regIndStepAux0}
then finally entails the bound
\begin{align*}
\big\langle\big\|\nabla\phi^T_{\xi+he,B}{-}\nabla\phi^T_{\xi,B}
{-}\nabla\phi^T_{\xi,B\odot e}h\big\|_{L^2(B_1)}^2\big\rangle
\leq C^2h^{4(1-\beta)}.
\end{align*}
This is the asserted estimate~\eqref{eq:regGradient} on the level
of the linearized homogenization corrector in the case of $q=1$.
In particular, the map $\xi\mapsto\nabla\phi^T_{\xi,B}$ is 
Fr\'echet differentiable with values in the Fr\'echet space $L^2_{\langle\cdot\rangle}L^2_{\mathrm{loc}}(\Rd)$.
Note that as a consequence of~\eqref{eq:repTaylorLinearizedFluxes3} we then also get the estimate
\begin{align}
\label{eq:qualDiffLinearizedFlux}
\big\langle\big\|q^T_{\xi+he,B}{-}q^T_{\xi,B}
{-}q^T_{\xi,B\odot e}h\big\|_{L^2(B_1)}^2\big\rangle
\leq C^2h^{4(1-\beta)}.
\end{align}
In particular, the map  $\xi\mapsto q^T_{\xi,B}$ is also
Fr\'echet differentiable with values in the Fr\'echet space $L^2_{\langle\cdot\rangle}L^2_{\mathrm{loc}}(\Rd)$.

\textit{Step 3: (Proof of the estimates~\eqref{eq:regGradient} and~\eqref{eq:regLinearFunctionals} 
for general $q\in[1,\infty)$)} As we already established qualitative differentiability
of the map $\xi\mapsto\nabla\phi^T_{\xi,B}$ in the Fr\'echet space $L^2_{\langle\cdot\rangle}L^2_{\mathrm{loc}}(\Rd)$,
we may estimate based on the corrector estimate for differences~\eqref{eq:correctorBoundForDifferencesMassiveApprox}
\begin{align*}
&\big\langle\big\|\nabla\phi^T_{\xi+he,B}{-}\nabla\phi^T_{\xi,B}
{-}\nabla\phi^T_{\xi,B\odot e}h\big\|_{L^2(B_1)}^{2q}\big\rangle^\frac{1}{2q}
\\&
\leq \int_0^1 \big\langle\big\|\nabla\phi^T_{\xi+she,B\odot e}h
{-}\nabla\phi^T_{\xi,B\odot e}h\big\|_{L^2(B_1)}^{2q}\big\rangle^\frac{1}{2q} \ds
\leq Ch^{2(1-\beta)},
\end{align*}
which is precisely the asserted bound~\eqref{eq:regGradient}. Based on the corrector estimate for 
differences~\eqref{eq:LinearFunctionalEstimatesDifferencesMassiveApprox}, the 
estimate~\eqref{eq:regLinearFunctionals} is derived analogously. 

\textit{Step 4: (Proof of the estimates~\eqref{eq:regGradient} and~\eqref{eq:regLinearFunctionals} 
for linearized flux correctors)} The equation for the first-order Taylor expansion 
$\sigma^T_{\xi+he,B}{-}\sigma^T_{\xi,B}{-}\sigma^T_{\xi,B\odot e}h$
of linearized flux correctors is simply given by
\begin{align*}
&\frac{1}{T}(\sigma^T_{\xi+he,B}{-}\sigma^T_{\xi,B}{-}\sigma^T_{\xi,B\odot e}h)
- \Delta(\sigma^T_{\xi+he,B}{-}\sigma^T_{\xi,B}{-}\sigma^T_{\xi,B\odot e}h)
\\&
= - \nabla\cdot \big((e_l\otimes e_k - e_k\otimes e_l)
(q^T_{\xi+he,B}{-}q^T_{\xi,B}{-}q^T_{\xi,B\odot e}h)\big).
\end{align*}
By an application of the weighted energy estimate~\eqref{eq:expLocalization},
the stationarity of linearized flux correctors and linearized fluxes, 
and the estimate~\eqref{eq:qualDiffLinearizedFlux} we obtain
\begin{align*}
\big\langle\big\|\nabla\sigma^T_{\xi+he,B}{-}\nabla\sigma^T_{\xi,B}
{-}\nabla\sigma^T_{\xi,B\odot e}h\big\|_{L^2(B_1)}^2\big\rangle
\leq C^2h^{4(1-\beta)}.
\end{align*}
In particular, the map $\xi\mapsto\nabla\sigma^T_{\xi,B}$ is 
Fr\'echet differentiable with values in the Fr\'echet space $L^2_{\langle\cdot\rangle}L^2_{\mathrm{loc}}(\Rd)$.
Based on the corrector estimates for differences~\eqref{eq:correctorBoundForDifferencesMassiveApprox}
and~\eqref{eq:LinearFunctionalEstimatesDifferencesMassiveApprox}, we then
infer along the same lines as in \textit{Step 3} of this proof that the asserted
estimates~\eqref{eq:regGradient} and~\eqref{eq:regLinearFunctionals} indeed hold true
for the linearized flux correctors.

\textit{Step 5: (Proof of the estimate~\eqref{eq:regMassiveVersionHomOperator})}
As this is an immediate consequence of the estimate~\eqref{eq:qualDiffLinearizedFlux}
in combination with the stationarity of the linearized fluxes, we may now conclude
the proof of Lemma~\ref{lem:diffMassiveApprox}. \qed

\subsection{Proof of Lemma~\ref{lem:limitMassiveApprox}
{\normalfont (Limit passage in the massive approximation)}}
We again argue by induction over the linearization order. For the base case consisting of
the correctors of the nonlinear problem we refer to Lemma~\ref{prop:estimatesLimitHomCorrectorNonlinear}
in Appendix~\ref{app:baseCaseInd}.

\textit{Step 1: (Induction hypothesis)} 
Let $L\in\N$, $T\in [1,\infty)$ and $M>0$ be fixed. 
Let the requirements and notation of (A1), (A2)$_L$ and (A3)$_L$ of
Assumption~\ref{assumption:operators}, (P1) and (P2) of
Assumption~\ref{assumption:ensembleParameterFields}, and (R) of 
Assumption~\ref{assumption:smallScaleReg} be in place.

For any $0\leq l\leq L{-}1$, any $|\xi|\leq M$, any $|h|\leq 1$,
any $T\in[1,\infty)$ and any collection of unit vectors $v_1',\ldots,v_l'\in\Rd$ the difference
$\phi^{2T}_{\xi,B'}{-}\phi^T_{\xi,B'}$ of linearized homogenization correctors
in direction $B':=v_1'\odot\cdots\odot v_l'$ is assumed to satisfy---under the above conditions---the following estimate
(if $l=0$---and thus $B'$ being an empty symmetric tensor product---$\phi^T_{\xi,B'}$ is 
understood to denote the localized homogenization corrector $\phi^T_\xi$ of the nonlinear problem
with a massive term):
\begin{align}
\label{eq:indHypoConv1}
\tag{Hconv}
\big\langle\big\|\nabla\phi^{2T}_{\xi,B'} - \nabla\phi^T_{\xi,B'}
\big\|^{2}_{L^2(B_1)}\big\rangle
&\leq C^2\frac{\mu_*^2(\sqrt{T})}{T}.
\end{align}

\textit{Step 2: (Induction step)} The difference $\phi^{2T}_{\xi,B}{-}\phi^T_{\xi,B}$ 
of linearized homogenization correctors is subject to the equation
\begin{align}
\nonumber
&\frac{1}{2T}(\phi^{2T}_{\xi,B}{-}\phi^T_{\xi,B})
-\nabla\cdot a_\xi^{2T}(\nabla\phi^{2T}_{\xi,B}{-}\nabla\phi^T_{\xi,B})
\\&\label{eq:PDEdiffConvergence}
= \frac{1}{2T}\phi^T_{\xi,B} - \nabla\cdot
\big(\partial_\xi A(\omega,\xi{+}\nabla \phi^{T}_\xi){-}
\partial_\xi A(\omega,\xi{+}\nabla \phi^{2T}_\xi)\big)
(\mathds{1}_{L=1}B + \nabla\phi^T_{\xi,B})
\\&~~~\nonumber
+ \nabla\cdot\sum_{\Pi}
\big(\partial_\xi^{|\Pi|} A(\omega,\xi{+}\nabla \phi^{2T}_\xi){-}
\partial_\xi^{|\Pi|} A(\omega,\xi{+}\nabla \phi^{T}_\xi)\big)
\Big[\bigodot_{\pi\in\Pi}(\mathds{1}_{|\pi|=1}B'_\pi {+} \nabla \phi^{2T}_{\xi,B'_\pi})\Big]
\\&~~~\nonumber
+ \nabla\cdot\sum_{\Pi}
\partial_\xi^{|\Pi|} A(\omega,\xi{+}\nabla \phi^{T}_\xi)
\Big[\bigodot_{\pi\in\Pi}(\mathds{1}_{|\pi|=1}B'_\pi {+} \nabla \phi^{2T}_{\xi,B'_\pi})
{-}\bigodot_{\pi\in\Pi}(\mathds{1}_{|\pi|=1}B'_\pi {+} \nabla \phi^{T}_{\xi,B'_\pi})\Big].
\end{align}
Here, we made use of the abbreviation $\sum_{\Pi}:=\sum_{\Pi\in\mathrm{Par}\{1,\ldots,L\},\, 
\Pi\neq\{\{1,\ldots,L\}\}}$. Applying the weighted energy estimate~\eqref{eq:expLocalization}
to equation~\eqref{eq:PDEdiffConvergence} entails by means of~(A2)$_{L}$ from Assumption~\ref{assumption:operators}
\begin{align*}
&\int \ell_{\gamma,\sqrt{T}} \Big|\Big(\frac{\phi^{2T}_{\xi,B}{-}\phi^T_{\xi,B}}{\sqrt{2T}},
\nabla\phi^{2T}_{\xi,B}{-}\nabla\phi^T_{\xi,B}\Big)\Big|^2
\\&
\leq C^2\int \ell_{\gamma,\sqrt{T}} \frac{1}{2T}\big|\phi^T_{\xi,B}\big|^2
+ C^2\int \ell_{\gamma,\sqrt{T}} \big|\nabla\phi^{2T}_\xi{-}\nabla\phi^T_\xi\big|^2
\big|\mathds{1}_{L=1}B + \nabla\phi^T_{\xi,B}\big|^2
\\&~~~
+ C^2 \sum_\Pi\int \ell_{\gamma,\sqrt{T}} \Big|\nabla\phi^{2T}_\xi{-}\nabla\phi^T_\xi\Big|^2
\Big|\bigodot_{\pi\in\Pi}(\mathds{1}_{|\pi|=1}B'_\pi {+} \nabla \phi^{2T}_{\xi,B'_\pi})\Big|^2
\\&~~~
+ C^2 \sum_\Pi\int \ell_{\gamma,\sqrt{T}}
\Big|\bigodot_{\pi\in\Pi}(\mathds{1}_{|\pi|=1}B'_\pi {+} \nabla \phi^{2T}_{\xi,B'_\pi})
{-}\bigodot_{\pi\in\Pi}(\mathds{1}_{|\pi|=1}B'_\pi {+} \nabla \phi^{T}_{\xi,B'_\pi})\Big|^2.
\end{align*}
Taking expectation in the previous display, exploiting stationarity of the linearized homogenization correctors,
adding zero, and applying H\"older's and Poincar\'e's inequalities then yields
\begin{align*}
&\big\langle\big\|\nabla\phi^{2T}_{\xi,B} - \nabla\phi^T_{\xi,B}
\big\|^{2}_{L^2(B_1)}\big\rangle
\\&
\leq C^2\frac{1}{T}\big\langle\big\|\nabla\phi^T_{\xi,B}
\big\|^{2}_{L^2(B_1)}\big\rangle
+ C^2\frac{1}{T}\bigg\langle\bigg|\,\dashint_{B_1}\phi^T_{\xi,B}\bigg|^2\bigg\rangle
\\&~~~
+ C^2\big\langle\big\|\mathds{1}_{L=1}B + \nabla\phi^T_{\xi,B}
\big\|^{2}_{C^\alpha(B_1)}\big\rangle
\big\langle\big\|\nabla\phi^{2T}_{\xi} - \nabla\phi^T_{\xi}
\big\|^{2}_{L^2(B_1)}\big\rangle
\\&~~~
+ C^2\sum_{\Pi}\prod_{\pi\in\Pi}\big\langle\big\|
\mathds{1}_{|\pi|=1}B'_\pi {+} \nabla \phi^{2T}_{\xi,B'_\pi}
\big\|^{2|\Pi|}_{C^\alpha(B_1)}\big\rangle^\frac{1}{|\Pi|}
\big\langle\big\|\nabla\phi^{2T}_{\xi} - \nabla\phi^T_{\xi}
\big\|^{2}_{L^2(B_1)}\big\rangle
\\&~~~
+ C^2\sum_{\Pi}\sup_{\pi\in\Pi}
\big\langle\big\|\nabla\phi^{2T}_{\xi,B'_\pi} - \nabla\phi^T_{\xi,B'_\pi}
\big\|^{2}_{L^2(B_1)}\big\rangle
\\&~~~~~~~~~~~~~~~\times
\sup_{\substack{\pi'\in\Pi \\ \pi'\neq \pi}}
\Big\{1+\big\langle\big\|\nabla \phi^{2T}_{\xi,B'_{\pi'}}
\big\|^{2|\Pi|}_{C^\alpha(B_1)}\big\rangle^\frac{1}{|\Pi|}
+\big\langle\big\|\nabla \phi^{T}_{\xi,B'_{\pi'}}
\big\|^{2|\Pi|}_{C^\alpha(B_1)}\big\rangle^\frac{1}{|\Pi|}\Big\}.
\end{align*}
It is thus a consequence of the induction hypothesis~\eqref{eq:indHypoConv1}
and the corrector 
estimates~\eqref{eq:correctorGradientBoundMassiveApprox}--\eqref{eq:correctorGrowthoundMassiveApprox} that
\begin{align}
\label{eq:indStepConv1}
\big\langle\big\|\nabla\phi^{2T}_{\xi,B} - \nabla\phi^T_{\xi,B}
\big\|^{2}_{L^2(B_1)}\big\rangle
&\leq C^2\frac{\mu_*^2(\sqrt{T})}{T},
\end{align}
which concludes the induction step, and in particular establishes the asserted
estimate~\eqref{eq:convLinearizedCorrectors}.

\textit{Step 3: (Estimates for linearized flux correctors and massive version of homogenized operator)}
The difference $\sigma^{2T}_{\xi,B}{-}\sigma^T_{\xi,B}$ 
of linearized flux correctors is subject to the equation
\begin{align}
\nonumber
&\frac{1}{2T}(\sigma^{2T}_{\xi,B,kl} - \sigma^T_{\xi,B,kl})
- \Delta(\sigma^{2T}_{\xi,B,kl} - \sigma^T_{\xi,B,kl})
\\&\label{eq:PDEdiffConvergenceFlux}
= \frac{1}{2T}\sigma^T_{\xi,B,kl} 
- \nabla\cdot \big((e_l\otimes e_k - e_k\otimes e_l)
(q^{2T}_{\xi,B} - q^T_{\xi,B})\big).
\end{align}
Applying the weighted energy estimate~\eqref{eq:expLocalization}
to equation~\eqref{eq:PDEdiffConvergenceFlux} thus yields
\begin{align*}
&\int \ell_{\gamma,\sqrt{T}} \Big|\Big(\frac{\sigma^{2T}_{\xi,B,kl} - \sigma^T_{\xi,B,kl}}{\sqrt{2T}},
\nabla\sigma^{2T}_{\xi,B,kl} - \nabla\sigma^T_{\xi,B,kl}\Big)\Big|^2
\\&
\leq C^2\int \ell_{\gamma,\sqrt{T}} \frac{1}{2T}\big|\sigma^T_{\xi,B,kl}\big|^2
+ C^2 \int \ell_{\gamma,\sqrt{T}} \big|q^{2T}_{\xi,B} - q^T_{\xi,B}\big|^2.
\end{align*}
Taking expectation in the previous display, exploiting stationarity of the linearized flux correctors,
adding zero, and applying the Poincar\'e inequality entails the estimate
\begin{align*}
&\big\langle\big\|\nabla\sigma^{2T}_{\xi,B,kl} - \nabla\sigma^T_{\xi,B,kl}
\big\|^{2}_{L^2(B_1)}\big\rangle
\\&
\leq C^2\frac{1}{T}\big\langle\big\|\nabla\sigma^T_{\xi,B,kl}
\big\|^{2}_{L^2(B_1)}\big\rangle
+ C^2\frac{1}{T}\bigg\langle\bigg|\,\dashint_{B_1}\sigma^T_{\xi,B,kl}\bigg|^2\bigg\rangle
+ C^2\big\langle\big\|q^{2T}_{\xi,B} - q^T_{\xi,B}\big\|^{2}_{L^2(B_1)}\big\rangle.
\end{align*}
By means of the corrector estimates~\eqref{eq:correctorGradientBoundMassiveApprox}
and~\eqref{eq:correctorGrowthoundMassiveApprox} the previous display updates to
\begin{align}
\label{eq:convLinFluxCorrector}
&\big\langle\big\|\nabla\sigma^{2T}_{\xi,B,kl} - \nabla\sigma^T_{\xi,B,kl}
\big\|^{2}_{L^2(B_1)}\big\rangle
\leq C^2\frac{\mu_*^2(\sqrt{T})}{T}
+ C^2\big\langle\big\|q^{2T}_{\xi,B} - q^T_{\xi,B}\big\|^{2}_{L^2(B_1)}\big\rangle.
\end{align}
It remains to provide an estimate for the difference
$q^{2T}_{\xi,B} - q^T_{\xi,B}$ of linearized fluxes.
The majority of the required work is already done since
we already provided an estimate of required type for
the divergence term on the right hand side of~\eqref{eq:PDEdiffConvergence}.
By definition~\eqref{eq:HigherOrderLinearizedFlux} of the linearized flux, 
it thus suffices to note that by~\eqref{eq:indStepConv1} and~(A2)$_{L}$
from Assumption~\ref{assumption:operators} we obtain
\begin{equation}
\label{eq:convLinFlux}
\begin{aligned}
\big\langle\big\|q^{2T}_{\xi,B} - q^T_{\xi,B}\big\|^{2}_{L^2(B_1)}\big\rangle
&\leq C^2\big\langle\big\|\nabla\phi^{2T}_{\xi,B} - \nabla\phi^T_{\xi,B}
\big\|^{2}_{L^2(B_1)}\big\rangle
+ C^2\frac{\mu_*^2(\sqrt{T})}{T}
\\&
\leq C^2\frac{\mu_*^2(\sqrt{T})}{T}.
\end{aligned}
\end{equation}
Plugging this estimate back into~\eqref{eq:convLinFluxCorrector} therefore yields 
the asserted estimate~\eqref{eq:convLinearizedCorrectors} on the level of linearized flux correctors.

The estimate~\eqref{eq:convHomOperator} is an immediate consequence of~\eqref{eq:convLinFlux}
and the stationarity of the linearized flux.

\textit{Step 4: (Conclusion)} As a consequence of~\eqref{eq:convLinearizedCorrectors},
there exist stationary gradient fields 
\begin{align*}
(\nabla\phi_{\xi,B},\nabla\sigma_{\xi,B})\in 
L^2_{\mathrm{loc}}(\Rd;\Rd) {\times} L^2_{\mathrm{loc}}(\Rd;\mathbb{R}^{d\times d}_{\mathrm{skew}}{\times}\Rd)
\end{align*}
with vanishing expectation, finite second moments and being subject to the anchoring
$\dashint_{B_1}\phi_{\xi,B}=0$ resp.\ $\dashint_{B_1}\sigma_{\xi,B}=0$ such that 
\begin{align}
\label{eq:convStatement}
(\nabla\phi_{\xi,B}^T,\nabla\sigma_{\xi,B,kl}^T) \to
(\nabla\phi_{\xi,B},\nabla\sigma_{\xi,B,kl})
\quad\text{ as } T\to\infty,
\end{align}
strongly in $L^2_{\langle\cdot\rangle}L^2_{\mathrm{loc}}(\Rd;\Rd)$.

For any $x_0\in\Rd$, let 
$f_{x_0}:=\frac{1}{|B_1|}\mathds{1}_{B_1}{-}\frac{1}{|B_1(x_0)|}\mathds{1}_{B_1(x_0)}$.
We then take $v$ to be the solution of the Neumann problem for Poisson's equation 
$\Delta v = f_{x_0}$ in the ball $B_{1{+}|x_0|}$. This in turn enables us to
provide a solution of $\nabla\cdot g_{x_0} = f_{x_0}$ in $\Rd$ by means
of $g_{x_0}:=\mathds{1}_{B_{1+|x_0|}}\nabla v$. Furthermore, it holds
$\int_{B_{1{+}|x_0|}(0)}|g_{x_0}|^2\dx \leq C\mu_*^2(1{+}|x_0|)$.
We may then estimate by adding zero, exploiting stationarity,
and applying Poincar\'e's inequality and Fatou's lemma
\begin{align*}
\bigg\langle\,\dashint_{B_1(x_0)}\big|\phi_{\xi,B}\big|^2\bigg\rangle^\frac{1}{2}
&\leq \bigg\langle\,\dashint_{B_1(x_0)}\big|\nabla\phi_{\xi,B}\big|^2\bigg\rangle^\frac{1}{2}
+ \bigg\langle\bigg|\,\dashint_{B_1(x_0)}\phi_{\xi,B}
-\dashint_{B_1}\phi_{\xi,B}\bigg|^2\bigg\rangle^\frac{1}{2}
\\
&= \bigg\langle\,\dashint_{B_1(x_0)}\big|\nabla\phi_{\xi,B}\big|^2\bigg\rangle^\frac{1}{2} 
+ \bigg\langle\bigg|\int_{B_{1+|x_0|}} g_{x_0}\cdot\nabla\phi_{\xi,B}\bigg|^2\bigg\rangle^\frac{1}{2}
\\
&\leq \liminf_{T\to\infty} \bigg\langle\,\dashint_{B_1(x_0)}\big|\nabla\phi^T_{\xi,B}\big|^2\bigg\rangle^\frac{1}{2} 
+ \bigg\langle\bigg|\int_{B_{1+|x_0|}} g_{x_0}\cdot\nabla\phi^T_{\xi,B}\bigg|^2\bigg\rangle^\frac{1}{2}.
\end{align*}
By the corrector estimates~\eqref{eq:linearFunctionalCorrectorGradientBoundMassiveApprox}
and~\eqref{eq:correctorGradientBoundMassiveApprox}, and an analogous argument for $\sigma_{\xi,B}$, 
it follows that
\begin{align}
\label{eq:growthEstimate}
\bigg\langle\,\dashint_{B_1(x_0)}\big|\big(\phi_{\xi,B},\sigma_{\xi,B})\big|^2\bigg\rangle^\frac{1}{2}
\leq C\mu_*(1{+}|x_0|).
\end{align}
Hence, on one side we infer from~\eqref{eq:growthEstimate} that 
\begin{align*}
(\phi_{\xi,B},\sigma_{\xi,B})\in 
H^1_{\mathrm{loc}}(\Rd;\Rd) {\times} H^1_{\mathrm{loc}}(\Rd;\mathbb{R}^{d\times d}_{\mathrm{skew}}{\times}\Rd)
\quad\text{almost surely}.
\end{align*}
On the other side, we also learn from~\eqref{eq:growthEstimate} (by a covering argument
and definition~\eqref{eq:scalingCorrectorBounds} of the scaling function) that the pair 
$(\phi_{\xi,B},\sigma_{\xi,B})$ features sublinear growth at infinity in
the precise sense of Definition~\ref{def:correctorsHigherOrderLinearization}.

It remains to verify the validity of the associated PDE \eqref{eq:PDEhigherOrderLinearizedCorrector}
for the linearized homogenization correctors resp.\ the associated PDEs for the linearized flux correctors
\eqref{eq:PDEhigherOrderLinearizedFluxCorrector}~and~\eqref{eq:PDEhigherOrderLinearizedHelmholtzDecomp}
(almost surely in a distributional sense). To this end, we first note that
as a consequence of the corrector estimates~\eqref{eq:correctorGradientBoundMassiveApprox}
and~\eqref{eq:correctorGrowthoundMassiveApprox} and
stationarity of the linearized correctors $(\phi^T_{\xi,B},\sigma^T_{\xi,B},\psi^T_{\xi,B})$ that
\begin{align*}
&\bigg\langle\,\dashint_{B_1(x_0)}
\Big|\Big(\phi^T_{\xi,B},\sigma^T_{\xi,B},
\frac{\psi^T_{\xi,B}}{\sqrt{T}}\Big)\Big|^2\bigg\rangle^\frac{1}{2}
\\&
\leq \bigg\langle\,\dashint_{B_1}
\Big|\Big(\nabla\phi^T_{\xi,B},\nabla\sigma^T_{\xi,B},
\frac{\nabla\psi^T_{\xi,B}}{\sqrt{T}}\Big)\Big|^2\bigg\rangle^\frac{1}{2}
+ \bigg\langle\bigg|\,\dashint_{B_1}\Big(\phi^T_{\xi,B},\sigma^T_{\xi,B},
\frac{\psi^T_{\xi,B}}{\sqrt{T}}\Big)\bigg|^2\bigg\rangle^\frac{1}{2}
\\&
\leq C\mu_*(\sqrt{T}).
\end{align*}
In particular, 
\begin{align}
\label{eq:vanishingMassiveTerms}
\Big(\frac{\phi^T_{\xi,B}}{\sqrt{T}},\frac{\sigma^T_{\xi,B}}{\sqrt{T}},\frac{\psi^T_{\xi,B}}{T}\Big)
\to 0 \quad\text{ strongly in } L^2_{\langle\cdot\rangle}L^2_{\mathrm{loc}}(\Rd).
\end{align}
Moreover, due to the strong convergence~\eqref{eq:convStatement} in
$L^2_{\langle\cdot\rangle}L^2_{\mathrm{loc}}(\Rd)$ of the gradients,
the gradients also converge $\Prob\otimes\mathcal{L}^d$ almost everywhere
in the product space $\Omega\times K$, for every compact $K\subset\Rd$.
In particular, by a straightforward inductive argument (with the base case
provided in Appendix~\ref{app:baseCaseInd}) we deduce convergence of the linearized fluxes
from~\eqref{eq:HigherOrderLinearizedFlux} resp.\ \eqref{eq:LinearizedFlux} in the sense of
\begin{align}
\label{eq:pointwiseConvergenceLinearizedFlux}
q^T_{\xi,B} \to q_{\xi,B}
\quad \Prob\otimes\mathcal{L}^d \text{ almost everywhere in } \Omega\times K,
\end{align}
for every compact $K\subset\Rd$. Uniform boundedness of $(q^{T}_{\xi,B})_{T\geq 1}$ 
in $L^q_{\langle\cdot\rangle}L^p_{\mathrm{loc}}(\Rd)$
for every pair of exponents $q\in [1,\infty)$ and $p\in [2,\infty)$ (which is a consequence of the
corrector estimates~\eqref{eq:correctorSchauderMassiveApprox}, the definition~\eqref{eq:HigherOrderLinearizedFlux}
of the linearized flux, and~(A2)$_{L}$ from Assumption~\ref{assumption:operators})
upgrades~\eqref{eq:pointwiseConvergenceLinearizedFlux} to strong convergence 
\begin{align}
\label{eq:strongConvergenceLinearizedFlux}
q^T_{\xi,B} \to q_{\xi,B}
\quad\text{ strongly in } L^2_{\langle\cdot\rangle}L^2_{\mathrm{loc}}(\Rd),
\end{align}
which by stationarity of the linearized fluxes $q_{\xi,B}^T$ and $q_{\xi,B}$
in particular entails~\eqref{eq:convFluxes}.
Validity of the PDEs \eqref{eq:PDEhigherOrderLinearizedCorrector},
\eqref{eq:PDEhigherOrderLinearizedFluxCorrector} and \eqref{eq:PDEhigherOrderLinearizedHelmholtzDecomp}
(almost surely in a distributional sense) thus follows from taking the limit in
the corresponding massive versions \eqref{eq:PDEhigherOrderLinearizedCorrectorLocalized},
\eqref{eq:PDEhigherOrderLinearizedFluxCorrectorLocalized}
and \eqref{eq:PDEhigherOrderLinearizedHelmholtzDecompLocalized}.
This concludes the proof of Lemma~\ref{lem:limitMassiveApprox}. \qed

\subsection{Proof of Theorem~\ref{theo:correctorBounds}
{\normalfont (Corrector estimates for higher-order linearizations)}}
We proceed in three steps.

\textit{Step 1: (Proof of the corrector 
estimates~\emph{\eqref{eq:linearFunctionalCorrectorGradientBound}--\eqref{eq:correctorGrowthBound}})}
The estimates~\eqref{eq:linearFunctionalCorrectorGradientBound}
and~\eqref{eq:correctorGradientBound} are immediate consequences of
the corresponding estimates~\eqref{eq:linearFunctionalCorrectorGradientBoundMassiveApprox}
and~\eqref{eq:correctorGradientBoundMassiveApprox} from Theorem~\ref{theo:correctorBoundsMassiveApprox},
an application of Fatou's inequality based on Lemma~\ref{lem:limitMassiveApprox},
and multilinearity of the map $B\mapsto\phi_{\xi,B}$.
For a proof of~\eqref{eq:correctorGrowthBound}, we estimate by adding zero, 
stationarity of the corrector gradients, and applying Poincar\'e's inequality
\begin{align*}
\bigg\langle\bigg|\,\dashint_{B_1(x_0)}
\big|\big(\phi_{\xi,B},\sigma_{\xi,B}
\big)\big|^2\bigg|^q\bigg\rangle^\frac{1}{q}
&\leq \bigg\langle\bigg|\,\dashint_{B_1}
\big|\nabla(\phi_{\xi,B},\sigma_{\xi,B})
\big|^2\bigg|^q\bigg\rangle^\frac{1}{q}
\\&~~~
+ \bigg\langle\bigg|\,\dashint_{B_1(x_0)}
(\phi_{\xi,B},\sigma_{\xi,B})
{-} \dashint_{B_1}(\phi_{\xi,B},\sigma_{\xi,B})
\bigg|^{2q}\bigg\rangle^\frac{1}{q}.
\end{align*}
For any $x_0\in\Rd$, let 
$f_{x_0}:=\frac{1}{|B_1|}\mathds{1}_{B_1}{-}\frac{1}{|B_1(x_0)|}\mathds{1}_{B_1(x_0)}$.
We then take $v$ to be the solution of the Neumann problem for Poisson's equation 
$\Delta v = f_{x_0}$ in the ball $B_{1{+}|x_0|}$. This in turn enables us to
provide a solution of $\nabla\cdot g_{x_0} = f_{x_0}$ in $\Rd$ by means
of $g_{x_0}:=\mathds{1}_{B_{1+|x_0|}}\nabla v$. Furthermore, it holds
$\int_{B_{1{+}|x_0|}(0)}|g_{x_0}|^2\dx \leq C\mu_*^2(1{+}|x_0|)$.
We then obtain by means of the estimates~\eqref{eq:linearFunctionalCorrectorGradientBound}
and~\eqref{eq:correctorGradientBound}
\begin{align*}
&\bigg\langle\bigg|\,\dashint_{B_1(x_0)}
\big|\big(\phi_{\xi,B},\sigma_{\xi,B}
\big)\big|^2\bigg|^q\bigg\rangle^\frac{1}{q}
\\&
\leq \bigg\langle\bigg|\,\dashint_{B_1}
\big|\nabla(\phi_{\xi,B},\sigma_{\xi,B})
\big|^2\bigg|^q\bigg\rangle^\frac{1}{q}
+ \bigg\langle\bigg|\int g_{x_0}
\cdot\nabla(\phi_{\xi,B},\sigma_{\xi,B})
\bigg|^{2q}\bigg\rangle^\frac{1}{q}
\\&
\leq C^2q^{2C}|B|^2\mu_*^2(1{+}|x_0|).
\end{align*}
This concludes the proof of the corrector estimates.

\textit{Step 2: (Qualitative differentiability of linearized correctors)}
We argue that the maps $\xi\mapsto\phi_{\xi,B}$ and $\xi\mapsto\sigma_{\xi,B}$
are Fr\'echet differentiable with values in the Fr\'echet space $L^q_{\langle\cdot\rangle}H^1_{\mathrm{loc}}(\Rd)$,
and that for every unit vector $e\in\Rd$ we have the following representation of the
directional derivative $\partial_\xi(\phi_{\xi,B},\sigma_{\xi,B})[e]
=(\phi_{\xi,B\odot e},\sigma_{\xi,B\odot e})$.

On the level of the (stationary) corrector gradients, it follows from~\eqref{eq:regGradient} by an application
of Fatou's inequality based on Lemma~\ref{lem:limitMassiveApprox} that
\begin{align*}
&\big\langle\big\|\big(\nabla\phi_{\xi+he,B}{-}\nabla\phi_{\xi,B}{-}\nabla\phi_{\xi,B\odot e}h,
\nabla\sigma_{\xi+he,B}{-}\nabla\sigma_{\xi,B}{-}\nabla\sigma_{\xi,B\odot e}h
\big)\big\|^{2q}_{L^2(B_1)}\big\rangle^\frac{1}{q}
\\&
\leq C^2q^{2C}|B|^2|h|^{4(1-\beta)}
\end{align*}
for all unit vectors $e\in\Rd$ and all $|h|\leq 1$. On the level of the correctors themselves, 
note that the argument for~\eqref{eq:correctorGrowthBound} in \textit{Step 1} of this proof
is linear in the variable $(\phi_{\xi,B},\sigma_{\xi,B})$. Hence, we may run it based on
the first-order Taylor expansion $(\phi_{\xi+he,B}{-}\phi_{\xi,B}{-}\phi_{\xi,B\odot e}h,
\sigma_{\xi+he,B}{-}\sigma_{\xi,B}{-}\sigma_{\xi,B\odot e}h)$ which in light of~\eqref{eq:regLinearFunctionals} 
entails the estimate
\begin{align*}
&\big\langle\big\|\big(\phi_{\xi+he,B}{-}\phi_{\xi,B}{-}\phi_{\xi,B\odot e}h,
\sigma_{\xi+he,B}{-}\sigma_{\xi,B}{-}\sigma_{\xi,B\odot e}h
\big)\big\|^{2q}_{L^2(B_1(x_0))}\big\rangle^\frac{1}{q}
\\&
\leq C^2q^{2C}|B|^2|h|^{4(1-\beta)}\mu_*^2(1{+}|x_0|)
\end{align*}
for all unit vectors $e\in\Rd$, all $|h|\leq 1$ and all $x_0\in\Rd$. The two previous displays
immediately imply the claim.

\textit{Step 3: (Proof of the 
estimates~\eqref{eq:correctorGradientBoundDifferences} and~\eqref{eq:correctorGrowthBoundDifferences})}
As a consequence of the previous step and Taylor's formula, we may represent the (well-defined) Taylor expansions
from~\eqref{eq:TaylorLinearizedHomCorrector} resp.\ \eqref{eq:TaylorLinearizedFluxCorrector}
in terms of the one-parameter family of linearized correctors 
$(\phi_{s\xi{+}(1{-}s)\xi_0,B\odot(\xi{-}\xi_0)^{\odot (K{+}1)}}
\sigma_{s\xi{+}(1{-}s)\xi_0,B\odot(\xi{-}\xi_0)^{\odot (K{+}1)}})$, $s\in [0,1]$.
The estimates~\eqref{eq:correctorGradientBoundDifferences} 
and~\eqref{eq:correctorGrowthBoundDifferences} thus follow
from the corrector estimates~\eqref{eq:linearFunctionalCorrectorGradientBound}
and~\eqref{eq:correctorGrowthBound}.

This concludes the proof of Theorem~\ref{theo:correctorBounds}. \qed

\subsection{Proof of Theorem~\ref{theo:diffHomOperator}
{\normalfont (Higher-order regularity of the homogenized operator)}}
Differentiability of the homogenized operator, the representation
of the directional derivatives~\eqref{eq:repDerivHomOperator}, and
the corresponding bound~\eqref{eq:boundDerivHomOperator} are
in the case of first-order differentiability 
immediate consequences of the 
estimates~\eqref{eq:BaseCaseRegMassiveVersionHomOperator},
\eqref{eq:convFluxes} and~\eqref{eq:BaseCaseConvFluxes},
and in case of higher-order differentiability of
the estimates~\eqref{eq:regMassiveVersionHomOperator}
and~\eqref{eq:convFluxes}. \qed

\appendix
\section{A toolbox from elliptic regularity theory}
\label{app:toolsEllipticRegularity}

The aim of this appendix is to list (and in parts to prove) several
results from (deterministic resp.\ random) elliptic regularity theory.
We start with the probably most basic result concerning the Caccioppoli and 
hole filling estimates.

\begin{lemma}[Caccioppoli inequality and hole filling estimate]
\label{lem:caccioppoliHoleFilling}
Let $a\colon\Rd\to\Rd[d\times d]$ be a uniformly elliptic and bounded coefficient field
with respect to constants $(\lambda,\Lambda)$. For a given $T\in [1,\infty)$
and $f,g\in L^2_{\mathrm{loc}}(\Rd)$, let $u\in H^1_{\mathrm{loc}}(\Rd)$
be a solution of
\begin{align*}
\frac{1}{T}u - \nabla\cdot a\nabla u = \frac{1}{T}f + \nabla\cdot g \quad\text{in } \Rd.
\end{align*} 
Then, we have for all $x_0\in\Rd$ and all $R>0$ the Caccioppoli estimate
\begin{align}
\label{eq:Caccioppoli}
\nonumber
&\Big\|\Big(\frac{u}{\sqrt{T}},\nabla u\Big)\Big\|_{L^2(B_R(x_0))}^2
\\& \tag{T1}
\lesssim_{d,\lambda,\Lambda}  
\inf_{b\in\Rd[]} \Big\{\frac{1}{R^2}\big\|u-b\big\|_{L^2(B_{2R}(x_0))}^2 + \frac{1}{T}|b|^2 \Big\}
+ \Big\|\Big(\frac{f}{\sqrt{T}},g\Big)\Big\|_{L^2(B_{2R}(x_0))}^2.
\end{align}
Moreover, there exists $\delta=\delta(d,\lambda,\Lambda)$ 
such that for all $r<R$ and all $x_0\in\Rd$ we have the hole filling estimate
\begin{align}
\label{eq:holeFillingEstimate}
\nonumber
&\Big\|\Big(\frac{u}{\sqrt{T}},\nabla u\Big)\Big\|_{L^2(B_r(x_0))}^2
\\& \tag{T2}
\lesssim_{d,\lambda,\Lambda}  
\Big(\frac{R}{r}\Big)^{-\delta}\Big\|\Big(\frac{u}{\sqrt{T}},\nabla u\Big)\Big\|_{L^2(B_R(x_0))}^2
+ \int_{B_R(x_0)} \frac{r^\delta}{|x{-}x_0|^\delta}\Big|\Big(\frac{f}{\sqrt{T}},g\Big)\Big|^2 \dx.
\end{align}
\end{lemma}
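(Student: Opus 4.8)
\textbf{Proof plan for Lemma~\ref{lem:caccioppoliHoleFilling}.}
The plan is to treat the two estimates in sequence, both starting from a localized testing identity and a Meyers-type reverse H\"older argument for the hole filling part. For the Caccioppoli estimate~\eqref{eq:Caccioppoli}, first I would fix $x_0$ and $R$ and a cutoff $\eta\in C^\infty_{\mathrm{cpt}}(B_{2R}(x_0))$ with $\eta\equiv 1$ on $B_R(x_0)$, $0\le\eta\le 1$, and $|\nabla\eta|\lesssim 1/R$. Testing the equation with $\eta^2(u-b)$ for an arbitrary constant $b\in\Rd[]$ (this is legitimate because $A(\tilde\omega,0)=0$ is not needed here---only ellipticity and boundedness of $a$), one obtains
\begin{align*}
\frac{1}{T}\int \eta^2(u-b)u + \int \eta^2\nabla u\cdot a\nabla u
= -\,2\int \eta(u-b)\nabla\eta\cdot a\nabla u
+ \frac{1}{T}\int \eta^2(u-b)f - \int \nabla(\eta^2(u-b))\cdot g.
\end{align*}
Using uniform ellipticity $\lambda|\nabla u|^2\le \nabla u\cdot a\nabla u$ on the left, writing $\frac1T\eta^2(u-b)u = \frac1T\eta^2 u^2 - \frac1T\eta^2 bu = \frac1T\eta^2 u^2 - \frac1T\eta^2 b(u-b) - \frac1T\eta^2 b^2$, and absorbing the cross terms by Young's inequality (the $\nabla u$ terms into $\lambda\int\eta^2|\nabla u|^2$, the $\frac1T$ terms into $\frac1T\int\eta^2 u^2$, at the cost of the stated lower-order contributions $\frac{1}{R^2}\|u-b\|^2_{L^2(B_{2R})}$, $\frac1T|b|^2$, and $\|(\tfrac{f}{\sqrt T},g)\|^2_{L^2(B_{2R})}$), then restricting the left side to $B_R(x_0)$ where $\eta=1$, gives~\eqref{eq:Caccioppoli} after taking the infimum over $b$.

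For the hole filling estimate~\eqref{eq:holeFillingEstimate}, the key trick is to test with $\eta^2(u-\bar u_{B_\rho})$, where now $\eta$ is supported in an annular-type region and $\bar u_{B_\rho}$ denotes the average of $u$ over the smaller ball; more precisely one uses the classical "hole filling" of Widman: test on $B_{2\rho}(x_0)\setminus B_\rho(x_0)$ and add a multiple of the full-ball Caccioppoli inequality to the annular one so that the resulting coefficient in front of the energy on $B_{2\rho}$ is a definite fraction $\theta<1$ of that on $B_{2\rho}$ minus that on $B_\rho$. This yields a Campanato-type iteration inequality
\begin{align*}
\Big\|\Big(\tfrac{u}{\sqrt T},\nabla u\Big)\Big\|^2_{L^2(B_\rho(x_0))}
\le \theta\,\Big\|\Big(\tfrac{u}{\sqrt T},\nabla u\Big)\Big\|^2_{L^2(B_{2\rho}(x_0))}
+ C\int_{B_{2\rho}(x_0)}\Big|\Big(\tfrac{f}{\sqrt T},g\Big)\Big|^2
\end{align*}
with $\theta=\theta(d,\lambda,\Lambda)\in(0,1)$, uniformly in $\rho$ and in $T\ge 1$ (crucially the massive term only helps, never hurts, because $\frac1T\ge 0$). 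Iterating this dyadically from $r$ up to $R$ and summing the geometric series in $\theta$ produces the algebraic decay $(R/r)^{-\delta}$ with $\delta=\log_2(1/\theta)$ on the first term; the accumulated right-hand side contributions telescope against the weight $|x-x_0|^{-\delta}$ since on the dyadic annulus $B_{2^{j+1}r}\setminus B_{2^j r}$ one has $|x-x_0|^{-\delta}\sim (2^j r)^{-\delta}$, matching exactly the $\theta^{\,(\text{number of remaining steps})}$ factor. This is the standard bookkeeping that converts the iterated inequality into the weighted form stated in~\eqref{eq:holeFillingEstimate}.

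The main obstacle, and the only point requiring genuine care, is tracking the $T$-dependence uniformly. In the Caccioppoli step one must be sure that the manipulation $\frac1T\int\eta^2 u^2$ appears with a good sign on the left and that the only lower-order cost is $\frac1T|b|^2$ (not $\frac{1}{T}\|u\|^2$ with a bad constant); this is why the infimum is taken over constants $b$ and the decomposition of $\frac1T\eta^2(u-b)u$ above is done explicitly. In the hole filling step the subtlety is that the iteration constant $\theta$ and the decay exponent $\delta$ must not degenerate as $T\to\infty$; this holds because the massive term contributes $+\frac1T\|\eta u\|^2\ge 0$ to the coercive side of every testing identity and can simply be discarded (or kept) without affecting $\theta$. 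Once this uniformity is in place, both estimates are classical: \eqref{eq:Caccioppoli} is Caccioppoli with a massive term and an anchoring constant, and \eqref{eq:holeFillingEstimate} is Widman's hole filling / Campanato iteration, so I would not spell out the routine Young-inequality absorptions in full.
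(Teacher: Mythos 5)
Your proposal is correct and follows exactly the route the paper itself invokes: the paper's proof of this lemma is the single sentence that ``the standard proofs carry over immediately to the setting with an additional massive term,'' and your write-up simply spells out those standard arguments (Caccioppoli via testing with $\eta^2(u-b)$, Widman's hole filling plus dyadic iteration) while correctly tracking the $T$-uniformity and the role of the anchoring constant $b$. The only points deserving a touch more care are that $\tfrac1T\eta^2 u(u-b)$ is not sign-definite for $b\neq0$ (one bounds it below by $\tfrac{1}{2T}\eta^2u^2-\tfrac{1}{2T}\eta^2b^2$ and absorbs the latter) and that the Poincar\'e inequality in the hole-filling step is cleanest with the annulus average rather than the ball average; both are routine fixes at a level of detail the paper omits entirely.
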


\begin{proof}
The standard proofs carry over immediately to the setting with an
additional massive term.
\end{proof}

Perturbing a uniformly elliptic PDE by a massive term has the very convenient
consequence of entailing a weighted energy estimate in terms of a suitable
exponential weight. More precisely, we have the following standard result.

\begin{lemma}[Exponential localization]
\label{lem:expLocalization}
Let $a\colon\Rd\to\Rd[d\times d]$ be a uniformly elliptic and bounded coefficient field
with respect to constants $(\lambda,\Lambda)$. For given $T\in [1,\infty)$
and $f,g\in L^2_{\mathrm{loc}}(\Rd)$, let $u\in H^1_{\mathrm{loc}}(\Rd)$
be a solution of
\begin{align*}
\frac{1}{T}u - \nabla\cdot a\nabla u = \frac{1}{T}f + \nabla\cdot g \quad\text{in } \Rd.
\end{align*} 
Assume that $\limsup_{R\to\infty} R^{-2k-d}\|(u,\nabla u, f, g)\|_{L^2(B_R)}^2 = 0$
for some $k\in\N$. For any $\gamma>0$ and $R>0$ define the exponential weight $\ell_{\gamma,R}(x):=
\frac{1}{R^d}\exp(-\frac{\gamma|x|}{R})$. 
Then, there exists $\gamma=\gamma(d,\lambda,\Lambda)$ such that for all $R\geq\sqrt{T}$ we have the 
weighted energy estimate
\begin{align}
\label{eq:expLocalization}
\tag{T3}
\Big\|\Big(\frac{u}{\sqrt{T}},\nabla u\Big)\Big\|_{L^2(\Rd;\ell_{\gamma,R}\dx)}
\lesssim_{d,\lambda,\Lambda} \Big\|\Big(\frac{f}{\sqrt{T}}, g\Big)\Big\|_{L^2(\Rd;\ell_{\gamma,R}\dx)}.
\end{align}
\end{lemma}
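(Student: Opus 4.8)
\textbf{Proof strategy for Lemma~\ref{lem:expLocalization} (Exponential localization).}
The plan is to test the equation $\frac{1}{T}u - \nabla\cdot a\nabla u = \frac{1}{T}f + \nabla\cdot g$ with the localized test function $u\,\ell_{\gamma,R}$, where $\ell_{\gamma,R}(x) = R^{-d}\exp(-\gamma|x|/R)$, and to absorb the error terms coming from the derivative of the weight using the massive term. The decay hypothesis $\limsup_{R\to\infty} R^{-2k-d}\|(u,\nabla u,f,g)\|_{L^2(B_R)}^2 = 0$ is exactly what is needed to justify the integrations by parts on all of $\Rd$ (one works on $B_\rho$ and lets $\rho\to\infty$, noting that the boundary terms are killed by the exponential factor against the at-most-polynomial growth of $u$).

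First I would record the two elementary pointwise bounds for the weight: $|\nabla\ell_{\gamma,R}| \leq \frac{\gamma}{R}\,\ell_{\gamma,R}$ pointwise, and (trivially) $\ell_{\gamma,R} > 0$. Testing the PDE with $u\,\ell_{\gamma,R}$ and using $\nabla(u\,\ell_{\gamma,R}) = \ell_{\gamma,R}\nabla u + u\,\nabla\ell_{\gamma,R}$ yields
\begin{align*}
\frac{1}{T}\int u^2\,\ell_{\gamma,R} + \int \ell_{\gamma,R}\,\nabla u\cdot a\nabla u
&= -\int u\,\nabla\ell_{\gamma,R}\cdot a\nabla u
+ \frac{1}{T}\int f u\,\ell_{\gamma,R}
\\&\quad
- \int \ell_{\gamma,R}\,g\cdot\nabla u - \int u\,\nabla\ell_{\gamma,R}\cdot g.
\end{align*}
Using uniform ellipticity $\nabla u\cdot a\nabla u \geq \lambda|\nabla u|^2$ and boundedness $|a|\leq\Lambda$ on the left resp.\ the first right-hand term, and then Young's inequality on each right-hand term — splitting each product so that a small multiple of $\int \ell_{\gamma,R}|\nabla u|^2$ and a small multiple of $\frac{1}{T}\int \ell_{\gamma,R}u^2$ land on the left (the latter is where the cross terms $u\,\nabla\ell_{\gamma,R}\cdot a\nabla u$ and $u\,\nabla\ell_{\gamma,R}\cdot g$ are handled, because $\frac{\gamma}{R}\leq\frac{\gamma}{\sqrt T}$ so $\frac{1}{R^2}\leq\frac{1}{T}$ and the weight gradient is controlled by $\frac{1}{\sqrt T}\cdot\frac{1}{\sqrt T}$-type products) — one arrives at
\begin{align*}
\frac{1}{2T}\int u^2\,\ell_{\gamma,R} + \frac{\lambda}{2}\int \ell_{\gamma,R}|\nabla u|^2
\lesssim_{d,\lambda,\Lambda} \frac{1}{T}\int f^2\,\ell_{\gamma,R} + \int \ell_{\gamma,R}|g|^2,
\end{align*}
once $\gamma = \gamma(d,\lambda,\Lambda)$ is chosen small enough. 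Rewriting this as an estimate on $\|(\frac{u}{\sqrt T},\nabla u)\|_{L^2(\Rd;\ell_{\gamma,R}\dx)}$ versus $\|(\frac{f}{\sqrt T},g)\|_{L^2(\Rd;\ell_{\gamma,R}\dx)}$ gives~\eqref{eq:expLocalization}.

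The main obstacle is the rigorous justification of using $u\,\ell_{\gamma,R}$ as a test function, i.e.\ that all integrals converge and all boundary terms vanish in the limit $\rho\to\infty$ of an exhaustion by balls $B_\rho$: one cuts off with $\eta_\rho\,\ell_{\gamma,R}$ for a standard cutoff $\eta_\rho$ supported in $B_{2\rho}$, generates an extra commutator term $\int u\,\ell_{\gamma,R}\nabla\eta_\rho\cdot a\nabla u$ (plus its $g$-analogue), and checks via Cauchy–Schwarz that this is bounded by $\frac{C}{\rho}e^{-\gamma\rho/R}$ times the $L^2(B_{2\rho})$-norms of $(u,\nabla u,g)$, which tends to $0$ by the polynomial-growth hypothesis since the exponential beats any power. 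The choice of $\gamma$ is the only quantitative subtlety: it must be small enough that the $\frac{\gamma}{R}$-weighted cross terms can be absorbed, and this threshold depends only on $\lambda$, $\Lambda$, and $d$ (through the Young-inequality constants), consistent with the claimed dependence. Everything else is routine, so I would not belabor the constants.
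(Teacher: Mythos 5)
Your proposal is correct and follows essentially the same route as the paper, which itself only sketches the argument (test with $u\,\ell_{\gamma,R}$, absorb the weight-gradient cross terms using $|\nabla\ell_{\gamma,R}|\leq\frac{\gamma}{R}\ell_{\gamma,R}$, $R\geq\sqrt{T}$ and a small choice of $\gamma=\gamma(d,\lambda,\Lambda)$) and refers to \cite[Lemma~36]{Fischer2019} for details. Your additional care in justifying the admissibility of the test function via an exhaustion argument and the polynomial-growth hypothesis is exactly the standard content of that reference.
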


\begin{proof}
The idea is to test the equation with $u\ell_{\gamma,R}$,
and to run an absorption argument which gets facilitated by
an appropriate choice of $\gamma\in (0,1)$. Details of this
(standard) argument are provided in, e.g., the proof of \cite[Lemma~36]{Fischer2019}.
\end{proof}

In case of H\"older continuous coefficients, classical
elliptic regularity provides local Schauder estimates and local Calder\'on--Zygmund estimates.
The corresponding version for the massive approximation with an additional
explicit dependence of the constant on the H\"older norm of the coefficient field reads as follows.

\begin{lemma}[Local regularity estimates for H\"older continuous coefficients, cf.\ \cite{Josien2020}]
\label{lem:locRegularity}
Consider $\alpha\in (0,1)$ and $T\geq 1$, and let $a\in C^\alpha(B_2)$ be a uniformly elliptic and bounded
coefficient field with respect to constants $(\lambda,\Lambda)$. Let $p\in [2,\infty)$, and for 
given $f,g\in L^p(B_2)$ let $u\in H^1(B_2)$ be a solution of
\begin{align*}
\frac{1}{T}u - \nabla\cdot a\nabla u = \frac{1}{T}f + \nabla\cdot g \quad\text{in } B_2.
\end{align*} 
Then, the following local Calder\'on--Zygmund estimate holds true
\begin{align}
\label{eq:localCalderonZygmund}
\nonumber
&\Big\|\Big(\frac{u}{\sqrt{T}},\nabla u\Big)\Big\|_{L^p(B_1)} 
\\&\tag{T4}
\lesssim_{d,\lambda,\Lambda,\alpha,p}
\|a\|_{C^\alpha(B_2)}^{\frac{d}{\alpha}(\frac{1}{2}-\frac{1}{p})}
\Big\|\Big(\frac{u}{\sqrt{T}},\nabla u\Big)\Big\|_{L^2(B_2)}
+ \Big\|\Big(\frac{f}{\sqrt{T}},g\Big)\Big\|_{L^p(B_2)}.
\end{align}
Furthermore, in case of $f=0$ and $g\in C^\alpha(B_2)$ the following local Schauder estimate is satisfied
\begin{align}
\label{eq:localSchauder}
\nonumber
&\Big\|\Big(\frac{u}{\sqrt{T}},\nabla u\Big)\Big\|_{C^\alpha(B_1)} 
\\&\tag{T5} 
\lesssim_{d,\lambda,\Lambda,\alpha}
\|a\|_{C^\alpha(B_2)}^{\frac{d}{\alpha}(\frac{1}{2}+\frac{1}{d})}
\Big\|\Big(\frac{u}{\sqrt{T}},\nabla u\Big)\Big\|_{L^2(B_2)}
+ \|a\|_{C^\alpha(B_2)}^{\frac{1}{\alpha}-1}\|g\|_{C^\alpha(B_2)}.
\end{align}
\end{lemma}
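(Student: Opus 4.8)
\textbf{Proof plan for Lemma~\ref{lem:locRegularity} (local regularity for H\"older continuous coefficients).}
The plan is to reduce the massive equation $\frac1T u-\nabla\cdot a\nabla u=\frac1T f+\nabla\cdot g$ in $B_2$ to the classical (non-massive) local Calder\'on--Zygmund and Schauder estimates by absorbing the zeroth-order term into the right-hand side. Concretely, I would rewrite the PDE as $-\nabla\cdot a\nabla u = \frac1T(f-u)+\nabla\cdot g$ in $B_2$ and view $h:=\frac1T(f-u)$ as an additional inhomogeneity. Since $T\ge1$, one has $\|h\|_{L^p(B_2)}\le \frac1{\sqrt T}\cdot\frac1{\sqrt T}\|f\|_{L^p(B_2)}+\frac1{\sqrt T}\cdot\frac1{\sqrt T}\|u\|_{L^p(B_2)}\lesssim\|\frac{f}{\sqrt T}\|_{L^p}+\|\frac{u}{\sqrt T}\|_{L^p}$, so a zeroth-order inhomogeneity in $L^p$ is controlled exactly by the quantities appearing on the right of~\eqref{eq:localCalderonZygmund} and~\eqref{eq:localSchauder} (after using the interior estimate to trade the $L^p$ norm of $u$ for its $L^2$ norm, see below). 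The explicit dependence of the constants on $\|a\|_{C^\alpha(B_2)}$ is the one recorded in the deterministic statements cited from~\cite{Josien2020}, and the scaling exponents $\frac{d}{\alpha}(\frac12-\frac1p)$, $\frac{d}{\alpha}(\frac12+\frac1d)$, $\frac1\alpha-1$ are reproduced verbatim by tracking how the constant in the classical estimate depends on the ratio between the ellipticity/H\"older data and the frequency scale.

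First I would fix intermediate radii $1<r_1<r_2<2$ and apply the classical local Calder\'on--Zygmund estimate on $B_{r_1}\subset B_{r_2}$ to the equation $-\nabla\cdot a\nabla u=h+\nabla\cdot g$, obtaining $\|\nabla u\|_{L^p(B_{r_1})}\lesssim \|a\|_{C^\alpha}^{\frac d\alpha(\frac12-\frac1p)}\|\nabla u\|_{L^2(B_{r_2})}+\|h\|_{L^p(B_{r_2})}+\|g\|_{L^p(B_{r_2})}$, together with a Poincar\'e/Sobolev step to include $\|\frac{u}{\sqrt T}\|_{L^p(B_{r_1})}$ on the left (the massive term $\frac1T u$ being harmless since $T\ge1$). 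The term $\|h\|_{L^p(B_{r_2})}$ contains $\|\frac{u}{\sqrt T}\|_{L^p(B_{r_2})}$, which I would then reabsorb: apply the interior $L^p$-$L^2$ estimate (again classical, with the stated $\|a\|_{C^\alpha}$-dependence) on a slightly larger ball to bound $\|\frac{u}{\sqrt T}\|_{L^p(B_{r_2})}$ by $\|\frac{u}{\sqrt T}\|_{L^2(B_{r_3})}$ plus right-hand-side norms, and choose the radii so that all the intermediate balls sit inside $B_2$. For the Schauder estimate one proceeds identically in the case $f=0$: write $-\nabla\cdot a\nabla u=-\frac1T u+\nabla\cdot g$, note $-\frac1T u=\nabla\cdot 0 + (-\frac1T u)$ is a zeroth-order $C^\alpha$ (indeed $L^\infty$, hence $C^\alpha$ after the interior Schauder bootstrap) term, apply the classical local Schauder estimate with inhomogeneity $\nabla\cdot g$ and zeroth-order datum $-\frac1T u$, and reabsorb $\|\frac{u}{\sqrt T}\|_{C^\alpha}$ via the interior $C^\alpha$-$L^2$ estimate. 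A standard iteration/covering over the shrinking annuli $B_{r_{k+1}}\setminus B_{r_k}$ removes the auxiliary radii and yields the clean form with $B_1$ on the left and $B_2$ on the right.

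The only genuine point requiring care — the ``hard part'' — is the bookkeeping of the power of $\|a\|_{C^\alpha(B_2)}$ through the reabsorption of the zeroth-order term, since naively the interior estimate contributes another factor of $\|a\|_{C^\alpha}^{\frac d\alpha(\frac12-\frac1p)}$ and one must check this does not change the exponent. The resolution is that the zeroth-order term $\frac1T u$ comes with the favorable factor $\frac1T\le 1$, so when it is fed back through the interior estimate the resulting contribution is $\le\|a\|_{C^\alpha}^{\frac d\alpha(\frac12-\frac1p)}\|\frac{u}{\sqrt T}\|_{L^2(B_2)}$ with the \emph{same} exponent, and it simply merges with the leading term; no exponent inflation occurs. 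This is exactly the reason the estimates~\eqref{eq:localCalderonZygmund} and~\eqref{eq:localSchauder} have the same $\|a\|_{C^\alpha}$-powers as their non-massive counterparts. Since the underlying deterministic Calder\'on--Zygmund and Schauder estimates with explicit $\|a\|_{C^\alpha}$-dependence are precisely those established in~\cite{Josien2020}, the proof reduces to the elementary absorption and covering arguments sketched above, which I would not spell out in full detail.
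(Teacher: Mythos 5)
The paper does not actually prove this lemma: its ``proof'' is the one-line remark that the claims are standard apart from the explicit $\|a\|_{C^\alpha}$-dependence and the uniformity in $T\in[1,\infty)$, together with a citation of \cite[Lemma~A.3]{Josien2020}, where the estimates are established directly for the massive operator $\frac1T-\nabla\cdot a\nabla$. The exponents $\frac{d}{\alpha}(\frac12-\frac1p)$ and $\frac{d}{\alpha}(\frac12+\frac1d)$ arise there from working at the length scale $\sim\|a\|_{C^\alpha(B_2)}^{-1/\alpha}$, on which the coefficient is nearly constant, and covering $B_1$ by $\sim\|a\|_{C^\alpha}^{d/\alpha}$ such balls. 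Your alternative route --- treating $\frac1T(f-u)$ as a zeroth-order inhomogeneity and perturbing off the non-massive classical estimates --- is viable in outline, but observe that it must still take the non-massive estimates \emph{with the explicit $\|a\|_{C^\alpha}$-powers} as a black box; since that explicit dependence is precisely the nonstandard content of the lemma (as the paper itself notes), your proposal relocates rather than removes the appeal to \cite{Josien2020}, and the remaining content to be proven is only the $T$-uniformity.

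It is exactly there that your argument has a genuine soft spot. You bound $\|h\|_{L^p(B_{r_2})}\le\frac1{\sqrt T}\|\frac{f}{\sqrt T}\|_{L^p}+\frac1{\sqrt T}\|\frac{u}{\sqrt T}\|_{L^p}$ and then propose to control $\|\frac{u}{\sqrt T}\|_{L^p(B_{r_2})}$ by ``the interior $L^p$--$L^2$ estimate'' on a larger ball --- but that estimate is (a component of) the very inequality being proven, its right-hand side again contains $\|h\|_{L^p}$ on a larger ball, and the prefactor $\frac1{\sqrt T}\le1$ is \emph{not} a smallness factor, so neither a single absorption nor an iteration over growing balls closes the loop. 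Your diagnosis that the only danger is exponent inflation of $\|a\|_{C^\alpha}$ misses this circularity. The standard fixes are: for the Schauder estimate, bound the zeroth-order datum by $\frac1T\|u\|_{L^\infty(B_{r'})}\lesssim\|\frac{u}{\sqrt T}\|_{L^2(B_{r''})}+\dots$ via De~Giorgi--Nash--Moser, whose constant depends only on $(d,\lambda,\Lambda)$ and not on $\|a\|_{C^\alpha}$, so no circularity and no extra power of $\|a\|_{C^\alpha}$ appears; for the Calder\'on--Zygmund estimate, run a finite Sobolev bootstrap in the integrability exponent on a nested chain of balls (starting from $u\in L^{2^*}$), which is genuinely needed because in high dimensions and for large $p$ the membership $u\in H^1(B_2)$ does not place $\frac1Tu$ in $W^{-1,p}$ in a single step. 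With these two ingredients made explicit your argument goes through; as written it does not.
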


\begin{proof}
The claims are standard except for the explicitly spelled-out dependence of the
estimates on the H\"older regularity of the coefficient field and the uniformity
with respect to the parameter $T\in [1,\infty)$. A proof
can be found in~\cite[Lemma~A.3]{Josien2020}. 
\end{proof}

The previous result is typically applied to the random setting on the 
level of the linearized coefficient field $a_\xi^T:=\partial_\xi A(\omega,\xi{+}\nabla\phi^T_\xi)$.
However, for this one first needs to verify that the linearized coefficient field 
is actually H\"older regular to be able to apply the above local regularity theory.
Moreover, the arguments in the main text require stretched exponential moments
for the corresponding H\"older norm. The key step towards these goals is
a proof on the level of the correctors.

\begin{lemma}[Annealed H\"older regularity for the corrector of the nonlinear problem]
\label{lem:annealedRegCorrectorNonlinear}
Let the requirements and notation of (A1), (A2)$_0$, (A3)$_0$ of
Assumption~\ref{assumption:operators}, (P1) and (P2) of
Assumption~\ref{assumption:ensembleParameterFields}, and (R) of 
Assumption~\ref{assumption:smallScaleReg} be in place.
Given $\xi\in\Rd$ and $T\in [1,\infty)$, denote by $\phi_\xi^T\in H^1_{\mathrm{uloc}}(\Rd)$
the unique solution of the corrector equation~\eqref{eq:PDEMonotoneHomCorrectorLocalized}.
Let finally $M>0$ be fixed. 

There exist constants $C=C(d,\lambda,\Lambda,\nu,\rho,\eta,M)$ 
and $\alpha=\alpha(d,\lambda,\Lambda)\in (0,\eta)$ such that 
for all $q\in [1,\infty)$ and all $|\xi|\leq M$
\begin{align}
\label{eq:annealedRegLinCorrectorNonlinear}
\tag{T6}
\big\langle\|\nabla\phi^T_\xi\|^{2q}_{C^\alpha(B_1)}
\big\rangle^\frac{1}{q} \leq C^2q^{2C}.
\end{align}
\end{lemma}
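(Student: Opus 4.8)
The plan is to combine a purely deterministic small-scale Schauder estimate for the massive nonlinear corrector equation with the annealed corrector bounds already available from Proposition~\ref{prop:estimates1stOrderCorrector} (the part not relying on~(R)) and the small-scale regularity assumption~(R). No circularity arises because the first bullet of Proposition~\ref{prop:estimates1stOrderCorrector}, which is all we use, requires only (A1), (A2)$_0$, (A3)$_0$, (P1), (P2).

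\emph{Step 1 (deterministic $C^{1,\alpha}$-estimate).} First I would establish, by deterministic elliptic regularity, that there are $\alpha=\alpha(d,\lambda,\Lambda)\in(0,\eta)$ and $\theta=\theta(d,\lambda,\Lambda,\eta)$ such that
\[
\|\nabla\phi^T_\xi\|_{C^\alpha(B_1)} \leq C(d,\lambda,\Lambda,\eta)\,\bigl(1+[\omega]_{C^\eta(B_2)}\bigr)^{\theta}\Bigl(|\xi| + \Bigl\|\Bigl(\tfrac{\phi^T_\xi}{\sqrt T},\nabla\phi^T_\xi\Bigr)\Bigr\|_{L^2(B_2)}\Bigr).
\]
The building block is the interior $C^{1,\alpha_0}$-estimate, $\alpha_0=\alpha_0(d,\lambda,\Lambda)$, for the \emph{frozen} nonlinear problem $-\nabla\cdot A(\tilde\omega_0,\zeta+\nabla v)=0$ with constant $\tilde\omega_0$: each difference quotient of $v$ solves a linear equation with measurable uniformly elliptic coefficients and \emph{no} right-hand side, so De Giorgi--Nash applies and, letting the increment tend to zero, yields $C^{\alpha_0}$-regularity of $\nabla v$ with seminorm controlled by $(\dashint_{B_r}|\nabla v|^2)^{1/2}$ (hence an excess-decay estimate). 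Comparing $\phi^T_\xi$ on a ball $B_r(x_0)\subset B_2$ with the solution of the frozen problem with boundary data $\phi^T_\xi$, and testing with the difference, the error is controlled by $r^{2\eta}[\omega]^2_{C^\eta(B_2)}\dashint_{B_r}(|\xi|^2+|\nabla\phi^T_\xi|^2)$ coming from the $\omega$-increments (using that $A(\cdot,\zeta)$ is Lipschitz in $\tilde\omega$ with constant $\Lambda|\zeta|$, an integrated form of~(A3)$_0$) plus a harmless $r^2$-contribution of the massive term. A standard Campanato iteration, started at a scale $r_0\sim(1+[\omega]_{C^\eta(B_2)})^{-1/(\eta-\alpha)}$, then gives the claimed $C^\alpha$-bound with only a polynomial loss in $[\omega]_{C^\eta(B_2)}$. (A preliminary De Giorgi $C^\beta$-bound on $\phi^T_\xi$ itself, needed to give meaning to the comparison and to the massive term, follows likewise; alternatively one may cite the classical quasilinear $C^{1,\alpha}$-theory and recover the explicit coefficient dependence by scaling.)

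\emph{Step 2 (taking stochastic moments).} Fix $q\in[1,\infty)$ and a $p\in(1,\infty)$ close to~$1$. Applying H\"older's inequality in $\langle\cdot\rangle$ with exponents $(p,p')$ to the $2q$-th moment of the Step~1 bound gives
\[
\bigl\langle\|\nabla\phi^T_\xi\|^{2q}_{C^\alpha(B_1)}\bigr\rangle^{1/q} \leq C\,\bigl\langle(1+[\omega]_{C^\eta(B_2)})^{2q\theta p'}\bigr\rangle^{1/(qp')}\cdot\Bigl\langle\Bigl(|\xi|+\Bigl\|\Bigl(\tfrac{\phi^T_\xi}{\sqrt T},\nabla\phi^T_\xi\Bigr)\Bigr\|_{L^2(B_2)}\Bigr)^{2qp}\Bigr\rangle^{1/(qp)}.
\]
For the first factor I would cover $B_2$ by a fixed finite family of unit balls $B_1(x_j)$ and, by a chaining argument together with $\|\omega\|_{L^\infty}\leq1$, bound $[\omega]_{C^\eta(B_2)}\lesssim_d 1+\max_j[\omega]_{C^\eta(B_1(x_j))}$; stationarity~(P1) makes each $[\omega]_{C^\eta(B_1(x_j))}$ equidistributed with $[\omega]_{C^\eta(B_1)}$, so~(R) yields $\langle(1+[\omega]_{C^\eta(B_2)})^{2q\theta p'}\rangle^{1/(qp')}\leq C^2q^{2C}$ (the fixed parameters $\theta,p',d,\eta$ being absorbed into $C$). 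For the second factor, $\|(\phi^T_\xi/\sqrt T,\nabla\phi^T_\xi)\|^2_{L^2(B_2)}\leq\sum_j\|(\phi^T_\xi/\sqrt T,\nabla\phi^T_\xi)\|^2_{L^2(B_1(x_j))}$, and since the massive corrector $\phi^T_\xi$ is itself stationary (by uniqueness), the corrector bounds~\eqref{eq:BaseCaseCorrectorBounds}, which give $\langle\|(\phi^T_\xi/\sqrt T,\nabla\phi^T_\xi)\|^{2q'}_{L^2(B_1)}\rangle^{1/q'}\leq C^2(q')^{2C}|\xi|^2$ for every $q'$, applied with $q'=qp$ and summed over the finitely many $j$ produce $\langle(|\xi|+\|(\phi^T_\xi/\sqrt T,\nabla\phi^T_\xi)\|_{L^2(B_2)})^{2qp}\rangle^{1/(qp)}\leq C^2q^{2C}(1+|\xi|^2)$.

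\emph{Step 3 (conclusion and main obstacle).} Multiplying the two bounds and using $|\xi|\leq M$ to absorb $1+|\xi|^2$ into the constant yields $\langle\|\nabla\phi^T_\xi\|^{2q}_{C^\alpha(B_1)}\rangle^{1/q}\leq C^2q^{2C}$ with $C=C(d,\lambda,\Lambda,\nu,\rho,\eta,M)$, which is~\eqref{eq:annealedRegLinCorrectorNonlinear}. The main obstacle is Step~1: proving the deterministic $C^{1,\alpha}$-estimate for the \emph{nonlinear} massive equation with merely H\"older-in-$x$ coefficient while keeping the constant's dependence on $[\omega]_{C^\eta(B_2)}$ explicitly polynomial. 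Once this is in place, Steps~2--3 are a routine combination of a covering/stationarity argument, H\"older's inequality, the assumption~(R), and the already-available corrector bounds.
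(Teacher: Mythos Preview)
Your proposal is correct and, at its core, uses the same ingredients as the paper: freezing $\omega$ at a point to get a $C^{1,\alpha_0}$ reference solution via De Giorgi--Nash--Moser, controlling the comparison error by $r^{2\eta}[\omega]_{C^\eta}^2\int|\nabla u|^2$ (plus a harmless massive contribution), and running a Campanato iteration from a scale adapted to the size of $[\omega]_{C^\eta}$.

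The organizational difference is worth noting. You package the argument as a single \emph{deterministic} estimate $\|\nabla\phi^T_\xi\|_{C^\alpha(B_1)}\leq C(1+[\omega]_{C^\eta(B_2)})^\theta(|\xi|+\|(\phi^T_\xi/\sqrt{T},\nabla\phi^T_\xi)\|_{L^2(B_2)})$ and then take moments via H\"older and~(R). The paper instead introduces the random variable $\mathcal{X}_\eta=[\omega]_{C^\eta(B_4)}$, decomposes $\Omega$ into the events $\mathcal{A}_m^\eta=\{r_m^{-\eta}-1\leq\mathcal{X}_\eta<r_{m+1}^{-\eta}-1\}$ with $r_m=2^{-m}$, and on each event runs the Campanato iteration starting at the scale $\theta r_m$; the sum over $m$ converges thanks to the stretched exponential moments of $\mathcal{X}_\eta$. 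The two are equivalent: unwinding the paper's event-wise bounds (Claim~1 for energy decay, Claim~2 for excess decay) gives precisely a polynomial dependence $(1+\mathcal{X}_\eta)^\theta$ with $\theta$ of order $1+(d{+}2\alpha)/\eta$, which is exactly the deterministic estimate you posit. Your packaging is arguably cleaner and closer in spirit to the linear Lemma~\ref{lem:locRegularity} (cited from \cite{Josien2020}); the paper's level-set organization has the minor advantage that one never has to state the exact power $\theta$.
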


\begin{proof}
Note first that by smuggling in a spatial average over the unit ball,
and subsequently applying the triangle inequality and Jensen's inequality, we obtain
for all $\alpha\in (0,1)$ and all $p\geq 1$
\begin{align*}
\bigg(\,\dashint_{B_1} \big|\nabla\phi^T_\xi\big|^p \bigg)^\frac{1}{p}
\leq \big[\nabla\phi^T_\xi\big]_{C^\alpha(B_1)} +
\bigg(\,\dashint_{B_1} \big|\nabla\phi^T_\xi\big|^2 \bigg)^\frac{1}{2}.
\end{align*}
The left hand side term of the previous display converges to
$\|\nabla\phi^T_\xi\|_{L^\infty(B_1)}$ as we let $p\to\infty$.
Hence, in light of the corrector bounds~\eqref{eq:BaseCaseCorrectorBounds}
it suffices to derive an annealed estimate for the H\"older seminorm.
More precisely, we have to prove that there exist 
constants $C=C(d,\lambda,\Lambda,\nu,\rho,\eta,M)$ and $\alpha=\alpha(d,\lambda,\Lambda)\in (0,\eta)$
such that for all $q\in [1,\infty)$ and all $|\xi|\leq M$ it holds
\begin{align}
\label{eq:annealedHoelderGradientNonlinearCorrector}
\big\langle\big[\nabla\phi_\xi^T\big]^{2q}_{C^\alpha(B_1)}
\big\rangle^\frac{1}{q} \leq C^2q^{2C}.
\end{align}

For a proof of~\eqref{eq:annealedHoelderGradientNonlinearCorrector}, we start
by recalling the equivalence of H\"older seminorms and Campanato seminorms.
In other words, we face the task of finding constants 
$C=C(d,\lambda,\Lambda,\nu,\rho,\eta,M)$ and $\alpha=\alpha(d,\lambda,\Lambda)\in (0,\eta)$
such that for all $q\in [1,\infty)$ and all $|\xi|\leq M$ it holds
\begin{align}
\label{eq:annealedCampanatoGradientNonlinearCorrector}
\bigg\langle\bigg|\sup_{x_0\in B_1}\sup_{0<\kappa\leq 1}
\frac{1}{\kappa^{2\alpha}}\dashint_{B_\kappa(x_0)}
\Big|\nabla\phi^T_\xi{-}\dashint_{B_\kappa(x_0)}\nabla\phi^T_\xi\Big|^2\,
\bigg|^q\bigg\rangle^\frac{1}{q} \leq C^2q^{2C}.
\end{align}
To this end, let us introduce some auxiliary quantities.
First, define the random variable 
\begin{align}
\label{eq:auxiliaryRV}
\mathcal{X}_\eta:=\sup_{x,y\in B_4,\,x\neq y}\frac{|\omega(x)-\omega(y)|}{|x-y|^\eta}.
\end{align}
The non-negative random variable $\mathcal{X}_\eta$
has stretched exponential moments because of condition~(R) from Assumption~\ref{assumption:smallScaleReg}.
In addition, for every $m\in\N_0$ we define 
\begin{align}
\label{eq:auxiliaryScaleEvent}
\text{a scale } r_m:=2^{-m}
\text{ and the event } \mathcal{A}_m^\eta
:=\{r_m^{-\eta}{-}1\leq \mathcal{X}_\eta< r_{m+1}^{-\eta}{-}1\}.
\end{align}
Consider also $\theta=\theta(d,\lambda,\Lambda,\eta)\in (0,1)$. (The precise choice of $\theta$
will be determined further below.) Finally, for every $x_0\in B_1$ we
define a field 
\begin{align}
\label{eq:auxiliaryField}
u^T_{\xi,x_0} := \xi\cdot (x{-}x_0) + \phi^T_\xi.
\end{align}
Note that $\nabla u^T_{\xi,x_0}$ is stationary and that in the sequel we may freely switch between $\nabla\phi^T_\xi$
and $\nabla u^T_{\xi,x_0}$ in~\eqref{eq:annealedCampanatoGradientNonlinearCorrector} for fixed $x_0\in B_1$. 

Decomposing for every $m\in\N_0$, every $x_0\in B_1$, and every $\alpha\in (0,1)$
\begin{align*}
&\sup_{0<\kappa\leq 1} \frac{1}{\kappa^{2\alpha}}\dashint_{B_\kappa(x_0)}
\Big|\nabla \phi^T_\xi{-}\dashint_{B_\kappa(x_0)}\nabla \phi^T_\xi\Big|^2
\\& 
\leq \sup_{0<\kappa\leq \theta r_m} \frac{1}{\kappa^{2\alpha}}\dashint_{B_\kappa(x_0)}
\Big|\nabla \phi^T_\xi{-}\dashint_{B_\kappa(x_0)}\nabla \phi^T_\xi\Big|^2
+ 2C(d,\lambda,\Lambda,\alpha,\eta)\frac{1}{r_m^{2\alpha}}\dashint_{B_{r_m}(x_0)} \big|\nabla \phi^T_\xi\big|^2
\end{align*}
exploiting that $\mathds{1}_\Omega=\sum_{m=0}^\infty\mathds{1}_{\mathcal{A}_m^\eta}$
by definition~\eqref{eq:auxiliaryScaleEvent} of the events $\mathcal{A}_m^\eta$,
and relying on the stationarity of $\nabla \phi^T_{\xi}$, we obtain for all $\alpha\in (0,1)$ the estimate
\begin{align*}
&\bigg\langle\bigg|\sup_{x_0\in B_1}\sup_{0<\kappa\leq 1}
\frac{1}{\kappa^{2\alpha}}\dashint_{B_\kappa(x_0)}
\Big|\nabla\phi^T_\xi{-}\dashint_{B_\kappa(x_0)}\nabla\phi^T_\xi\Big|^2\,
\bigg|^q\bigg\rangle^\frac{1}{q}
\\&
\lesssim_{d,\lambda,\Lambda,\alpha,\eta} \sum_{m=0}^\infty\big\langle\big\|
\nabla \phi^T_\xi\big\|_{L^2(B_1)}^{4q}\big\rangle^\frac{1}{2q}
r_m^{-d-2\alpha}\Prob\big[\mathcal{A}_m^\eta\big]^\frac{1}{2q}
\\&~~~
+ \sum_{m=0}^\infty \bigg\langle\bigg|\sup_{x_0\in B_1}\sup_{0<\kappa\leq \theta r_m}
\mathds{1}_{\mathcal{A}_m^\eta}\frac{1}{\kappa^{2\alpha}}\dashint_{B_\kappa(x_0)}
\Big|\nabla\phi^T_\xi{-}\dashint_{B_\kappa(x_0)}\nabla\phi^T_\xi\Big|^2\,
\bigg|^q\bigg\rangle^\frac{1}{q}.
\end{align*} 
As a consequence of the corrector estimates from Proposition~\ref{prop:estimates1stOrderCorrector}, the
definition~\eqref{eq:auxiliaryScaleEvent} of the scales $r_m$ and the events $\mathcal{A}_m^\eta$,
and the random variable $\mathcal{X}_\eta$ from~\eqref{eq:auxiliaryRV} 
admitting stretched exponential moments, the previous display updates to
\begin{align}
\label{eq:intermediateEstimate2}
&\bigg\langle\bigg|\sup_{x_0\in B_1}\sup_{0<\kappa\leq 1}
\frac{1}{\kappa^{2\alpha}}\dashint_{B_\kappa(x_0)}
\Big|\nabla\phi^T_\xi{-}\dashint_{B_\kappa(x_0)}\nabla\phi^T_\xi\Big|^2\,
\bigg|^q\bigg\rangle^\frac{1}{q}
\lesssim_{d,\lambda,\Lambda,\alpha,\eta} C^2q^{2C}|\xi|^2 
\\&~~~~~~~~~~~~~~~\nonumber
+ \sum_{m=0}^\infty \bigg\langle\bigg|\sup_{x_0\in B_1}\sup_{0<\kappa\leq \theta r_m}
\mathds{1}_{\mathcal{A}_m^\eta}\frac{1}{\kappa^{2\alpha}}\dashint_{B_\kappa(x_0)}
\Big|\nabla\phi^T_\xi{-}\dashint_{B_\kappa(x_0)}\nabla\phi^T_\xi\Big|^2\,
\bigg|^q\bigg\rangle^\frac{1}{q}.
\end{align}
To estimate the second right hand side term in~\eqref{eq:intermediateEstimate2},
we proceed (not surprisingly) by harmonic approximation and split the task into two parts.

\textit{Claim 1:} For all $\mu\in (0,d)$ there exists $\theta=\theta(d,\lambda,\Lambda,\eta,\mu)\in (0,1)$ 
such that for all $x_0\in B_1$, all $m\in\N_0$ and all $\kappa\in (0,\theta r_m]$ it holds
\begin{equation}
\label{eq:claim1}
\begin{aligned}
&\mathds{1}_{\mathcal{A}_m^\eta} \int_{B_\kappa(x_0)} |\nabla u^T_{\xi,x_0}|^2 
\\&
\lesssim_{d,\lambda,\Lambda,\eta,\mu} \mathds{1}_{\mathcal{A}_m^\eta}
\Big(\frac{\kappa}{r_m}\Big)^{\mu} \bigg\{  
r_m^{2+d}|\xi|^2 + \int_{B_{r_m}(x_0)} |\nabla u^T_{\xi,x_0}|^2\bigg\}.
\end{aligned}
\end{equation}
For a proof of~\eqref{eq:claim1}, fix $x_0\in B_1$, $R\in (0,\theta r_m)$ and $\kappa\in (0,\frac{R}{2})$. 
Note that we may write the equation of $u^T_{\xi,x_0}$ defined in~\eqref{eq:auxiliaryField} in form of
\begin{equation}
\label{eq:PDEAux1}
\begin{aligned}
&\frac{1}{T}u^T_{\xi,x_0} - \nabla\cdot A(\omega(x_0),\nabla u^T_{\xi,x_0})
\\&
= \frac{1}{T}\xi\cdot (x{-}x_0) -\nabla\cdot\big\{
A(\omega(x_0),\nabla u^T_{\xi,x_0}) - A(\omega,\nabla u^T_{\xi,x_0})\big\}.
\end{aligned}
\end{equation}
We then consider the harmonic approximation with massive term
\begin{equation}
\label{eq:PDEAux2}
\begin{aligned}
\frac{1}{T} v_R - \nabla\cdot A(\omega(x_0),\nabla v_R) &= 0
&& \text{in } B_R(x_0),
\\
v_R &= u^T_{\xi,x_0} && \text{on } \partial B_R(x_0).
\end{aligned}
\end{equation}
Moser iteration applied to the equation
\begin{align}
\label{eq:PDEAux3}
\frac{1}{T}\partial_i v_R - \nabla\cdot \partial_\xi A(\omega(x_0),\nabla v_R)\nabla \partial_i v_R = 0
\end{align}
entails that
\begin{align}
\label{eq:MoserIterationHarmonicApprox}
\int_{B_\kappa(x_0)} |\nabla v_R|^2 \lesssim_{d} 
\kappa^d \sup_{x\in B_{\frac{R}{2}}(x_0)} |\nabla v_R|^2
\lesssim_{d,\lambda,\Lambda} \Big(\frac{\kappa}{R}\Big)^d
\int_{B_R(x_0)} |\nabla v_R|^2.
\end{align}
Moreover, by a simple energy estimate
and~(A3)$_0$ from Assumption~\ref{assumption:operators} we have
\begin{equation}
\label{eq:EnergyEstimateAux}
\begin{aligned}
\int_{B_\kappa(x_0)} |\nabla u^T_{\xi,x_0} {-} \nabla v_R|^2 
&\leq \int_{B_R(x_0)} |\nabla u^T_{\xi,x_0} {-} \nabla v_R|^2 
\\&
\lesssim_{d,\lambda,\Lambda} \frac{1}{T}R^{2+d}|\xi|^2 
+ R^{2\eta} \mathcal{X}_\eta^2 \int_{B_R(x_0)} |\nabla u^T_{\xi,x_0}|^2.
\end{aligned}
\end{equation}
As $R\leq\theta r_m$ and $T\geq 1$, we obtain from definition~\eqref{eq:auxiliaryRV}
of the random variable $\mathcal{X}$ and definition~\eqref{eq:auxiliaryScaleEvent} 
of the event $\mathcal{A}_m^\eta$ that
\begin{align}
\label{eq:EnergyEstimateAm}
\mathds{1}_{\mathcal{A}_m^\eta} \int_{B_\kappa(x_0)} |\nabla u^T_{\xi,x_0} {-} \nabla v_R|^2
\lesssim_{d,\lambda,\Lambda} \mathds{1}_{\mathcal{A}_m^\eta}\Big\{ 
R^{2+d}|\xi|^2 + \theta^{2\eta} \int_{B_R(x_0)} |\nabla u^T_{\xi,x_0}|^2 \Big\}.
\end{align}
Combining the estimates~\eqref{eq:MoserIterationHarmonicApprox} and~\eqref{eq:EnergyEstimateAm}
thus shows that
\begin{equation}
\label{eq:CampanatoIterationEnergy}
\begin{aligned}
&\mathds{1}_{\mathcal{A}_m^\eta} \int_{B_\kappa(x_0)} |\nabla u^T_{\xi,x_0}|^2
\\&
\lesssim_{d,\lambda,\Lambda} \mathds{1}_{\mathcal{A}_m^\eta}\bigg\{  
R^\mu r_m^{2+d-\mu}|\xi|^2 + \Big(\theta^{2\eta}{+}\Big(\frac{\kappa}{R}\Big)^d\Big)
\int_{B_R(x_0)} |\nabla u^T_{\xi,x_0}|^2\bigg\}.
\end{aligned}
\end{equation}
Note that the estimate from the previous display is trivially fulfilled
in the regime $R\in (0,\theta r_m)$ and $\kappa\in (\frac{R}{2}, R)$.
Choosing $\theta=\theta(d,\lambda,\Lambda,\eta,\mu)$ sufficiently small
so that one can iterate~\eqref{eq:CampanatoIterationEnergy}, we
obtain for all $R\in (0,\theta r_m]$ and all $\kappa\in (0,R]$
\begin{align*}
\mathds{1}_{\mathcal{A}_m^\eta} \int_{B_\kappa(x_0)} |\nabla u^T_{\xi,x_0}|^2
\lesssim_{d,\lambda,\Lambda,\eta,\mu} \mathds{1}_{\mathcal{A}_m^\eta}
\Big(\frac{\kappa}{R}\Big)^\mu \bigg\{  
R^\mu r_m^{2+d-\mu}|\xi|^2 + \int_{B_R(x_0)} |\nabla u^T_{\xi,x_0}|^2\bigg\}.
\end{align*}
This in turn immediately implies the claim~\eqref{eq:claim1}.

\textit{Claim 2:}
There exist $\theta=\theta(d,\lambda,\Lambda,\eta)\in (0,1)$ 
and $\alpha=\alpha(d,\lambda,\Lambda)\in (0,\eta)$ 
such that for all $x_0\in B_1$, all $m\in\N_0$, all $R\in (0,\theta r_m]$
and all $\kappa\in (0,R]$
\begin{equation}
\label{eq:claim2}
\begin{aligned}
&\mathds{1}_{\mathcal{A}_m^\eta}
\int_{B_\kappa(x_0)} \Big|\nabla u^T_{\xi,x_0} {-} 
\dashint_{B_\kappa(x_0)} \nabla u^T_{\xi,x_0} \Big|^2
\\&
\lesssim_{d,\lambda,\Lambda,\eta} \mathds{1}_{\mathcal{A}_m^\eta}
\Big(\frac{\kappa}{R}\Big)^{2\alpha + d} \bigg\{
\int_{B_R(x_0)} \Big|\nabla u^T_{\xi,x_0} {-} 
\dashint_{B_R(x_0)} \nabla u^T_{\xi,x_0} \Big|^2
\\&~~~~~~~~~~~~~~~~~~~~~~~~~~~~~~~
+ R^{2\alpha + d}\big(1{+}\mathcal{X}_\eta^2\big)|\xi|^2
\\&~~~~~~~~~~~~~~~~~~~~~~~~~~~~~~~
+ R^{2\alpha + d} \big(1{+}\mathcal{X}_\eta^2\big) 
\dashint_{B_{r_m}(x_0)} |\nabla u^T_{\xi,x_0}|^2 \bigg\}.
\end{aligned}
\end{equation}
For a proof, fix $x_0\in B_1$, $R\in (0,\theta r_m)$ and $\kappa\in (0,\frac{R}{8})$. 
We first introduce a suitable decomposition for the gradient of the solution $v_R$ of~\eqref{eq:PDEAux2}
because of the massive term appearing in the equation. More precisely, consider
for all $i\in\{1,\ldots,d\}$ the auxiliary Dirichlet problem
\begin{equation}
\label{eq:PDEAux5}
\begin{aligned}
- \nabla\cdot A(\omega(x_0),\nabla v_R)\nabla w_{i,R} &= -\frac{1}{T}\partial_i v_R
&& \text{in } B_\frac{R}{2}(x_0),
\\
w_{i,R} &= 0 && \text{on } \partial B_\frac{R}{2}(x_0).
\end{aligned}
\end{equation}
By the De Giorgi--Nash--Moser estimate in combination with Moser iteration 
applied to the equation
\begin{align*}
- \nabla\cdot A(\omega(x_0),\nabla v_R)(\nabla w_{i,R} - \nabla\partial_i v_R) = 0
&& \text{in } B_\frac{R}{2}(x_0),
\end{align*}
we then find $\gamma=\gamma(d,\lambda,\Lambda)\in (0,1)$
such that for all $i\in\{1,\ldots,d\}$ we have excess decay in form of
\begin{equation}
\label{eq:DeGiorgiNashMoserHarmonicApprox}
\begin{aligned}
&\int_{B_\kappa(x_0)} \Big|(w_{i,R} {-} \partial_i v_R) {-} 
\dashint_{B_\kappa(x_0)} (w_{i,R} {-} \partial_i v_R) \Big|^2 
\\&
\lesssim_{d,\lambda,\Lambda} \Big(\frac{\kappa}{R}\Big)^{2\gamma + d}
\int_{B_\frac{R}{4}(x_0)} \Big|(w_{i,R} {-} \partial_i v_R) {-} 
\dashint_{B_\frac{R}{4}(x_0)} (w_{i,R} {-} \partial_i v_R) \Big|^2 .
\end{aligned}
\end{equation}
Moreover, by a simple energy estimate for~\eqref{eq:PDEAux5}
(rewriting to this end the right hand side of~\eqref{eq:PDEAux5} in form of 
$-\frac{1}{T}\partial_i v_R=-\frac{1}{T}\nabla\cdot 
e_i (v_R-\,\dashint_{B_{\frac{R}{2}}(x_0)} v_R)$)
in combination with Poincar\'e's inequality and $T \geq 1$ we obtain 
for all $i\in\{1,\ldots,d\}$
\begin{align}
\nonumber
\int_{B_\kappa(x_0)} \Big| w_{i,R} {-} \dashint_{B_\kappa(x_0)} w_{i,R} \Big|^2 
&\leq \int_{B_\frac{R}{2}(x_0)} \Big| w_{i,R} {-} \dashint_{B_\frac{R}{2}(x_0)} w_{i,R} \Big|^2 
\\\nonumber
&\lesssim_{d} R^2\int_{B_\frac{R}{2}(x_0)} |\nabla w_{i,R}|^2
\\\label{eq:ernergyEstimateAux5}
&\lesssim_{d,\lambda,\Lambda} R^2
\int_{B_\frac{R}{2}(x_0)} \Big| v_R {-}
\dashint_{B_\frac{R}{2}(x_0)} v_R \Big|^2
\\\nonumber
&\lesssim_{d,\lambda,\Lambda} R^4
\int_{B_R(x_0)} |\nabla v_R {-} \nabla u^T_{\xi,x_0}|^2
+ R^4 \int_{B_R(x_0)} |\nabla u^T_{\xi,x_0}|^2.
\end{align}
Finally, because of~\eqref{eq:EnergyEstimateAux}, $R\leq 1$, $x_0\in B_1$ and $T\geq 1$ we get
\begin{equation}
\label{eq:EnergyEstimateAux2}
\begin{aligned}
\int_{B_\kappa(x_0)} |\nabla u^T_{\xi,x_0} {-} \nabla v_R|^2 
&\leq \int_{B_R(x_0)} |\nabla u^T_{\xi,x_0} {-} \nabla v_R|^2 
\\&
\lesssim_{d,\lambda,\Lambda} R^{2+d} |\xi|^2
+ R^{2\eta} \mathcal{X}^2_\eta
\int_{B_R(x_0)} |\nabla u_\xi^T|^2.
\end{aligned}
\end{equation}
In total, the decomposition $\partial_i u^T_{\xi,x_0} = (\partial_i u^T_{\xi,x_0} {-} \partial_i v_R)
+ (\partial_i v_R {-} w_{i,R}) + w_{i,R}$ together with 
the estimates~\eqref{eq:DeGiorgiNashMoserHarmonicApprox}--\eqref{eq:EnergyEstimateAux2}
implies that for all $i\in\{1,\ldots,d\}$ we have the estimate
\begin{align*}
&\int_{B_\kappa(x_0)} \Big| \partial_i u^T_{\xi,x_0} {-} \dashint_{B_\kappa(x_0)} \partial_i u^T_{\xi,x_0} \Big|^2 
\\&
\lesssim_{d,\lambda,\Lambda} \Big(\frac{\kappa}{R}\Big)^{2\gamma + d}
\int_{B_R(x_0)} \Big| \partial_i u^T_{\xi,x_0} {-} 
\dashint_{B_R(x_0)} \partial_i u^T_{\xi,x_0} \Big|^2 
\\&~~~
+ R^{2+d} |\xi|^2
+ R^{2\eta} \big(1{+}\mathcal{X}^2_\eta\big)
\int_{B_R(x_0)} |\nabla u^T_{\xi,x_0}|^2.
\end{align*}
Now, the estimate~\eqref{eq:claim1} (with $\kappa$ replaced by $R$) entails 
\begin{align*}
\mathds{1}_{\mathcal{A}_m^\eta} \int_{B_R(x_0)} |\nabla u^T_{\xi,x_0}|^2 
\lesssim_{d,\lambda,\Lambda,\eta,\mu} \mathds{1}_{\mathcal{A}_m^\eta}
R^\mu\bigg\{r_m^{2+d-\mu}|\xi|^2 + \dashint_{B_{r_m}(x_0)} |\nabla u^T_{\xi,x_0}|^2\bigg\}.
\end{align*}
The previous two displays in turn show that after 
choosing $\mu=\mu(d,\lambda,\Lambda,\eta)\in (0,d)$ appropriately 
there exists $\alpha=\alpha(d,\lambda,\Lambda)\in (0,\eta\wedge\gamma)$ 
such that
\begin{align*}
&\mathds{1}_{\mathcal{A}_m} \int_{B_\kappa(x_0)} \Big| \partial_i u^T_{\xi,x_0} 
{-} \dashint_{B_\kappa(x_0)} \partial_i u^T_{\xi,x_0} \Big|^2 
\\&
\lesssim_{d,\lambda,\Lambda,\eta} \mathds{1}_{\mathcal{A}_m}
\Big(\frac{\kappa}{R}\Big)^{2\gamma + d}
\int_{B_R(x_0)} \Big| \partial_i u^T_{\xi,x_0} {-} 
\dashint_{B_R(x_0)} \partial_i u^T_{\xi,x_0} \Big|^2 
\\&~~~
+ \mathds{1}_{\mathcal{A}_m} R^{2\alpha + d}
\bigg\{\big(1{+}\mathcal{X}_\eta^2\big)|\xi|^2
+ \big(1{+}\mathcal{X}_\eta^2\big)\,\dashint_{B_{r_m}(x_0)} |\nabla u^T_{\xi,x_0}|^2\bigg\}.
\end{align*}
Iterating the previous display then establishes the asserted estimate~\eqref{eq:claim2}.

\textit{Conclusion:} We make use of~\eqref{eq:claim2} in order to estimate
the second right hand side term of~\eqref{eq:intermediateEstimate2}.
More precisely, the estimate~\eqref{eq:claim2} applied with $R=\theta r_m$
entails that it holds
\begin{align*}
&\sup_{x_0\in B_1}\sup_{0<\kappa\leq \theta r_m}
\mathds{1}_{\mathcal{A}_m^\eta}\frac{1}{\kappa^{2\alpha}} \,\dashint_{B_\kappa(x_0)}
\Big|\nabla u^T_{\xi,x_0}{-}\dashint_{B_\kappa(x_0)}\nabla u^T_{\xi,x_0}\Big|^2
\\&
\lesssim_{d,\lambda,\Lambda,\eta} \mathds{1}_{\mathcal{A}_m^\eta}
\big(1{+}\mathcal{X}_\eta^2\big)
r_m^{-d-2\alpha} \int_{B_2} |\nabla u^T_{\xi,x_0}|^2
+ \mathds{1}_{\mathcal{A}_m^\eta} 
\big(1{+}\mathcal{X}_\eta^2\big)|\xi|^2.
\end{align*}
Plugging this back into the second right hand side term of~\eqref{eq:intermediateEstimate2},
and then making use of H\"older's inequality, the corrector estimates from Proposition~\ref{prop:estimates1stOrderCorrector},
the definition~\eqref{eq:auxiliaryScaleEvent} of the scales $r_m$ and the events $\mathcal{A}_m^\eta$,
and that the random variable $\mathcal{X}_\eta$ from~\eqref{eq:auxiliaryRV} 
admits stretched exponential moments, therefore upgrades~\eqref{eq:intermediateEstimate2} to
\begin{align*}
&\bigg\langle\bigg|\sup_{x_0\in B_1}\sup_{0<\kappa\leq 1}
\frac{1}{\kappa^{2\alpha}}\dashint_{B_\kappa(x_0)}
\Big|\nabla\phi^T_\xi{-}\dashint_{B_\kappa(x_0)}\nabla\phi^T_\xi\Big|^2\,
\bigg|^q\bigg\rangle^\frac{1}{q}
\leq C^2q^{2C}|\xi|^2.
\end{align*}
This concludes the proof of~\eqref{eq:annealedCampanatoGradientNonlinearCorrector},
which in turn implies~\eqref{eq:annealedHoelderGradientNonlinearCorrector}. 
\end{proof}

An immediate consequence of Lemma~\ref{lem:annealedRegCorrectorNonlinear}
is now the following regularity result for the linearized coefficient field
$a_\xi^T:=\partial_\xi A(\omega,\xi{+}\nabla\phi^T_\xi)$.

\begin{lemma}[Annealed H\"older regularity for the linearized coefficient]
\label{lem:annealedHoelderRegLinCoefficient}
Let the requirements and notation of (A1), (A2)$_1$, (A3)$_1$ of
Assumption~\ref{assumption:operators}, (P1) and (P2) of
Assumption~\ref{assumption:ensembleParameterFields}, and (R) of 
Assumption~\ref{assumption:smallScaleReg} be in place.
Given $\xi\in\Rd$ and $T\in [1,\infty)$, denote by $\phi_\xi^T\in H^1_{\mathrm{uloc}}(\Rd)$
the unique solution of the corrector equation~\eqref{eq:PDEMonotoneHomCorrectorLocalized}.
Let finally $M>0$ be fixed. 

Then, there exist constants $C=C(d,\lambda,\Lambda,\nu,\rho,\eta,M)$ 
and $\alpha=\alpha(d,\lambda,\Lambda)\in (0,\eta)$ such that 
for all $q\in [1,\infty)$ and all $|\xi|\leq M$
\begin{align}
\label{eq:annealedHoelderRegLinCoefficient}
\tag{T7}
\big\langle\|\partial_\xi A(\omega,\xi{+}\nabla\phi_\xi^T)\|^{2q}_{C^\alpha(B_1)}
\big\rangle^\frac{1}{q} \leq C^2q^{2C}.
\end{align}
\end{lemma}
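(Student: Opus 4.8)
The plan is to transfer the annealed H\"older regularity of the nonlinear corrector gradient $\nabla\phi^T_\xi$ established in Lemma~\ref{lem:annealedRegCorrectorNonlinear} to the linearized coefficient field $a^T_\xi = \partial_\xi A(\omega,\xi{+}\nabla\phi^T_\xi)$ by a straightforward composition argument. The key structural input is that the map $(\tilde\omega,v)\mapsto \partial_\xi A(\tilde\omega,v)$ is jointly Lipschitz: by (A2)$_1$ from Assumption~\ref{assumption:operators} we have $|\partial_\xi^2 A(\tilde\omega,v)|\leq\Lambda$ (so $\xi'\mapsto\partial_\xi A(\tilde\omega,\xi')$ is $\Lambda$-Lipschitz, uniformly in $\tilde\omega$), and by (A3)$_1$ we have $|\partial_\omega\partial_\xi A(\tilde\omega,v)|\leq\Lambda$ together with the Lipschitz bound in $\tilde\omega$, so $\tilde\omega\mapsto\partial_\xi A(\tilde\omega,v)$ is $\Lambda$-Lipschitz uniformly in $v$. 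Hence for any two points $x,y\in B_1$,
\begin{align*}
\big|a^T_\xi(x)-a^T_\xi(y)\big|
&\leq \big|\partial_\xi A(\omega(x),\xi{+}\nabla\phi^T_\xi(x))-\partial_\xi A(\omega(y),\xi{+}\nabla\phi^T_\xi(x))\big|
\\&\quad
+ \big|\partial_\xi A(\omega(y),\xi{+}\nabla\phi^T_\xi(x))-\partial_\xi A(\omega(y),\xi{+}\nabla\phi^T_\xi(y))\big|
\\&\leq \Lambda|\omega(x)-\omega(y)| + \Lambda|\nabla\phi^T_\xi(x)-\nabla\phi^T_\xi(y)|.
\end{align*}

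From this pointwise estimate I would derive, for the Hölder exponent $\alpha=\alpha(d,\lambda,\Lambda)\in(0,\eta)$ supplied by Lemma~\ref{lem:annealedRegCorrectorNonlinear}, the bound on seminorms
\begin{align*}
\big[a^T_\xi\big]_{C^\alpha(B_1)}
\leq \Lambda\big[\omega\big]_{C^\alpha(B_1)} + \Lambda\big[\nabla\phi^T_\xi\big]_{C^\alpha(B_1)},
\end{align*}
using on the first term the interpolation $[\omega]_{C^\alpha(B_1)}\leq (2)^{\eta-\alpha}[\omega]_{C^\eta(B_1)}\lesssim \mathcal{X}_\eta$ with $\mathcal{X}_\eta$ the random variable from~\eqref{eq:auxiliaryRV} (or directly the quantity controlled in (R) of Assumption~\ref{assumption:smallScaleReg}), since $\alpha<\eta$ and $B_1$ has bounded diameter. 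For the sup-norm part I would use $\|a^T_\xi\|_{L^\infty(B_1)}\leq\Lambda$, which is immediate from (A2)$_1$ since $|\partial_\xi A|\leq\Lambda$. Combining, $\|a^T_\xi\|_{C^\alpha(B_1)}\lesssim_\Lambda 1 + \mathcal{X}_\eta + \|\nabla\phi^T_\xi\|_{C^\alpha(B_1)}$.

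The conclusion then follows by taking $2q$th stochastic moments, applying the triangle inequality in $L^{2q}_{\langle\cdot\rangle}$ (i.e.\ Minkowski's inequality):
\begin{align*}
\big\langle\|a^T_\xi\|^{2q}_{C^\alpha(B_1)}\big\rangle^\frac{1}{2q}
\lesssim_\Lambda 1 + \big\langle\mathcal{X}_\eta^{2q}\big\rangle^\frac{1}{2q}
+ \big\langle\|\nabla\phi^T_\xi\|^{2q}_{C^\alpha(B_1)}\big\rangle^\frac{1}{2q}.
\end{align*}
The middle term is controlled by $C q^{C}$ because $\mathcal{X}_\eta$ has stretched-exponential moments by (R) (equivalently, the bound in Assumption~\ref{assumption:smallScaleReg}(R) reads $\langle\mathcal{X}_\eta^{2q}\rangle^{1/q}\leq C^2 q^{2C}$ after a covering of $B_4$ by unit balls and stationarity), and the last term is bounded by $Cq^{C}$ by Lemma~\ref{lem:annealedRegCorrectorNonlinear} in form of~\eqref{eq:annealedRegLinCorrectorNonlinear}. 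Squaring yields the asserted estimate~\eqref{eq:annealedHoelderRegLinCoefficient}, with a constant $C=C(d,\lambda,\Lambda,\nu,\rho,\eta,M)$ absorbing all the above and an exponent of the form $q^{2C}$.

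\textbf{Main obstacle.} There is no deep obstacle here: the proof is a short composition-plus-interpolation argument once Lemma~\ref{lem:annealedRegCorrectorNonlinear} is available. The only points requiring a modicum of care are (i) making sure the two Lipschitz properties of $\partial_\xi A$ are invoked with the correct assumption labels—(A2)$_1$ for the $\xi$-variable and (A3)$_1$ for the $\omega$-variable, which is why the statement requires these (rather than the order-$0$ versions used for $\nabla\phi^T_\xi$ itself)—and (ii) verifying that the Hölder exponent $\alpha$ produced by Lemma~\ref{lem:annealedRegCorrectorNonlinear} is indeed $<\eta$, so that the embedding $C^\eta(B_1)\hookrightarrow C^\alpha(B_1)$ applied to $\omega$ is legitimate; this is exactly the range asserted there, $\alpha=\alpha(d,\lambda,\Lambda)\in(0,\eta)$, so no adjustment is needed.
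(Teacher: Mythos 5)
Your proposal is correct and follows exactly the route the paper takes: the paper's proof is the one-line observation that the claim follows from (A2)$_1$, (A3)$_1$, (R) and Lemma~\ref{lem:annealedRegCorrectorNonlinear}, and your argument simply fills in the composition/triangle-inequality details of that observation correctly (Lipschitz in $\omega$ from (A3)$_1$, Lipschitz in $\xi$ from (A2)$_1$, Hölder control of $\omega$ from (R) with $\alpha<\eta$, and the annealed Schauder bound~\eqref{eq:annealedRegLinCorrectorNonlinear} for $\nabla\phi^T_\xi$).
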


\begin{proof}
Because of~(A2)$_{1}$ resp.\ (A3)$_{1}$ from Assumption~\ref{assumption:operators}
as well as~(R) from Assumption~\ref{assumption:smallScaleReg}, this is an
immediate consequence of Lemma~\ref{lem:annealedRegCorrectorNonlinear}.
\end{proof}

In terms of regularity theory in the random setting, the last missing
ingredient is given by an annealed Calder\'on--Zygmund estimate. We emphasize
that for the purpose of the present work, it suffices to consider a perturbative regime.

\begin{lemma}[The annealed Calder\'on--Zygmund estimate: the perturbative regime, cf.\ \cite{Josien2020}]
\label{lem:annealedCZMeyers}
Consider an ensemble of uniformly elliptic and 
bounded coefficient fields $a\colon\Rd\to\Rd[d\times d]$ with respect to constants $(\lambda,\Lambda)$. 
There exists $c=c(d,\lambda,\Lambda)>0$ such that for all $q\in [2,c)$
and all random fields $f,g\in L^2(\Rd)$, the solution of
\begin{align*}
\frac{1}{T}u - \nabla\cdot a\nabla u = \frac{1}{T}f + \nabla\cdot g \quad\text{in } \Rd
\end{align*} 
is subject to the estimate
\begin{align}
\label{eq:annealedCZMeyers}
\tag{T8}
\Big\|\Big\langle\Big|\Big(\frac{u}{\sqrt{T}}, \nabla u\Big)\Big|^q\Big\rangle^\frac{1}{q}\Big\|^2_{L^2(\Rd)}
\lesssim_{d,\lambda,\Lambda}
\Big\|\Big\langle\Big|\Big(\frac{f}{\sqrt{T}}, g\Big)\Big|^q\Big\rangle^\frac{1}{q}\Big\|^2_{L^2(\Rd)}.
\end{align}
\end{lemma}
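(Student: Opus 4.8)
The statement to prove is Lemma~\ref{lem:annealedCZMeyers}, the perturbative annealed Calder\'on--Zygmund estimate~\eqref{eq:annealedCZMeyers}.

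\medskip

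\textbf{Plan of proof.} The plan is to derive the annealed estimate from the deterministic Meyers estimate by an interpolation/perturbation argument in the spirit of Josien and Otto. First I would record the two endpoint ingredients. The endpoint $q=2$: applying the expectation $\langle\cdot\rangle$ to the massive energy identity obtained by testing the equation with $u$ itself yields immediately
\begin{align*}
\Big\|\Big\langle\Big|\Big(\tfrac{u}{\sqrt T},\nabla u\Big)\Big|^2\Big\rangle^\frac12\Big\|_{L^2(\Rd)}^2
\lesssim_{d,\lambda,\Lambda}
\Big\|\Big\langle\Big|\Big(\tfrac{f}{\sqrt T},g\Big)\Big|^2\Big\rangle^\frac12\Big\|_{L^2(\Rd)}^2,
\end{align*}
using Fubini and $\tfrac1T\|u\|_{L^2}^2+\lambda\|\nabla u\|_{L^2}^2\le \tfrac1T\|f\|_{L^2}\|u\|_{L^2}+\|g\|_{L^2}\|\nabla u\|_{L^2}$ together with Young's inequality (the massive term only helps). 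The other ingredient is the \emph{deterministic} local Meyers estimate: there is $p_0=p_0(d,\lambda,\Lambda)>2$ such that for $p\in[2,p_0]$ one controls $\|(\tfrac{u}{\sqrt T},\nabla u)\|_{L^p(B_1(x))}$ by $\|(\tfrac{u}{\sqrt T},\nabla u)\|_{L^2(B_2(x))}$ plus $\|(\tfrac{f}{\sqrt T},g)\|_{L^p(B_2(x))}$, uniformly in $T\ge1$ and in $x\in\Rd$; this is a standard reverse-H\"older/Gehring argument applied to the massive equation, and can be quoted exactly as in~\cite[Appendix]{Josien2020}.

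\medskip

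Next I would globalize and anneal. Writing $\Rd$ as a union of unit balls and summing the local Meyers estimate gives a \emph{deterministic uniform} bound of the form $\|(\tfrac{u}{\sqrt T},\nabla u)\|_{L^p_{\mathrm{uloc}}}\lesssim \|(\tfrac{u}{\sqrt T},\nabla u)\|_{L^2_{\mathrm{uloc}}}+\|(\tfrac{f}{\sqrt T},g)\|_{L^p_{\mathrm{uloc}}}$. Then for fixed $x$ I raise the local Meyers inequality to the power $q/p$ (choosing the Meyers exponent $p$ above the target $q$, possible as long as $q<c:=p_0$), take $\langle\cdot\rangle$, and apply Jensen/H\"older inside the spatial ball $B_2(x)$ to pass the spatial average through the $L^q_{\langle\cdot\rangle}$ norm. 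This converts the local deterministic estimate into a local \emph{annealed} one,
\begin{align*}
\Big\langle\Big|\Big(\tfrac{u}{\sqrt T},\nabla u\Big)\Big|^q\Big\rangle^{\frac1q}\Big|_{B_1(x)}
\lesssim \Big\langle\Big(\dashint_{B_2(x)}\big|(\tfrac{u}{\sqrt T},\nabla u)\big|^2\Big)^{\frac q2}\Big\rangle^{\frac1q}
+\Big\langle\Big|\Big(\tfrac{f}{\sqrt T},g\Big)\Big|^q\Big\rangle^{\frac1q}\Big|_{B_2(x)},
\end{align*}
where I still need to downgrade the $L^2$-in-space, $L^q$-in-probability term on the right to an $L^2$-in-both term. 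Squaring, integrating in $x$ over $\Rd$, and using that the $B_2(x)$-averages have bounded overlap leaves the task of bounding $\big\|\langle(\dashint_{B_2(x)}|(\tfrac{u}{\sqrt T},\nabla u)|^2)^{q/2}\rangle^{1/q}\big\|_{L^2(\Rd)}^2$ by the $q=2$ annealed norm of $(\tfrac{u}{\sqrt T},\nabla u)$.

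\medskip

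That last reduction is the crux. For this I would interpolate between the annealed $L^2$ bound and a \emph{suboptimal} higher annealed integrability bound, both of which are already in hand: the $q=2$ estimate from the first paragraph (with the suboptimal right-hand side replaced, if necessary, by the stronger $L^q_{\langle\cdot\rangle}L^2$ data norm, which is what actually appears on the right of~\eqref{eq:annealedCZMeyers} after the above manipulations), combined with a crude annealed bound at a fixed exponent $q_1\in(2,c)$ obtained by the deterministic global Meyers estimate and brutal $L^\infty_{\langle\cdot\rangle}$-type control of the data. The real work is arranging the exponents so that the absorption closes; following Josien--Otto, one shows that the constant degrades only by a universal factor provided $q-2$ is small enough, which is exactly the perturbative restriction $q\in[2,c)$ with $c=c(d,\lambda,\Lambda)$. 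I expect this exponent bookkeeping in the absorption/interpolation step to be the main obstacle; everything else (the energy estimate, the local Meyers estimate, the covering argument) is routine and can be cited from~\cite{Josien2020}. Finally, I would note that the estimate is stated for deterministic $a$ but the proof uses no independence or stationarity, so it holds verbatim for any ensemble of uniformly elliptic coefficient fields, and in particular for $a=a_\xi^{T,*}$, which is how it is invoked in the main text via~\eqref{eq:annealedCZMeyers}.
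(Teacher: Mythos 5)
First, note that the paper does not actually prove this lemma: its ``proof'' is a one-line citation of \cite[Proposition~7.1(i)]{Josien2020}. So your proposal is to be judged on its own merits as a proof, and against the strategy of that reference.

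There is a genuine gap in your step ``globalize and anneal,'' and it is not a bookkeeping issue that the later interpolation step could repair. Taking $q$-th stochastic moments of the deterministic local Meyers estimate produces on the right-hand side the quantities $\big\langle\|(\tfrac{u}{\sqrt T},\nabla u)\|_{L^2(B_2(x))}^q\big\rangle^{1/q}$ and $\big\langle\|(\tfrac{f}{\sqrt T},g)\|_{L^p(B_2(x))}^q\big\rangle^{1/q}$, i.e.\ mixed norms of the type $L^q_{\langle\cdot\rangle}L^r_x$ with the expectation \emph{outermost}, whereas the target~\eqref{eq:annealedCZMeyers} requires the expectation \emph{innermost}, $L^2_xL^q_{\langle\cdot\rangle}$. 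Minkowski's integral inequality only gives $\|\cdot\|_{L^r_xL^q_{\langle\cdot\rangle}}\leq\|\cdot\|_{L^q_{\langle\cdot\rangle}L^r_x}$ for $r\geq q$ --- exactly the wrong direction: the upper bounds you obtain dominate the norms you need and can be strictly larger (consider data concentrating on a small, $\omega$-dependent spatial set). This obstruction hits both the data term and the solution term; in particular $\big\langle(\int_{B_2(x)}|\nabla u|^2)^{q/2}\big\rangle^{1/q}$ with $q>2$ is simply not controlled by the $q=2$ annealed norm, and the ``crude annealed bound at a fixed exponent $q_1$'' you propose to interpolate with does not exist under the hypotheses ($f,g$ are merely square-integrable random fields; no $L^\infty_{\langle\cdot\rangle}$-type control is available). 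This is precisely why the annealed Calder\'on--Zygmund estimate is a theorem in its own right rather than a corollary of the quenched Meyers estimate plus moment bounds.

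The argument in \cite{Josien2020} is structurally different: it treats $g\mapsto\nabla(\tfrac1T-\nabla\cdot a\nabla)^{-1}\nabla\cdot g$ as a bounded linear operator on the Bochner space $L^2(\Rd;L^2_{\langle\cdot\rangle})$, uses that its adjoint is the analogous operator with $a$ replaced by $a^*$ (so the $L^2_xL^q_{\langle\cdot\rangle}$ and $L^2_xL^{q_*}_{\langle\cdot\rangle}$ bounds are dual to one another within the same class of coefficient fields), and perturbs the stochastic integrability off the exponent $2$ by a duality/real-variable argument for $L^q_{\langle\cdot\rangle}$-valued functions in which the quenched Meyers estimate enters only through the dual problem. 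Your first paragraph (the $q=2$ energy identity after Fubini) and your closing observation (no stationarity or ergodicity is used, so the statement applies to $a=a_\xi^{T,*}$) are correct, but they are the easy part; the core of the lemma is not reached by the route you outline.
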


\begin{proof}
For a proof, we refer the reader to~\cite[Proposition~7.1(i)]{Josien2020}.
\end{proof}

\section{Existence of localized, higher-order linearized homogenization correctors and flux correctors}
\label{app:existenceCorrectors}

\begin{proof}[Proof of Lemma~\ref{eq:lemmaExistenceLinearizedCorrectorsShort}
(Existence of localized correctors)]
We proceed by an induction over the linearization order $L\in\N$. For the 
sake of completeness, we refer the reader to~\cite[Lemma~12]{Fischer2019}
for the existence of localized correctors $\phi^T_\xi$ of the nonlinear
corrector problem~\eqref{eq:PDEMonotoneHomCorrectorLocalized}.

\textit{Step 1: (Base case)} Let the requirements and notation 
of (A1), (A2)$_{0}$ and (A3)$_{0}$ of Assumption~\ref{assumption:operators} be in place, 
and consider an \textit{arbitrary} measurable parameter field $\tilde\omega\colon\Rd\to B^n_1$,
as well as a unit vector $v\in\Rd$. Then, there exists a unique solution 
\begin{align}
\label{eq:existenceBaseCase}
\Big(\frac{\phi^T_{\xi,v}(\cdot,\tilde\omega)}{\sqrt{T}},
\nabla\phi^T_{\xi,v}(\cdot,\tilde\omega)\Big)\in L^2_{\mathrm{uloc}}(\Rd;\Rd[]{\times}\Rd)
\end{align}
of the first-order linearized corrector problem with massive term given by
\begin{align}
\label{eq:firstOrderEquation}
\frac{1}{T}\phi^T_{\xi,v} - 
\nabla\cdot\partial_\xi A(\tilde\omega,\xi{+}\nabla\phi^T_\xi)\nabla\phi^T_{\xi,v}
= \nabla\cdot\partial_\xi A(\tilde\omega,\xi{+}\nabla\phi^T_\xi)v.
\end{align}
Under the stronger set of conditions (A1), (A2)$_{1}$ and (A3)$_{1}$ of Assumption~\ref{assumption:operators},
and under the stronger requirement~\eqref{eq:ergodicityParameterField} on the
parameter field $\tilde\omega\colon\Rd\to B^n_1$, we in addition claim that for all $p\geq 2$
\begin{align}
\label{eq:ergodicityLinearizedCorrectorBaseCase}
\sup_{x_0\in\Rd} \limsup_{R\to\infty}\dashint_{B_R(x_0)}
\Big|\Big(\frac{\phi^T_{\xi,v}(\cdot,\tilde\omega)}{\sqrt{T}},
\nabla\phi^T_{\xi,v}(\cdot,\tilde\omega)\Big)\Big|^p < \infty.
\end{align} 

\textit{Proof of first claim (Existence of solutions to~\eqref{eq:firstOrderEquation} 
in the function space~\eqref{eq:existenceBaseCase}):}
For any $R\geq 1$, there exists a unique Lax-Milgram solution
$\phi^{T,R}_{\xi,v}\in H^1(\Rd)$ of
\begin{align}
\label{eq:firstOrderEquationApprox}
\frac{1}{T}\phi^{T,R}_{\xi,v} - 
\nabla\cdot\partial_\xi A(\tilde\omega,\xi{+}\nabla\phi^T_\xi)\nabla\phi^{T,R}_{\xi,v}
= \nabla\cdot \mathds{1}_{B_R}\partial_\xi A(\tilde\omega,\xi{+}\nabla\phi^T_\xi)v.
\end{align}
The sequence $\big(\frac{\phi^{T,R}_{\xi,v}(\cdot,\tilde\omega)}{\sqrt{T}},
\nabla\phi^{T,R}_{\xi,v}(\cdot,\tilde\omega)\big)_{R\geq 1}$ is Cauchy in 
$L^2_{\mathrm{uloc}}(\Rd;\Rd[]{\times}\Rd)$ as a consequence
of the weighted energy estimate~\eqref{eq:expLocalization}
applied to differences of solutions to~\eqref{eq:firstOrderEquationApprox},
and the limit is easily identified as a distributional solution to~\eqref{eq:firstOrderEquation}.
Uniqueness follows again by an application of the weighted energy estimate~\eqref{eq:expLocalization},
this time with respect to two solutions of~\eqref{eq:firstOrderEquation}
in the function space~\eqref{eq:existenceBaseCase}.

\textit{Proof of second claim (Improved regularity~\eqref{eq:ergodicityLinearizedCorrectorBaseCase}
under stronger assumptions):} Thanks to the assumption~\eqref{eq:ergodicityParameterField}
on the parameter field $\tilde\omega$, the proof of Lemma~\ref{lem:annealedRegCorrectorNonlinear}---in particular
the proof of the annealed estimate~\eqref{eq:annealedHoelderGradientNonlinearCorrector} for
the H\"older seminorm---carries over verbatim to the present setting. Indeed, one simply
needs to replace stochastic moments $\langle|\cdot|^q\rangle$ by 
$\limsup_{R\to\infty}\,\dashint_{B_R(x_0)} |\cdot|^p$, the random variable $\mathcal{X}_\eta$
defined in~\eqref{eq:auxiliaryRV} by the field $\mathcal{X}_\eta(x,\tilde\omega) := \sup_{y,z\in B_1(x),\,y\neq z} 
\frac{|\tilde\omega(y)-\tilde\omega(z)|}{|y-z|^\eta}$, and the condition~(R) of Assumption~\ref{assumption:smallScaleReg}
by the assumption~\eqref{eq:ergodicityParameterField}. The upshot of this is that we obtain 
an ``annealed'' H\"older estimate on the linearized coefficient 
$\partial_\xi A(\tilde\omega,\xi{+}\nabla\phi^T_{\xi}(\cdot,\tilde\omega))$ in form of
\begin{align}
\label{eq:ergodicHoelderRegLinCoefficient}
\sup_{x_0\in\Rd} \limsup_{R\to\infty}\dashint_{B_R(x_0)}
\big\|\partial_\xi A(\tilde\omega,\xi{+}\nabla\phi^T_{\xi}(\cdot,\tilde\omega))\big\|^p_{C^\alpha(B_1(x))} < \infty
\end{align}
for some suitable $\alpha=\alpha(d,\lambda,\Lambda)\in (0,\eta)$
and all $p\geq 2$.

The information provided by~\eqref{eq:ergodicHoelderRegLinCoefficient}
is now leveraged as follows. By means of the local
Calder\'on--Zygmund estimate~\eqref{eq:localCalderonZygmund}
applied to the equation~\eqref{eq:firstOrderEquation}, the estimate~\eqref{eq:ergodicHoelderRegLinCoefficient}
and the regularity $\big(\frac{\phi^{T}_{\xi,v}(\cdot,\tilde\omega)}{\sqrt{T}},
\nabla\phi^{T}_{\xi,v}(\cdot,\tilde\omega)\big) \in L^2_{\mathrm{uloc}}(\Rd;\Rd[]{\times}\Rd)$
we infer that for all $p\geq 2$ it holds
\begin{align*}
&\sup_{x_0\in\Rd} \limsup_{R\to\infty}\dashint_{B_R(x_0)}
\Big|\Big(\frac{\phi^T_{\xi,v}(\cdot,\tilde\omega)}{\sqrt{T}},
\nabla\phi^T_{\xi,v}(\cdot,\tilde\omega)\Big)\Big|^p 
\\&
\lesssim \sup_{x_0\in\Rd}\limsup_{R\to\infty}\dashint_{B_R(x_0)}\bigg(\,\dashint_{B_1(x)}
\Big\|\Big(\frac{\phi^T_{\xi,v}(\cdot,\tilde\omega)}{\sqrt{T}},
\nabla\phi^T_{\xi,v}(\cdot,\tilde\omega)\Big)\Big\|_{L^p(B_1(x))}^p\bigg)\dx < \infty.
\end{align*} 
This concludes the proof of~\eqref{eq:ergodicityLinearizedCorrectorBaseCase}.
 
\textit{Step 2: (Formulation of the induction hypotheses)} Let $L\geq 2$ and $T\in [1,\infty)$
be fixed. Let the requirements and notation 
of (A1), (A2)$_{L{-}1}$ and (A3)$_{L{-}1}$ of Assumption~\ref{assumption:operators} be in place.
Fix also a parameter field $\tilde\omega\colon\Rd\to B^n_1$ subject to the condition~\eqref{eq:ergodicityParameterField}.
For any linearization order $1\leq l\leq L{-}1$, and any collection of unit vectors $v_1',\ldots,v_l'\in\Rd$
we assume that---under the above conditions---the associated localized 
$l$th-order linearized homogenization corrector in direction $B':=v_1'\odot\cdots\odot v_l'$
\begin{align}
\label{eq:existenceIndHypo}
\Big(\frac{\phi^T_{\xi,B'}(\cdot,\tilde\omega)}{\sqrt{T}},
\nabla\phi^T_{\xi,B'}(\cdot,\tilde\omega)\Big)\in L^2_{\mathrm{uloc}}(\Rd;\Rd[]{\times}\Rd)
\end{align}
exists, and is subject to the estimate
\begin{align}
\label{eq:ergodicityLinearizedCorrectorIndHypo}
\sup_{x_0\in\Rd} \limsup_{R\to\infty}\dashint_{B_R(x_0)}
\Big|\Big(\frac{\phi^T_{\xi,B'}(\cdot,\tilde\omega)}{\sqrt{T}},
\nabla\phi^T_{\xi,B'}(\cdot,\tilde\omega)\Big)\Big|^p < \infty
\end{align} 
for all $p\geq 2$.

\textit{Step 3: (Induction step)} Let $L\geq 2$ and $T\in [1,\infty)$
be fixed. Let the requirements and notation 
of (A1), (A2)$_{L{-}1}$ and (A3)$_{L{-}1}$ of Assumption~\ref{assumption:operators} be in place.
Let $\tilde\omega\colon\Rd\to B^n_1$ be a parameter field subject to the condition~\eqref{eq:ergodicityParameterField}.
We finally fix a set of unit vectors $v_1,\ldots,v_L\in\Rd$ and define $B:=v_1\odot\cdots\odot v_L$.

As a consequence of the induction hypothesis~\eqref{eq:ergodicityLinearizedCorrectorIndHypo},
there exists for any $R\geq 1$ a unique Lax-Milgram solution $\phi^{T,R}_{\xi,B}\in H^1(\Rd)$ of
\begin{align*}
&\frac{1}{T}\phi^{T,R}_{\xi,B} 
- \nabla\cdot a_\xi^T\nabla \phi^{T,R}_{\xi,B}
\\&
= \nabla\cdot \mathds{1}_{B_R}\sum_{\substack{\Pi\in\mathrm{Par}\{1,\ldots,L\} \\ \Pi\neq\{\{1,\ldots,L\}\}}}
\partial_\xi^{|\Pi|} A(\tilde\omega,\xi{+}\nabla \phi^T_\xi)
\Big[\bigodot_{\pi\in\Pi}(\mathds{1}_{|\pi|=1}B'_\pi + \nabla \phi^T_{\xi,B'_\pi})\Big].
\end{align*}
An application of the weighted energy estimate~\eqref{eq:expLocalization}
to differences of solutions with respect to the equation from the previous display,
and making use of the induction hypothesis~\eqref{eq:ergodicityLinearizedCorrectorIndHypo} 
shows that the sequence $\big(\frac{\phi^{T,R}_{\xi,B}(\cdot,\tilde\omega)}{\sqrt{T}},
\nabla\phi^{T,R}_{\xi,B}(\cdot,\tilde\omega)\big)_{R\geq 1}$ is Cauchy in the desired function space
$L^2_{\mathrm{uloc}}(\Rd;\Rd[]{\times}\Rd)$. Details are left to the reader. Moreover, the limit constitutes
the unique distributional solution of the linearized corrector problem~\eqref{eq:PDEhigherOrderLinearizedCorrectorLocalized}
in the required function space. The proof of~\eqref{eq:ergodicityLinearizedCorrector}
follows along the same lines as the argument in favor of~\eqref{eq:ergodicityLinearizedCorrectorBaseCase}.
This in turn concludes the proof of the induction step.

\textit{Step 4: (Existence of linearized flux correctors)}
This is a straightforward consequence of standard arguments relying on the form of the
flux corrector equations~\eqref{eq:PDEhigherOrderLinearizedFluxCorrectorLocalized}
resp.\ \eqref{eq:PDEhigherOrderLinearizedHelmholtzCorrectorLocalized}, the
already established existence and regularity results for linearized homogenization correctors $\phi^T_{\xi,B}$,
and the definition~\eqref{eq:HigherOrderLinearizedFlux} of linearized fluxes $q^T_{\xi,B}$.

\textit{Step 5: (Almost sure existence for random parameter fields)}
As a consequence of the small-scale regularity condition~(R) of Assumption~\ref{assumption:smallScaleReg}
and Birkhoff's ergodic theorem (recall to this end Assumption~\ref{assumption:ensembleParameterFields}),
there exists a subset $\Omega'\subset\Omega$ of full $\Prob$-measure such that all random fields $\omega\in\Omega'$
satisfy the condition~\eqref{eq:ergodicityParameterField}. Hence, the claim on almost sure
existence of linearized correctors for random parameter fields follows immediately from the
previous four steps of this proof. This in turn concludes the 
proof of Lemma~\ref{eq:lemmaExistenceLinearizedCorrectorsShort}.
\end{proof}

\begin{lemma}[G\^ateaux differentiability of localized correctors 
with respect to parameter fields]
\label{eq:lemmaExistenceLinearizedCorrectorsExtended}
Let $L\in\N$ and $T\in [1,\infty)$ be fixed.
Let the requirements and notation of (A1), (A2)$_L$ and (A3)$_L$ of
Assumption~\ref{assumption:operators} be in place.
We also fix a parameter field $\tilde\omega\colon\Rd\to B^1_n$
subject to the condition~\eqref{eq:ergodicityParameterField}.
Consider in addition a smooth parameter field $\delta\tilde\omega\colon\Rd\to B^1_n$ 
being compactly supported in the unit ball $B_1$.

Then, for every $\xi\in\Rd$ and every $B:=v_1\odot\cdots\odot v_L$ formed by unit vectors $v_1,\ldots,v_L\in\Rd$,
the associated linearized homogenization corrector $\phi^T_{\xi,B}(\cdot,\tilde\omega)$ from
Lemma~\ref{eq:lemmaExistenceLinearizedCorrectorsShort} is
G\^ateaux differentiable at $\tilde\omega$ in direction of $\delta\tilde\omega$. The corresponding
G\^ateaux derivative $\delta\phi^T_{\xi,B}(\cdot,\tilde\omega)$ satisfies
\begin{align}
\label{eq:regFromEquGateaux}
\Big(\frac{\delta\phi^T_{\xi,B}(\cdot,\tilde\omega)}{\sqrt{T}},
\nabla\delta\phi^T_{\xi,B}(\cdot,\tilde\omega)\Big)
\in L^2_{\mathrm{uloc}}(\Rd;\Rd[]{\times}\Rd)
\end{align}
as well as
\begin{align}
\label{eq:ergodicityLinearizedCorrectorGateaux}
\sup_{x_0\in\Rd} \limsup_{R\to\infty}\dashint_{B_R(x_0)}
\Big|\Big(\frac{\delta\phi^T_{\xi,B}(\cdot,\tilde\omega)}{\sqrt{T}},
\nabla\delta\phi^T_{\xi,B}(\cdot,\tilde\omega)\Big)\Big|^p < \infty.
\end{align} 
Analogous statements hold true for the linearized flux correctors
$\sigma^T_{\xi,B}(\cdot,\tilde\omega)$ resp.\ $\psi^T_{\xi,B}(\cdot,\tilde\omega)$
from Lemma~\ref{eq:lemmaExistenceLinearizedCorrectorsShort}. 

In particular, under the requirements of 
(A1), (A2)$_L$ and (A3)$_L$ of Assumption~\ref{assumption:operators},
(P1) and (P2) of Assumption~\ref{assumption:ensembleParameterFields}, and 
(R) of Assumption~\ref{assumption:smallScaleReg}, there exists
a set $\Omega'\subset\Omega$ of full $\Prob$-measure on which 
the existence of G\^ateaux derivatives for
(higher-order) linearized correctors is guaranteed
in the above sense for all random parameter fields $\omega\in\Omega'$,
with directions given by all smooth $\delta\omega\colon\Rd\to B^n_1$ 
which are compactly supported in the unit ball $B_1$.
\end{lemma}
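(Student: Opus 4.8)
\textbf{Proof strategy for Lemma~\ref{eq:lemmaExistenceLinearizedCorrectorsExtended}.}
The plan is to mimic the inductive structure of the existence proof in Lemma~\ref{eq:lemmaExistenceLinearizedCorrectorsShort}, differentiating the defining PDEs with respect to the parameter field along $\delta\tilde\omega$ and solving the resulting linear equations in $L^2_{\mathrm{uloc}}$ via the approximation-plus-weighted-energy-estimate scheme that is by now routine in this paper. First I would treat the base case $L=0$: the localized corrector $\phi^T_\xi(\cdot,\tilde\omega)$ of the nonlinear problem~\eqref{eq:PDEMonotoneHomCorrectorLocalized}. Formally differentiating gives
\begin{align*}
\frac{1}{T}\delta\phi^T_\xi - \nabla\cdot a^T_\xi\nabla\delta\phi^T_\xi
= \nabla\cdot\partial_\omega A(\tilde\omega,\xi{+}\nabla\phi^T_\xi)[\delta\tilde\omega],
\end{align*}
with $a^T_\xi=\partial_\xi A(\tilde\omega,\xi{+}\nabla\phi^T_\xi)$ uniformly elliptic and bounded. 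Since $\delta\tilde\omega$ is smooth, compactly supported and $\|\delta\tilde\omega\|_{L^\infty}\le 1$, the right-hand side lies in $L^2$ (indeed in $L^p_{\mathrm{uloc}}$ for all $p$, by (A3)$_0$ and the bound on $\nabla\phi^T_\xi$). Existence and uniqueness of a solution in the function space~\eqref{eq:regFromEquGateaux} follow exactly as in Step~1 of the proof of Lemma~\ref{eq:lemmaExistenceLinearizedCorrectorsShort}: approximate by Lax--Milgram solutions with truncated right-hand side $\mathds{1}_{B_R}(\cdots)$, observe the approximating sequence is Cauchy in $L^2_{\mathrm{uloc}}$ by the weighted energy estimate~\eqref{eq:expLocalization}, and identify the limit distributionally. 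The estimate~\eqref{eq:ergodicityLinearizedCorrectorGateaux} then comes from the local Calder\'on--Zygmund estimate~\eqref{eq:localCalderonZygmund} applied to this equation, feeding in the ``ergodic'' H\"older bound~\eqref{eq:ergodicHoelderRegLinCoefficient} on $a^T_\xi$ established inside the proof of Lemma~\ref{eq:lemmaExistenceLinearizedCorrectorsShort}; this is the same argument as for~\eqref{eq:ergodicityLinearizedCorrectorBaseCase}. Finally one must check that the constructed $\delta\phi^T_\xi$ really is the G\^ateaux derivative, i.e.\ that $\frac{1}{t}(\phi^T_\xi(\cdot,\tilde\omega{+}t\delta\tilde\omega){-}\phi^T_\xi(\cdot,\tilde\omega)){-}\delta\phi^T_\xi\to 0$ in $L^2_{\mathrm{uloc}}$ as $t\to 0$; this follows by subtracting the corrector equations at $\tilde\omega{+}t\delta\tilde\omega$ and $\tilde\omega$, dividing by $t$, subtracting the linearized equation, and running~\eqref{eq:expLocalization} once more, using Lipschitz continuity of $\xi\mapsto\partial_\xi A$ and $\tilde\omega\mapsto A,\partial_\xi A$ from (A2)$_0$, (A3)$_0$ together with the already-available convergence $\nabla\phi^T_\xi(\cdot,\tilde\omega{+}t\delta\tilde\omega)\to\nabla\phi^T_\xi(\cdot,\tilde\omega)$ (itself proved by the same scheme).

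For the induction step, I would fix $L\ge 1$, assume the conclusion for all lower linearization orders, and differentiate~\eqref{eq:PDEhigherOrderLinearizedCorrectorLocalized} with respect to $\tilde\omega$ in direction $\delta\tilde\omega$. This produces precisely equation~\eqref{eq:PDEperturbedLinearizedHomCorrector} (with $\delta\omega$ read as $\delta\tilde\omega$ and $\langle\cdot\rangle$-free): the left-hand side is $\frac{1}{T}\delta\phi^T_{\xi,B}-\nabla\cdot a^T_\xi\nabla\delta\phi^T_{\xi,B}$ and the right-hand side is a divergence of terms built from $\delta\tilde\omega$, from $\nabla\delta\phi^T_\xi$, and from the $\nabla\delta\phi^T_{\xi,B'_\pi}$ for strict sub-partitions $\pi$. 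By the base case and the induction hypothesis, every one of these ingredients already lives in $L^p_{\mathrm{uloc}}$ (for all $p$), so the right-hand side is a divergence of an $L^p_{\mathrm{uloc}}$ field; combined with (A2)$_L$, (A3)$_L$ this is exactly the regularity of the constituents recorded in~\eqref{eq:regAuxSensitivity}. Existence and uniqueness of $(\frac{\delta\phi^T_{\xi,B}}{\sqrt{T}},\nabla\delta\phi^T_{\xi,B})\in L^2_{\mathrm{uloc}}$ then follow verbatim from the truncation-plus-\eqref{eq:expLocalization} argument (this is the content of~\eqref{eq:convAuxSensitivity}), and the higher-integrability bound~\eqref{eq:ergodicityLinearizedCorrectorGateaux} again follows from~\eqref{eq:localCalderonZygmund} plus~\eqref{eq:ergodicHoelderRegLinCoefficient}. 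Verifying that this solution is the genuine G\^ateaux derivative uses the difference-quotient argument sketched above, now additionally invoking the induction hypothesis that $\delta\phi^T_{\xi,B'_\pi}$ is the G\^ateaux derivative of $\phi^T_{\xi,B'_\pi}$.

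The flux correctors $\sigma^T_{\xi,B}$ and $\psi^T_{\xi,B}$ are handled exactly as in Step~4 of the proof of Lemma~\ref{eq:lemmaExistenceLinearizedCorrectorsShort}: differentiating~\eqref{eq:PDEhigherOrderLinearizedFluxCorrectorLocalized} and~\eqref{eq:PDEhigherOrderLinearizedHelmholtzCorrectorLocalized} yields constant-coefficient (massive Laplace-type) equations whose right-hand sides are built from $\delta q^T_{\xi,B}$ and $\nabla\delta\phi^T_{\xi,B}$, both now under control, so existence, uniqueness and the bound~\eqref{eq:ergodicityLinearizedCorrectorGateaux} are immediate; here one simply notes that $\delta q^T_{\xi,B}$ is given by the explicit expression~\eqref{eq:PerturbedLinearizedFlux}, each summand of which is $L^p_{\mathrm{uloc}}$ by the previous steps and Assumption~\ref{assumption:operators}. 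The final ``in particular'' clause is then just the combination of the small-scale regularity condition~(R) with Birkhoff's ergodic theorem, exactly as in Step~5 of the proof of Lemma~\ref{eq:lemmaExistenceLinearizedCorrectorsShort}: one intersects the full-measure set $\Omega'$ on which~\eqref{eq:ergodicityParameterField} holds with the full-measure set on which the correctors themselves exist, and on the resulting set the above deterministic construction applies for every admissible $\delta\omega$. I do not expect a genuine obstacle here---the whole lemma is a ``differentiate the PDE and re-run the existence machinery'' argument---but the one place requiring a little care is the justification that the solution of the linearized (variational) equation coincides with the actual G\^ateaux derivative; this forces one to prove, simultaneously and inductively in $L$, both the strong $L^2_{\mathrm{uloc}}$-convergence $\nabla\phi^T_{\xi,B}(\cdot,\tilde\omega{+}t\delta\tilde\omega)\to\nabla\phi^T_{\xi,B}(\cdot,\tilde\omega)$ as $t\to 0$ and the convergence of the difference quotients, both of which again reduce to the weighted energy estimate~\eqref{eq:expLocalization} but must be threaded through the induction in the right order.
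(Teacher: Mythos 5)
Your proposal is correct and follows essentially the same route as the paper: differentiate the (linearized) corrector equations in the direction $\delta\tilde\omega$, construct the solution of the resulting linear equation by the truncation-plus-weighted-energy-estimate scheme of Lemma~\ref{eq:lemmaExistenceLinearizedCorrectorsShort}, and identify it as the G\^ateaux derivative by estimating the first-order Taylor expansion $\phi^T_{\xi,B}(\cdot,\tilde\omega{+}h\delta\tilde\omega)-\phi^T_{\xi,B}(\cdot,\tilde\omega)-\delta\phi^T_{\xi,B}(\cdot,\tilde\omega)h$ via~\eqref{eq:expLocalization} and an induction over the linearization order. The point you flag as needing care (ordering the induction so that continuity and differentiability of the lower-order correctors are available) is exactly how the paper's argument is organized, so no gap remains.
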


\begin{proof}
By an induction over the linearization order $0\leq l\leq L$, one 
may provide solutions with the regularity~\eqref{eq:regFromEquGateaux}
and~\eqref{eq:ergodicityLinearizedCorrectorGateaux} to the equations
obtained from the linearized corrector problem~\eqref{eq:PDEhigherOrderLinearizedCorrectorLocalized}
by formally differentiating with respect to the parameter field. Indeed,
by the same arguments as in the proof of Lemma~\ref{eq:lemmaExistenceLinearizedCorrectorsShort},
there exists a unique solution $(\frac{\delta\phi^T_{\xi,B}(\cdot,\tilde\omega)}{\sqrt{T}},
\nabla\delta\phi^T_{\xi,B}(\cdot,\tilde\omega))\in L^2_{\mathrm{uloc}}(\Rd;\Rd[]{\times}\Rd)$ 
with the additional regularity~\eqref{eq:ergodicityLinearizedCorrectorGateaux} to the equation
\begin{align*}
&\frac{1}{T}\delta\phi^T_{\xi,B} - 
\nabla\cdot \partial_\xi A(\tilde\omega,\xi+\nabla\phi^T_\xi)\nabla\delta\phi^T_{\xi,B}
\\&
= \nabla\cdot\partial_\omega\partial_\xi A(\tilde\omega,\xi+\nabla\phi^T_\xi)
\big[\delta\tilde\omega\odot\nabla \phi^T_{\xi,B}\big]
\\&~~~
+ \nabla\cdot\partial_\xi^2 A(\tilde\omega,\xi+\nabla\phi^T_\xi)
\big[\nabla\delta\phi^T_\xi\odot\nabla \phi^T_{\xi,B}\big]
\\&~~~
+\nabla\cdot \sum_{\Pi} \partial_\omega\partial_\xi^{|\Pi|} A(\tilde\omega,\xi{+}\nabla \phi^T_\xi)
\Big[\delta\tilde\omega\odot\bigodot_{\pi\in\Pi}(\mathds{1}_{|\pi|=1}B'_\pi {+} \nabla \phi^T_{\xi,B'_\pi})\Big]
\\&~~~
+\nabla\cdot \sum_{\Pi} \partial_\xi^{1+|\Pi|} A(\tilde\omega,\xi{+}\nabla \phi^T_\xi)
\Big[\nabla\delta\phi_\xi^T\odot\bigodot_{\pi\in\Pi}(\mathds{1}_{|\pi|=1}B'_\pi {+} \nabla \phi^T_{\xi,B'_\pi})\Big]
\\&~~~
+\nabla\cdot \sum_{\Pi} \partial_\xi^{|\Pi|} A(\tilde\omega,\xi{+}\nabla \phi^T_\xi)
\Big[\sum_{\pi\in\Pi}\nabla\delta\phi^T_{\xi,B'_\pi}\odot
\bigodot_{\substack{{\pi'\in\Pi} \\ \pi'\neq \pi}}
(\mathds{1}_{|\pi'|=1}B'_{\pi'} {+} \nabla \phi^T_{\xi,B'_{\pi'}})\Big].
\end{align*}
In order to identify $\delta\phi^T_{\xi,B}(\cdot,\tilde\omega)$ as the
G\^ateaux derivative of $\phi^T_{\xi,B}(\cdot,\tilde\omega)$ in direction
of the compactly supported and smooth perturbation $\delta\tilde\omega$,
one proceeds as follows. For any $|h|\leq 1$, note that $\tilde\omega + h\delta\tilde\omega$
also satisfies~\eqref{eq:ergodicityParameterField}. In particular, for any $|h|\leq 1$
one may construct a linearized homogenization corrector $\phi^T_{\xi,B}(\cdot,\tilde\omega + h\delta\tilde\omega)$
in the precise sense of Lemma~\ref{eq:lemmaExistenceLinearizedCorrectorsShort}.
Based on that observation, the next step consists of studying the equation
satisfied by the ``first-order Taylor expansion'' $$\phi^T_{\xi,B}(\cdot,\tilde\omega + h\delta\tilde\omega)
- \phi^T_{\xi,B}(\cdot,\tilde\omega) - \delta\phi^T_{\xi,B}(\cdot,\tilde\omega)h.$$
As a consequence of~(A2)$_L$ and~(A3)$_L$ from Assumption~\ref{assumption:operators},
the weighted energy estimate~\eqref{eq:expLocalization} applied to the equation satisfied
by the expansion from the previous display, and an induction over the linearization
order $0\leq l\leq L$ we obtain by straightforward computations that
\begin{align*}
\sup_{x_0\in\Rd} \Big\|\frac{\phi^T_{\xi,B}(\cdot,\tilde\omega {+} h\delta\tilde\omega)
{-} \phi^T_{\xi,B}(\cdot,\tilde\omega) {-} \delta\phi^T_{\xi,B}(\cdot,\tilde\omega)h}{\sqrt{T}}\Big\|_{L^2(B_1(x_0))}
&= o(|h|),
\\
\sup_{x_0\in\Rd} \Big\|\nabla\phi^T_{\xi,B}(\cdot,\tilde\omega {+} h\delta\tilde\omega)
{-} \nabla\phi^T_{\xi,B}(\cdot,\tilde\omega) {-} \nabla\delta\phi^T_{\xi,B}(\cdot,\tilde\omega)h\Big\|_{L^2(B_1(x_0))}
&= o(|h|).
\end{align*}
This in turn entails the asserted differentiability result for the
linearized homogenization corrector $\phi^T_{\xi,B}(\cdot,\tilde\omega)$.

Finally, the claims from the statement of Lemma~\ref{eq:lemmaExistenceLinearizedCorrectorsExtended}
concerning linearized flux correctors and random parameter fields now follow as in
the proof of Lemma~\ref{eq:lemmaExistenceLinearizedCorrectorsShort}.
\end{proof}

\section{Corrector bounds for higher-order linearizations: Proofs for the base cases}
\label{app:baseCaseInd}

\subsection{Proof of Proposition~\ref{prop:estimates1stOrderCorrector}
{\normalfont (Estimates for localized homogenization correctors of the nonlinear problem)}}
The corrector estimates~\eqref{eq:BaseCaseCorrectorBounds} hold true by a combination
of \cite[Lemma~17a), Lemma~19, Estimate~(111)]{Fischer2019}. The small-scale
annealed Schauder estimate~\eqref{eq:BaseCaseAnnealedSchauder} was already established
in Lemma~\ref{lem:annealedRegCorrectorNonlinear}.

For a proof of~\eqref{eq:BaseCaseRepresentationMalliavinDerivative}
and~\eqref{eq:BaseCaseSensitivityBound}, let $\delta\omega\colon\Rd\to \Rd[n]$ 
be compactly supported and smooth with $\|\delta\omega\|_{L^\infty}\leq 1$. Differentiating the defining 
equation~\eqref{eq:PDEMonotoneHomCorrectorLocalized} for the localized homogenization
corrector with respect to the parameter field in the direction of $\delta\omega$ yields
\begin{align*}
\frac{1}{T}\delta\phi^T_\xi - \nabla\cdot\partial_\xi A(\omega,\xi{+}\nabla\phi^T_\xi)\nabla\delta\phi^T_\xi
= \nabla\cdot\partial_\omega A(\omega,\xi{+}\nabla\phi^T_\xi)\delta\omega,
\end{align*}
with $(\frac{\delta\phi^T_\xi}{\sqrt{T}},\nabla\delta\phi^T_\xi)\in L^2(\Rd;\Rd[]{\times}\Rd)$.
By the usual duality argument, we compute for the centered random variable
$F_\phi:=\int g\cdot\nabla\phi^T_\xi$ that (with $a^{T,*}_\xi$ denoting the
transpose of the uniformly elliptic and bounded coefficient field 
$\partial_\xi A(\omega,\xi{+}\nabla\phi^T_\xi)$)
\begin{align*}
\delta F_\phi &= \int g\cdot\nabla\delta\phi^T_\xi
= \int \partial_\omega A(\omega,\xi{+}\nabla\phi^T_\xi)\delta\omega \cdot
\nabla\Big(\frac{1}{T}{-}\nabla\cdot a^{T,*}_\xi\nabla\Big)^{-1}(\nabla\cdot g).
\end{align*}
This proves~\eqref{eq:BaseCaseRepresentationMalliavinDerivative}
as by means of~(A3)$_0$ of Assumption~\ref{assumption:operators} it holds
\begin{align}
\label{eq:dualRandomFieldSensitivity}
G^T_\xi := \big(\partial_\omega A(\omega,\xi{+}\nabla\phi^T_\xi)\big)^*
\nabla\Big(\frac{1}{T}{-}\nabla\cdot a^{T,*}_\xi\nabla\Big)^{-1}(\nabla\cdot g)
\in L^1_{\mathrm{uloc}}(\Rd;\Rd[n]).
\end{align}
From the previous display, it follows by duality in $L^q_{\langle\cdot\rangle}$, the stationarity
of the localized homogenization corrector $\phi^T_\xi$, and H\"older's inequality that
\begin{align*}
&\bigg\langle\bigg|\,\int\bigg(
\,\dashint_{B_1(x)}|G^T_{\xi}|\,\bigg)^2\bigg|^{q}\bigg\rangle^\frac{1}{q}
\\&
\leq C^2q^2\big\langle\big\|\xi{+}\nabla\phi^T_\xi\big\|^{2\frac{q}{\kappa}}_{L^2(B_1)}
\big\rangle^\frac{\kappa}{q}
\sup_{\langle F^{2q_*} = 1 \rangle}
\int\Big\langle\Big|\nabla\Big(\frac{1}{T}{-}\nabla\cdot a^{T,*}_\xi\nabla\Big)^{-1}
(\nabla\cdot Fg)\Big|^{2(\frac{q}{\kappa})_*}\Big\rangle^\frac{1}{(\frac{q}{\kappa})_*}.
\end{align*}
For large enough $q\in [1,\infty)$, we may then apply the annealed Calder\'on--Zygmund
estimate from~\eqref{eq:annealedCZMeyers} to infer from the previous display that
\begin{align*}
&\bigg\langle\bigg|\,\int\bigg(
\,\dashint_{B_1(x)}|G^T_{\xi}|\,\bigg)^2\bigg|^{q}\bigg\rangle^\frac{1}{q}
\\&
\leq C^2q^2\big\langle\big\|\xi{+}\nabla\phi^T_\xi\big\|^{2(q/\kappa)}_{L^2(B_1)}
\big\rangle^\frac{1}{q/\kappa}\sup_{\langle F^{2q_*}\rangle=1}
\int\big\langle|Fg|^{2(q/\kappa)_*}\big\rangle^\frac{1}{(q/\kappa)_*}.
\end{align*}
The estimate~\eqref{eq:BaseCaseSensitivityBound} thus follows from
the corrector estimates~\eqref{eq:BaseCaseCorrectorBounds}. 

Finally, denote by $G^{T,r}_\xi$ the random field with values in
$L^1_{\mathrm{uloc}}(\Rd;\Rd[n])$ defined by~\eqref{eq:dualRandomFieldSensitivity}, however with
$g$ replaced by $g_r$. Since
\begin{align*}
\nabla\Big(\frac{1}{T}{-}\nabla\cdot a^{T,*}_\xi\nabla\Big)^{-1}(\nabla\cdot g_r)
\to \nabla\Big(\frac{1}{T}{-}\nabla\cdot a^{T,*}_\xi\nabla\Big)^{-1}(\nabla\cdot g) \in L^2_{\mathrm{uloc}}(\Rd;\Rd)
\end{align*}
it follows that $G^{T,r}_\xi$ converges in $L^1_{\mathrm{uloc}}(\Rd;\Rd[n])$ to
the random field $G^{T}_\xi$ defined by~\eqref{eq:dualRandomFieldSensitivity}.
By Fatou's lemma and the estimate~\eqref{eq:BaseCaseSensitivityBound} being
already established for $G^{T,r}_\xi$, $r\geq 1$, we then infer that
\begin{align*}
\bigg\langle\bigg|\,\int\bigg(
\,\dashint_{B_1(x)}|G^T_{\xi}|\,\bigg)^2\bigg|^{q}\bigg\rangle^\frac{1}{q}
&\leq \liminf_{r\to\infty} \bigg\langle\bigg|\,\int\bigg(
\,\dashint_{B_1(x)}|G^{T,r}_{\xi}|\,\bigg)^2\bigg|^{q}\bigg\rangle^\frac{1}{q}
\\&
\leq C^2q^{2C}|\xi|^2\sup_{\langle F^{2q_*}\rangle=1}
\int\big\langle|Fg|^{2(q/\kappa)_*}\big\rangle^\frac{1}{(q/\kappa)_*}.
\end{align*}  
This in turn concludes the proof of Proposition~\ref{prop:estimates1stOrderCorrector}. \qed

\subsection{Estimates for differences of localized homogenization correctors of the nonlinear problem}
We next turn to a result which provides a proof of the base case 
for the induction in the proof of Lemma~\ref{lem:differencesLinearizedCorrectors}.

\begin{lemma}[Estimates for differences of localized homogenization correctors of the nonlinear problem]
\label{prop:estimatesDiffHomCorrectorNonlinear}
Let the requirements and notation of (A1), (A2)$_1$ and (A3)$_1$ of
Assumption~\ref{assumption:operators}, (P1) and (P2) of
Assumption~\ref{assumption:ensembleParameterFields},
and~(R) of Assumption~\ref{assumption:smallScaleReg} be in place.
Let $T\in [1,\infty)$ and $M>0$ be fixed. For any vector $\xi\in\Rd$ let 
\begin{align*}
\phi^T_{\xi}\in H^1_{\mathrm{uloc}}(\Rd)
\end{align*}
denote the unique solution of the localized corrector problem~\eqref{eq:PDEMonotoneHomCorrectorLocalized}.
For any unit vector $e\in\Rd$ and any $|h|\leq 1$, the
difference of localized homogenization correctors $\phi^T_{\xi+he}-\phi^T_\xi$ 
then satisfies the following estimates:
\begin{itemize}[leftmargin=0.4cm]
\item There exists a constant $C=C(d,\lambda,\Lambda,\nu,\rho,\eta,M)$ such that for
all $|\xi|\leq M$, all $q\in [1,\infty)$, and all compactly supported and square-integrable $f,g$ we have 
\emph{corrector estimates for differences}
\begin{equation}
\label{eq:BaseCaseCorrectorBoundsDiff}
\begin{aligned}
\bigg\langle\bigg|\int g\cdot\big(\nabla\phi^T_{\xi+he}{-}\nabla\phi^T_{\xi}\big)
\bigg|^{2q}\bigg\rangle^\frac{1}{q}
&\leq C^2q^{2C}|h|^{2}\int \big|g\big|^2,
\\ 
\bigg\langle\bigg|\int \frac{1}{T}f \big(\phi^T_{\xi+he}{-}\phi^T_{\xi}\big)\bigg|^{2q}\bigg\rangle^\frac{1}{q}
&\leq C^2q^{2C}|h|^{2}\int \Big|\frac{f}{\sqrt{T}}\Big|^2,
\\
\Big\langle\Big\|\Big(\frac{\phi^T_{\xi+he}{-}\phi^T_{\xi}}{\sqrt{T}},
\nabla\phi^T_{\xi+he}{-}\nabla\phi^T_{\xi}\Big)\Big\|^{2q}_{L^2(B_1)}\Big\rangle^\frac{1}{q}
&\leq C^2q^{2C}|h|^{2}.
\end{aligned}
\end{equation}
\begin{subequations}
\item Fix $p\in (2,\infty)$, and let $g$ be a compactly supported and $p$-integrable random field. 
						Then there exists a random field
						$G_{\xi,h,e}^T\in L^1_{\mathrm{uloc}}(\Rd;\Rd[n])$ being related to $g$ via $\phi^T_{\xi+he}{-}\phi^T_{\xi}$ 
						in the sense that, $\Prob$-almost surely, it holds for all compactly supported and smooth perturbations 
						$\delta\omega\colon\Rd\to \Rd[n]$ with $\|\delta\omega\|_{L^\infty}\leq 1$
						\begin{align}
						\label{eq:BaseCaseRepresentationMalliavinDerivativeDiff}
						\int g\cdot\nabla\big(\delta\phi^T_{\xi+he}{-}\delta\phi^T_{\xi}\big) 
						= \int G^T_{\xi,h,e}\cdot\delta\omega.
						\end{align}
						
						For any $\kappa\in (0,1]$, there then exists 
						a constant $C=C(d,\lambda,\Lambda,\nu,\rho,M,\eta,\kappa)$
						such that for all $|\xi|\leq M$, and all $q\in [1,\infty)$ the random field
						$G^T_{\xi,h,e}$ gives rise to a \emph{sensitivity estimate for differences of correctors}
						\begin{align}
						\label{eq:BaseCaseSensitivityBoundDiff}
						\bigg\langle\bigg|\,\int\bigg(
						\,\dashint_{B_1(x)}|G^T_{\xi,h,e}|\,\bigg)^2\bigg|^{q}\bigg\rangle^\frac{1}{q}
						&\leq C^2q^{2C}|h|^{2}\sup_{\langle F^{2q_*}\rangle=1}
						\int\big\langle|Fg|^{2(\frac{q}{\kappa})_*}\big\rangle^\frac{1}{(\frac{q}{\kappa})_*}.
						\end{align}
						
						If $(g_r)_{r\geq 1}$ is a sequence of compactly supported and $p$-integrable random fields, denote by
						$G_{\xi,h,e}^{T,r}\in L^1_{\mathrm{uloc}}(\Rd;\Rd[n])$, $r\geq 1$, the random field
						associated to $g_r$, $r\geq 1$, in the sense of~\eqref{eq:BaseCaseRepresentationMalliavinDerivativeDiff}.
						Let $g$ be an $L^p_{\mathrm{uloc}}(\Rd;\Rd)$-valued random field, and assume that
						$\Prob$-almost surely it holds $g_r\to g$ in $L^p_{\mathrm{uloc}}(\Rd;\Rd)$.
						Then there exists a random field $G^T_{\xi,h,e}$ such that $\Prob$-almost surely
						\begin{align}
						\label{eq:BaseCaseSensitivityApproxDiff}
						G_{\xi,h,e}^{T,r} \to G_{\xi,h,e}^{T} \text{ as } r\to\infty 
						\text{ in } L^1_{\mathrm{uloc}}(\Rd;\Rd[n]).
						\end{align}
						In the special case of $g_r = \mathds{1}_{B_r}g$, $r\geq 1$, the limit random
						field is in addition subject to	the sensitivity estimate~\eqref{eq:BaseCaseSensitivityBoundDiff}.
\end{subequations}
\item There exists an exponent $\alpha=\alpha(d,\lambda,\Lambda)\in (0,\eta)$ and, for any $\beta\in (0,1)$, 
some constant $C=C(d,\lambda,\Lambda,\nu,\rho,\eta,M,\beta)$
such that for all $|\xi|\leq M$, and all $q\in [1,\infty)$ we have a 
\emph{small-scale annealed Schauder estimate} of the form
\begin{align}
\label{eq:BaseCaseAnnealedSchauderDiff}
\big\langle\big\|\nabla\phi^T_{\xi+he}{-}\nabla\phi^T_{\xi}\big\|^{2q}_{C^\alpha(B_1)}\big\rangle^\frac{1}{q}
\leq C^2q^{2C}|h|^{2}.
\end{align}
\end{itemize}
\end{lemma}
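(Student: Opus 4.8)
\textbf{Proof proposal for Lemma~\ref{prop:estimatesDiffHomCorrectorNonlinear}.}
The plan is to mirror, at the level of the nonlinear corrector with massive term, the same three-step structure used for the induction step of Lemma~\ref{lem:differencesLinearizedCorrectors}, but with the simplification that no lower-order linearized correctors are present. First I would write the equation satisfied by $\phi^T_{\xi+he}-\phi^T_\xi$: subtracting~\eqref{eq:PDEMonotoneHomCorrectorLocalized} at $\xi+he$ and $\xi$ and adding zero yields
\begin{align*}
&\frac{1}{T}(\phi^T_{\xi+he}{-}\phi^T_\xi)
-\nabla\cdot a^T_\xi(\nabla\phi^T_{\xi+he}{-}\nabla\phi^T_\xi)
\\&
= \nabla\cdot\big(A(\omega,\xi{+}he{+}\nabla\phi^T_{\xi+he}){-}A(\omega,\xi{+}\nabla\phi^T_{\xi+he})\big)
=: \nabla\cdot R^T_{\xi,h,e},
\end{align*}
where $a^T_\xi=\partial_\xi A(\omega,\xi{+}\nabla\phi^T_\xi)$. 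By (A2)$_1$, $|R^T_{\xi,h,e}|\lesssim_\Lambda|h|$ pointwise (and the analogous $C^\alpha(B_1)$ bound follows from the annealed H\"older regularity~\eqref{eq:annealedHoelderRegLinCoefficient} together with (A2)$_1$, (A3)$_1$ and (R)). This is the decisive structural difference from the inductive step: because the right-hand side is a genuine \emph{difference of fluxes}, it is directly of size $|h|$ and carries the clean exponent $2$ rather than $2(1{-}\beta)$; there is no loss $\beta$ to propagate.

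For the $L^2(B_1)$ estimate~\eqref{eq:BaseCaseCorrectorBoundsDiff}, I would follow verbatim the buckling argument in \textit{Step~3} of the proof of Lemma~\ref{lem:differencesLinearizedCorrectors}: apply the hole filling estimate~\eqref{eq:holeFillingEstimate} followed by Caccioppoli~\eqref{eq:Caccioppoli} on a ball of radius $R$, then reproduce the passage from~\eqref{eq:case2Aux10} to~\eqref{eq:case2Aux1} (Poincar\'e plus mollification on scale $\theta R$) and absorb, reducing matters to controlling $\dashint_{B_{\theta R}}\nabla(\phi^T_{\xi+he}{-}\phi^T_\xi)$, $\dashint_{B_{\theta R}}\tfrac{1}{\sqrt T}(\phi^T_{\xi+he}{-}\phi^T_\xi)$, and $\big\langle|\dashint_{B_1}|R^T_{\xi,h,e}|^4|^{q/2}\big\rangle^{1/q}\lesssim C^2q^{2C}|h|^2$. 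The first two averages are linear functionals, handled by the sensitivity/spectral-gap argument of \textit{Step~4} below; then, choosing $R$ of the form $R^\delta\sim C^2q^{2C}/(1\wedge\langle\|\cdot\|_{L^2(B_1)}^{2q}\rangle^{(1-(1-\tau)^3)/q})$ as in~\eqref{eq:intermediateDiff4}, and using the already-available corrector bounds~\eqref{eq:BaseCaseCorrectorBounds} to interpolate the exponent $\tfrac{q}{(1-\tau)^2}$ down, an absorption yields the claim. For~\eqref{eq:BaseCaseRepresentationMalliavinDerivativeDiff}--\eqref{eq:BaseCaseSensitivityBoundDiff} I would differentiate the displayed equation in $\omega$ in direction $\delta\omega$ (legitimate by Lemma~\ref{eq:lemmaExistenceLinearizedCorrectorsExtended}), obtaining a PDE for $\delta\phi^T_{\xi+he}{-}\delta\phi^T_\xi$ whose right-hand side, after adding zero, splits into three groups of terms: one proportional to $\nabla(\phi^T_{\xi+he}{-}\phi^T_\xi)$ with a bounded coefficient (this produces the factor $\langle\|\nabla(\phi^T_{\xi+he}{-}\phi^T_\xi)\|_{L^2(B_1)}^{2q}\rangle^{1/q}$ once one uses the already-proven $L^2$ difference bound), one proportional to differences of $\partial_\omega\partial_\xi A$ or $\partial_\xi^2 A$ evaluated at $\xi+he,\nabla\phi^T_{\xi+he}$ versus $\xi,\nabla\phi^T_\xi$ (bounded by $|h|$ via (A2)$_1$, (A3)$_1$, (A4)$_1$ and~\eqref{eq:BaseCaseCorrectorBoundsDiff}), and one proportional to $\nabla\delta\phi^T_\xi$ or $\nabla(\delta\phi^T_{\xi+he}{-}\delta\phi^T_\xi)$, the latter being absorbed. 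A duality argument with $(\tfrac1T-\nabla\cdot a^{T,*}_\xi\nabla)^{-1}$, the spectral gap inequality~\eqref{eq:spectralGapHigherMoments}, the base-case sensitivity estimate~\eqref{eq:BaseCaseSensitivityBound} for $\phi^T_\xi$ and $\phi^T_{\xi+he}$, and the perturbative annealed Calder\'on--Zygmund estimate~\eqref{eq:annealedCZMeyers} then give~\eqref{eq:BaseCaseSensitivityBoundDiff}; the approximation statement~\eqref{eq:BaseCaseSensitivityApproxDiff} follows as in the base case of Proposition~\ref{prop:estimates1stOrderCorrector} by Meyers-convergence of $\nabla(\tfrac1T-\nabla\cdot a^{T,*}_\xi\nabla)^{-1}(\nabla\cdot g_r)$ and Fatou.

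Finally,~\eqref{eq:BaseCaseAnnealedSchauderDiff} follows from the local Schauder estimate~\eqref{eq:localSchauder} applied to the difference equation: the coefficient is $a^T_\xi$, whose $C^\alpha(B_2)$ norm has stretched exponential moments by~\eqref{eq:annealedHoelderRegLinCoefficient}, the right-hand side is $R^T_{\xi,h,e}$ with $\langle\|R^T_{\xi,h,e}\|_{C^\alpha(B_1)}^{2q}\rangle^{1/q}\lesssim C^2q^{2C}|h|^2$, and H\"older's inequality combined with the $L^2(B_1)$ bound~\eqref{eq:BaseCaseCorrectorBoundsDiff} just proven closes the estimate (no buckling is needed here since the $L^2$ bound is already in hand). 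The main obstacle is purely bookkeeping in \textit{Step~4}: one must organize the (lengthy but routine) list of remainder terms in the differentiated equation so that each is visibly either of size $|h|$ or of the form handled by the base-case sensitivity estimate, and verify that the $\tau$-dependent H\"older exponents line up so that the perturbative regime of~\eqref{eq:annealedCZMeyers} applies for all sufficiently large $q$; once that is set up, everything reduces to the arguments already carried out in the proofs of Lemma~\ref{lem:annealedLinFunctionalsIndStep} and Lemma~\ref{lem:differencesLinearizedCorrectors}.
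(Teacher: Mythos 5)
Your displayed difference equation is not correct, and the error propagates. Subtracting the two corrector equations gives
\begin{align*}
\frac{1}{T}(\phi^T_{\xi+he}{-}\phi^T_\xi)
-\nabla\cdot\Big(\int_0^1 \partial_\xi A(\omega,\xi{+}s\nabla\phi^T_{\xi+he}{+}(1{-}s)\nabla\phi^T_\xi)\ds\Big)
(\nabla\phi^T_{\xi+he}{-}\nabla\phi^T_\xi)
= \nabla\cdot R^T_{\xi,h,e},
\end{align*}
i.e.\ the principal coefficient is the \emph{secant} average, not the frozen coefficient $a^T_\xi=\partial_\xi A(\omega,\xi{+}\nabla\phi^T_\xi)$. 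If you insist on $a^T_\xi$ on the left, the right-hand side must in addition carry $A(\omega,\xi{+}\nabla\phi^T_{\xi+he})-A(\omega,\xi{+}\nabla\phi^T_\xi)-a^T_\xi(\nabla\phi^T_{\xi+he}{-}\nabla\phi^T_\xi)$, which by (A2)$_1$ is only controlled by $\Lambda|\nabla\phi^T_{\xi+he}{-}\nabla\phi^T_\xi|^2$ --- quadratic in the very quantity you are estimating, hence not manifestly $O(|h|)$ and requiring an extra buckling step you do not set up. This is exactly the point the paper's proof exploits: in the secant form the coefficient is still uniformly elliptic, bounded, and annealed-H\"older regular (via Lemma~\ref{lem:annealedRegCorrectorNonlinear} applied at both $\xi$ and $\xi{+}he$), and the right-hand side $\int_0^1\partial_\xi A(\omega,\xi{+}\nabla\phi^T_{\xi+he}{+}she)\ds\, he$ is genuinely of size $|h|$. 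A related symptom of the wrong frozen-coefficient setup is your appeal to (A4)$_1$ in the sensitivity step: the lemma only assumes (A1), (A2)$_1$, (A3)$_1$, and in the secant formulation the $\omega$-derivative never produces differences of $\partial_\xi^2 A$ or $\partial_\omega\partial_\xi A$ at two distinct arguments, so (A4)$_1$ is not needed (nor available).

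Beyond this, your route also diverges from the paper's in scope: the paper does not rederive the $L^2$ and linear-functional difference bounds~\eqref{eq:BaseCaseCorrectorBoundsDiff} at all --- it imports them from Fischer--Neukamm (Lemmas~20, 21a), 23 and Proposition~14 of \cite{Fischer2019}, observing that they survive under the weaker small-scale condition~(R) thanks to Lemma~\ref{lem:annealedHoelderRegLinCoefficient} --- and then only proves the Schauder estimate~\eqref{eq:BaseCaseAnnealedSchauderDiff} from the secant-form equation and sketches the sensitivity assertions. Your plan to re-run the full buckling machinery of Lemma~\ref{lem:differencesLinearizedCorrectors} in the base case is a viable self-contained alternative once the equation is corrected, but it is substantially longer than what the paper actually does.
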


\begin{proof}
The estimates~\eqref{eq:BaseCaseCorrectorBoundsDiff} follow from a combination
of the qualitative differentiability result~\cite[Lemma~20]{Fischer2019}
with~\cite[Lemma~21a), Lemma~23, Proposition~14]{Fischer2019}. Note that
these results are even available under the weaker small-scale regularity condition~(R) from
Assumption~\ref{assumption:smallScaleReg} as we still have
annealed H\"older regularity of the linearized coefficient fields
$\partial_\xi A(\omega,\xi+\nabla\phi^T_\xi)$ at our disposal,
see Lemma~\ref{lem:annealedHoelderRegLinCoefficient}.
For a proof of the remaining assertions, note that the equation for the difference of
localized correctors is given by
\begin{align*}
&\frac{1}{T}(\phi^T_{\xi+he} - \phi^T_\xi)
- \nabla\cdot\big\{A(\omega,\xi{+}\nabla\phi^T_{\xi+he})
- A(\omega,\xi{+}\nabla\phi^T_\xi)\big\}
\\
&= - \nabla\cdot\big\{A(\omega,\xi{+}\nabla\phi^T_{\xi+he})
- A(\omega,\xi{+}\nabla\phi^T_{\xi+he}{+}he)\big\}.
\end{align*}
By means of~(A2)$_{0}$ from Assumption~\ref{assumption:operators}, we may express
the equation for the difference in equivalent form as follows:
\begin{align*}
&\frac{1}{T}(\phi^T_{\xi+he} - \phi^T_\xi)
- \nabla\cdot\bigg(\int \partial_\xi A(\omega,\xi{+}s\nabla\phi^T_{\xi+he}{+}(1{-}s)\nabla\phi^T_\xi)\ds\bigg)
(\nabla\phi^T_{\xi+he}-\nabla\phi^T_\xi)
\\
&= - \nabla\cdot\int \partial_\xi A(\omega,\xi{+}\nabla\phi^T_{\xi+he}{+}she) \ds\, he.
\end{align*}

The coefficient in the equation of the previous display is uniformly elliptic and bounded
with respect to the constants $(\lambda,\Lambda)$ from Assumption~\ref{assumption:operators},
and by means of Lemma~\ref{lem:annealedHoelderRegLinCoefficient} H\"older continuous with
an annealed estimate for the associated H\"older norm of the form~\eqref{eq:annealedHoelderRegLinCoefficient}.
In particular, applying the local Schauder estimate~\eqref{eq:localSchauder}
to the equation from the previous display in combination with the corrector
estimates~\eqref{eq:BaseCaseCorrectorBoundsDiff} implies the small-scale annealed Schauder
estimate~\eqref{eq:BaseCaseAnnealedSchauderDiff} for differences of localized correctors.

The remaining two assertions~\eqref{eq:BaseCaseRepresentationMalliavinDerivativeDiff}
and~\eqref{eq:BaseCaseSensitivityBoundDiff} follow by an argument similar to the
proof of~\eqref{eq:BaseCaseRepresentationMalliavinDerivative}
and~\eqref{eq:BaseCaseSensitivityBound}. Details are left to the interested reader.
\end{proof}

\subsection{Differentiability of localized homogenization correctors of the nonlinear problem}
The base case of the induction in the proof of Lemma~\ref{lem:diffMassiveApprox}
is covered by the following result.

\begin{lemma}[Differentiability of localized homogenization correctors of the nonlinear problem]
\label{prop:estimatesRegHomCorrectorNonlinear}
Let the requirements and notation of (A1), (A2)$_1$ and (A3)$_1$ of
Assumption~\ref{assumption:operators}, (P1) and (P2) of
Assumption~\ref{assumption:ensembleParameterFields},
and~(R) of Assumption~\ref{assumption:smallScaleReg} be in place.
Let $T\in [1,\infty)$ and $M>0$ be fixed. For any $\xi\in\Rd$ let 
$\phi^T_{\xi}\in H^1_{\mathrm{uloc}}(\Rd)$ resp.\
$\phi^T_{\xi,e} \in H^1_{\mathrm{uloc}}(\Rd)$
denote the unique solutions of the problems~\eqref{eq:PDEMonotoneHomCorrectorLocalized}
resp.\ \eqref{eq:PDEhigherOrderLinearizedCorrectorLocalized}, and let
$q^T_\xi$ resp.\ $q^T_{\xi,e}$ denote the associated fluxes 
from~\eqref{eq:fluxNonlinear} resp.\ \eqref{eq:HigherOrderLinearizedFlux}.

For any unit vector $e\in\Rd$ and any $|h|\leq 1$, the first-order Taylor expansion
of localized homogenization correctors $\phi^T_{\xi+he}{-}\phi^T_\xi{-}\phi^T_{\xi,e}h$
then satisfies the following estimate: there exists a constant $C=C(d,\lambda,\Lambda,\nu,\rho,\eta,M)$
such that for all $|\xi|\leq M$ it holds
\begin{align}
\label{eq:BaseCaseTaylor1}
\big\langle\big\|\phi^T_{\xi+he}{-}\phi^T_\xi{-}\phi^T_{\xi,e}h\big\|^2_{L^2(B_1)}
\big\rangle &\leq C^2h^{4}.
\end{align}
In particular, the map $\xi\mapsto\nabla\phi^T_{\xi}$ is Fr\'echet differentiable
with values in the Fr\'echet space $L^2_{\langle\cdot\rangle}L^2_{\mathrm{loc}}(\Rd)$.
Finally, we also have the estimate
\begin{align}
\label{eq:BaseCaseRegMassiveVersionHomOperator}
\big|\big\langle q^T_{\xi+he} \big\rangle
- \big\langle q^T_{\xi} \big\rangle
- \big\langle q^T_{\xi,e}h\big\rangle\big|^{2}
&\leq C^2h^{4}.
\end{align}
\end{lemma}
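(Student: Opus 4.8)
\textbf{Proof plan for Lemma~\ref{prop:estimatesRegHomCorrectorNonlinear}.}
The plan is to derive a linear uniformly elliptic PDE for the first-order Taylor remainder $R_{\xi,e,h}^T := \phi^T_{\xi+he}-\phi^T_\xi-\phi^T_{\xi,e}h$, apply the weighted energy estimate~\eqref{eq:expLocalization} to it, and then control the right-hand side in terms of known corrector bounds, using that the right-hand side is at worst quadratic in $h$. First I would recall that existence and (qualitative) differentiability of the relevant objects---$\phi^T_\xi$, $\phi^T_{\xi,e}$, and $\phi^T_{\xi+he}$ for $|h|\leq 1$---are guaranteed $\Prob$-almost surely by Lemma~\ref{eq:lemmaExistenceLinearizedCorrectorsShort} and Lemma~\ref{eq:lemmaExistenceLinearizedCorrectorsExtended}. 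Subtracting equation~\eqref{eq:PDEMonotoneHomCorrectorLocalized} at $\xi+he$ and at $\xi$, and subtracting $h$ times equation~\eqref{eq:firstOrderEquation} for $\phi^T_{\xi,e}$, and adding and subtracting terms so that the leading linear operator is $\frac{1}{T}-\nabla\cdot a_\xi^T\nabla$ with $a_\xi^T=\partial_\xi A(\omega,\xi+\nabla\phi^T_\xi)$, yields
\begin{align*}
\frac{1}{T}R^T_{\xi,e,h}-\nabla\cdot a_\xi^T\nabla R^T_{\xi,e,h} = \nabla\cdot\big(S_1+S_2+S_3\big),
\end{align*}
where (after a first-order Taylor expansion of $\xi\mapsto A(\omega,\xi+p)$ and of $A(\omega,\xi+he+\cdot)$ around $\nabla\phi^T_\xi$, exactly as in~\eqref{eq:repTaylorLinearizedFluxes1} specialized to $L=1$) the remainders $S_i$ are built from: a term $\big(a_{\xi+he}^T-a_\xi^T\big)\big(\nabla\phi^T_{\xi+he}-\nabla\phi^T_\xi\big)$, a term $\big(a_{\xi+he}^T-a_\xi^T-\partial_\xi a_\xi^T[(eh+\nabla\phi^T_{\xi,e}h)]\big)\big(e+\nabla\phi^T_\xi\big)$, and second-order Taylor remainders of the type $\int_0^1\{\partial_\xi^2 A(\omega,\cdots+she)-\partial_\xi^2 A(\omega,\cdots)\}\,ds$ contracted with gradients.

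The key steps, in order, are as follows. Apply~\eqref{eq:expLocalization} to the displayed equation and take expectation, using stationarity of all correctors, to reduce to showing $\langle\|S_i\|_{L^2(B_1)}^2\rangle\leq C^2 h^4$ for $i=1,2,3$. For the term $\big(a_{\xi+he}^T-a_\xi^T\big)\big(\nabla\phi^T_{\xi+he}-\nabla\phi^T_\xi\big)$: split $a_{\xi+he}^T-a_\xi^T$ via~\eqref{eq:regIndStepAux1} into an $he$-increment (bounded by $\Lambda|h|$ using (A2)$_1$) and an increment in the slot $\nabla\phi^T_{\xi+he}-\nabla\phi^T_\xi$ (bounded in $C^\alpha(B_1)$ by $C|h|$ via the small-scale annealed Schauder estimate for differences~\eqref{eq:BaseCaseAnnealedSchauderDiff}), then multiply by the $L^2(B_1)$-bound $C|h|$ for $\nabla\phi^T_{\xi+he}-\nabla\phi^T_\xi$ from~\eqref{eq:BaseCaseCorrectorBoundsDiff}, giving $C^2h^4$ after Hölder. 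For the second term I would use the three-fold decomposition~\eqref{eq:regIndStepAux2} (valid at $L=1$): the first piece is a difference of second derivatives of $A$ evaluated at points $\Lambda|h|$-close, hence $\leq C|h|^2$ pointwise by (A4)$_1$; the second piece contains the factor $\nabla\phi^T_{\xi+he}-\nabla\phi^T_\xi-\nabla\phi^T_{\xi,e}h$, which is exactly the remainder for the \emph{nonlinear} corrector controlled by $C|h|^2$ via~\cite[Lemma~20ff.]{Fischer2019}---and this is precisely the qualitative-differentiability input one needs; the third piece is again a $C^0$-increment of $\partial_\xi^2 A$ at points $O(|h|)$-close (using (A4)$_1$) times $\nabla\phi^T_{\xi,e}h$. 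Combining with Hölder's inequality and the corrector estimates~\eqref{eq:BaseCaseCorrectorBounds} (and their Schauder companion~\eqref{eq:BaseCaseAnnealedSchauder}, available also for $\phi^T_{\xi,e}$ by Lemma~\ref{lem:annealedHoelderRegLinCoefficient} applied to the first-order linearization) yields $C^2h^4$ for these contributions. Note the output estimate is $h^4$ rather than $h^{4(1-\beta)}$ because the base case admits a genuine \emph{second-order} expansion in $h$; the loss of $\beta$ only enters at higher linearization orders through the corrector-difference estimates with their $|h|^{2(1-\beta)}$ rate.

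Once $\langle\|R^T_{\xi,e,h}\|^2_{H^1(B_1)}\rangle\leq C^2h^4$ is established, Fréchet differentiability of $\xi\mapsto\nabla\phi^T_\xi$ in $L^2_{\langle\cdot\rangle}L^2_{\mathrm{loc}}(\Rd)$ with derivative $\nabla\phi^T_{\xi,e}$ in direction $e$ follows immediately. Finally,~\eqref{eq:BaseCaseRegMassiveVersionHomOperator} is obtained by writing $q^T_{\xi+he}-q^T_\xi-q^T_{\xi,e}h$ through the analogue of~\eqref{eq:repTaylorLinearizedFluxes3} at $L=1$: it is a sum of $a_\xi^T$ applied to $\nabla R^T_{\xi,e,h}$ plus exactly the flux-remainder terms $S_i$ bounded above, so $\langle\|q^T_{\xi+he}-q^T_\xi-q^T_{\xi,e}h\|^2_{L^2(B_1)}\rangle\leq C^2h^4$, and Jensen's inequality together with stationarity of the fluxes turns this into the asserted bound on $|\langle q^T_{\xi+he}\rangle-\langle q^T_\xi\rangle-\langle q^T_{\xi,e}h\rangle|^2$. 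I expect the main obstacle to be bookkeeping the Taylor remainders so that every term is genuinely $O(h^2)$ in the relevant norm---in particular correctly isolating the one occurrence of the \emph{nonlinear} corrector remainder $\nabla\phi^T_{\xi+he}-\nabla\phi^T_\xi-\nabla\phi^T_{\xi,e}h$, which cannot be estimated by difference bounds alone and forces one to invoke the qualitative differentiability result of Fischer--Neukamm as the true base case.
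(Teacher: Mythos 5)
Your overall strategy---write a uniformly elliptic equation with massive term for the remainder $R^T_{\xi,e,h}=\phi^T_{\xi+he}-\phi^T_\xi-\phi^T_{\xi,e}h$, apply the weighted energy estimate~\eqref{eq:expLocalization}, use stationarity, and bound the right-hand side by $O(h^4)$ in $L^2_{\langle\cdot\rangle}L^2(B_1)$---is the same as the paper's. However, the decomposition you choose for the right-hand side introduces a genuine gap. You import the induction-step decomposition~\eqref{eq:repTaylorLinearizedFluxes1}--\eqref{eq:regIndStepAux2}, i.e.\ you Taylor-expand the \emph{linearized} coefficient $a^T_\xi=\partial_\xi A(\omega,\xi+\nabla\phi^T_\xi)$. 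The second piece of~\eqref{eq:regIndStepAux2} is $\int_0^1\partial_\xi^2A(\cdots)[\nabla\phi^T_{\xi+he}-\nabla\phi^T_\xi-\nabla\phi^T_{\xi,e}h]\,\mathrm{d}s$ contracted with $e+\nabla\phi^T_\xi$; that is, your right-hand side contains $\nabla R^T_{\xi,e,h}$ itself, multiplied by an order-one factor that cannot be absorbed. You propose to close this by citing the ``qualitative differentiability'' result of Fischer--Neukamm, but a qualitative result supplies only $o(h)$, not the quantitative $O(h^2)$ rate you need---and if the cited result did supply $O(h^2)$, the present lemma would already be proven and your argument would be redundant. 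Either way the central step does not close. In the inductive scheme this term is legitimate precisely because it is controlled by the induction hypothesis~\eqref{eq:indHypoTaylor1} at order $l=0$, whose content is exactly the lemma you are proving here; using it in the base case is circular.

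The fix, which is what the paper does, is to Taylor-expand the nonlinearity $A$ itself (not $\partial_\xi A$) around $(\omega,\xi+\nabla\phi^T_\xi)$ in the full increment $he+\nabla\phi^T_{\xi+he}-\nabla\phi^T_\xi$. The linear part of that expansion combines with the subtracted flux of $\phi^T_{\xi,e}h$ to produce exactly $a^T_\xi\nabla R^T_{\xi,e,h}$, which goes to the left-hand side under the divergence; the right-hand side then consists only of the three terms $R_1,R_2,R_3$ of the paper: a second-order Taylor remainder of $A$ in the $\xi$-slot (pointwise $\lesssim\Lambda h^2$ by the bound on $\partial_\xi^2A$ from (A2)$_1$), a second-order Taylor remainder in the increment $\nabla\phi^T_{\xi+he}-\nabla\phi^T_\xi$ (pointwise $\lesssim\Lambda|\nabla\phi^T_{\xi+he}-\nabla\phi^T_\xi|^2$), and a cross term $\lesssim\Lambda|h||\nabla\phi^T_{\xi+he}-\nabla\phi^T_\xi|$; the last two are $O(h^4)$ after squaring thanks to~\eqref{eq:BaseCaseCorrectorBoundsDiff} and~\eqref{eq:BaseCaseAnnealedSchauderDiff}. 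Note also that your route invokes (A4)$_1$ (Lipschitz continuity of $\partial_\xi^2A$), which is not among the lemma's hypotheses; the correct decomposition needs only (A2)$_1$. Your treatment of~\eqref{eq:BaseCaseRegMassiveVersionHomOperator} via $q^T_{\xi+he}-q^T_\xi-q^T_{\xi,e}h=a^T_\xi\nabla R^T_{\xi,e,h}-\sum_iR_i$, Jensen and stationarity is fine once the correct $R_i$ are in place.
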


\begin{proof}
For a proof of~\eqref{eq:BaseCaseTaylor1}, we start by computing 
the equation for the first-order Taylor expansion
of localized homogenization correctors $\phi^T_{\xi+he}{-}\phi^T_\xi{-}\phi^T_{\xi,e}h$.
Abbreviating $a^T_\xi:=\partial_\xi A(\omega,\xi{+}\nabla\phi^T_\xi)$ and adding zero yields
\begin{align*}
&\frac{1}{T}(\phi^T_{\xi+he}{-}\phi^T_\xi{-}\phi^T_{\xi,e}h)
- \nabla\cdot a^T_\xi(\nabla\phi^T_{\xi+he}{-}\nabla\phi^T_\xi{-}\nabla\phi^T_{\xi,e}h)
\\&
= -\nabla\cdot \Big\{\partial_\xi A(\omega,\xi{+}\nabla\phi^T_\xi)\nabla\phi^T_{\xi+he}
- A(\omega,\xi{+}he{+}\nabla\phi^T_{\xi+he})\Big\}
\\&~~~
+ \nabla\cdot \Big\{\partial_\xi A(\omega,\xi{+}\nabla\phi^T_\xi)\nabla\phi^T_\xi
-	A(\omega,\xi{+}\nabla\phi^T_\xi)\Big\}
\\&~~~
- \nabla\cdot \partial_\xi A(\omega,\xi{+}\nabla\phi^T_\xi)eh.
\end{align*}
Adding zero again, we may rewrite the equation from the previous display
in the following equivalent form
\begin{align}
\label{eq:TaylorEquAux}
\frac{1}{T}(\phi^T_{\xi+he}{-}\phi^T_\xi{-}\phi^T_{\xi,e}h)
- \nabla\cdot a^T_\xi(\nabla\phi^T_{\xi+he}{-}\nabla\phi^T_\xi{-}\nabla\phi^T_{\xi,e}h)
= -\nabla\cdot\sum_{i=1}^3 R_i
\end{align}
with the divergence form right hand side terms given by
\begin{align*}
R_1 &:= - \Big\{A(\omega,\xi{+}he{+}\nabla\phi^T_\xi) - A(\omega,\xi{+}\nabla\phi^T_\xi)
- \partial_\xi A(\omega,\xi{+}\nabla\phi^T_\xi)eh\Big\}
\\
R_2 &:= A(\omega,\xi{+}he{+}\nabla\phi^T_\xi) 
- A(\omega,\xi{+}he{+}\nabla\phi^T_{\xi+he})
\\&~~~~
- \partial_\xi A(\omega,\xi{+}he{+}\nabla\phi^T_\xi)(\nabla\phi^T_\xi - \nabla\phi^T_{\xi+he})
\\
R_3 &:= \big(\partial_\xi A(\omega,\xi{+}\nabla\phi^T_\xi)
- \partial_\xi A(\omega,\xi{+}he{+}\nabla\phi^T_\xi)\big)
(\nabla\phi^T_\xi - \nabla\phi^T_{\xi+he}).
\end{align*}
As a consequence of the weighted energy estimate~\eqref{eq:expLocalization}
applied to equation~\eqref{eq:TaylorEquAux} as well as stationarity we then obtain
\begin{align*}
\big\langle\big\|\phi^T_{\xi+he}{-}\phi^T_\xi{-}\phi^T_{\xi,e}h\big\|^2_{L^2(B_1)}\big\rangle
\leq C^2\sum_{i=1}^3\big\langle\big\|R_i\big\|^2_{L^2(B_1)}\big\rangle.
\end{align*}
Observe that by means of~(A2)$_{0}$ from Assumption~\ref{assumption:operators} we 
may express the right hand side terms of~\eqref{eq:TaylorEquAux} as follows:
\begin{align*}
R_1 &= - \int \Big\{\partial_\xi A(\omega,\xi{+}\nabla\phi^T_\xi{+}she)
- \partial_\xi A(\omega,\xi{+}\nabla\phi^T_\xi)\Big\} eh \ds,
\\
R_2 &= \int \Big\{\partial_\xi A(\omega,\xi{+}he{+}s\nabla\phi^T_{\xi}{+}(1{-}s)\nabla\phi^T_{\xi+he})
- \partial_\xi A(\omega,\xi{+}he{+}\nabla\phi^T_\xi)\Big\} 
\\&~~~~~~~~~~~~~~~~~~~~~~~~~~~~~~~~~~~~~~~~~~~~~~~~~~~~~~~~~~~~~~~~\times
(\nabla\phi^T_\xi - \nabla\phi^T_{\xi+he})\ds,
\\
R_3 &= - \int \partial_\xi^2 A(\omega,\xi{+}\nabla\phi^T_\xi{+}she)
\big[eh\odot (\nabla\phi^T_\xi - \nabla\phi^T_{\xi+he})\big].
\end{align*}
The previous two displays in combination with~(A2)$_{1}$ from Assumption~\ref{assumption:operators},
the corrector estimates for differences~\eqref{eq:BaseCaseCorrectorBoundsDiff},
and H\"older's inequality then imply the asserted estimate~\eqref{eq:BaseCaseTaylor1}.

For a proof of~\eqref{eq:BaseCaseRegMassiveVersionHomOperator}, observe first
that because of stationarity and Jensen's inequality we obtain
\begin{align*}
\big|\big\langle q^T_{\xi+he} \big\rangle
- \big\langle q^T_{\xi} \big\rangle
- \big\langle q^T_{\xi,e}h\big\rangle\big|^{2}
\leq \big\langle\big\|q^T_{\xi+he}-q^T_{\xi}-q^T_{\xi,e}h\big\|^2_{L^2(B_1)}\big\rangle.
\end{align*}
Moreover, by definition~\eqref{eq:fluxNonlinear} resp.\ \eqref{eq:HigherOrderLinearizedFlux}
of the fluxes we have
\begin{align*}
q^T_{\xi+he}-q^T_{\xi}-q^T_{\xi,e}h = 
a^T_\xi(\nabla\phi^T_{\xi+he}{-}\nabla\phi^T_\xi{-}\nabla\phi^T_{\xi,e}h)
- \sum_{i=1}^3 R_i.
\end{align*}
Hence, the above reasoning for the proof of~\eqref{eq:BaseCaseTaylor1} together with
the estimate~\eqref{eq:BaseCaseTaylor1} itself then entails the estimate~\eqref{eq:BaseCaseRegMassiveVersionHomOperator}.
This concludes the proof of Lemma~\ref{prop:estimatesRegHomCorrectorNonlinear}.
\end{proof}

\subsection{Limit passage in massive approximation of the nonlinear corrector problem}
We finally formulate the result covering the base case of the induction in the 
proof of Lemma~\ref{lem:limitMassiveApprox}.

\begin{lemma}[Limit passage in massive approximation of the nonlinear corrector problem]
\label{prop:estimatesLimitHomCorrectorNonlinear}
Let the requirements and notation of (A1), (A2)$_0$ and (A3)$_0$ of
Assumption~\ref{assumption:operators}, as well as (P1) and (P2) of
Assumption~\ref{assumption:ensembleParameterFields} be in place. Let $T\in [1,\infty)$ be fixed. 
For any given vector $\xi\in\Rd$ let $\phi^T_{\xi}\in H^1_{\mathrm{uloc}}(\Rd)$ denote 
the unique solution of~\eqref{eq:PDEMonotoneHomCorrectorLocalized}, 
and let $q^T_\xi$ denote the associated flux from~\eqref{eq:fluxNonlinear}.

There exists a constant $C=C(d,\lambda,\Lambda,\nu,\rho,M)$ such that for all $|\xi|\leq M$ it holds
\begin{align}
\label{eq:BaseCaseConv1}
\big\langle\big\|\nabla\phi^{2T}_{\xi} - \nabla\phi^T_{\xi}
\big\|^{2}_{L^2(B_1)}\big\rangle
&\leq C^2\frac{\mu_*^2(\sqrt{T})}{T}.
\end{align}
In particular, the sequence $(\nabla\phi^T_\xi)_{T\in [1,\infty)}$ is Cauchy 
in $L^2_{\langle\cdot\rangle}L^2_{\mathrm{loc}}(\Rd)$ (with respect
to the strong topology). The limit gives rise to the unique homogenization corrector
of the nonlinear PDE in the sense of Definition~\ref{def:correctorsMonotonePDE}.
Finally, it holds
\begin{align}
\label{eq:BaseCaseConvFluxes}
\big\langle\big|q^{T}_{\xi}-q_{\xi}\big|^2\big\rangle \to 0
\end{align}
with the limiting flux $q_\xi$ defined in~\eqref{eq:PDEMonotoneFlux}.
\end{lemma}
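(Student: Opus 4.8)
The proof of Proposition~\ref{prop:estimatesLimitHomCorrectorNonlinear} is essentially contained in~\cite{Fischer2019}, so the plan is to assemble the relevant pieces and spell out the few parts that require a brief argument.

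\textbf{Step 1: The energy estimate for differences.} First I would subtract the defining equations~\eqref{eq:PDEMonotoneHomCorrectorLocalized} for $\phi^{2T}_\xi$ and $\phi^T_\xi$, adding zero in the nonlinearity so as to write the difference equation in the schematic form
\begin{align*}
\frac{1}{2T}(\phi^{2T}_\xi-\phi^T_\xi)
- \nabla\cdot a^{2T}_\xi(\nabla\phi^{2T}_\xi-\nabla\phi^T_\xi)
= \frac{1}{2T}\phi^T_\xi
- \nabla\cdot\big(A(\omega,\xi{+}\nabla\phi^T_\xi)-A(\omega,\xi{+}\nabla\phi^{2T}_\xi)\big),
\end{align*}
where $a^{2T}_\xi:=\int_0^1\partial_\xi A(\omega,\xi{+}s\nabla\phi^{2T}_\xi{+}(1{-}s)\nabla\phi^T_\xi)\ds$ is uniformly elliptic and bounded by (A1), (A2)$_0$; note the right-hand side nonlinearity term is now $a^{2T}_\xi$ times nothing, so really only the massive term $\frac{1}{2T}\phi^T_\xi$ survives on the right after moving the coefficient term to the left. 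Applying the weighted energy estimate~\eqref{eq:expLocalization} to this equation (whose constituents have at most polynomial growth since $\phi^T_\xi\in H^1_{\mathrm{uloc}}$) and then taking expectations using stationarity yields
\begin{align*}
\big\langle\big\|\nabla\phi^{2T}_\xi-\nabla\phi^T_\xi\big\|^2_{L^2(B_1)}\big\rangle
\leq C^2\frac{1}{T}\big\langle\big\|\nabla\phi^T_\xi\big\|^2_{L^2(B_1)}\big\rangle
+ C^2\frac{1}{T}\bigg\langle\bigg|\,\dashint_{B_1}\phi^T_\xi\bigg|^2\bigg\rangle,
\end{align*}
after adding zero and using Poincaré's inequality to replace $\frac{1}{T}\|\phi^T_\xi\|^2_{L^2(B_1)}$ by $\frac{1}{T}\|\nabla\phi^T_\xi\|^2_{L^2(B_1)}+\frac{1}{T}|\dashint_{B_1}\phi^T_\xi|^2$.

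\textbf{Step 2: Inserting the corrector bounds.} The first right-hand side term is controlled by $C^2/T$ thanks to the corrector bounds~\eqref{eq:BaseCaseCorrectorBounds}. The second term is where the scaling function $\mu_*$ enters: by the first estimate in~\eqref{eq:BaseCaseCorrectorBounds} applied with $f=0$ and $g=g_0$ a fixed solution of $\nabla\cdot g_0=\frac{1}{|B_1|}\mathds{1}_{B_1}-\frac{1}{|B_1(0)|}\mathds{1}_{B_1(0)}$ one recovers $\langle|\dashint_{B_1}\phi^T_\xi|^{2}\rangle\lesssim C^2\mu_*^2(\sqrt{T})$ — in fact it is cleaner here to note $\frac{1}{T}\langle|\dashint_{B_1}\phi^T_\xi|^2\rangle\leq\langle\|\frac{\phi^T_\xi}{\sqrt T}\|^2_{L^2(B_1)}\rangle\lesssim C^2\mu_*^2(\sqrt T)/T$ via a weighted energy / Poincaré argument exactly as in Step 4 of the proof of Lemma~\ref{lem:limitMassiveApprox}, but at first order this is already a consequence of \cite{Fischer2019}. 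Either way one arrives at~\eqref{eq:BaseCaseConv1}. Cauchyness of $(\nabla\phi^T_\xi)_T$ in $L^2_{\langle\cdot\rangle}L^2_{\mathrm{loc}}$ then follows by a dyadic summation of~\eqref{eq:BaseCaseConv1} over $T,2T,4T,\dots$ since $\sum_k \mu_*(\sqrt{2^kT})/\sqrt{2^kT}<\infty$ in all dimensions $d\geq1$; the limit $\nabla\phi_\xi$ is a stationary gradient field with vanishing expectation and finite second moment, and anchoring $\dashint_{B_1}\phi_\xi=0$ together with the growth estimate obtained by reconstructing $\phi_\xi$ from $\nabla\phi_\xi$ (as in Step 4 of Lemma~\ref{lem:limitMassiveApprox}) gives sublinear growth, so the limit satisfies Definition~\ref{def:correctorsMonotonePDE}; passing to the limit in~\eqref{eq:PDEMonotoneHomCorrectorLocalized} (the massive term $\frac1T\phi^T_\xi\to0$ strongly by the same growth bound) verifies~\eqref{eq:PDEMonotoneHomCorrector},~\eqref{eq:PDEMonotoneFluxCorrector} and~\eqref{eq:PDEMonotoneHelmholtzDecomp} distributionally.

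\textbf{Step 3: Convergence of the fluxes.} For~\eqref{eq:BaseCaseConvFluxes} I would first note $q^T_\xi=A(\omega,\xi{+}\nabla\phi^T_\xi)$ converges $\Prob\otimes\mathcal L^d$-a.e.\ on $\Omega\times K$ for compact $K$ (along the a.e.-convergent subsequence of $\nabla\phi^T_\xi$, but since the whole sequence converges strongly in $L^2_{\langle\cdot\rangle}L^2_{\mathrm{loc}}$ and $A(\omega,\cdot)$ is Lipschitz with constant $\Lambda$ by (A2)$_0$, in fact $q^T_\xi\to q_\xi:=A(\omega,\xi+\nabla\phi_\xi)$ strongly in $L^2_{\langle\cdot\rangle}L^2_{\mathrm{loc}}$ directly from $|q^T_\xi-q_\xi|\leq\Lambda|\nabla\phi^T_\xi-\nabla\phi_\xi|$), and stationarity of the fluxes then upgrades this to $\langle|q^T_\xi-q_\xi|^2\rangle\to0$. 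The main (and only mild) obstacle is bookkeeping: making sure that the $\mu_*(\sqrt T)$ factor is genuinely $\langle|\dashint_{B_1}\phi^T_\xi|^2\rangle^{1/2}$ rather than something worse like $\|\phi^T_\xi\|^2_{L^2(B_1)}$ (which would scale like $T^{d/2}$ in $d=1$ and ruin the dyadic sum) — this is handled by never estimating $\phi^T_\xi$ pointwise but only through linear functionals of its gradient, exactly the mechanism of~\eqref{eq:BaseCaseCorrectorBounds}. Everything else is a transcription of arguments already in~\cite{Fischer2019} and in the proof of Lemma~\ref{lem:limitMassiveApprox}.
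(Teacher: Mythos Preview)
The paper's own proof is a single-line citation to \cite[Lemma~26, Estimate~(68)]{Fischer2019}, so your sketch is in fact \emph{more} detailed than what the paper provides; your overall strategy (difference equation, weighted energy estimate, Poincar\'e, then invoke the corrector bounds) is exactly the one behind that reference and is the nonlinear base case of the argument in Lemma~\ref{lem:limitMassiveApprox}.

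There is, however, a slip in Step~2 worth flagging. Your proposed test function $g_0$ with $\nabla\cdot g_0=\frac{1}{|B_1|}\mathds{1}_{B_1}-\frac{1}{|B_1(0)|}\mathds{1}_{B_1(0)}$ is identically zero, so that route gives nothing. Your alternative, bounding $\frac{1}{T}\big\langle|\dashint_{B_1}\phi^T_\xi|^2\big\rangle$ by $\big\langle\|\phi^T_\xi/\sqrt{T}\|_{L^2(B_1)}^2\big\rangle$, is also not sharp enough: the corrector bound~\eqref{eq:BaseCaseCorrectorBounds} only gives $\lesssim 1$ for the latter, not $\mu_*^2(\sqrt{T})/T$, so you would lose all decay in $T$. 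The correct mechanism---the one used in Step~4 of the proof of Theorem~\ref{theo:correctorBoundsMassiveApprox} and implicit in the cited estimate from \cite{Fischer2019}---is to represent $\dashint_{B_1}\phi^T_\xi$ via the massive resolvent, $\dashint_{B_1}\phi^T_\xi=\int\frac{1}{T}\phi^T_\xi\,(\tfrac{1}{T}{-}\Delta)^{-1}\tfrac{\mathds{1}_{B_1}}{|B_1|}+\int\nabla\phi^T_\xi\cdot\nabla(\tfrac{1}{T}{-}\Delta)^{-1}\tfrac{\mathds{1}_{B_1}}{|B_1|}$, and then apply~\eqref{eq:BaseCaseCorrectorBounds} together with the Green's function bound $\int|(\tfrac{1}{T}{-}\Delta)^{-1}\tfrac{\mathds{1}_{B_1}}{|B_1|}|^2\lesssim\mu_*^2(\sqrt{T})$. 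Since you ultimately defer this estimate to \cite{Fischer2019} anyway, the gap is harmless, but your written justifications for it are both incorrect as stated.
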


\begin{proof}
This follows from~\cite[Lemma~26, Estimate~(68)]{Fischer2019}.
\end{proof}

\bibliographystyle{abbrv}
\bibliography{higher_order_linearized_corrector}

\end{document}